\numberwithin{equation}{section}
\newtheorem{theorem}{Theorem}[section]
\newtheorem{corollary}[theorem]{Corollary}
\newtheorem{lemma}[theorem]{Lemma}
\newtheorem{proposition}[theorem]{Proposition}
\newtheorem{definition}[theorem]{Definition}
\newtheorem{example}[theorem]{Example}
\newtheorem{remark}[theorem]{Remark}
\newcommand{\N}{\mathbb{N}}
\newcommand{\Q}{\mathbb{Q}}
\newcommand{\R}{\mathbb{R}}
\newcommand{\V}{\mathbb{V}}
\newcommand{\cE}{{\ensuremath{\mathcal E}}}
\newcommand{\bb}{{\mbox{\boldmath$b$}}}
\newcommand{\cc}{{\mbox{\boldmath$c$}}}
\newcommand{\dd}{{\mbox{\boldmath$d$}}}
\newcommand{\ee}{{\mbox{\boldmath$e$}}}
\newcommand{\ff}{{\mbox{\boldmath$f$}}}
\renewcommand{\gg}{{\mbox{\boldmath$g$}}}
\newcommand{\mm}{{\mbox{\boldmath$m$}}}
\newcommand{\vv}{{\mbox{\boldmath$v$}}}
\newcommand{\XX}{{\mbox{\boldmath$X$}}}
\newcommand{\YY}{{\mbox{\boldmath$Y$}}}
\newcommand{\sXX}{{\mbox{\scriptsize\boldmath$X$}}}
\newcommand{\sgg}{{\mbox{\scriptsize\boldmath$g$}}}
\newcommand{\bB}{{\mbox{\boldmath$B$}}}
\newcommand{\eeta}{{\mbox{\boldmath$\eta$}}}
\newcommand{\llambda}{{\mbox{\boldmath$\lambda$}}}
\newcommand{\ppi}{{\mbox{\boldmath$\pi$}}}
\newcommand{\sfd}{{\sf d}}
\newcommand{\sfP}{{\sf P}}
\newcommand{\rme}{{\mathrm e}}
\newcommand{\rmD}{{\mathrm D}}
\newcommand{\rmI}{{\mathrm I}}
\newcommand{\dom}{\rmD}
\newcommand{\supp}{\mathop{\rm supp}\nolimits}   
\newcommand{\restr}[1]{\lower3pt\hbox{$|_{#1}$}}
\newcommand{\Leb}[1]{{\mathscr L}^{#1}}      
\newcommand{\BorelSets}[1]{{\mathscr B}(#1)}
\newcommand{\eps}{\varepsilon}  
\newcommand{\nchi}{{\raise.3ex\hbox{$\chi$}}}
\newcommand{\fr}{\hfill$\blacksquare$}                      
\def\qed{\ifmmode 
  \else \leavevmode\unskip\penalty9999 \hbox{}\nobreak\hfill
  \fi               
    \qquad           \hbox{\hskip.5em $\square$
                \hskip.1em}}
\renewcommand{\mm}{\mathfrak m} 
\newcommand{\Gq}[1]{\Gamma\!\left(#1\right)}          
\newcommand{\Probabilities}[1]{\mathscr P(#1)}          
\newcommand{\CD}{{\sf CD}}
\newcommand{\RCD}{{\sf RCD}}
\newcommand{\BE}{{\sf BE}}
\newcommand{\sqGq}[1]{\sqrt{ \Gq {#1}}}
\newcommand{\bra}[1]{\left( #1 \right)}
\newcommand{\sqa}[1]{\left[ #1 \right]}
\newcommand{\cur}[1]{\left\{ #1 \right\}}
\newcommand{\ang}[1]{\left< #1 \right>}
\newcommand{\abs}[1]{\left| #1 \right|}
\newcommand{\nor}[1]{\left\| #1 \right\|}
\newcommand{\Lbm}[1]{L^{#1}(\mm)}
\newcommand{\Lbtx}[2]{L^{#1}_t(L^{#2}_x)}
\newcommand{\vphi}{\varphi}
\renewcommand{\div}{\mathop{\rm div}\nolimits}        
\newcommand{\mmt}{\widetilde{\mm}}
\newcommand{\dbneg}{\div\bb^-}
\newcommand{\veps}{\varepsilon}
\newcommand{\down}{\downarrow}
\newcommand{\up}{\uparrow}
\newcommand{\Commutator}{{\mathscr C}}
\newcommand{\Algebra}{{\mathscr A}}
\title{Well posedness of Lagrangian flows and continuity equations \\ in metric measure spaces}
\begin{document}

\author{Luigi Ambrosio,\ Dario Trevisan\
\thanks{Scuola Normale Superiore, Pisa, \textsf{luigi.ambrosio@sns.it, dario.trevisan@sns.it}}  
}

\maketitle

\begin{abstract}
We establish, in a rather general setting, an analogue of DiPerna-Lions theory on well-posedness of flows of ODE's associated to Sobolev vector fields. Key results are a well-posedness result for the continuity equation associated to suitably defined Sobolev vector fields, via a commutator estimate, and an abstract superposition principle in (possibly extended) metric measure spaces, via an embedding into $\R^\infty$.

When specialized to the setting of Euclidean or infinite dimensional (e.g.\ Gaussian) spaces, large parts of previously known results are recovered at once. 
Moreover, the class of $\RCD(K,\infty)$ metric measure spaces introduced in \cite{AGS11b} and object of extensive recent research fits into our framework.
Therefore we provide, for the first time, well-posedness results for ODE's under low regularity assumptions on the velocity and in a nonsmooth context. 
\end{abstract}

\tableofcontents

\section{Introduction}

The theory of DiPerna-Lions, initiated in the seminal paper \cite{DiPerna-Lions89}, provides existence, stability and uniqueness results for ODE's
associated to large classes of non-smooth vector fields, most notably that of Sobolev vector fields. In more recent times the first
author extended in \cite{Ambrosio03} the theory to include $BV$ vector fields and, at the same time, he introduced a more probabilistic axiomatization
based on the duality between flows and continuity equation, while the approach of \cite{DiPerna-Lions89} relied on characteristics and
the transport equation.
In more recent years the theory developed in many different directions, including larger classes of vector fields, quantitative
convergence estimates, mild regularity properties of the flow, and non-Euclidean spaces, including infinite-dimensional ones. 
We refer to the Lecture Notes \cite{cetraro} and \cite{bologna} for more exhaustive, but still incomplete, description of the developments on this topic.

Aim of this paper is to extend the theory of well posedness for the continuity equation and the theory of flows
to metric measure spaces $(X,d,\mm)$. Roughly speaking, and obviously under additional structural assumptions, we prove
that if $\{\bb_t\}_{t\in (0,T)}$ is a time-dependent family of Sobolev vector fields then there is a unique flow associated to $\bb_t$, namely a family of
absolutely continuous maps $\{\XX(\cdot,x)\}_{x\in X}$ from $[0,T]$ to $X$ satisfying:
\begin{itemize}
\item[(i)]  $\XX(\cdot,x)$ solves the possibly non-autonomous ODE associated to $\bb_t$ for $\mm$-a.e.\ $x\in X$;
\item[(ii)] the push-forward measures $\XX(t,\cdot)_\#\mm$ are absolutely continuous w.r.t.\ $\mm$ and have uniformly
bounded densities.
\end{itemize}
Of course the notions of ``Sobolev vector field'' and even ``vector field'', as well as the notion of solution to the
ODE have to be properly understood in this nonsmooth context, where not even local coordinates are available.
As far as we know, these are the first well-posedness results for ODE's under low regularity assumptions and in
a nonsmooth context. 

One motivation for writing this paper has been the theory of ``Riemannian" metric measure spaces developed
by the first author in collaboration with N.~Gigli and G.~Savar\'e, leading to a series of papers \cite{AGS11a}, \cite{AGS11b}, \cite{AGS12} 
and to further developments in \cite{Gigli2012}, \cite{Gigli-splitting}. In this perspective, it is important to develop new calculus tools in metric
measure spaces. For instance, in the proof of the splitting theorem in \cite{Gigli-splitting} a key role is played by the flow
associated to the gradient of a $c$-concave harmonic function, whose flow lines provide the fibers of the product decomposition;
therefore a natural question is under which regularity assumption on the potential $V$ the gradient flow associated
to $V$ has a unique solution,  where uniqueness is not understood pointwise, but in the sense of the DiPerna-Lions
theory (see Theorem~\ref{thm:uniflow} and Theorem~\ref{thm:ediss2} for a partial answer to this question). We also point out the recent paper \cite{GB-14}, where continuity equations in metric measure spaces are introduced and studied in connection with absolutely continuous curves with respect to the Wasserstein distance $W_2$, thus relying mainly on a ``Lagrangian'' point of view.

\smallskip

The paper is basically organized in three parts: in the ``Eulerian'' one, which has an independent interest, we study the well-posedness of  continuity
equations, in the ``Lagrangian'' one we define the notion of solution to the ODE and relate well-posedness of the
continuity equation to existence and uniqueness of the flow (in the same spirit of \cite{Ambrosio03}, \cite{cetraro}, 
where the context was Euclidean). Eventually, in the third part we see how a large class of previous results can be
seen as particular cases of ours. On the technical side, these are the main ingredients: for the first part, a new intrinsic 
way to write down the so-called commutator estimate, obtained with $\Gamma$-calculus tools (this point of view
is new even for ``nice'' spaces as Euclidean spaces and Riemannian manifolds); for the second part, a more
general version of the so-called superposition principle (see for instance \cite[Theorem~8.2.1]{Ambrosio-Gigli-Savare05}, in the setting of
Euclidean spaces), that allows to lift, not canonically in general, nonnegative solutions
to the continuity equation to measures on paths. 
 
\smallskip
We pass now to a more detailed description of the three parts. 

\smallskip
\noindent
{\bf Part 1.} This part consists of five sections, from Section~\ref{sec:setup} to Section~\ref{sec:curvature}.  Section~\ref{sec:setup} is devoted to the description of our abstract setup, which is the typical one of
$\Gamma$-calculus and of the theory of Dirichlet forms: for the moment the distance is absent and we are 
given only a topology $\tau$ on $X$ and a reference measure $\mm$ on $X$, which is required to be Borel, nonnegative and $\sigma$-finite. On
$\Lbm 2$ we are given a symmetric, densely defined and strongly local Dirichlet form $(\cE, D(\cE))$ whose semigroup
$\sfP$ is assumed to be Markovian. We write $\V := D(\cE)$ and assume that a {\it Carr\'e du Champ} $\Gamma: \V\times \V\to\Lbm 1$
is defined, and that we are given a ``nice'' algebra $\Algebra$ which is dense in $\V$ 
and which plays the role of the $C^\infty_c$ functions in the theory of distributions. 

Using $\Algebra$, we can define in Section~\ref{sec:derivations} ``vector fields'' as derivations, in the same spirit of \cite{weaver} (see also
\cite{Ambrosio-Kirchheim} for parallel developments in the theory of currents): a derivation $\bb$ is a linear
map from $\Algebra$ to the space of real-valued Borel functions on $X$, satisfying the Leibniz rule $\bb(fg)=f\bb(g)+g\bb(f)$, and a pointwise $\mm$-a.e.\ bound in terms of $\Gamma$. We will use the more intuitive notation
(since differentials of functions are co-vectors) $f\mapsto df(\bb)$ for the action of a derivation $\bb$ on $f$. An important
example is provided by \emph{gradient} derivations $\bb_V$ induced by $V\in\V$, acting as follows:
$$
df(\bb_V):=\Gamma(V,f).
$$
Although we will not need more than this, we would like to mention the forthcoming paper \cite{gigli-WIP}, which provides
equivalent axiomatizations, in which the Leibniz rule is not an axiom anymore, and it is shown that gradient derivations generate,
in a suitable sense, all derivations.  
Besides the basic example of gradient derivations, 
the {\it Carr\'e du Champ} provides, by duality, a natural pointwise norm on derivations; such duality can be used to define, via
integration by parts, a notion of divergence $\div\bb$ for a derivation (the divergence depends only on $\mm$, not on $\Gamma$).
In Section~\ref{sec:existence} we prove existence of solutions to the weak formulation of the continuity equation $\frac{d}{dt}u_t+\div (u_t\bb_t)=0$ 
induced by a family $(\bb_t)$ of derivations, namely
$$
\frac{d}{dt}\int fu_t d\mm=\int df(\bb_t) u_t d\mm\qquad\forall f\in\Algebra.
$$

The strategy of the proof is classical: first we add a viscosity term and get a $\V$-valued solution by Hilbert space techniques, then
we take a vanishing viscosity limit. Together with existence, we recover also higher (or lower, since our measure $\mm$ might be not finite
and therefore no inclusion between $L^p$ spaces might hold) integrability estimates on $u$, depending on the initial
condition $\bar u$. Also, under a suitable assumption \eqref{eq:mass_preserving} on $\Algebra$, we prove that the $L^1$ norm is independent
of time. Section~\ref{sec:commutators} is devoted to the proof of uniqueness of solutions to the continuity equation. The classical proof in 
\cite{DiPerna-Lions89} is based on a smoothing scheme
that, in our context, is played by the semigroup $\sfP$ (an approach already proved to be successful in \cite{AF-09}, \cite{Trevisan-13}, in Wiener spaces).
For $t$ fixed, one has to estimate carefully the so-called commutator
$$
\Commutator^\alpha (\bb_t,u_t):=\div ((\sfP_\alpha u_t)\bb_t)-\sfP_\alpha (\div (u_t\bb_t))
$$
as $\alpha\to 0$.
The main new idea here is to imitate Bakry-\'Emery's $\Gamma$-calculus (see e.g.\ the recent monograph \cite{BGL-13}), interpolating and writing, at least formally,
\begin{eqnarray}\label{eq:introcommu}
\Commutator^\alpha (\bb_t,u_t)&=&\int_0^\alpha \frac{d}{ds}\sfP_{\alpha-s} ( \div ( \sfP_s(u_t)\bb_t ) ) ds\\&=&
\int_0^\alpha \bigl[-\Delta \sfP_{\alpha-s} (\div (\sfP_s(u_t)\bb_t))+\sfP_{\alpha-s} (\div (\Delta\sfP_s(u_t)\bb_t))\bigr] ds.\nonumber
\end{eqnarray}
It turns out that an estimate of the commutator involves only the symmetric part of the derivative (this, in the Euclidean case,
was already observed in \cite{Capuzzo} for regularizations induced by even convolution kernels). This structure can be recovered in our context: inspired by the definition of Hessian
in \cite{Bakry-94} we define the symmetric part $ D^{sym}\cc$ of the gradient  a deformation $\cc$ by
\begin{equation}\label{eq:introdsym2}
\int  D^{sym}\cc(f,g) d\mm := -\frac{1}{2} \int \sqa{df(\cc)\Delta g + dg(\cc) \Delta f - (\div\cc) \Gamma(f,g)}  d\mm.
\end{equation}
Using this definition in \eqref{eq:introcommu} (assuming here for simplicity $\div \bb_t = 0$) we establish the identity
\begin{equation}\label{eq:introcommu-nice}
\int f \, \Commutator^\alpha (\bb_t,u_t) d\mm = 2\int_0^\alpha \int D^{sym}\bb_t( \sfP_{\alpha- s} f , \sfP_s u_t) d\mm ds, \qquad\forall f\in\Algebra.
\end{equation} 
Then, we assume the validity of the estimates (see Definition~\ref{def:deformation} for a more general
setup with different powers) 
\begin{equation}\label{eq:introdsym}
\biggl|\int D^{sym}\bb_t(f,g) d\mm \biggr| \le c \biggl(\int\Gamma(f)^2 d\mm\biggr)^{1/4} \biggl(\int\Gamma(g)^2 d\mm\biggr)^{1/4},
\end{equation}
which, in a smooth context, amount to an $L^2$ control on the symmetric part of derivative.
Luckily, the control \eqref{eq:introdsym} on $D^{sym}\bb_t$ can be combined with \eqref{eq:introcommu} and
\eqref{eq:introcommu-nice} to obtain strong convergence to $0$ of the commutator as $\alpha \to 0$ and therefore
well-posedness of the continuity equation. This procedure works 
assuming some regularizing properties of the semigroup $\sfP$, especially the validity of 
$$
\biggl(\int \Gamma(\sfP_t f)^2 d\mm \biggr)^{1/4}  \le \frac{c}{\sqrt{t}} \bra{\int |f|^4 d\mm}^{1/4}, \quad\text{for every $f \in L^2\cap\Lbm 4$,  $t \in (0,1)$,}
$$
for some constant $c\ge 0$ (see Theorem~\ref{thm:uniqueness}). In particular, these hold assuming an abstract curvature lower bound on the underlying space, as discussed in Section~\ref{sec:curvature},
where we crucially exploit the recent results in \cite{Savare-13} and \cite{Ambrosio-Mondino-Savare-13} to show that our structural
assumptions on $\sfP$ and on $\Algebra$ are fulfilled in the presence of lower bounds on the curvature. Furthermore, gradient derivations
associated to sufficiently regular functions satisfy \eqref{eq:introdsym}.

Finally, we remark that, as in \cite{DiPerna-Lions89}, analogous well-posedness results could be obtained for weak solutions
to the inhomogeneous transport equation
$$ 
\frac{d}{dt} u_t+ du_t(\bb_t)=c_tu_t+w_t .
$$
under suitable assumptions on $c_t$ and $w_t$.
We confined our discussion to the case of the homogeneous continuity equation (corresponding to $c_t=-\div\bb_t$ and $w_t=0$)
for the sake of simplicity and for the relevance of this PDE in connection with the theory of flows.

\smallskip
\noindent
{\bf Part 2.} This part consists of two sections. In Section~\ref{sec:superpo} we show how solutions $u$ to the continuity equation 
$\frac{d}{dt}u_t+\div (u_t\bb_t)=0$
can be lifted to measures $\eeta$ in $C([0,T];X)$. Namely, we would like that $(e_t)_\#\eeta=u_t\mm$ for all $t\in (0,T)$
and that $\eeta$ is concentrated on solutions $\eta$ to the ODE $\dot\eta=\bb_t(\eta)$. This statement is well-known
in Euclidean spaces (or even Hilbert spaces), see \cite[Thm.~8.2.1]{Ambrosio-Gigli-Savare05}; in terms of currents, it could be seen as a particular
case of Smirnov's decomposition \cite{Smirnov} of 1-currents as superposition of rectifiable currents. Here we realized
that the most appropriate setup for the validity of this principle is $\R^\infty$, see Theorem~\ref{thm:superpoRinfty}, where
only the Polish structure of $\R^\infty$ matters and neither distance nor reference measure come into play.

In order to extend this principle from $\R^\infty$ to our abstract setup we assume the existence of a sequence $(g_k)\subset\{f\in\Algebra:\
\|\Gamma(f)\|_\infty\leq1\}$ satisfying:
\begin{equation}\label{eq:introGstar1}
\text{${\rm span}(g_k)$ is dense in $\V$ and any function $g_k$ is $\tau$-continuous,} 
\end{equation}
\begin{equation}\label{eq:introGstar2}
\text{$\exists\lim_{n\to\infty}g_k(x_n)$ in $\R$ for all $k$}\quad\Longrightarrow\quad\text{$\exists\lim_{n\to\infty} x_n$ in $X$.} 
\end{equation}
This way, the embedding $J:X\to\R^\infty$ mapping $x$ to $(g_k(x))$ provides an homeomorphism of $X$
with $J(X)$ and we can first read the
solution to the continuity equation in $\R^\infty$ (setting $\nu_t:=J_\#(u_t\mm)$, with an appropriate choice of the
velocity in $\R^\infty$) and then pull back the lifting obtained
in $\Probabilities{C([0,T];\R^\infty)}$ to obtain $\eeta\in\Probabilities{C([0,T];X)}$, see Theorem~\ref{thm:superpo}. 
It turns out that $\eeta$ is concentrated on curves $\eta$ satisfying
\begin{equation}\label{eq:understood}
\frac{d}{dt}(f\circ\eta)=df(\bb_t)\circ\eta \qquad\text{in the sense of distributions in $(0,T)$, for all $f\in\Algebra$,}
\end{equation}
which is the natural notion of solution to the ODE $\dot\eta=\bb_t(\eta)$ in our context (again, consistent with the fact
that a vector can be identified with a derivation). We show, in addition, that this property implies absolute
continuity of $\eeta$-almost every curve $\eta$ with respect to the possibly extended distance $d(x,y):=\sup_k|g_k(x)-g_k(y)|$,
with metric derivative $|\dot\eta|$ estimated from above with $|\bb_t|\circ\eta$.
Notice also that, in our setup, the distance appears only now. Also, we remark that a similar change of variables
appears in the recent paper \cite{Kolesnikov-Rockner-13}, but not in a Lagrangian perspective: it is used therein to prove 
well-posedness of the continuity equation when the reference measure is log-concave (see Section~\ref{sec:examples-log-concave}). 

Section~\ref{sec:RLF} is devoted to the proof of Theorem~\ref{thm:uniflow}, which links well-posedness of the continuity equation
in the class of nonnegative functions $L^1_t(L^1_x\cap L^\infty_x)$ with initial data $\bar u\in L^1\cap\Lbm\infty$ to the existence
and uniqueness of the flow $\XX$ according to (i), (ii) above, where (i) is now understood as in \eqref{eq:understood}. The proof
of Theorem~\ref{thm:uniflow} is based on two facts: first, the possibility to lift solutions $u$ to probabilities $\eeta$, discussed in the previous section; 
second, the fact that the restriction of $\eeta$ to any Borel set still induces a solution to the continuity equation with the same velocity
field. Therefore we can ``localize'' $\eeta$ to show that, whenever some branching of trajectories occurs, then there is non-uniqueness at
the level of the continuity equation. 

Let us comment that, in this abstract setting, it seems more profitable to the authors to deal uniquely with continuity equations, instead of transport equations as in \cite{DiPerna-Lions89}, since the latter require in its very definition a choice of  ``coordinates'', while the former arises naturally as the description of evolution of underlying measures.

\smallskip
\noindent
{\bf Part 3.} This part consists of Section~\ref{sec:examples} only, where we specialize the general theory to settings where continuity equations and 
associated flows have already been considered, and to $\RCD(K,\infty)$-metric measure spaces. Since the transfer mechanism of well-posedness 
from the PDE to the ODE levels is quite general, we mainly focus on the continuity equation. Moreover, in these particular settings (except for $\RCD(K,\infty)$ spaces), the proof of existence  for solutions turns out to be a much easier task than in the general framework, 
due to explicit and componentwise approximations by smooth vector fields. Therefore, we limit ourselves to compare uniqueness results.

In Section~\ref{sec:diperna-lions}, we show how the classical DiPerna-Lions theory of \cite{DiPerna-Lions89} fits into our setting: in short, we recover almost all the well-posedness results in \cite{DiPerna-Lions89}, with the notable exception of  $W^{1,1}_{\rm loc}$-regular vector fields. In Section~\ref{sec:riemannian} we also describe how our techniques provide intrinsic proofs, i.e.\ without reducing to local coordinates, of analogous results for weighted Riemannian manifolds.

In Section~\ref{sec:examples-wiener} and Section~\ref{sec:examples-daprato}, we deal with (infinite dimensional) Gaussian frameworks, comparing our results to those established respectively in \cite{AF-09}, \cite{DaPrato-Flandoli-Rockner-13}: large parts of these can be obtained as consequences of our general theory, which turns out to be more flexible e.g.\ we can allow for vector fields that do not necessarily take values in the Cameron-Martin space (see at the end of Section~\ref{sec:examples-daprato}), which is not admissible in \cite{AF-09} or in \cite{DaPrato-Flandoli-Rockner-13}. In Section~\ref{sec:examples-log-concave} we consider the even more general setting of log-concave measures and make a comparison with some of the results contained in \cite{Kolesnikov-Rockner-13}. The strength of our approach is immediately revealed, e.g.\ we are not limited as in \cite{Kolesnikov-Rockner-13} to uniformly log-concave measures.

We conclude in Section~\ref{sec:RCD} by describing how the theory specializes to the setting of $\RCD(K,\infty)$-metric measure spaces, that is one of our original motivations for this work. We show that Lagrangian flows do exist in many cases (Theorem~\ref{thm:ediss2}) and provide instances of so-called \emph{test plans}. In the case of gradient derivations, we also show that the trajectories satisfy a global energy dissipation identity (Theorem~\ref{thm:ediss}).

\smallskip
\noindent {\bf Acknowledgments.}
The first author acknowledges the support of the ERC ADG GeMeThNES.
The second author has been partially supported by PRIN10-11 grant from MIUR
for the project Calculus of Variations. Both authors are members of the GNAMPA group of the Istituto 
Nazionale di Alta Matematica (INdAM).

The authors thank G.\ Savar\'e for pointing out to them, after reading a preliminary version of this work, the possibility to 
dispense from the full curvature assumption, an observation that led to the present organization of the paper. 
They also thank F.\ Ricci and G.\ Da Prato for useful discussions.

\section{Notation and abstract setup}\label{sec:setup}

Let $(X,\tau)$ be a Polish topological space, endowed with
a $\sigma$-finite Borel measure $\mm$ with full support (i.e.\ $\supp\mm=X$) and
\begin{equation}
 \label{E-assumptions}
 \begin{gathered}
    \text{a strongly local, densely defined and symmetric Dirichlet form $\cE$ on
      $L^2(X,\BorelSets{X},\mm)$}\\
    \text{enjoying a \emph{Carr\'e du Champ}
    $\Gamma: D(\cE)\times D(\cE)\to L^1(X,\BorelSets{X},\mm)$ and}\\
  \text{generating
  a Markov semigroup $(\sfP_t)_{t\ge0}$ on $L^2(X,\BorelSets{X},\mm)$.}
\end{gathered}
\end{equation}

The precise meaning of \eqref{E-assumptions} is recalled below in this section.

To keep notation simple, we write $\Lbm p$ instead of $L^p(X,\BorelSets{X},\mm)$ and denote $\Lbm p$ norms by $\nor{\cdot}_{p}$. We also write $\Lbm 0$ for the space of $\mm$-a.e.\ equivalence classes of Borel functions $f: X \mapsto [-\infty, + \infty]$ that take finite values $\mm$-a.e.\ in $X$.

Since $(X,\tau)$ is Polish and $\mm$ is $\sigma$-finite, the spaces $\Lbm p$ are separable for $p\in [1,\infty)$. We shall also use
the duality relations $$(\Lbm p+\Lbm q)^*=L^{p'}\cap\Lbm{q'}\qquad\text p,\,q\in [1,\infty)$$
and the notation $\|\cdot\|_{L^p+L^q}$, $\|\cdot\|_{L^{p'}\cap L^{q'}}$. In addition, we will use that the spaces 
$\Lbm p$, $1\leq p\leq\infty$ (and $p =0$) are complete lattices with respect to the order relation induced by the inequality $\mm$-a.e.\ in $X$. This follows at once from the general fact that, for any family of Borel functions $f_i:X\to [-\infty,+\infty]$ there exists $f:X\to [-\infty,+\infty]$ Borel such that $f\geq f_i$
$\mm$-a.e.\ in $X$ for all $i\in I$ and $f\geq g$ $\mm$-a.e.\ in $X$ for any function $g$ with the same property. Existence
of $f$ can be achieved, for instance, by considering the maximization of \[J\mapsto\int \tan^{-1}(\sup\limits_{i\in J}f_i)\vartheta d\mm\] among the
finite subfamilies $J$ of $I$, with $\vartheta$ positive function in $\Lbm 1$ (notice that the pointwise supremum could lead to a function which
is not $\mm$-measurable).

\subsection{Dirichlet form and Carr\'e du champ}

A symmetric Dirichlet form $\cE$ is a $\Lbm 2$-lower semicontinuous quadratic form satisfying the Markov property
\begin{equation}\label{eq:Markovian}
\cE(\eta\circ f)\leq \cE(f)\qquad\text{for every normal contraction $\eta:\R\to\R$},
\end{equation}
i.e.~a $1$-Lipschitz map satisfying $\eta(0)=0$. We refer to \cite{Bouleau-Hirsch91,Fukushima-Oshima-Takeda11} 
for equivalent formulations of \eqref{eq:Markovian}. Recall that
	$$ \V := \dom(\cE)\subset \Lbm 2, \quad \text{endowed with} \, \nor{f}_{\V}^2:= \nor{f}_2^2 + \cE(f) $$ 
is a Hilbert space. Furthermore, $\V$ is separable because $\Lbm 2$ is separable (see \cite[Lemma~4.9]{AGS11b} for the simple
proof).

We still denote by $\cE(\cdot,\cdot):\V\times \V \to\R$ the 
associated continuous and symmetric bilinear form
\begin{displaymath}
  \cE(f,g):=\frac 14\Big(\cE(f+g)-\cE(f-g)\Big).
\end{displaymath}
We will assume strong locality of $\cE$, namely
\[
\forall\,f,\,g\in\V:\quad 
\cE(f,g)=0,\quad\text{if $(f+a)g=0$,  $\mm$-a.e.\ in $X$, for some $a\in\R$.}
\]
It is possible to prove (see for instance \cite[Prop.~2.3.2]{Bouleau-Hirsch91}) that $\V\cap\Lbm\infty$ is an algebra with respect to pointwise
multiplication, so that for every $f\in\V\cap\Lbm\infty$ the linear form on $\V\cap\Lbm\infty$
\begin{equation}
  \label{eq:65bis}
  \Gamma[f;\varphi]:=2\cE(f,f\varphi)-\cE(f^2,\varphi),\quad\varphi\in\V\cap\Lbm\infty,
\end{equation}
is well defined and, for every normal contraction $\eta:\R\to\R$, it
satisfies \cite[Prop.~2.3.3]{Bouleau-Hirsch91}
\begin{equation}
  \label{eq:106}
  0\le\Gamma[\eta\circ f;\varphi]\le 
  \Gamma[f;\varphi]\le \|\varphi\|_\infty\,\cE(f)\quad
  \text{for all $f,\,\varphi\in\V\cap\Lbm\infty$, $\varphi\ge0$.}
\end{equation}
The inequality \eqref{eq:106} shows that for every nonnegative $\varphi\in
\V\cap\Lbm\infty$ the function $f\mapsto\Gamma[f;\varphi]$ is a 
quadratic form in $\V\cap\Lbm\infty$ which satisfies the Markov property 
and can be extended by continuity to $\V$.

We assume that for all $f\in\V$ the linear form $\varphi\mapsto\Gamma [f;\varphi]$
can be represented by a an absolutely continuous measure w.r.t.\ $\mm$
with density $\Gq f\in L^1_+(\mm)$, the so-called \emph{Carr\'e du champ}.
Since $\cE$ is strongly local, \cite[Thm.~I.6.1.1]{Bouleau-Hirsch91} yields 
the representation formula
\begin{equation}\label{eq:energymeasurebis}
\cE(f,f)=\int_X \Gq f d\mm,\quad\text{for all  $f\in\V$}.
\end{equation}
It is not difficult to check that 
$\Gamma$ as defined by \eqref{eq:energymeasurebis}
(see e.g.\ \cite[Def.~I.4.1.2]{Bouleau-Hirsch91})
is a quadratic continuous map defined in $\V$ with 
values in $L^1_+(\mm)$, and that $ \Gamma[f-g;\varphi]\geq 0$ for all
$\varphi\in\V\cap\Lbm\infty$ yields
\begin{equation}\label{eq:boundg1}
|\Gamma(f,g)|\leq\sqGq{f}\sqGq{g},\qquad\text{$\mm$-a.e.\ in $X$.}
\end{equation}

We use the $\Gamma$ notation also for the symmetric, bilinear and continuous map 
\begin{displaymath}
	\Gamma(f,g):=\frac14\Big(\Gamma(f+g)-\Gamma(f-g)\Big)\in\Lbm 1\qquad f,\,g\in\V,
\end{displaymath}
which, thanks to \eqref{eq:energymeasurebis}, represents the bilinear form $\cE$ by the formula
\begin{displaymath}
  \cE(f,g)= \frac 12 \int_X\Gamma(f,g) d\mm\quad\text{for all $f,\,g\in\V$.}
\end{displaymath}
Because of the Markov property and locality, $\Gamma(\cdot,\cdot)$ satisfies the chain rule
\cite[Cor.~I.7.1.2]{Bouleau-Hirsch91}
\begin{equation}
	\label{eq:Gchain}
		\Gamma(\eta(f),g)=\eta'(f)\Gamma(f,g)\quad		
		\text{for all $f,\,g\in\V$, $\eta:\R\to\R$ Lipschitz with $\eta(0)=0$,}
\end{equation}
and the Leibniz  rule:
\begin{displaymath}
	\Gamma(fg,h)=f\Gamma(g,h)+g\Gamma(f,h)\quad
	\quad\text{for all $f,\,g,\,h\in\V\cap L^\infty(\mm)$.}
\end{displaymath}
Notice that by \cite[Thm.~I.7.1.1]{Bouleau-Hirsch91}
\eqref{eq:Gchain} is well defined, since for every Borel set $N\subset \R$ 
(as the set where $\eta$ is not differentiable)
\begin{equation}
  \label{eq:72}
  \Leb1(N)=0\quad\Rightarrow\quad
  \Gq f=0\quad \text{$\mm$-a.e.\ on }f^{-1}(N).
\end{equation}

For $p\in [1,\infty]$, we introduce the spaces 
\begin{equation}\label{eq:sobolev-spaces}
\V_p:=\left\{u\in\V \cap \Lbm p:\ \int (\Gamma(u))^{p/2} d\mm<\infty\right\}\qquad p\in [1,\infty),
\end{equation}
with the obvious extension to $p=\infty$.
As in \cite[\S I.6.2]{Bouleau-Hirsch91}, one can endow each $\V_p$ with the norm
\begin{equation}\label{eq:sobolev-spaces-norms }\nor{f}_{\V_p} = \nor{f}_\V + \nor{f}_p + \| \Gamma(u)^{1/2} \|_p,\end{equation}
thus obtaining a Banach space, akin to the intersection of classical Sobolev spaces $W^{1,2} \cap W^{1,p}$. Notice that $\V_2 = \V$, with an equivalent norm. The Banach space structure plays a major role only starting from Section~\ref{sec:commutators}, but the notation $f \in \V_p$ is conveniently used throughout.

\subsection{Laplace operator and Markov semigroup}

The Dirichlet form $\cE$ induces a densely defined, negative and selfadjoint operator 
$\Delta:D(\Delta)\subset\V\to\Lbm 2$, defined by the integration by parts formula
$\cE(f,g)=-\int_X g\,\Delta f d\mm$ for all $g\in\V$.
The operator $\Delta$ is of ``diffusion'' type, since it satisfies  the following chain rule
for every $\eta\in C^2(\R)$ with $\eta(0)=0$ and bounded first and
second derivatives (see \cite[Corollary I.6.1.4]{Bouleau-Hirsch91}): whenever $f\in D(\Delta)$ 
with $\Gq f\in\Lbm 2$, then $\eta(f)\in D(\Delta)$ and
\begin{equation}
	\label{eq:Gchain-laplace}
	\Delta\eta(f)=\eta'(f)\Delta f+\eta''(f)\Gq f.
\end{equation}
The ``heat flow'' $\sfP_t$ associated to $\cE$ is well defined starting
from any initial condition $f\in\Lbm 2$. Recall that in this framework
the heat flow $(\sfP_t)_{t\ge0}$ is an analytic Markov semigroup 
and that $f_t=\sfP_t f$ can be characterized as the unique $C^1$ map 
$f:(0,\infty)\to\Lbm 2$, with values in $D(\Delta)$,  satisfying
\begin{displaymath}
\begin{cases}
\displaystyle\frac{d}{d t} f_t=\Delta f_t&\text{for $t\in (0,\infty)$,}\\
\lim\limits_{t\down 0}f_t=f&\text{in $\Lbm 2$.}
\end{cases}
\end{displaymath}
Because of this, $\Delta$ can equivalently be characterized in terms of
the strong convergence $(\sfP_t f-f)/t\to\Delta f$ in $\Lbm 2$ as
$t \downarrow 0$. 

Furthermore, we have the
regularization estimates (in the more general context of
gradient flows of convex functionals, see for instance 
\cite[Thm.~4.0.4(ii)]{Ambrosio-Gigli-Savare05})
\begin{equation}\label{eq:regularization2}
\cE(\sfP_t u,\sfP_tu)\leq\inf_{v\in\V}\bigl\{\cE(v,v)+\frac{\|v-u\|_2^2}{2t}\bigr\}<\infty,\qquad\forall 
t>0,\,\,u\in\Lbm 2,
\end{equation}
\begin{equation}\label{eq:regularization}
\|\Delta \sfP_t u\|_2^2\leq\inf_{v\in D(\Delta)}\bigl\{\|\Delta 
v\|_2^2+\frac{\|v-u\|_2^2}{t^2}\bigr\}<\infty,\qquad\forall 
t>0,\,\,u\in\Lbm 2.
\end{equation}
For $p\in (1,\infty)$, we shall also need an $L^p$ version of \eqref{eq:regularization}, namely
	\begin{equation}\label{eq:p-reverse-Delta-inequalities} 
	\nor{ \Delta \sfP_t f }_p \le \frac{c^\Delta_p}{t} \nor{f}_p,\quad \text{for every $f \in L^p\cap \Lbm 2$ and every $t \in (0,1)$.} 
	\end{equation}
This can be obtained as a consequence of the fact that $\sfP$ is analytic \cite[Thm.~III.1]{Stein-70}: it is actually equivalent to it, see \cite[\S X.10]{Yoshida-80}.   

As an easy corollary of \eqref{eq:p-reverse-Delta-inequalities}, we obtain the following estimate.

\begin{corollary}\label{coro:stein}
Let $p \in (1,\infty)$ and let $c^\Delta_p$ be the constant in \eqref{eq:p-reverse-Delta-inequalities}. Then 
$$
\nor{ \sfP_t f - \sfP_{t-t'} f}_p \le \min\cur{c_p^\Delta\log\bra{ 1 + \frac{t'}{t-t'}}, 2} \nor{f}_p,\qquad\forall f\in L^p\cap \Lbm 2
$$
for every $t,\,t'\in (0,1)$, with $t'\le t$.
\end{corollary}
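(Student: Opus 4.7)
The plan is to combine two elementary estimates, the trivial one coming from $L^p$-contractivity of the semigroup and the more delicate one coming from \eqref{eq:p-reverse-Delta-inequalities} via integration along the semigroup.

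First, since $(\sfP_t)$ is Markov and symmetric, it is a contraction on every $\Lbm p$, $p\in[1,\infty]$, by interpolation between $\Lbm 1$ and $\Lbm\infty$ (equivalently, Markovianity plus symmetry give contractivity on $\Lbm 2$ and on $\Lbm\infty$, whence on every $\Lbm p$). This immediately yields $\nor{\sfP_t f}_p\le \nor{f}_p$ and $\nor{\sfP_{t-t'} f}_p\le\nor{f}_p$, so by the triangle inequality
$$
\nor{\sfP_t f-\sfP_{t-t'}f}_p\le 2\nor{f}_p,
$$
which handles the $\min$ term $2\nor{f}_p$.

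For the logarithmic bound, I would use the fact that $s\mapsto \sfP_s f$ is a $C^1$ map $(0,\infty)\to \Lbm 2$ with derivative $\Delta \sfP_s f$. For $f\in L^p\cap\Lbm 2$ the estimate \eqref{eq:p-reverse-Delta-inequalities} guarantees that $s\mapsto \Delta\sfP_s f$ is also strongly continuous with values in $\Lbm p$ (using the semigroup property $\Delta\sfP_s f=\sfP_{s/2}\Delta\sfP_{s/2}f$ and continuity of $\sfP_{s/2}$ in $\Lbm p$), and locally integrable on $(0,1)$ since $\int_{t-t'}^{t} s^{-1}\,ds<\infty$. Therefore the fundamental theorem of calculus, applied in the Bochner sense in $\Lbm p+\Lbm 2$, gives
$$
\sfP_t f-\sfP_{t-t'}f=\int_{t-t'}^t \Delta\sfP_s f\,ds.
$$
Taking $\Lbm p$-norms and applying \eqref{eq:p-reverse-Delta-inequalities} term by term,
$$
\nor{\sfP_t f-\sfP_{t-t'}f}_p\le\int_{t-t'}^t \nor{\Delta\sfP_s f}_p\,ds\le c^\Delta_p\nor{f}_p\int_{t-t'}^t\frac{ds}{s}=c^\Delta_p\nor{f}_p\log\!\frac{t}{t-t'},
$$
and $\log(t/(t-t'))=\log(1+t'/(t-t'))$, which is the claimed bound.

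The only mildly subtle point is justifying the Bochner identity above when $f\in L^p\cap\Lbm 2$ but $f\notin D(\Delta)$: on $(t-t',t)\subset(0,\infty)$ the map $s\mapsto \sfP_s f$ takes values in $D(\Delta)$ and has a classical strong $\Lbm 2$-derivative equal to $\Delta\sfP_s f$, so the identity holds in $\Lbm 2$, and then passes to $\Lbm p$ once one checks, as above, that $\Delta\sfP_s f\in \Lbm p$ with integrable norm on $[t-t',t]$. Taking the minimum of the two bounds concludes the proof.
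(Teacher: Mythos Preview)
Your proof is correct and follows essentially the same approach as the paper: the bound $2\nor{f}_p$ comes from $L^p$-contractivity plus the triangle inequality, and the logarithmic bound comes from writing the difference as the integral of $\Delta\sfP_s f$ along the semigroup and applying \eqref{eq:p-reverse-Delta-inequalities} pointwise in $s$. The paper's computation is the same up to the change of variable $s=t-t'+r$, and you supply a bit more justification for the Bochner identity than the paper does.
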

\begin{proof}
The estimate with the constant $2$ follows from $L^p$ contractivity. For the other one, we apply \eqref{eq:p-reverse-Delta-inequalities} as follows:
\[ \nor{\sfP_t f - \sfP_{t-t'} f}_p \le \int_0^{t'} \nor{ \Delta \sfP_{t-t'+r} f }_p dr\le \int_0^{t'} \frac{c_p^\Delta}{t-t'+r} dr \nor{f}_p = 
c_p^\Delta\log\bra{ 1 + \frac{t'}{t-t'}} \nor{f}_p.\]
\end{proof}

One useful consequence of the Markov property is the $L^p$ contraction
of $(\sfP_t)_{t\ge0}$  from $L^p\cap L^2$ to $L^p\cap L^2$. 
Because of the density of $L^p\cap L^2$ in $L^p$ when $p\in [1,\infty)$, this allows to
extend uniquely $\sfP_ t$ to 
a strongly continuous semigroup of linear contractions in
$\Lbm p$, $p\in [1,\infty)$, for which we retain the same notation.
Furthermore, $(\sfP_t)_{t\ge0}$ is sub-Markovian (see 
\cite[Prop.~I.3.2.1]{Bouleau-Hirsch91}), since it preserves
one-sided essential bounds, namely $f\leq C$ (resp. $f\geq C$) 
$\mm$-a.e.\ in $X$ for some $C\geq 0$ (resp. $C\leq 0$) implies $\sfP_t f\leq C$ 
(resp. $\sfP_tf\geq C$) $\mm$-a.e.\ in $X$ for all  $t\geq 0$.

Finally, it is easy to check, using $L^1$-contractivity of $\sfP$, that the dual semigroup $\sfP_t^\infty:\Lbm{\infty}\to\Lbm{\infty}$
given by
\begin{equation}\label{eq:dual_P}
\int g\sfP^\infty_t f d\mm=\int f\sfP_t g d\mm,\qquad
f\in\Lbm\infty,\,\,g\in\Lbm 1
\end{equation}
is well defined. It is a contraction semigroup in $\Lbm{\infty}$, sequentially $w^*$-continuous, and it coincides with
$\sfP$ on $L^2\cap\Lbm\infty$.

\subsection{The algebra $\Algebra$}\label{subsec:algebra}

Throughout the paper we assume that an algebra $\Algebra \subset \V$ is prescribed, with
\begin{equation}\label{eq:basic_algebra}
\Algebra\subset \bigcap_{p \in [1,\infty]} \Lbm p, \quad \text{$\Algebra$ dense in $\V$}
\end{equation}
and
\begin{equation}\label{eq:stability_composition}
\Phi(f_1,\ldots,f_n)\in\Algebra\quad\text{whenever $\Phi\in C^1(\R^n)$, $f_1,\ldots,f_n\in\Algebra$.}
\end{equation}

Additional conditions on $\Algebra$ will be considered in specific sections of the paper. A particular role is played by the condition $\Algebra \subset \V_p$, for $p\in [2,\infty]$. By interpolation, if such an inclusion holds, then it holds for every $q$ between $2$ and $p$. About the inclusion $\Algebra \subset \V_p$ for $p \in [1,2]$, we prove:

\begin{lemma}\label{lem:refining-algebra}
Let $\mathcal{A} \subset \V$ be dense in $\V$  and satisfy \eqref{eq:stability_composition}. Then, there exists $\Algebra \subset \mathcal{A}$, such that \eqref{eq:basic_algebra} and \eqref{eq:stability_composition} hold, and
\begin{equation}\label{eq:algebra-density} \text{$\Algebra$ is contained and dense in $\V_p$, for every $p \in [1,2]$.} \end{equation}
\end{lemma}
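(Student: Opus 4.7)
My plan is to take
\[
\Algebra := \{f \in \mathcal{A}: f \in \Lbm\infty \text{ and } \mm(\{f \neq 0\}) < \infty\},
\]
the bounded elements of $\mathcal{A}$ with essentially finite-measure support. The inclusion $\Algebra \subset \bigcap_{p \in [1,\infty]} \Lbm p$ follows from $|f| \le \|f\|_\infty\,\mathbf{1}_{\{f \neq 0\}}$. For $C^1$-composition stability, given $f_1,\ldots,f_n \in \Algebra$ and $\Phi\in C^1(\R^n)$ (with $\Phi(0)=0$, the only nontrivial case compatible with $\Algebra \subset \Lbm p$ when $\mm(X) = \infty$), the composition $\Phi(f_1,\ldots,f_n)$ lies in $\mathcal{A}$ by the hypothesis on $\mathcal{A}$, is bounded on the compact range of the $f_i$, and vanishes outside $\bigcup_i\{f_i\neq 0\}$, hence lies in $\Algebra$. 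For $\Algebra \subset \V_p$ with $p \in [1,2]$, I would apply \eqref{eq:72} with $N = \{0\}$ to get $\Gamma(f) = 0$ $\mm$-a.e.\ on $\{f = 0\}$; combined with $\Gamma(f) \in \Lbm 1$, H\"older's inequality on the finite-measure support then yields $\Gamma(f)^{1/2} \in \Lbm p$.

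For the density claim, I would build a family of $C^1$ ``annular'' truncations $\Phi_k:\R \to \R$ with $\Phi_k(0)=0$, $\Phi_k(t) = 0$ for $|t|\le 1/(2k)$, $\Phi_k(t)=t$ for $1/k \le |t| \le k$, $|\Phi_k(t)| \le \min(|t|,k+1)$, and $\sup_k\|\Phi_k'\|_\infty < \infty$. For any $u\in \V_p$, dominated convergence and the chain rule identity $\Gamma(\Phi_k(u) - u)^{1/2} = |\Phi_k'(u)-1|\,\Gamma(u)^{1/2}$ (noting that $\Gamma(u)$ is concentrated on $\{u \neq 0\}$, where $\Phi_k'(u)\to 1$) show $\Phi_k(u)\to u$ in $\V_p$ as $k\to\infty$. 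It thus suffices to approximate each $\Phi_k(u)$ in $\V_p$ by elements of $\Algebra$.

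To do so, pick $\tilde u_l \in \mathcal{A}$ with $\tilde u_l \to u$ in $\V$ and set $g_{k,l}:= \Phi_k(\tilde u_l)$, which lies in $\mathcal{A}$ by $C^1$-stability, is bounded by $k+1$, and is supported in $\{|\tilde u_l|>1/(2k)\}$ of $\mm$-measure at most $(2k)^2\|\tilde u_l\|_2^2$, a quantity uniformly bounded in $l$; hence $g_{k,l} \in \Algebra$. Expanding $\Gamma(g_{k,l}-\Phi_k(u))$ via the chain rule and passing to the limit using $\V$-convergence of $\tilde u_l$ to $u$ (which, by a Cauchy--Schwarz argument based on \eqref{eq:boundg1}, yields $\Gamma(\tilde u_l) \to \Gamma(u)$ and $\Gamma(\tilde u_l, u) \to \Gamma(u)$ in $\Lbm 1$) together with a.e.\ convergence and uniform boundedness of $\Phi_k'(\tilde u_l)$, I conclude that $g_{k,l}\to\Phi_k(u)$ in $\V$; in particular $g_{k,l}\to\Phi_k(u)$ in $\Lbm 2$ and $\Gamma(g_{k,l}-\Phi_k(u))^{1/2}\to 0$ in $\Lbm 2$. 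Since the essential supports of these differences are contained in a set of uniformly bounded $\mm$-measure, the H\"older interpolation $\|f\|_p \le \|f\|_2\,\mm(\supp f)^{(2-p)/(2p)}$ upgrades this to $\Lbm p$-convergence for every $p\in[1,2]$; a diagonal choice $l=l(k)$ then produces the desired approximating sequence in $\Algebra$ converging to $u$ in $\V_p$.

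The main obstacle will be precisely this upgrade from $\Lbm 2$ to $\Lbm p$ for $p < 2$, since the Dirichlet-form/Hilbert-space structure only yields $\Lbm 2$-convergence of the $\Gamma^{1/2}$ of differences: it is the finite-measure essential support ensured by the ``annular'' shape of $\Phi_k$ (vanishing both near $0$ and at infinity) that makes the H\"older interpolation applicable, and hence motivates both the definition of $\Algebra$ and the explicit form of the truncations.
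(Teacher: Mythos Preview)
Your proof is correct and follows the same overall strategy as the paper: introduce ``annular'' $C^1$ truncations $\Phi_k$ vanishing near the origin, reduce to approximating each $\Phi_k(u)$, and approximate these by $\Phi_k(\tilde u_l)$ with $\tilde u_l\to u$ in $\V$. The two arguments differ only in packaging.

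First, the paper defines $\Algebra$ as the set of compositions $\Phi(f)$ with $f\in\mathcal{A}$ and $\Phi$ ranging over a class $\mathcal{F}$ of bounded $C^1$ Lipschitz functions null at the origin with $\Phi'(x)/x$ bounded; the condition on $\Phi'(x)/x$ forces $|\Phi(f)|\lesssim f^2$, which is what puts $\Phi(f)$ into $\Lbm1$. Your choice $\Algebra=\{f\in\mathcal{A}: f\in\Lbm\infty,\ \mm(\{f\neq0\})<\infty\}$ achieves the same integrability more directly via finite-measure support, and is arguably more transparent.

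Second, for the $L^p$-convergence of $\Gamma(g_{k,l}-\Phi_k(u))^{1/2}$ with $p<2$, the paper first proves the auxiliary claim that $\Phi_k'(\tilde u_l)\to\Phi_k'(u)$ in every $\Lbm q$ (splitting into $\{|u|\ge 1/(3k)\}$ and its complement), and then feeds this into the explicit identity for $\Gamma(\Phi_k(h_1)-\Phi_k(h_2))$ combined with H\"older. You instead establish $L^2$-convergence of $\Gamma^{1/2}$ by the same bilinear expansion and then upgrade to $L^p$ in one stroke via $\|h\|_p\le\|h\|_2\,\mm(\supp h)^{(2-p)/(2p)}$, exploiting the uniform bound on the support measure. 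This is a cleaner route and avoids the separate $L^q$-convergence lemma; the paper's version, on the other hand, yields the identity \eqref{eq:gamma-difference} which is reused later in Lemma~\ref{lemma:feller-density}. One small remark: your a.e.\ convergence of $\Phi_k'(\tilde u_l)$ requires passing to a subsequence of $(\tilde u_l)$, which is harmless for a density statement but worth making explicit.
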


In particular, without any loss of generality, we assume throughout that \eqref{eq:algebra-density} holds.

\begin{proof}
We define
\[ \Algebra = \cur{ \Phi(f)\, : \,  \text{ $f \in \mathcal{A}$, $\Phi\in\mathcal F$}} \subset \mathcal{A}, \]
where $\mathcal F$ consists of all functions $\Phi:\R\to\R$ bounded and Lipschitz, continuously differentiable and null at the origin, with
$\Phi'(x)/x$ bounded in $\R$.
By the chain rule and H\"older inequality, it follows that $\Algebra\subset\Lbm p$ for all $p\in [1,\infty]$ and that
\eqref{eq:stability_composition} holds. We address the density of $\Algebra$
in $\V_p$ for $p\in [1,2]$.

We consider Lipschitz functions $\phi_n: \R \mapsto [0,1]$ such that $\phi_n(z) = 0$ for $\abs{z}\le 1/(2n)$ and for $\abs{z} \ge 2n$,
while $\phi_n(z) = 1$ for $\abs{z} \in [1/n, n]$, and we set $\Phi_n(z) = \int_0^z \phi_n(t) dt$. Notice that $\Phi_n\equiv 0$ on $[-1/(2n),1/(2n)]$, 
that $\Phi_n$ belongs to $\mathcal F$, and that $\Phi'_n(z) = \phi_n(z) \to 1$ as $n \to \infty$.  It is easily seen, by chain rule, that
$\Phi_n(f)\to f$ in $\V_p$ as $n\to\infty$ for all $f\in\V_p$, therefore density is achieved if we show that all functions $\Phi_n(f)$ belong to the closure
of $\Algebra$. Since by assumption there exist $f_k\in\mathcal{A}$ convergent to $f$ in $\V$ (and $\mm$-a.e.\ on $X$), 
it will be sufficient to show that, for every $n \ge 1$, 
$\Phi_n(f_k)$ converge to $\Phi_n(f)$ in $\V_p$ as $k\to \infty$.

We claim that, as $k\to \infty$, $\phi_n(f_k)$ converge to $\phi_n(f)$ in $\Lbm q$ for every $q\in [1,\infty)$. To prove the claim, it suffices to
consider separately the sets $\{|f|\geq 1/(3n)\}$ and $\{|f|<1/(3n)\}$. On the first set, which has finite $\mm$-measure,
we can use $\mm$-a.e.\  and dominated convergence to achieve the thesis, taking into account the boundedness
of $\phi_n$ (since $n$ is fixed); on the second set we have
$$
|\phi_n(f_k)-\phi_n(f)|=\chi_{\cur{|f_k|\geq 1/(2n)}}|\phi_n(f_k)-\phi_n(f)|\leq
\chi_{\{|f_k-f|\geq 1/(6n)\}}\min\bigl\{2,{\rm Lip}(\phi_n)|f_k-f|\bigr\}
$$
and we can use H\"older's inequality for $q<2$ and uniform boundedness for $q\geq 2$.

To show convergence of $\Phi_n(f_k)$ to $\Phi_n(f)$ in $\V_p$ as $k \to \infty$, 
we use the following straightforward identity, valid for any $h_1,\,h_2\in\V$ and $\Phi\in\mathcal F$:
\[\Gamma( \Phi(h_1) - \Phi(h_2)) =\bra{ \Phi'(h_1) -\Phi'(h_2)}^2\Gamma(h_1,h_2)  + \Phi'(h_1)^2 \Gamma(h_1,h_1-h_2)+ \Phi'(h_2)^2 \Gamma(h_2, h_2- h_1). \]
Adding and subtracting $\Phi'(h_2)^2 \Gamma(h_1, h_1-h_2)$,
and taking $\Phi =\Phi_n$, since $0\leq \phi_n\leq 1$ we obtain the inequality
\begin{equation}\label{eq:gamma-difference}
\begin{split}
 \Gamma( \Phi_n(h_1) -\Phi_n(h_2))^{1/2} \le & 
 \abs{ \phi_n(h_1) - \phi_n(h_2)}\Gamma(h_1)^{1/4}\Gamma(h_2)^{1/4} \\
  & + 2 |\phi_n(h_1) - \phi_n(h_2))|^{ 1/2 } \Gamma(h_1)^{1/4} \Gamma(h_1 -h_2)^{1/4}\\
   & +\phi_n(h_2) \Gamma(h_1 - h_2)^{1/2}.
 \end{split}
  \end{equation}
Finally, we let $h_1 = f$ and $h_2 = f_k$ in this inequality and use the  convergence of $\phi_n(f_k)$ to $\phi_n(f)$ in every $\Lbm q$ space, for $q \in [1,\infty)$, as
well as H\"older's inequality,  to deduce that the right hand side above converges to $0$ in $\Lbm p$ as $k \to \infty$.
\end{proof}

We also deduce density in $L^p\cap L^q$-spaces, thanks to the following lemma.

\begin{lemma}\label{lem:newdensity}
There exists a countable set ${\mathscr D}\subset\Algebra$ dense in $L^p\cap\Lbm 
q$, for any $1\leq p\leq q<\infty$, and
$w^*$-dense in $\Lbm\infty$.
\end{lemma}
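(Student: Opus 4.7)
The strategy is to first prove that $\Algebra$ itself is dense in $L^p(\mm)\cap L^q(\mm)$ for each $1\leq p\leq q<\infty$, and then extract a single countable $\mathscr{D}\subset\Algebra$ by separability and a diagonal argument over rational pairs $(p,q)$.

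For the first point, given $f\in L^p\cap L^q$, I would first reduce to $f$ bounded with $\mm$-finite support: by $\sigma$-finiteness and dominated convergence, such functions are $L^p\cap L^q$-norm dense (standard double truncation in value and in measure). For such an $f$ one has $f\in L^2$, so the semigroup regularization $\sfP_\tau f$ lies in $\V$ and satisfies $\sfP_\tau f\to f$ in every $L^r(\mm)$, $r\in[1,\infty)$, as $\tau\downarrow 0$, by $L^r$-strong continuity of $(\sfP_t)_{t\ge0}$. The density of $\Algebra$ in $\V$, refined to density in $\V_r$ for $r\in[1,2]$ by Lemma~\ref{lem:refining-algebra}, combined with a $C^1$-truncation $\Phi_M\in C^1(\R)$, $\Phi_M(0)=0$, $M\geq\|\sfP_\tau f\|_\infty$ (which keeps the approximants in $\Algebra$ by \eqref{eq:stability_composition} and bounds them uniformly in $L^\infty$), yields $\Algebra$-approximants of $\sfP_\tau f$ in the target norm: for $r\geq 2$ via the pointwise interpolation $|\cdot|^r\leq(2M)^{r-2}|\cdot|^2$ starting from the $L^2$-convergence, and for $r\in[1,2]$ directly from the $\V_r$-norm control.

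Having density of $\Algebra$ in $L^p\cap L^q$, the separability of each $L^r(\mm)$, $r\in[1,\infty)$ (a consequence of the Polish structure of $(X,\tau)$ and of $\sigma$-finiteness of $\mm$) implies separability of $L^p\cap L^q$, so I may pick a countable $L^p\cap L^q$-dense subset $\mathscr{D}_{p,q}\subset\Algebra$ for each rational pair $1\leq p\leq q<\infty$ and let $\mathscr{D}=\bigcup_{p\leq q\text{ rational}}\mathscr{D}_{p,q}$, still countable. For arbitrary $1\leq p_0\leq q_0<\infty$ and $f\in L^{p_0}\cap L^{q_0}$, reduce to $f$ bounded with $\mm$-finite support (hence in every $L^r$), choose rationals $p\leq p_0\leq q_0\leq q$ and approximate in $L^p\cap L^q$ via $\mathscr{D}_{p,q}$; the interpolation $\|h\|_r\leq\|h\|_p^\theta\|h\|_q^{1-\theta}$ for $r\in[p,q]$ transfers convergence to $L^{p_0}\cap L^{q_0}$.

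For the $w^*$-density in $L^\infty$, use $\sigma$-finiteness to pick $E_n\uparrow X$ with $\mm(E_n)<\infty$; then for any $g\in L^\infty$, the functions $g\mathbf{1}_{E_n}\to g$ in the $w^*$-topology by dominated convergence against $L^1$-test functions. Each $g\mathbf{1}_{E_n}\in L^1\cap L^\infty\subset L^1\cap L^2$ is $L^1$-approximated by $\mathscr{D}$; after closing $\mathscr{D}$ under a countable dense family of $C^1$-truncations (preserving countability and membership in $\Algebra$ thanks to \eqref{eq:stability_composition}), the approximants become uniformly bounded in $L^\infty$, so $L^1$-convergence upgrades to $w^*$-convergence. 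A diagonal extraction completes the proof. The main obstacle is the density in $L^r$ for $r<2$: $L^2$-density of $\Algebra$ combined with pointwise truncation does not control the mass of approximants outside the support of the target, which is precisely what the finer $\V_r$-density from Lemma~\ref{lem:refining-algebra} fixes.
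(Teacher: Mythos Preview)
Your argument has a genuine gap at the step for $r\in[1,2)$. You propose to approximate $\sfP_\tau f$ in $L^r$ ``directly from the $\V_r$-norm control'', invoking Lemma~\ref{lem:refining-algebra}. But that lemma gives density of $\Algebra$ in $\V_r$, and using it requires $\sfP_\tau f\in\V_r$, i.e.\ $\sqrt{\Gamma(\sfP_\tau f)}\in\Lbm r$. In the basic setup \eqref{E-assumptions} only the $L^2$-$\Gamma$ inequality \eqref{eq:regularization2} is available, so we know $\sqrt{\Gamma(\sfP_\tau f)}\in\Lbm 2$ but have no reason to expect it to lie in $\Lbm r$ for $r<2$ (the $L^p$-$\Gamma$ inequalities are introduced only later, under curvature assumptions). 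Your own final sentence correctly diagnoses the obstacle---$L^2$-density plus truncation does not control mass at infinity---but the proposed fix via $\V_r$-density does not apply, because the target $\sfP_\tau f$ is not known to be in $\V_r$.

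The paper bypasses this entirely with an elementary algebraic trick: for $f\in\Lbm 1$ nonnegative, approximate $\sqrt{f}\in\Lbm 2$ by $\varphi_n\in\Algebra$ in $\Lbm 2$; then $\varphi_n^2\in\Algebra$ (algebra property) and $\|\varphi_n^2-f\|_1\leq\|\varphi_n-\sqrt{f}\|_2\,\|\varphi_n+\sqrt{f}\|_2\to 0$. This uses nothing beyond $L^2$-density of $\Algebra$ and closure under products. For general $p\in[1,2)$, interpolation between $L^1$ and $L^2$ finishes the job. The $w^*$-density in $\Lbm\infty$ and the case $p\in[2,\infty)$ are handled simultaneously by a single duality argument (if $h\in\Lbm{p'}$ annihilates $\Algebra$, approximate ${\rm sign}(h)\chi_{\{|h|>\delta\}}$ by an equibounded $L^2$-convergent sequence in $\Algebra$), which is also shorter than your separate treatment. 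The countable subset then falls out of separability of $\V$ with no need for the rational-exponent diagonal scheme.
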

\begin{proof} Since $\V$ is dense in $\Lbm 2$ and we assume that $\Algebra$ is dense in $\V$, 
we obtain that $\Algebra$ is dense in $\Lbm 2$.

We consider first the case $p=q\in [2,\infty]$. Let $h\in\Lbm {p'}$.
Assuming $\int h\varphi d\mm=0$ for all $\varphi\in\Algebra$, to prove 
density in the $w^*$ topology
(and then in the strong topology if $p<\infty$) we have to prove that $h=0$.
Let $\delta>0$, set $f_\delta={\rm sign} h\chi_{\{|h|>\delta\}}$ (set 
equal to $0$ wherever $h=0$) and find an equibounded sequence
$(\varphi_n)\subset\Algebra$ convergent in $\Lbm 2$ to $f_\delta$. 
Since $(\varphi_n)$ are uniformly bounded in $\Lbm\infty$,
we obtain strong convergence to $f_\delta$ in $L^p$ for $p\in 
[2,\infty)$ and $w^*$-convergence for $p=\infty$. It follows that
$\int_{\{|h|>\delta\}}|h| d\mm=0$ and we can let $\delta\downarrow0$ to get 
$h=0$.

To cover the cases $p=q\in [1,2)$, by interpolation we need only to 
consider $p=1$. Given $f\in\Lbm 1$ nonnegative, we can
find $\varphi_n\in\Algebra$ convergent to $\sqrt{f}$ in $\Lbm 2$. It 
follows that the functions $\varphi_n^2$ belong to $\Algebra$
and converge to
$f$ in $\Lbm 1$. In order to remove the sign assumption on $f$ we split 
in positive and negative part.

Finally, in the case $p<q$ we can use the density of bounded functions to reduce ourselves to the
case of approximation of a bounded function $f\in L^p\cap\Lbm{q}$ by functions in $\Algebra$. Since 
$f$ can be approximated by equibounded functions $f_n\in\Algebra$ in $L^p$ norm, we need only
to use the fact that $f_n\to f$ also in $L^q$ norm. 

Finally, a simple inspection of the proof shows that we can achieve the same density result
with a countable subset of $\Algebra$, since $\V$ is separable.
\end{proof}

\begin{remark} \label{rem:Feller} {\rm Under the additional condition
\begin{equation}\label{eq:Feller}
\text{$\Algebra$ is invariant under the action of $\sfP_t$,}
\end{equation}
our basic assumption that $\Algebra$ is dense in $\V$
can be weakened to the assumption that $\Algebra$ is dense
in $\Lbm 2$; indeed, standard semigroup theory shows that an invariant subspace is dense in $\V$ if and only if it is dense in
$\Lbm 2$, see for instance \cite[Lemma~4.9]{AGS11b}, but also Lemma~\ref{lemma:feller-density} below. \fr
}\end{remark}

\section{Derivations}\label{sec:derivations}

Since $\Algebra$ might be regarded as an abstract space of test functions, we introduce derivations as linear operators acting on it, 
satisfying a Leibniz rule and a pointwise $\mm$-a.e.\ upper bound in terms of $\Gamma$ (even though for some results an integral
bound would be sufficient). 

\begin{definition}[Derivation]
	A derivation is a linear operator $\bb: \Algebra\to \Lbm 0$, $f\mapsto df(\bb)$, satisfying
		$$ d(fg)(\bb) =  f dg(\bb) + g df(\bb), \quad \text{$\mm$-a.e.\ in $X$, for every $f,\,g\in\Algebra$}$$
	and
			$$\abs{df(\bb)} \le g \sqGq f ,\quad \text{$\mm$-a.e.\ in $X$, for every $f\in\Algebra$}$$
	for some $g\in\Lbm 0$. The smallest function $g$ with this property will be denoted by $\abs{\bb}$.   
	For $p,\,q\in [1,\infty]$, we say that a derivation $\bb$ is in $L^p+L^q$ if $|\bb|\in \Lbm p + \Lbm q$.   
\end{definition}

Existence of the smallest function $g$ can easily be achieved using the fact that $\Lbm 0$
is a complete lattice, i.e.\ considering the supremum among all functions $f\in\Algebra$ of the expression
$\abs{df(\bb)} \Gq f^{-1/2}$ (set equal to $0$ on the set $\cur{\Gq f = 0}$).

N.\ Gigli pointed out to us that linearity and the $\mm$-a.e.\ upper bound are sufficient to entail ``locality'' and thus Leibniz and chain rules, with a proof contained in the work in preparation \cite{gigli-WIP}, akin to that of \cite[Thm.~3.5]{Ambrosio-Kirchheim}. Since our work focuses on the continuity equation and related Lagrangian flows, but not on the fine structure of the space of derivations, for the sake of simplicity, we have chosen to retain this slightly redundant definition and deduce only the validity of the chain rule.

\begin{proposition}[Chain rule for derivations]\label{prop-chain-rule-derivations}
	Let $\bb$ be a derivation and let $\Phi: \R^n \to \R$ be a smooth function, with $\Phi(0) = 0$.
	Then, for any $\ff = (f_1,\ldots,f_n)\in\Algebra^n$ there holds
	\begin{equation}
	\label{eq:chain-rule-general}
	d\Phi(\ff)(\bb) = \sum_{i=1}^n \partial_i \Phi(\ff) df_i(\bb), \quad \text{$\mm$-a.e.\ in $X$.}
	\end{equation}	
\end{proposition}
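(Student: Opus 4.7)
The plan is to establish \eqref{eq:chain-rule-general} first for polynomials $\Phi$ with $\Phi(0)=0$, and then to extend to arbitrary smooth $\Phi$ by $C^1$-approximation on a compact set, exploiting that each $f_i\in\Lbm\infty$ by \eqref{eq:basic_algebra}, so that $\ff$ takes values $\mm$-a.e.\ in some $K:=[-R,R]^n$.

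For polynomials, by linearity of $\bb$ it suffices to treat monomials $\Phi(x)=x^\alpha$ with $|\alpha|\ge1$. Iterating the Leibniz rule (and using \eqref{eq:stability_composition} to keep all intermediate products inside $\Algebra$) yields
$$d(\ff^\alpha)(\bb)=\sum_{i=1}^n\alpha_i f_1^{\alpha_1}\cdots f_i^{\alpha_i-1}\cdots f_n^{\alpha_n}\,df_i(\bb),$$
which is exactly $\sum_i\partial_i(x^\alpha)|_{\ff}\,df_i(\bb)$. Summing with complex coefficients gives \eqref{eq:chain-rule-general} for all polynomial $\Phi$ vanishing at $0$.

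For the general case, I would invoke Stone--Weierstrass to choose polynomials $P_k$ with $P_k(0)=0$ such that $P_k\to\Phi$ and $\nabla P_k\to\nabla\Phi$ uniformly on $K$. Then $\sum_i\partial_iP_k(\ff)\,df_i(\bb)\to\sum_i\partial_i\Phi(\ff)\,df_i(\bb)$ $\mm$-a.e., by uniform convergence of the partials on $K$ and $\mm$-a.e.\ finiteness of $df_i(\bb)$. On the other side, the pointwise derivation bound applied to $h_k:=\Phi(\ff)-P_k(\ff)\in\Algebra$ gives
$$\bigl|d(\Phi(\ff))(\bb)-d(P_k(\ff))(\bb)\bigr|\le|\bb|\,\sqrt{\Gamma(h_k)}\qquad\text{$\mm$-a.e.\ in $X$.}$$

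The hard step will be to show $\sqrt{\Gamma(h_k)}\to 0$ $\mm$-a.e. For this I would use the multivariate chain rule $\Gamma(\Psi(\ff))=\sum_{i,j}\partial_i\Psi(\ff)\partial_j\Psi(\ff)\Gamma(f_i,f_j)$ valid for any smooth $\Psi:\R^n\to\R$, which is not stated explicitly in the excerpt but is standard in Dirichlet form theory and can be derived from the one-variable identity \eqref{eq:Gchain} together with the Leibniz rule for $\Gamma$ by a separate polynomial approximation argument. Applied to $\Psi=\Phi-P_k$ and combined with \eqref{eq:boundg1}, it yields
$$\sqrt{\Gamma(h_k)}\le \bigl\|\nabla(\Phi-P_k)\bigr\|_{L^\infty(K)}\sum_{i=1}^n\sqrt{\Gamma(f_i)}\qquad\text{$\mm$-a.e.\ in $X$,}$$
whose right-hand side tends to $0$ $\mm$-a.e. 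Combining this with the polynomial case and passing to the limit $k\to\infty$ closes the argument.
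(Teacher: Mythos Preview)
Your proof is correct and follows the same outline as the paper: first establish the polynomial case via the Leibniz rule, then pass to the limit using $C^1$-approximation on the compact set where $\ff$ takes its values. The paper is terser on the limiting step; your identification of the multivariate chain rule for $\Gamma$ (equivalently, the $\V$-convergence $P_k(\ff)\to\Phi(\ff)$, which follows already from the polynomial chain rule for $\Gamma$ via a Cauchy argument) as the key technical input is exactly what is needed to justify that passage.
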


\begin{proof}
Since $\Phi(\ff)\in\Algebra$, $\bb(\Phi(\ff))$ is well defined. Arguing by induction and linearity, Leibniz rule entails that 
\eqref{eq:chain-rule-general} holds when $\Phi$ is a polynomial in $n$ variables, with $\Phi(0) =0$. Since $\ff$ is bounded, the thesis follows by approximating $\Phi$ with a sequence $(p_k)$ of polynomials, converging to $\Phi$, together with their derivatives, uniformly on compact sets. 
\end{proof}

\begin{remark}[Derivations $u\bb$]\label{rem:product-derivation}{\rm
Let $\bb$ be a derivation in $L^q$ for some $q\in [1,\infty]$ 
and let $u \in \Lbm r $, with $q^{-1}+r^{-1}\leq 1$. Then, 
$f\mapsto udf(\bb)$ defines a derivation $u\bb$ in $L^{s'}$, where $(s')^{-1}=q^{-1}+r^{-1}$, i.e.
$q^{-1} + r^{-1} +s^{-1} =1$. 
By linearity, similar remarks apply when $\bb$ is a derivation in $L^p+L^q$.}\fr
\end{remark}

\begin{example}[Gradient derivations]{\rm 
The main example is provided by derivations $\bb_g$ induced by $g\in\V$, of the form 
\begin{equation}\label{eq:defbbg}
f\in\Algebra\mapsto  df(\bb_g):=\Gamma(f,g)\in\Lbm1.
\end{equation} 
These derivations belong to $L^2$, because \eqref{eq:boundg1} yields $\abs{\bb_g}\leq\sqGq{g}$. Since $\Algebra$ is dense in $\V$, it is not difficult to show that equality holds.

By linearity, 
the $L^\infty$-module generated by this class of examples (i.e.\ finite sums 
$\sum_i\chi_i\bb_{g_i}$ with $\chi_i\in\Lbm\infty$ and $g_i\in\V$) still consists of derivations in $L^2$.}\fr
\end{example}

\begin{definition}[Divergence]\label{def:divergence}
	Let $p,\,q \in [1,\infty]$, assume that $\Algebra \subset \V_{p'} \cap \V_{q'}$ and let $\bb$ be a derivation in $L^p + L^q$. The distributional divergence $\div\bb$ is the linear operator on $\Algebra$ defined by
	\[ \Algebra \ni f \mapsto -\int df(\bb) d\mm.\]
	We say that $\div\bb \in \Lbm p+\Lbm q$ if the distribution $\div\bb$ is induced by $g\in \Lbm p+\Lbm q$, i.e.\ 
	$$\int df(\bb) d\mm = - \int f g d\mm, \quad \text{for all $f\in\Algebra$}.$$
	Analogously, we say that $\dbneg\in \Lbm p$ if there exists a nonnegative $g\in\Lbm p$ such that
	$$\int df(\bb) d\mm \le \int f g d\mm,  \quad \text{for all $f\in\Algebra$, $f\ge 0$}.$$
\end{definition}

Notice that we impose the additional condition $\Algebra \subset \V_{p'} \cap \V_{q'}$, to ensure integrability of $df(\bb)$.

As we did for $\abs{\bb}$, we define $\dbneg$ as the smallest nonnegative function $g$ in $\Lbm p$ for which the inequality above holds.
Existence of the minimal $g$ follows by a simple convexity argument, because the class of admissible $g$'s is convex and closed
in $\Lbm p$ (if $p=\infty$, one has to consider the $w^*$-topology).

\begin{example}[Divergence of gradients]\label{example:derivation-gradient}
{\rm The distributional divergence of the ``gradient'' derivation $\bb_g$ induced by $g\in\V$ as in \eqref{eq:defbbg} coincides with the Laplacian $\Delta g$,
still understood in distributional terms.}
\end{example}

Although the definitions given above are sufficient for many purposes, the following extensions will be technically useful in Section~\ref{sec:apriori} (and in Section~\ref{sec:commutators} for the case $q \in [1,\infty)$).

\begin{remark}[Derivations in $L^2+L^\infty$ extend to $\V$] \label{remark:bounded-derivations}
{\rm When a derivation $\bb$ belongs to $L^2+L^\infty$, we can use the density of $\Algebra$ in $\V$ to extend uniquely
$\bb$ to a derivation, still denoted by $\bb$, defined on $\V$, with values in the space $\Lbm 1+\Lbm 2$ and continuous. For all $u\in\V$, it still satisfies
$$
|du(\bb)|\leq \abs{\bb} \sqrt{\Gamma(u)},\qquad\text{$\mm$-a.e.\ in $X$.}
$$
A similar remark holds for derivations belonging to $L^q + L^\infty$, for some $q \in [1,\infty)$, if $\Algebra$ is assumed to be dense in $\V_r$, for some $r \in [1,\infty)$ with $q^{-1} +r^{-1} \le 1$. The extension is then a continuous linear operator $\bb$ mapping $\V_r$ into $\Lbm{s'} + \Lbm 2$, where $q^{-1} + r^{-1} + s^{-1} = 1$.}\fr
\end{remark}

By a similar density argument as above, any derivation $\bb$ could be extended uniquely to a derivation defined on $\V$, with values in the space $\Lbm 0$. However, such an extension is not useful when dealing with integral functionals defined initially on $\Algebra$, e.g.\ that of divergence or weak solutions to the continuity equation, because these are not continuous with respect to the topology of $\Lbm 0$. Therefore, we avoid in what follows to consider such an extension, except for the case in the remark above.

We conclude this section noticing that if $\bb$ is a derivation $L^2 +L^\infty$, with $\div \bb \in \Lbm 2 + \Lbm \infty$, 
the following integration by parts formula can be proved by approximation with functions in $\Algebra$:
\begin{equation}\label{eq:extended_integration}
\int du(\bb) f d\mm=-\int df(\bb) u d\mm+\int uf \div\bb d\mm\qquad\forall f\in\Algebra,\,\,\forall u\in\V.
\end{equation}

\section{Existence of solutions to the continuity equation}\label{sec:existence}

Let $I=(0,T)$ with $T\in (0,\infty)$.
In this section we prove existence of weak solutions to the continuity equation
\begin{equation}\label{eq:continuity-equation}
\frac{d}{dt}u_t+\div (u_t\bb_t)=w_t u_t\qquad\text{in $I\times X$,}
\end{equation}
under suitable growth assumptions on $\bb_t$ and its divergence.

\begin{remark}\label{rem:algebra-lipschitz}\rm{
Starting from this section, we always assume that $\Algebra$ is contained in $\V_\infty$, i.e.\ $\Gamma(f) \in \Lbm \infty$ for every $f \in \Algebra$. We are motivated by the examples and by the clarity that we gain in the exposition, although some variants of our results could be slightly reformulated and proved without this assumption.}\fr
\end{remark}

Before we address the definition of \eqref{eq:continuity-equation}, let us remark that a Borel family of derivations $\bb = (\bb_t)_{t \in I}$ is by 
definition a map $t \mapsto\bb_t$, taking values in the space of derivations on $X$, such that there exists a Borel function $g: I \times X \mapsto [0,\infty)$ satisfying
$$
|\bb_t|\leq g(t,\cdot), \quad \text{$\mm$-a.e.\ in $X$, for a.e.\ $t\in I$.}
$$
As in the autonomous case we denote by $|\bb|$ the smallest function $g$ (in the $\mathcal{L}^1\otimes \mm$-a.e.\ sense) with this property. We say that Borel family of derivations $(\bb_t)_{t\in I}$ belongs to
$L_t^r(L^p_x+L^q_x)$ if $|\bb|\in L^r_t(L^p_x+L^q_x)$.

\begin{definition}[Weak solutions to the continuity equation with initial condition $\bar u$]\label{def:continuity-equation}
Let $p,\,q\in [1,\infty]$, $\bar u\in L^p \cap \Lbm q$, let $(\bb_t)_{t\in I}$ be a Borel family of derivations in $L^1_t(L^{p'}_x + L^{q'}_x)$ and 
let $w\in L^1_t(L^{p'}_x + L^{q'}_x)$. 
We say that $u\in L^\infty_t (L^p_x \cap L^q_x)$ solves \eqref{eq:continuity-equation} with the initial condition $u_0=\bar u$
in the weak sense if
\begin{equation}\label{eq:fokker-planck}
 	 \int_0^T \int \sqa{-\psi'\vphi -\psi d\vphi(\bb_t)-w_t} u_t d\mm dt=  \psi(0)\int \vphi\bar u d\mm
	 \end{equation}
	 for all $\vphi\in\Algebra$ and all $\psi\in C^1([0,T])$ with $\psi(T)=0$.
\end{definition}

As usual with weak formulations of PDE's, the definition above has many advantages, the main one is to provide a meaning to 
\eqref{eq:continuity-equation} without any regularity assumption on $u$. 
Notice that, without the assumption $\Algebra \subset \V_\infty$, one could define  
e.g.\ weak solutions $u \in L^\infty_t (L^2_x)$ to the equation associated to $\bb$ in $L^1_t(L^\infty_x)$.

In order to prove the mass-conservation property of solutions to the continuity equation we assume the existence
of $(f_n)\subset\Algebra$ satisfying
\begin{equation}\label{eq:mass_preserving}
\text{$0\leq f_n\leq 1$, $f_n\uparrow 1$ $\mm$-a.e.\ in $X$, $\sqrt{\Gamma(f_n)}\rightharpoonup 0$ weakly-$*$ in $\Lbm\infty$.}
\end{equation}

The following theorem is our main result about existence: we address the case $w =0$ only, the general case 
following from a Duhamel's principle that we do not pursue here. 

\begin{theorem}[Existence of weak solutions in $L^\infty_t(L^1_x \cap L^2_x)$]\label{thm:existence-lp}
Assume that $\Algebra\subset\V_\infty$, let $\bar u\in L^1\cap \Lbm r$ for some $r\in [2,\infty]$ 
and let $\bb=(\bb_t)_{t\in I}$ be a Borel family of derivations with $|\bb| \in L^1_t(L^2_x+L^\infty_x)$, 
$\div \bb \in  L^1_t(L^2_x + L^\infty_x)$, and $\dbneg\in L^1_t(L^\infty_x)$.\\ 
Then, there exists a weakly continuous in $[0,T)$ (in duality with $\Algebra$) 
solution $u\in L^\infty_t(L^1_x \cap L^r_x)$ of \eqref{eq:continuity-equation} according to
Definition~\ref{def:continuity-equation} with $u_0=\bar u$ and $w_t=0$.
Furthermore, if $\bar u\geq 0$, we can build a solution $u$ in such a way that
$u_t\geq 0$ for all $t\in I$. Finally, if \eqref{eq:mass_preserving} holds, then
\begin{equation}\label{eq:mass_preserving1}
\int u_t d\mm=\int \bar u d\mm\qquad\forall t\in [0,T).
\end{equation}
\end{theorem}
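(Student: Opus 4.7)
The plan is to follow the classical vanishing-viscosity strategy, adapted to our Dirichlet-form framework. First, for each $\varepsilon>0$, I would construct a solution $u^\varepsilon\in L^2_t(\V)\cap C([0,T];\Lbm 2)$ of the regularized equation
\begin{equation*}
\frac{d}{dt}u^\varepsilon_t + \div(u^\varepsilon_t \bb_t) = \varepsilon\,\Delta u^\varepsilon_t,\qquad u^\varepsilon_0=\bar u,
\end{equation*}
in the weak sense against test functions in $\Algebra$. For this, view the equation as a non-autonomous linear evolution in the Hilbert triple $\V\subset\Lbm 2\subset\V^*$: the symmetric part $\varepsilon\cE(\cdot,\cdot)$ provides coercivity, and the first-order term $v\mapsto\int v\,du(\bb_t)d\mm+\int uv\,\div\bb_t\,d\mm$ (making sense of $du(\bb_t)$ via Remark~\ref{remark:bounded-derivations}) is a bounded perturbation integrable in $t$, so a standard Galerkin/Lions--Magenes argument produces a unique solution.

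Second, I would derive a priori estimates independent of $\varepsilon$. Testing formally against $|u^\varepsilon_t|^{p-2}u^\varepsilon_t$ (rigorously: using smooth approximations of $|z|^p$ together with the chain rule \eqref{eq:Gchain-laplace} and integration by parts \eqref{eq:extended_integration}) and exploiting the identity $\int d(|u|^p)(\bb_t)\,d\mm = -\int |u|^p\,\div\bb_t\,d\mm$ gives
\begin{equation*}
\frac{d}{dt}\int|u^\varepsilon_t|^p\,d\mm + p\varepsilon(p-1)\int|u^\varepsilon_t|^{p-2}\Gamma(u^\varepsilon_t)\,d\mm \le (p-1)\int|u^\varepsilon_t|^p\,(\dbneg_t)\,d\mm,
\end{equation*}
so Gronwall yields $\|u^\varepsilon_t\|_p\le\|\bar u\|_p\exp(\tfrac{p-1}{p}\int_0^t\|\dbneg_s\|_\infty ds)$ for all $p\in[1,r]$; the endpoint $p=\infty$ follows either by passing $p\to\infty$ or by a Stampacchia truncation. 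Nonnegativity when $\bar u\ge 0$ follows by the same argument applied to the negative part $(u^\varepsilon_t)^-$, which starts at $0$ and remains $0$.

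Third, by these uniform bounds, a subsequence $u^{\varepsilon_k}$ converges weakly-$*$ in $L^\infty_t(L^1_x\cap L^r_x)$ to some $u$. To pass to the limit in the weak formulation \eqref{eq:fokker-planck} tested against $\psi(t)\varphi(x)$ with $\varphi\in\Algebra$, the only delicate term is the viscosity contribution $\varepsilon\int_0^T\psi\int\varphi\,\Delta u^\varepsilon_t\,d\mm\,dt = \varepsilon\int_0^T\psi\int u^\varepsilon_t\,\Delta\varphi\,d\mm\,dt$; since $\Delta\varphi\in\Lbm 2$ (as $\varphi\in\Algebra\subset\V_\infty$ and, more importantly, $\varphi$ lies in the domain of $\Delta$ via the chain rule for $\Delta$ applied to smooth functions of elements of $\Algebra$) and $\|u^\varepsilon_t\|_2$ is uniformly bounded, this term vanishes as $\varepsilon\to 0$. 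The other terms are linear in $u^\varepsilon$ against $L^1_tL^\infty_x + L^\infty_tL^1_x$ test objects, so they pass. Weak continuity of $t\mapsto\int\varphi u_t d\mm$ on $[0,T)$ is immediate from the equation itself, since the right-hand side of \eqref{eq:fokker-planck} shows that this map has an absolutely continuous representative, and a standard redefinition on a null set fixes $u_0=\bar u$.

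Finally, under \eqref{eq:mass_preserving}, plugging $\varphi=f_n$ into the identity $\int f_n u_t\,d\mm-\int f_n\bar u\,d\mm=\int_0^t\!\int df_n(\bb_s)\,u_s\,d\mm\,ds$ and using $|df_n(\bb_s)|\le\sqrt{\Gamma(f_n)}\,|\bb_s|$ together with $|\bb_s|\,u_s\in L^1_t(L^1_x)$ (which holds because $|\bb|\in L^1_t(L^2_x+L^\infty_x)$ and $u\in L^\infty_t(L^1_x\cap L^2_x)$) lets us send $n\to\infty$ via $f_n\uparrow 1$ and $\sqrt{\Gamma(f_n)}\overset{*}{\rightharpoonup}0$ to obtain \eqref{eq:mass_preserving1}. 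The main technical obstacle is the first step: making the viscous approximation rigorous in the abstract setting, particularly the meaning of $\div(u^\varepsilon_t\bb_t)$ when $u^\varepsilon_t\in\V$ rather than $\Algebra$, which is precisely what Remark~\ref{remark:bounded-derivations} and \eqref{eq:extended_integration} are designed to handle.
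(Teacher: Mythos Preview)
Your overall strategy---vanishing viscosity, a priori $L^p$ estimates via entropy inequalities, and the mass-conservation argument via $f_n$---matches the paper's approach, and your treatment of \eqref{eq:mass_preserving1} is essentially identical to the paper's. However, there is a genuine gap in your third step, and a secondary technical point you have glossed over.

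\textbf{The main gap: passing to the limit in the viscous term.} You claim that $\Delta\varphi\in\Lbm 2$ for $\varphi\in\Algebra$, invoking the chain rule \eqref{eq:Gchain-laplace}. But that chain rule only applies once you already know $f\in D(\Delta)$; nothing in the standing assumptions on $\Algebra$ (namely \eqref{eq:basic_algebra}, \eqref{eq:stability_composition}, and $\Algebra\subset\V_\infty$) forces $\Algebra\subset D(\Delta)$. So you cannot move $\Delta$ onto $\varphi$. The paper handles this differently: writing the viscous term as $\int\Gamma(\varphi_t,\sigma u^\sigma_t)\,d\mm\,dt$, it observes from the Lions estimate \eqref{apriorisigma} that $\|\sigma u^\sigma\|_{L^2(I;\V)}\le \|\bar u\|_2$ is bounded uniformly in $\sigma$. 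Since the $L^\infty_t(L^r_x)$ bounds on $u^\sigma$ force $\sigma u^\sigma\to 0$ in $L^2_t(L^2_x)$, one gets $\sigma u^\sigma\rightharpoonup 0$ weakly in $L^2(I;\V)$, and the viscous term vanishes by pairing against $\Gamma(\varphi,\cdot)$. Note the subtlety: the $L^2(I;\V)$ norm of $u^\sigma$ itself blows up like $\sigma^{-1}$, so one really needs to track $\sigma u^\sigma$ as a single object.

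\textbf{Secondary gap: time integrability of $\bb$.} Your Lions--Magenes/Galerkin step implicitly treats $\bb$ as if it had $L^\infty_t$ bounds: the coercivity constant in \eqref{apriorih1} (or any analogue) depends on $\|\dbneg\|_{L^\infty_t(L^\infty_x)}$, which is not available under the hypothesis $\dbneg\in L^1_t(L^\infty_x)$. The paper resolves this by first mollifying $\bb$ in time to obtain $\bb^\delta$ with $L^\infty_t$ bounds, running the viscous approximation for $\bb^\delta$, passing $\sigma\downarrow 0$, and only then letting $\delta\downarrow 0$. You need this extra layer of approximation.
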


To prove existence of a solution $u$ to \eqref{eq:continuity-equation} with $w_t=0$, we rely on a suitable approximation of the equation. 
Following a classical strategy, 
we approximate the original equation by adding a diffusion term, 
i.e.\ we solve, still in the weak sense of duality with
test functions $\psi(t)\vphi(x)$,
	\begin{equation}
	\label{eq:continuity-equation-coercive} \partial_t u_t +\div\bra{u_t \bb_t} = \sigma \Delta u_t,
	\end{equation}
where $\sigma >0$. By Hilbert space techniques, we show existence of a solution with some extra regularity,  namely $u \in L^2(I; \V)$.
We use this extra regularity to derive a priori estimates and then we take weak limits as $\sigma\downarrow0$.

Let us remark that such a technique forces the introduction of stronger assumptions than those known to prove existence in particular 
classes of spaces (e.g.\ Euclidean or Gaussian), where ad hoc methods are available: here, we trade some strength in the result 
in favour of generality.

\subsection{Auxiliary Hilbert spaces}

In all what follows, we consider the Gelfand triple
\[	\V \subset\Lbm 2 = (\Lbm 2)^* \subset\V',\]
i.e.\ we regard $\V$ as a dense subspace in $\V'$ (proper if $\V \neq \Lbm 2$) by means of 
$$\phi \mapsto (\phi^*: f \mapsto \int f \phi d\mm).$$ 
Notice that this is different from the identification $\V \sim \V'$ provided by the Riesz-Fischer theorem applied to the Hilbert space $\V$ (which has been applied to $\Lbm 2$ instead).

Given a vector space $F$, we introduce a space of $F$-valued test functions on $I$, namely
	\[ \Phi_F  := \text{span}\cur{\psi \cdot \phi\, : \, \psi \in C^1([0,T]),\,\, \psi(T) = 0,\,\, \phi \in F }.\]
We notice that, for every $\vphi \in \Phi_F$, the function $t \mapsto \vphi_t$ is Lipschitz and continuously differentiable from $I$ to $F$, and there exists $\vphi_0 = \lim_{t \down 0} \vphi_t$ in $F$ (while $\lim_{t \up T} \vphi_t = 0$ in $F$ by construction). 

Assuming that $F$ is a separable Hilbert space, starting from $\Phi_F$ one can consider completions with respect to different norms. The classical space
\[ L^2(I; F), \qquad \ang{ \vphi, \tilde{\vphi} }_{L^2(F)} = \int_I \ang{ \vphi_t, \tilde{\vphi}_t}_F dt,\]
is indeed the closure of $\Phi_F$ with respect to the norm induced by the scalar product above. Similarly, the space $H^1(I; F)$ is obtained 
by completing $\Phi_F$ with respect to the norm
\[  \ang{ \vphi, \tilde{\vphi}}_{H^1(F)} = \int_I \ang{ \vphi_t, \tilde{\vphi}_t}_F +\ang{ \frac{d}{dt}\vphi_t, \frac{d}{dt} \tilde{\vphi}_t}_F dt.\]
Arguing by mollification as in the case $F = \R^n$, it is not difficult to prove that $H^1(I; F)= W^{1,2}(I; F)$, where the latter space is defined as the subspace of functions $\vphi \in L^2(I;F)$ such that there exists $g \in L^2(I;F)$, which represent the distributional derivative of $\vphi$, i.e.
\[ \int_I \ang{ \vphi_t, \frac{d}{dt} \tilde{\vphi}_t }_F dt = - \int_0^T \ang{g_t, \tilde{\vphi}_t}_F dt, \quad \text{for every $\tilde{\vphi}\in \Phi_F$ with $\tilde{\vphi}_0 = 0$}.\]
\subsection{Existence under additional ellipticity}

We address now the existence of some $u \in L^2(I;\V)$ that solves the following weak formulation of \eqref{eq:continuity-equation-coercive} with the
initial condition $u_0=\bar u$: 
 	\begin{equation}\label{eq:fokker-planck-variational}
 	 \int_0^T \int \sqa{-\partial_t \vphi_t -d\vphi_t(\bb_t)} u_t + \sigma \Gamma(\vphi_t, u_t) d\mm dt=  \int\vphi_0\bar u d\mm
	 \qquad\forall \vphi \in\Phi_\Algebra.
 	 \end{equation}
We still assume that $\Algebra\subset\V_\infty$, and that $\sigma\in (0,1/2]$, $|\bb|\in L^\infty_t(L^2_x+L^\infty_x)$, 
$\dbneg\in L^\infty_t(L^\infty_x)$, $\bar u\in\Lbm 2$. 
Notice that the assumptions on $|\bb|$ and $\div \bb^-$ are stronger than that in Theorem~\ref{thm:existence-lp}, but only with respect to integrability in time.

We obtain, together with existence, the a priori estimate:
\begin{equation}\label{apriorisigma}
\nor{e^{-\lambda t} u}_{L^2(I;\V)}\leq\frac{\nor{\bar u}_2}{\sigma}
\quad\text{with}\quad\lambda:=\frac 12\|\dbneg\|_{\infty}+\sigma.
\end{equation}

To this aim, we change variables setting $h_t=e^{-\lambda t}u_t$ and we pass to this equivalent weak formulation
\begin{equation}\label{eq:fokker-planck-variationalstar}
 	 \int_0^T \int \sqa{-\partial_t \vphi_t +\lambda -d\vphi_t(\bb_t)} h_t + \sigma \Gamma(\vphi_t, h_t) d\mm dt=  \int \vphi_0\bar u d\mm
	 \qquad\forall \vphi \in\Phi_\Algebra.
 	 \end{equation}

From now on we shall use the notation $\mmt$ for the product measure $\Leb{1}\otimes\mm$ in $I\times X$.
Existence of $h$ is a consequence of J.-L.\ Lions' extension of Lax-Milgram Theorem, whose statement is recalled below 
(see \cite[Thm.~III.2.1, Corollary III.2.3]{Showalter97}) applied with $H=L^2(I;\V)$,
$V=\Phi_\Algebra$ endowed with the norm
\begin{equation}\label{eq:norm-phi}
	 \nor{\vphi}_V^2 = \nor{\varphi}_{L^2(I;\V)}^2 + \nor{\varphi_0}^2_2,
\end{equation}
$$
B(\vphi,h) = \int \sqa{-\partial_t \vphi +\lambda \vphi -d\vphi(\bb)} h + \sigma \Gamma(\vphi,h) d\mmt,\qquad \ell(\varphi) = \int \vphi_0\bar u d\mm.
$$

\begin{theorem}[Lions]\label{thm:lions}
Let $V$, $H$ be respectively a normed and a Hilbert space, with $V$ continuously embedded in $H$, with $\nor{v}_H\leq\nor{v}_V$
for all $v\in V$, and let
$B:V\times H\to\R$ be bilinear, with $B(v,\cdot)$ continuous for all $v\in V$. If $B$ is coercive, namely there exists $c>0$ satisfying
$B(v,v)\geq c\nor{v}^2_V$ for all $v\in V$, then for all $\ell\in V'$ there exists $h\in H$ such that $B(\cdot,h)=\ell$ and
\begin{equation}\label{apriorih}
\nor{h}_H\leq\frac{\nor{\ell}_{V'}}{c}.
\end{equation}
\end{theorem}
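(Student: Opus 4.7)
The plan is to reduce the statement to the classical Riesz representation theorem on the Hilbert space $H$, via an auxiliary linear operator $T : V \to H$. Since $B(v,\cdot)$ is continuous and linear on $H$ for each fixed $v \in V$, the Riesz representation theorem produces a unique $T(v) \in H$ with $B(v,k) = \langle T(v), k \rangle_H$ for every $k \in H$, and linearity of $T$ follows from bilinearity of $B$. The goal is then to find $h \in H$ such that $\langle T(v), h \rangle_H = \ell(v)$ for every $v \in V$.

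The first key step is to exploit coercivity to bound $\|v\|_V$ in terms of $\|T(v)\|_H$, up to the constant $1/c$. Using the continuous embedding $V \hookrightarrow H$ with $\|v\|_H \le \|v\|_V$ and the Cauchy--Schwarz inequality in $H$, I would estimate
\[ c\,\|v\|_V^2 \le B(v,v) = \langle T(v), v \rangle_H \le \|T(v)\|_H \,\|v\|_H \le \|T(v)\|_H \,\|v\|_V, \]
which yields $\|T(v)\|_H \ge c \,\|v\|_V$ for every $v \in V$. In particular, $T$ is injective and the range $W := T(V) \subset H$ is a linear subspace on which the functional $L : T(v) \mapsto \ell(v)$ is well defined. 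The previous bound also gives
\[ |L(T(v))| = |\ell(v)| \le \|\ell\|_{V'}\,\|v\|_V \le \frac{\|\ell\|_{V'}}{c}\,\|T(v)\|_H, \]
so $L$ is continuous on $W$ with norm at most $\|\ell\|_{V'}/c$.

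The second key step is to extend $L$ to a bounded linear functional on all of $H$ with the same norm bound; this is exactly the content of the Hahn--Banach theorem. Then the Riesz representation theorem applied on $H$ produces $h \in H$ such that $L(k) = \langle k, h \rangle_H$ for every $k \in H$ and $\|h\|_H \le \|\ell\|_{V'}/c$. By construction one recovers $B(v,h) = \langle T(v), h \rangle_H = L(T(v)) = \ell(v)$ for every $v \in V$, which is the desired identity $B(\cdot,h) = \ell$ together with the quantitative estimate \eqref{apriorih}.

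The main obstacle, and the reason this is not a direct application of Lax--Milgram, is that the bilinear form lives on the asymmetric space $V \times H$ rather than $H \times H$, so one cannot symmetrize or run a contraction argument on $H$ alone; the use of Hahn--Banach to cross from the (possibly proper) subspace $W \subset H$ back to $H$ is essential. Note also that uniqueness of $h$ is not claimed and should not be expected unless $T(V)$ is dense in $H$, which coercivity by itself does not guarantee; this is consistent with the fact that, in the subsequent application to \eqref{eq:fokker-planck-variationalstar}, $h$ plays the role of a weak solution and is constructed, not canonically determined by the variational formulation on test functions alone.
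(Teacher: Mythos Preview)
Your proof is correct and is precisely the standard argument for Lions' extension of Lax--Milgram. Note, however, that the paper does not actually prove this theorem: it is stated as a recalled result with a reference to \cite[Thm.~III.2.1, Corollary~III.2.3]{Showalter97}, and the surrounding text is devoted to verifying the hypotheses in the specific setting of \eqref{eq:fokker-planck-variationalstar}. Your argument via Riesz representation on $H$, the coercivity bound $\|T(v)\|_H \ge c\|v\|_V$, and Hahn--Banach extension from $T(V)$ to $H$ is exactly the classical proof one finds in the cited reference, and your closing remark on non-uniqueness of $h$ is also apt.
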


Let us start by proving continuity, thus let $\varphi \in V$. The linear functional $h \mapsto B(\varphi, h)$ is $L^2(I;\V)$-continuous for all
$\phi\in V$, since we can estimate $\abs{B(\varphi,h)}$ from above with
\[\nor{h}_{L^2(I;\V)} \sqa{\nor{\partial_t\varphi}_{L^1_t(L^2_x)} + \lambda \|\varphi\|_{L^1_t(L^2_x)} +
\nor{\bb}_{L^2_t(L^2_x+L^\infty_x)}\|\sqGq{\varphi}\|_{L^\infty_t(L^2_x\cap L_x^\infty)} 
+ \sigma \|\sqGq{\varphi} \|_{L^1_t(L^2_x)}} .\]
The functional $\ell$ satisfies $\|\ell\|_{V'}\leq \nor{\bar u}_2$, immediately from the definition of $\nor{\cdot}_V$ in \eqref{eq:norm-phi}.

To conclude the verification of the assumptions of Theorem~\ref{thm:lions}, we show coercivity (here the change of variables we did and the choice of $\lambda$ play a role):
	\begin{equation}
	\label{eq:lambda-coercivity}
	\begin{split}
		\int \sqa{\lambda\varphi -d\varphi(\bb)} \varphi d\mmt
		& = \lambda \nor{\varphi}^2_{L^1_t(L^2_x)} -\frac{1}{2}\int d\varphi^2(\bb)d\mmt\\
		& \ge \lambda\nor{\varphi}^2_{L^1_t(L^2_x)} -\frac{1}{2}\int\varphi^2\dbneg d\mmt \\
		& \ge (\lambda-\frac 12 \|\dbneg\|_{\infty})\nor{\varphi}_{L^1_t(L^2_x)}^2=\sigma \nor{\varphi}_{L^1_t(L^2_x)}^2.
	\end{split}
	\end{equation}
Since $\varphi \in V=\Phi_\Algebra$, it holds $\partial_t \varphi_t^2 = 2 \varphi_t \partial_t \varphi_t$ 
and $\int  - 2\varphi_t \partial \varphi d\mmt =  \int \varphi_0^2 d\mm$. Hence, inequality \eqref{eq:lambda-coercivity} entails that
\[ \int \sqa{ -\partial_t\varphi+\lambda\varphi - d\varphi(\bb)}\varphi + \sigma \Gq{ \varphi} d\mmt \ge
\frac 12 \int \varphi_0^2 d\mm+
 \sigma\nor{ \varphi}_{L^1_t(L^2_x)}^2 + \sigma\|\sqGq{\varphi}\|^2_{L^1_t(L^2_x)},\]
Since $\sigma\leq 1/2$, it follows from these two inequalities that 
\begin{equation}\label{apriorih1}
B(\varphi,\varphi)\geq\sigma\|\varphi\|_V^2.
\end{equation}

Finally, \eqref{apriorisigma} follows at once from \eqref{apriorih} and \eqref{apriorih1}, taking into account that $\nor{\ell}_{V'}\leq\nor{\bar u}_2$.

\subsection{A priori estimates}\label{sec:apriori}

In this section we still consider weak solutions to
\begin{equation}\label{eq:fokker-planck-variational2}
 	 \int_0^T \int -\sqa{\partial_t \vphi_t + d\vphi_t(\bb_t)} u_t + \sigma \Gamma(\vphi_t, u_t) d\mm dt=  \int \vphi_0 \bar u d\mm
	 \qquad\forall \vphi \in\Phi_\Algebra
 	 \end{equation}
obtained in the previous section. In order to state pointwise in time $L^r$ estimates in space, we use the following remark.

\begin{remark}[Equivalent formulation]\label{rem:form}
{\rm
Assuming $\Algebra\subset\V_\infty$, $u\in L^2(I;\V)$ and $|\bb|\in L^1_t(L^2_x+L^\infty_x)$, an equivalent formulation of \eqref{eq:fokker-planck-variational2}, in terms of absolute continuity and pointwise derivatives w.r.t.\ time, is the following: we are requiring that, for every $f \in\Algebra$, $t\mapsto\int fu_t d\mm$
is absolutely continuous in $I$ and that its a.e.\ derivative in $I$ is $\int (df(\bb_t)u_t + \sigma\Gamma(f,u_t)) d\mm$. 
In addition, the Cauchy initial condition is encoded by
\begin{equation}\label{eq:initial_condition}
\lim_{t \down 0} \int f u_t d\mm = \int f \bar u d\mm, \quad \text{for every $f\in\Algebra$}
\end{equation}
(notice also that $\bar u$ is uniquely determined by \eqref{eq:initial_condition}, thanks to the density of $\Algebra$ in $\Lbm 2$).

Indeed, it is clear that the definition above implies
the formula for the distributional derivative, because for absolutely continuous functions the two concepts coincide; the converse can be 
obtained using the set ${\mathscr D}$ of Lemma~\ref{lem:newdensity}
to redefine $u_t$ in a negligible set of times in order to get a weakly continuous representative in the duality with $\Algebra$, see
\cite[Lemma~8.1.2]{Ambrosio-Gigli-Savare05} for details.\fr
}\end{remark}

We prove, by a suitable approximation, the following result:

\begin{theorem}\label{thm:apriori} 
Assume that $\Algebra\subset\V_\infty$, $|\bb|\in L^\infty_t(L^2_x+L^\infty_x)$, $\div\bb\in L^\infty_t(L^2_x+L^\infty_x)$, 
$\dbneg\in L^\infty_t(L^\infty_x)$, and that the initial condition $\bar u$ belongs
 to $L^p\cap \Lbm q$, with $1\le p \le 2 \le q \le \infty$. Then there exists a weakly continuous (in duality with $\Algebra$) 
 solution  
 $$u\in L^\infty_t(L^p_x\cap L^q_x)\cap L^2(I;\V)$$ 
to \eqref{eq:fokker-planck-variational2} satisfying:
\begin{equation}
 	\label{eq:apriori-estimates-lp}
 	\sup_{(0,T)}\nor{u_t^\pm}_r\le\nor{\bar u^\pm}_{ r} \exp\bra{(1-\frac 1 r)\nor{\dbneg}_{L^1_t(L^\infty_x)} },
\end{equation}
for every $r\in [p,q]$. In particular, if $\bar u \geq 0$, then $u_t\geq 0$ for all $t\in (0,T)$.
\end{theorem}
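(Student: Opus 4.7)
\smallskip\noindent\textbf{Plan of proof.}
Since $1\le p\le 2\le q\le\infty$ and $\bar u\in L^p\cap\Lbm q$, interpolation yields $\bar u\in\Lbm 2$. The hypotheses on $\bb$ and $\div\bb$ are stronger in the time variable than those of the preceding subsection, so the Lions-type argument produces a weak solution $u\in L^2(I;\V)$ of \eqref{eq:fokker-planck-variational2} with initial datum $\bar u$. Employing Remark~\ref{rem:form} together with the countable dense family $\mathscr{D}\subset\Algebra$ of Lemma~\ref{lem:newdensity}, I select a representative of $u$ which is weakly continuous in duality with $\Algebra$; once the uniform $L^r$ bound is established, continuity in duality with $\Lbm{r'}$ for the range of interest will follow automatically.

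\smallskip\noindent
The core of the argument is the renormalised identity
\begin{equation*}
\frac{d}{dt}\int \beta(u_t)\,d\mm \;=\; -\sigma\int\beta''(u_t)\,\Gq{u_t}\,d\mm \;-\; \int G(u_t)\,\div\bb_t\,d\mm,
\end{equation*}
valid for every $\beta\in C^2(\R)$ with $\beta(0)=0$ and $\beta',\beta''$ bounded, where $G$ is the primitive of $s\mapsto s\beta''(s)$ vanishing at zero. To reach it I would first extend the weak formulation \eqref{eq:fokker-planck-variational2} to time-dependent test functions $\varphi_t\in\V$, which is possible thanks to the density of $\Algebra$ in $\V$, to the quantitative bounds on $\bb_t,\,\div\bb_t$, and to the extension of derivations to $\V$ furnished by Remark~\ref{remark:bounded-derivations}. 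This identifies $\partial_t u$ as an element of the dual of a suitable $\V$-valued test space, and a Lions--Magenes type argument (mollification in time) gives the time chain rule $\frac{d}{dt}\int\beta(u_t)\,d\mm=\int\beta'(u_t)\,\partial_t u_t\,d\mm$. The spatial integration by parts is then performed by means of the chain rules \eqref{eq:Gchain}, \eqref{eq:Gchain-laplace}, the Leibniz property of the derivation $\bb_t$, and the divergence identity \eqref{eq:extended_integration}, producing the claimed form of the right-hand side.

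\smallskip\noindent
With this identity at hand, I specialise $\beta$ to a convex $C^2$ approximation $\beta_n$ of $s\mapsto (s^+)^r/r$ with $\beta_n(0)=0$ and $\beta_n',\,\beta_n''$ bounded. Convexity makes the dissipation $-\sigma\int\beta_n''(u_t)\Gq{u_t}\,d\mm$ nonpositive and may be discarded. Since the associated $G_n$ converges as $n\to\infty$ to $\frac{r-1}{r}(s^+)^r$, dominated convergence yields
\begin{equation*}
\frac{d}{dt}\int (u_t^+)^r\,d\mm \;\le\; (r-1)\,\nor{\dbneg(t,\cdot)}_{\infty}\int (u_t^+)^r\,d\mm,
\end{equation*}
and an application of Gr\"onwall's inequality followed by taking the $r$-th root gives \eqref{eq:apriori-estimates-lp} for $u^+$ and any finite $r\in[p,q]$; the case $r=\infty$ is recovered by letting $r\to\infty$. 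A symmetric argument applied to $s\mapsto (s^-)^r/r$ controls $u^-$. In particular, when $\bar u\ge 0$ one has $\nor{\bar u^-}_r=0$, which forces $u_t^-=0$ for every $t\in(0,T)$. Weak continuity combined with the uniform $L^r$ bound promotes the estimate from \emph{a.e.}\ $t$ to every $t\in(0,T)$.

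\smallskip\noindent\textbf{Main obstacle.}
The delicate point is the renormalisation step: the would-be test function $\beta'(u_t)$ depends on the unknown and is only $\V$-regular in space, whereas the weak formulation is originally stated against $\Phi_\Algebra$. Upgrading it to time-dependent $\V$-valued test functions under only the prescribed integrability on $\bb_t$ and $\div\bb_t$, and ensuring that the successive mollification in time and approximation of $(s^\pm)^r/r$ by $\beta_n$ preserve both the nonpositivity of the dissipative term and the structure needed to close Gr\"onwall, is the technical core of the proof; it is here that one cannot rely on the ad hoc smoothings available in Euclidean or Gaussian settings and must exploit only the chain rules intrinsic to $\cE$ and $\Gamma$.
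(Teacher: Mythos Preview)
Your overall architecture---renormalisation with a convex entropy $\beta$, nonpositivity of the dissipative term $-\sigma\int\beta''(u_t)\Gq{u_t}\,d\mm$, the identity $G=\mathcal L_\beta$, then Gronwall---is exactly the scheme the paper follows. The difference lies in the regularisation you choose to justify the chain rule in time.

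You propose a Lions--Magenes argument based on \emph{time} mollification, treating $\partial_t u$ as an element of a dual space and pairing it with $\beta'(u_t)\in\V$. The paper instead regularises \emph{in space} via the semigroup: it sets $u^s_t:=\sfP_s u_t$ and shows that $t\mapsto u^s_t$ solves $\partial_t u^s_t+\div(\bb_t u^s_t)=\sigma\Delta u^s_t+\mathscr C^s_t$ with commutator $\mathscr C^s_t=\div(\bb_t u^s_t)-\sfP_s(\div(\bb_t u_t))$. The gain is that now $\partial_t u^s_t\in L^2_t(L^1_x+L^2_x)$ is a genuine function, so Remark~\ref{rem:weak-derivative-absolute-continuity} gives strong differentiability and the computation of $\frac{d}{dt}\int\beta(u^s_t)\,d\mm$ becomes elementary; one then checks $\mathscr C^s_t\to 0$ in $L^2_t(L^1_x+L^2_x)$ as $s\downarrow 0$ and passes to the limit. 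This semigroup smoothing is the device that replaces convolution-in-space from the Euclidean theory.

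Your route is not obviously wrong, but it runs into a concrete obstruction: with $|\bb_t|$ and $\div\bb_t$ only in $L^2_x+L^\infty_x$, the functional $\varphi\mapsto\int d\varphi(\bb_t)u_t\,d\mm$ does \emph{not} extend to a bounded functional on $\V$---the $L^2$ components force test functions to lie in $L^\infty$. Thus $\partial_t u\notin L^2(I;\V')$ in general, and the standard Lions--Magenes triple $\V\subset L^2\subset\V'$ does not apply. One would need a nonstandard version with $\V\cap L^\infty$ (which is not Hilbert) as the test space; this may be salvageable since $\beta'(u_t)\in\V\cap L^\infty$ whenever $\beta'$ is bounded with $\beta'(0)=0$, but you would have to build that framework from scratch. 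The paper's choice of $\sfP_s$ bypasses the issue entirely.

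Two smaller points. First, your order of limits is reversed: you should apply Gronwall at the level of $\beta_n$ (using $\mathcal L_{\beta_n}\le (r-1)\beta_n$) and then let $n\to\infty$ by Fatou, since at that stage $\int(u_t^+)^r\,d\mm$ is not yet known to be finite and dominated convergence is unjustified. Second, the paper needs an additional step you omit: the Gronwall argument uses the initial value $\int\beta(\bar u)\,d\mm$, which requires strong continuity $u_t\to\bar u$ in $\Lbm2$ at $t=0$; the Lions solution need not satisfy this, so the paper first builds solutions with $\bb_t$ replaced by $0$ on $(0,\varepsilon)$ (where the solution is simply $\sfP_{\sigma t}\bar u$, hence strongly continuous), and then takes a weak limit as $\varepsilon\downarrow 0$.
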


At this stage, it is technically useful to introduce another formulation of the continuity equation, 
suitable for $\V$-valued solutions $u$, with the derivation acting on $u$.

\begin{remark}[Transport weak formulation]\label{rem:transport_weak}{\rm
Using \eqref{eq:extended_integration} we obtain an equivalent weak formulation of
\eqref{eq:fokker-planck-variational2}, namely
\begin{equation}\label{eq:fokker-planck-variational222}
 	 \int_0^T \int -u_t\partial_t\vphi_t +du_t(\bb_t)\vphi_t +u_t\vphi_t\div\bb_t
	 + \sigma \Gamma(\vphi_t, u_t) d\mm dt=  \int \vphi_0 \bar u d\mm
	 \qquad\forall \vphi \in\Phi_\Algebra.
 	 \end{equation}
\fr}\end{remark}

\begin{remark}[Basic formal identity]
{\rm Before we address the proof of the a priori estimates, let us remark that these, and uniqueness as well, strongly rely on the formal identity 
\begin{equation}\label{eq:formal-renormalization}
 \frac{d}{dt} \int \beta(u_t) d\mm = -\int \sqa{ \beta'(u_t) u_t - \beta(u_t) }\div\bb_t d\mm,
\end{equation}
which comes from chain rule in \eqref{eq:fokker-planck} and the formal identity $\int \div (\beta(u_t)\bb_t)=0$.
To establish existence, however, this computation is made rigorous by approximating the PDE (by vanishing viscosity, in Theorem \ref{thm:apriori},
or by other approximations), while to obtain uniqueness in Section~\ref{sec:proof-uniqueness} we approximate $u$. In both cases technical assumptions on $\bb$ will be needed.\fr
}\end{remark}

A natural choice in \eqref{eq:formal-renormalization} is a convex ``entropy'' function $\beta:\R\to\R$ with $\beta(0)=0$. In order to give a meaning
to the identity \eqref{eq:formal-renormalization} also when $\beta$ is not $C^1$ ($z\mapsto z^+$ will be a typical choice of $\beta$) we define
\begin{equation}\label{def:Legendre}
{\cal L}_\beta(z):=
\begin{cases}
z\beta_+'(z)-\beta(z) &\text{if $z\geq 0$;}\\
z\beta_-'(z)-\beta(z) &\text{if $z\leq 0$,}
\end{cases}
\end{equation}
where we write $\beta_\pm'(z) := \lim_{y \to z^\pm} \beta'(y)$.
Notice that the convexity of $\beta$ and the condition 
$\beta(0)=0$ give that ${\cal L}_\beta$ is nonnegative;
for instance, if $z\geq 0$, there holds
$$
\beta(0)=0\geq\beta(z)-z\beta_-'(z)\geq \beta(z)-z\beta_+'(z).
$$
The argument for $z\leq 0$ follows from ${\cal L}_{\tilde{\beta}} (-z) ={\cal L}_\beta (z)$, where $\tilde{\beta}(z)=\beta(-z)$. 

In order to approximate $\beta$ with functions with linear growth in $\R$, we will consider the approximations
\begin{equation}\label{def:betan}
\beta_n(z):=
\begin{cases} 
\beta(-n)+\beta'_-(-n)(z+n) &\text{if $z<-n$;} \\
\beta(z) &\text{if $-n\leq z\leq n$;}\\
\beta(n)+\beta'_+(n)(z-n) &\text{if $z>n$,}
\end{cases}
\end{equation}
that satisfy ${\cal L}_{\beta_n}(z) = {\cal L}_{\beta}(-n\lor z \land n)$, so that ${\cal L}_{\beta_n}\uparrow {\cal L}_\beta$
as $n\to\infty$. On the other hand, in order to pass from smooth to nonsmooth $\beta$'s, we
will also need the following property, whose proof is elementary and motivates our
precise definition of ${\cal L}_\beta$ in \eqref{def:Legendre}:
\begin{equation}\label{eq:stabLeb}
\limsup_{i\to\infty}{\cal L}_{\beta_i}\leq
{\cal L}_\beta\quad\text{whenever $\beta_i$ are convex, $\beta_i\to\beta$ uniformly on compact sets.}
\end{equation}

{\bf Proof of Theorem~\ref{thm:apriori}.} By Remark~\ref{rem:form} we can assume with no loss of generality that
$t\mapsto u_t$ is weakly continuous in $[0,T)$, in the duality with $\Algebra$.

We assume first that a weak solution 
$u$ satisfies the strong continuity property
\begin{equation}\label{eq:strong_continuity_time0}
\lim_{t\down 0}u_t=\bar u\quad\text{in $\Lbm 2$.}
\end{equation}
We shall remove this assumption at the end of the proof.

We claim that for any convex function $\beta:\R\to [0,\infty)$
satisfying $\beta(0)=0$ and $\beta'(z)/z$ bounded on $\R$, the inequality
\begin{equation}\label{eq:basicapriori} 
\frac{d}{dt} \int \beta(u_t) d\mm \le \int  {\cal L}_\beta(u_t)\div\bb_t^- d\mm\end{equation}
holds in the sense of distributions in $(0,T)$. The assumption on the behaviour of $\beta$ near to the origin is needed to ensure
that both $\beta(u)$ and ${\cal L}_\beta(u)$ belong to $L^2_t(L^1_x)$, since at present we only know that $u\in L^2_t(L^2_x)$.
By approximation, taking \eqref{def:betan} and \eqref{eq:stabLeb} into account, we can assume with no
loss of generality that $\beta\in C^1$ with bounded derivative.

In the proof of \eqref{eq:basicapriori}, motivated by the necessity to get strong differentiability w.r.t.\ time,
we shall use the regularization $u^s_t:=\sfP_su_t$ and the following elementary remark 
(\cite[Prop.~III.1.1]{Showalter97}).

\begin{remark}\label{rem:weak-derivative-absolute-continuity} {\rm Let $X$ be a Banach space, 
let $f,\,g \in L^1((0,T); X)$ satisfy $\partial_t f = g$ in the weak sense, namely
$$
-\int_0^T\psi'(t)\int \phi(f) d\mm dt=\int_0^T\psi(t)\int \phi(g) d\mm dt,
$$
for every $\psi\in C^1_c(0,T)$, $\phi\in\mathcal D\subset X^*$, dense w.r.t.\ the $\sigma(X^*, X)$-topology. 
Then, $f$ admits a unique absolutely continuous representative from $I$ to $X$ and this representative is 
strongly differentiable a.e.\ in $I$, with derivative equal to $g$.}
\fr
\end{remark}

Notice that $X$ may not have the Radon-Nikodym property so that
 it might be the case that not all absolutely continuous maps with values in $X$ are strongly 
 differentiable a.e.\ in their domain. Indeed, we are going to apply it with $X = \Lbm 1 +\Lbm 2$, so that $X^* = L^2 \cap\Lbm\infty$, 
 and $\mathcal D= \Algebra$. 

It is immediate to check, replacing $\vphi$ in \eqref{eq:fokker-planck-variational222} by $\sfP_s\vphi$ and using \eqref{eq:extended_integration},
that for any $s>0$ the function $t\mapsto u^s_t$ solves
$$
\frac{d}{dt} u^s_t+\div (\bb_t u^s_t)=\sigma\Delta u^s_t+\Commutator^s_t
$$ 
in the weak sense of duality with $\Algebra$, where $\Commutator^s_t$ is the commutator between semigroup and divergence, namely
$$
\Commutator^s_t:=\div (\bb_t u^s_t)-\sfP_s(\div (\bb_tu_t)).
$$

Therefore, using \eqref{eq:extended_integration} once more and expanding 
$$
\Commutator^s_t=u^s_t\div\bb_t+du^s_t(\bb_t)-\sfP_s(u_t\div\bb_t)-\sfP_s(du_t(\bb_t))
$$
we may use the assumption $\div\bb\in L_t^\infty(L^2_x +L^\infty_x)$ and the continuity of derivations to
obtain that $\Commutator^s_t\to 0$ strongly in $L^2_t(L^1_x + L^2_x)$ as $s\downarrow0$. Similarly, 
expanding $\div (\bb_t u^s_t)=u_t^s\div\bb_t+du^s_t(\bb_t)$ and using the regularization estimate \eqref{eq:regularization} to
estimate the Laplacian term in the derivative of $u_t$ we obtain
$\frac{d}{dt} u_t^s\in L^2_t(L^1_x+L^2_x)$ in the weak sense of duality with $\Algebra$, therefore 
by Remark~\ref{rem:weak-derivative-absolute-continuity}, $t\mapsto u^s_t$ is strongly 
$(L^1+L^2)$-differentiable a.e.\ in $(0,T)$ and absolutely continuous.

Since $\beta$ is convex, we can start from the inequality
$$
\int\beta(u^s_t)d\mm-\int\beta(u^s_{t'})d\mm\leq \int \beta'(u^s_t)(u^s_t-u^s_{t'}) d\mm 
$$
and we can use the uniform boundedness of $\beta'(z)$ and of $\beta'(z)/z$ to obtain that 
$\beta'(u^s_t)\in L^2_t(L^2_x\cap L^\infty_x)$, hence
$$
\int\beta(u^s_t)d\mm-\int\beta(u^s_{t'})d\mm\leq g(t)\biggl|\int_{t'}^t\|\frac{d}{dr} u^s_r\|_{L^1+L^2} dr\biggr| 
$$
with $g(t)=\|\beta'(u^s_t)\|_{L^2\cap L^\infty}\in L^2(0,T)$. Since (again by the convexity of $\beta$) $t\mapsto \int\beta(u^s_t) d\mm$
is lower semicontinuous, a straightforward application of a calculus lemma \cite[Lemma~2.9]{AGS11a} entails that $t\mapsto \int\beta(u^s_t) d\mm$ is absolutely
continuous in $(0,T)$ and that
\begin{equation}\label{eq:veryhard}
\frac{d}{dt}\int\beta(u^s_t) d\mm=\int\beta'(u^s_t) \bigl[-\div (\bb_t u^s_t)+\sigma\Delta u^s_t+\Commutator^s_t\bigr] d\mm
\end{equation}
for a.e.\ $t\in (0,T)$.

Since $\beta(u^s_t)\in\V$ we get $\int\beta'(u^s_t)\Delta u^s_t d\mm=-\int\beta''(u^s_t)\Gamma(u^s_t) d\mm\leq 0$, hence
 we may disregard this term. Using twice the chain rule and $\div\bb\in L^2_t(L^2_x+L^\infty_x)$, ${\cal L}_\beta(u)\in L^2_t(L^1_x)$ gives
\begin{eqnarray*}
\frac{d}{dt}\int \beta(u^s_t)d\mm&\leq& -\int \beta'(u^s_t)u^s_t\div\bb_t+d\beta(u^s_t)(\bb_t)d\mm+\int \beta'(u^s_t)\Commutator_t^s d\mm\\
&=&-\int (\beta'(u^s_t)u^s_t-\beta(u^s_t))\div\bb_t d\mm+\int \beta'(u^s_t)\Commutator_t^s d\mm\\
&\leq&\int (\beta'(u^s_t)u^s_t-\beta(u^s_t))\div\bb_t^- d\mm+\int \beta'(u^s_t)\Commutator_t^s d\mm.
\end{eqnarray*}
Eventually, since $\beta'(u^s_t)$ are bounded in $L^2_t(L^2_x\cap L^\infty_x)$, uniformly w.r.t.\ $s$, 
we let $s\downarrow0$ to obtain \eqref{eq:basicapriori} (convergence of the first term in the right hand side follows from dominated convergence and convergence in $L^2(\mm)$ of $u^s_t \to u_t$). 

\noindent
{\bf Proof of \eqref{eq:apriori-estimates-lp}.} Let $r \in [p,q]$, let $\beta(z) = (z^+)^r$ and notice that $\mathcal{L}_\beta(z)=(r-1)\beta(z)$. We cannot apply directly \eqref{eq:basicapriori} to $\beta$, because $\beta'(z)/z$ is unbounded near $0$. If $r<2$, we let
$$
\beta_n(z) := 
\begin{cases}
\displaystyle{\frac{(z^+)^2}{2\epsilon^{2-r}}} &\text{if $z\leq\epsilon$;}\\ \\
\displaystyle{(z^+)^r-\frac {\epsilon^r}2} &\text{if $z\geq\epsilon$,}
\end{cases}
$$
where $\veps = 1/n$, so that $\beta_n$ are convex, $\beta_n'(z)/z$ is bounded, ${\cal L}_{\beta_n}\leq\beta_n$ and $\beta_n\to\beta$ as $n\to\infty$.

If $r \ge 2$, we use the approximations $\beta_n$ in \eqref{def:betan},
that satisfy ${\cal L}_{\beta_n}(z) = {\cal L}_{\beta}(z \land n)$, so that we still have ${\cal L}_{\beta_n}\leq (r-1)\beta_n$, and $\beta_n'(z)/z$
is bounded. 

Now in both cases it is sufficient
to apply Gronwall's lemma to the differential inequality \eqref{eq:basicapriori} with $\beta=\beta_n$ 
(here we use the assumption \eqref{eq:strong_continuity_time0}, to ensure that the value at $0$ is the expected one) and then let $n \to \infty$ to conclude
with Fatou's lemma.

The correspondent inequalities for $\beta(z) = (z^-)^r$ are settled similarly.

Finally, the assumption \eqref{eq:strong_continuity_time0} can be removed considering the solutions $u^\eps_t$
relative to the same initial condition and to the derivations
$$
\bb^\eps_t:=\begin{cases} \bb_t &\text{if $t\in [\epsilon, T)$;}\\
0&\text{if $t\in (0,\epsilon)$.}
\end{cases}
$$
Since $u^\eps_t$ coincides with $\sfP_{\sigma t}\bar u$ for $t\in (0,\epsilon)$, \eqref{eq:strong_continuity_time0}
is fulfilled. Then, we can take weak limits in $L^\infty_t(L^p_x \cap L^q_x) \cap L^2(I;\V)$ as $\eps\downarrow0$ to obtain a function 
$u$ satisfying the desired properties.

\subsection{Vanishing viscosity and proof of Theorem~\ref{thm:existence-lp}} \label{sec:vanishing-viscosity}

Let $\bb = (\bb_t)_{t\in I}$ and $\bar u\in L^1\cap \Lbm r$ ($r\ge 2$) satisfy the assumptions of Theorem~\ref{thm:existence-lp}. Let $\delta>0$, let $\rho$ be a mollifying kernel in $C^1_c(0,1)$ and set $\bb^\delta_t := \int_0^1 \bb_{t+s\delta} \rho(s)ds$ (where $\bb_t =0$ for $t > T$) i.e.\ we let
\[ \varphi \mapsto d\varphi(\bb^\delta_t) =  \int_0^1  d\varphi(\bb_{t+s\delta}) \rho(s) ds.\]
Since $|\bb|\in L^1_t(L^2_x)$, $\div \bb \in L^1_t(L^2_x + L^\infty_x)$, it follows that $|\bb^\delta|\in L^\infty_t(L^2_x)$, 
$\div\bb^\delta\in L^\infty_t(L^2_x + L^\infty_x)$ and the assumption $\div \bb^- \in L^1_t(L^1_x\cap L^\infty_x)$ entails  
$(\div \bb^\delta)^- \in L^\infty_t(L^1_x\cap L^\infty_x)$. Moreover, as $\delta \downarrow0$, $d\varphi (\bb^\delta)$ 
converges to $d\varphi (\bb)$ in $L^1_t(L^2_x +L^\infty_x)$, for every $\varphi\in\Algebra$ and $\nor{(\div \bb^\delta)^{-}}_{L^1_t(L^\infty_x)}$ converges to $\nor{(\div \bb)^{-}}_{L^1_t(L^\infty_x)}$.

For fixed $\delta >0$, consider a sequence $u^n = u^{\delta,n}$ of solutions 
to \eqref{eq:fokker-planck-variational2} with $\bb^\delta$ in place of $\bb$, $\sigma=1/n$, $n\geq 2$, 
as provided by Theorem~\ref{thm:apriori} with $p=1$ and $q=r$, and notice that \eqref{apriorisigma} gives 
$$\frac 1n\nor{e^{-(1+\lambda)t} u^n}_{L^2(I;\V)}\leq \nor{\bar u}_2,$$ 
so that $v^n:=u^n/n$ is bounded in $L^2(I;\V)$.
We would like to pass to the limit as $n\to\infty$ in 
\begin{equation}\label{eq:fokker-planck-variational4}
 	 \int_0^T \int -\sqa{\partial_t \vphi_t + d\vphi_t(\bb_t^\delta)} u^n_t +  \Gamma(\vphi_t, v^n_t) d\mm dt=  \int \vphi_0 \bar u d\mm
	 \qquad\forall \vphi \in\Phi_{\Algebra}.
 	 \end{equation}
Inequality \eqref{eq:apriori-estimates-lp} entails that $(u^n)_n$ is bounded in $L^\infty_t (L^1_x \cap L^r_x)$ and so $v^n$ weakly converges to
$0$ in $L^2(I;\V)$. In addition, there exists a subsequence $n(k)$ such that $(u^{n(k)})_k$ converges,  
in duality with $L^1_t (L^2_x + L^\infty_x)$, to some $u:= u^\delta \in L^\infty_t (L^1_x \cap L^r_x)$. This gives that 
$u^\delta$ is a weak solution to the continuity equation with $\bb^\delta$ in place of $\bb$. 

We then let $\delta \downarrow0$ and extract again a subsequence $\delta(k)$ such that $(u^{\delta(k)})_k$ converges,  
in  duality with $L^ 1_t (L^{r'}_x + L^\infty_x)$, to some $u:= L^\infty_t (L^1_x \cap L^r_x)$
and is a weak solution to the continuity equation, thus concluding the proof of 
Theorem~\ref{thm:existence-lp}, except for conservation of mass.

Finally, we prove conservation of mass for any weak solution to the continuity equation, assuming existence
of $f_n\in\Algebra$ as in \eqref{eq:mass_preserving}. The proof is based on the simple observation that
our assumptions on $\bb$ and $u$ imply $\cc:=u\bb\in L^1_t(L^1_x)$, and therefore
$$
\lim_{n\to\infty}\int_0^T\int |df_n(\cc_t)| d\mm dt=0.
$$
Since 
$$
\lim_{n\to\infty}\int u_t f_n d\mm=\int u_t d\mm\quad\forall t\in [0,T)\qquad\text{and}\qquad
\frac{d}{dt}\int u_t f_n d\mm=\int df_n(\cc_t) d\mm,
$$
we conclude that $\int u_t d\mm=\int \bar u d\mm$ for all $t\in [0,T)$.

\section{Uniqueness of solutions to the continuity equation}\label{sec:commutators}
In this section, we provide conditions that ensure uniqueness, in certain classes, for the continuity equation: these involve further regularity of 
$\bb$, expressed in terms of bounds on its divergence and its deformation (introduced below), density assumptions of $\Algebra$ in $\V_p$ and the validity of 
inequalities which correspond, in the smooth setting, to integral bounds on the gradient of the kernel of $\sfP$.

\begin{definition}[$L^p$-$\Gamma$ inequality]
Let $p\in [1,\infty]$. We say that the $L^p$-$\Gamma$ inequality holds if there exists $c_p>0$ satisfying
$$
\Big\|\sqGq{ \sfP_t f} \Big\|_p \le \frac{c_p}{\sqrt{t}} \nor{f}_p, \quad\text{for every $f \in L^2\cap\Lbm p$,  $t \in (0,1)$.}
$$
\end{definition}

Although the $L^p$-$\Gamma$ inequality is expressed for $t \in (0,1)$, from its validity and $L^p$ contractivity of $\sfP$, we easily deduce that
\begin{equation} \label{eq:p-reverse-gamma-inequalities}
\nor{ \sqGq{ \sfP_t f} }_p \le c_p (t\land 1)^{-1/2}  \nor{f}_p,\quad \text{for every $f \in L^2\cap \Lbm p$, $t \in (0,\infty)$.}
\end{equation}
Notice also that, thanks to \eqref{eq:regularization2}, the $L^2$-$\Gamma$ inequality always holds, with $c_2=1/\sqrt{2}$. 
By semilinear Marcinkiewicz interpolation, 
we obtain that if the $L^p$-$\Gamma$ inequality holds then, for every $q$ between $2$ and $p$, the $L^q$-$\Gamma$ inequality holds as well.

\begin{definition}[Derivations with deformations of type $(r,s)$]\label{def:deformation}
Let $q\in [1,\infty]$, let $\bb$ be a derivation in $L^q+L^\infty$, with $\div\bb\in\Lbm q+\Lbm\infty$, let $r,\,s\in [1,\infty]$
with $q^{-1}+r^{-1}+s^{-1}=1$ and assume that $\Algebra$ is dense both in $\V_r$ and in $\V_s$. We say that the deformation of $\bb$ is of type $(r,s)$ 
if there exists $c \ge 0$ satisfying
\begin{equation}\label{eq:ineq_deformation}
\biggl|\int D^{sym}\bb(f,g) d\mm \biggr| \le c \|\sqrt{\Gamma(f)}\|_r \|\sqrt{\Gamma(g)}\|_s  ,
\end{equation}
for all $f\in \V_r$ with $\Delta f\in L^r\cap\Lbm 2$ and all $g\in \V_s$ with $\Delta g\in L^s\cap\Lbm 2$, where
	\begin{equation}
		\label{eq:distributional-deformation}
		 \int D^{sym}\bb(f,g) d\mm := -\frac{1}{2} \int \sqa{ df(\bb)\Delta g + dg(\bb) \Delta f - (\div\bb) \Gamma(f,g) } d\mm.
	\end{equation}
	We let $\nor{D^{sym}\bb}_{r,s}$ be the smallest constant $c$ in \eqref{eq:ineq_deformation}.
\end{definition}

The density assumption of $\Algebra$ in $\V_r$ and $\V_s$ is necessary to extend the derivation $\bb$ to all of $\V_r$ and $\V_s$, by Remark~\ref{remark:bounded-derivations}. Notice that the expression $\int D^{sym}\bb(f,g) d\mm$ is symmetric with respect to $f$, $g$, so the role of $r$ and $s$ above can be interchanged.

\begin{remark}[Deformation in the smooth case]\label{rem:deformation-smooth}{\rm
Let $(X,\ang{\cdot,\cdot})$ be a compact Riemannian manifold, let $\mm$ be its associated Riemannian volume and let $\Gamma(f,g) = \ang{\nabla f,\nabla g}$. Let $df(\bb) = \ang{b,\nabla f}$ for some smooth vector field $b$ and let $Db$ be the covariant derivative of $b$. The expression
\[ \ang{\nabla g,\nabla \ang{b,\nabla f} } + \ang{ \nabla f, \nabla \ang{b,\nabla g} }-\ang{b,\nabla \ang{\nabla f,\nabla g}}  = 
\ang{Db\nabla g,\nabla f}  + \ang{Db\nabla f,\nabla g}\]
gives exactly twice the symmetric part of the tensor $Db$, i.e.\ $2\ang{D^{sym} bf,g}$. Integrating over $X$ and then integrating by parts, we obtain twice the expression in \eqref{eq:distributional-deformation}, so that the derivation $\bb$ associated to a smooth field $b$ is of type $(r,s)$ if $\abs{D^{sym}b} \in \Lbm q$, 
where $q \in [1,\infty]$ satisfies $q^{-1} + r^{-1} + s^{-1} =1$.}\fr
\end{remark}

\begin{theorem}[Uniqueness of solutions]\label{thm:uniqueness}
Let $1<s\leq r<\infty$, $q\in (1,\infty]$ satisfy $q^{-1} + r^{-1} + s^{-1}= 1$. Assume the existence of $(f_n) \subset \Algebra$ as in \eqref{eq:mass_preserving} and that, for $p\in\cur{r,s}$, $\Algebra$ is dense in $\V_p$ and the $L^p$-$\Gamma$ inequality holds. Let $\bb=(\bb_t)_{t\in (0,T)}$ be a Borel family of derivations, with
\[ \text{ $|\bb|\in L^1_t(L^q_x+L^\infty_x)$, $\div\bb\in L^1_t(L^q_x+L^\infty_x)$ and $\nor{ D^{sym}\bb_t}_{r,s} \in L^1(0,T)$.}\]

Then, there exists at most one weak solution $u$ in $(0,T)\times X$ to the continuity equation
$\frac d{dt} u_t+\div (u_t\bb_t)=0$ in the class
	$$ \cur{u\in L^\infty_t(L^r_x\cap L^2_x):\ \text{$t\mapsto u_t$ is weakly continuous in $[0,T)$}},$$
for every initial condition $\bar u\in L^r\cap\Lbm 2$.
\end{theorem}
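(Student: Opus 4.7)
By linearity of the continuity equation, it suffices to show that any weak solution $u \in L^\infty_t(L^r_x \cap L^2_x)$ with initial datum $\bar u = 0$ must vanish identically. The plan mirrors the DiPerna-Lions commutator scheme, with the Markov semigroup $\sfP_\alpha$ playing the role of convolution and the deformation bound on $\bb_t$ providing the commutator control. I would first regularize by setting $u^\alpha_t := \sfP_\alpha u_t$ for $\alpha \in (0,1)$; by \eqref{eq:regularization2} this takes values in $D(\Delta) \subset \V$. Testing the weak formulation with $\sfP_\alpha \varphi$ in place of $\varphi$ and exploiting self-adjointness of $\sfP_\alpha$ shows that $u^\alpha$ satisfies (in duality with $\Algebra$) the perturbed equation
$$\frac{d}{dt} u^\alpha_t + \div(u^\alpha_t \bb_t) = \Commutator^\alpha_t, \qquad \Commutator^\alpha_t := \div((\sfP_\alpha u_t)\bb_t) - \sfP_\alpha(\div(u_t\bb_t)).$$
Since $u^\alpha_t \in \V$, the chain rule from Section~\ref{sec:setup} applies and, as in the proof of Theorem~\ref{thm:apriori}, for a suitable convex $\beta \ge 0$ with $\beta(0)=0$ one obtains the renormalized identity
$$\int \beta(u^\alpha_t)\,d\mm = -\int_0^t\!\!\int {\cal L}_\beta(u^\alpha_\tau)\div\bb_\tau\,d\mm\,d\tau + \int_0^t\!\!\int \beta'(u^\alpha_\tau)\Commutator^\alpha_\tau\,d\mm\,d\tau,$$
where the boundary term vanishes because $u^\alpha_0 = \sfP_\alpha \bar u = 0$.

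The heart of the argument is the commutator estimate. Starting from the telescoping identity $\Commutator^\alpha_t = \int_0^\alpha \tfrac{d}{d\sigma}\sfP_{\alpha-\sigma}\bigl(\div(\sfP_\sigma u_t \cdot \bb_t)\bigr)\,d\sigma$, expanding each $\sigma$-derivative into the two Laplacian contributions displayed in \eqref{eq:introcommu}, and then invoking \eqref{eq:distributional-deformation} together with integration by parts, one rigorously establishes, for each $f \in \Algebra$,
$$\int f\,\Commutator^\alpha_t\,d\mm = 2\int_0^\alpha\!\!\int D^{sym}\bb_t(\sfP_{\alpha-\sigma}f,\sfP_\sigma u_t)\,d\mm\,d\sigma + (\text{correction terms in }\div\bb_t).$$
Combining the deformation bound on $D^{sym}\bb_t$, the $L^r$- and $L^s$-$\Gamma$ inequalities applied respectively to $\sfP_{\alpha-\sigma}f$ and $\sfP_\sigma u_t$, and the elementary identity $\int_0^\alpha d\sigma/\sqrt{\sigma(\alpha-\sigma)} = \pi$, one obtains the $\alpha$-uniform bound
$$\Big|\int f\,\Commutator^\alpha_t\,d\mm\Big| \le C\,\|D^{sym}\bb_t\|_{r,s}\,\|f\|_r\,\|u_t\|_s \qquad \forall f \in \Algebra,$$
which by density of $\Algebra$ in $L^r$ extends to arbitrary $f \in L^r$; equivalently, $\Commutator^\alpha_t$ is uniformly bounded in $L^{r'}$.

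To close the argument I would pass to the limit $\alpha \downarrow 0$, choosing $\beta$ so that ${\cal L}_\beta \le C\beta$ and $\beta'(u^\alpha_t)$ stays bounded in $L^r$ (the relation $q^{-1}+r^{-1}+s^{-1}=1$ is precisely what ensures H\"older compatibility between the divergence and commutator contributions in the renormalized identity). Strong $L^2$-convergence $u^\alpha_t \to u_t$, upgraded via the uniform $L^r$ bound from \eqref{eq:apriori-estimates-lp} and interpolation, yields strong convergence $\beta'(u^\alpha_t) \to \beta'(u_t)$ in $L^r$. Combining this with the uniform $L^{r'}$-bound on $\Commutator^\alpha_t$ and the weak convergence $\Commutator^\alpha_t \rightharpoonup 0$ in $L^{r'}$ — verified by testing against $\Algebra$ and checking that the right-hand side of the commutator identity vanishes in the limit — the commutator term drops out and one recovers a Gronwall-type inequality
$$\int \beta(u_t)\,d\mm \le \int_0^t C_\tau \int \beta(u_\tau)\,d\mm\,d\tau,$$
with $C_\tau \in L^1(0,T)$ determined by $\|\div\bb^-_\tau\|$ and $\|D^{sym}\bb_\tau\|_{r,s}$. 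Since $\bar u=0$, Gronwall concludes $u \equiv 0$. The assumption \eqref{eq:mass_preserving} enters to make the formal integration-by-parts steps rigorous and to ensure global integrability of $u^\alpha_t\bb_t$.

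The principal obstacle is the commutator analysis on two levels. First, the identity must be justified for general $\div\bb_t \ne 0$, which requires carefully tracking the correction terms produced by integration by parts and verifying that they are absorbed by the $L^q+L^\infty$ regularity of $\div\bb_t$. More delicately, the natural estimate above is only $O(1)$ in $\alpha$, not vanishing, so the passage $\alpha \downarrow 0$ cannot proceed by pointwise decay; it requires combining weak compactness in $L^{r'}$ with strong convergence of $\beta'(u^\alpha_t)$ and a careful verification — relying on the precise structure of the commutator identity — that the weak limit of $\Commutator^\alpha_t$ is zero.
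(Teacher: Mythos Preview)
Your overall scheme matches the paper's, but there is a real gap in the passage $\alpha\downarrow 0$. You correctly note that the interpolation identity for $\int f\,\Commutator^\alpha_t\,d\mm$ combined with the $L^p$-$\Gamma$ inequalities and $\int_0^\alpha d\sigma/\sqrt{\sigma(\alpha-\sigma)}=\pi$ only yields an $O(1)$ bound; you then propose weak convergence $\Commutator^\alpha_t\rightharpoonup 0$, ``verified by testing against $\Algebra$ and checking that the right-hand side of the commutator identity vanishes in the limit.'' But this is the very identity you just said gives only $O(1)$: even for fixed $f\in\Algebra$, the integrand is controlled only by $(\alpha-\sigma)^{-1/2}\sigma^{-1/2}$ through the $\Gamma$-inequalities, and the $\sigma$-integral does not tend to zero. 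The paper resolves this by proving \emph{strong} convergence $\Commutator^\alpha(u,\bb)\to 0$ in $L^{s'}+L^2$: first for $u=\sfP_\delta v$ with $v\in\Algebra$, where $\Commutator^\alpha$ can be written explicitly as a difference of four terms each continuous in $\alpha$ by strong continuity of $\sfP$ and of $\alpha\mapsto\bB^\alpha$ (Proposition~\ref{prop:consequences-gradient-estimates}(iv)), hence vanishing at $\alpha=0$; then for general $u\in L^r\cap L^2$ by density combined with the uniform bound. With strong convergence in hand, the pairing against $\beta'(u^\alpha_t)$, which is merely \emph{bounded} in the dual $L^s\cap L^2$, goes to zero directly---no weak compactness is needed.

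Two subsidiary points. Your exponents are transposed: since $u_t\in L^r\cap L^2$, the commutator lives in $L^{s'}+L^2$ (not $L^{r'}$), and you need $\beta'(u^\alpha_t)$ bounded in $L^s\cap L^2$, not $L^r$. This is exactly why the paper takes the specific $\beta(z)=|z|^{1+r/s}$ near the origin, truncated to linear growth at infinity: then $|\beta'(z)|\sim|z|^{r/s}$ places $\beta'(u_t)\in L^s$ whenever $u_t\in L^r$, and ${\cal L}_\beta\le (r/s)\beta$ feeds Gronwall. Finally, the sequence $(f_n)$ from \eqref{eq:mass_preserving} is used more concretely than you suggest: the paper differentiates $\int f_n\beta(u^\alpha_t)\,d\mm$ with the cutoff in place, which produces a residual term $\int\beta(u_t)\,df_n(\bb_t)\,d\mm$ that is shown to vanish as $n\to\infty$ via the weak-$*$ convergence $\sqrt{\Gamma(f_n)}\rightharpoonup 0$, after which a logarithmic form of Gronwall (introducing an auxiliary $\delta>0$ and letting $\delta\downarrow 0$) closes the argument.
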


The proof of this result is given in Section~\ref{sec:proof-uniqueness} and relies upon the strong convergence to $0$ as
$\alpha\downarrow0$ of the commutator between divergence and action of the semigroup
\begin{equation}
 \Commutator^\alpha(u_t,\bb_t) :=  \div ((\sfP_\alpha u_t) \bb_t) - \sfP_\alpha( \div (u_t \bb_t)) ,
 \end{equation}
proved in Lemma~\ref{lemma:commutator-estimate} in the next Section. We end this section with some comments on the density assumption on $\Algebra$.

\begin{remark}[On the density of $\Algebra$ in $\V_p$]\label{rem:role-of-density}\rm{
The assumption that $\Algebra \subset \V_p$ is dense for $p \in \cur{r,s}$ is fundamental to show that the semigroup approximation $t \mapsto \sfP_\alpha u_t$ is a solution to another continuity equation, \eqref{eq:ce-alpha} below. This follows by the extension of the derivation on $\V_p$ provided by Remark~\ref{remark:bounded-derivations}. One could argue that the invariance condition \eqref{eq:Feller} is sufficient to define $\bb(\sfP_\alpha f)$, whenever $f \in \Algebra$: indeed Theorem~\ref{thm:uniqueness} holds, assuming \eqref{eq:Feller} in place of the density of $\Algebra$ in $\V_p$, and the same proof goes through, with minor modifications (e.g.\ in Definition~\ref{def:deformation} above we require $f$, $g \in \Algebra$). In view of Remark~\ref{rem:Feller}, one could also wonder whether \eqref{eq:Feller} and the $L^p$-$\Gamma$ inequality are sufficient to entail density in $\V_p$: the next lemma provides a partial affirmative answer 
(see Proposition~\ref{prop:BE-density} for an application of the lemma, assuming curvature lower bounds).}\fr
\end{remark}

\begin{lemma}\label{lemma:feller-density}
Let $p \in [2,\infty)$, assume that \eqref{eq:Feller} and the $L^p$-$\Gamma$ inequality hold and that
\begin{equation}\label{eq:semigroup-approximating} \limsup_{t \down 0} \nor{\sqGq{ \sfP_t f} }_p  \le \nor{ \sqGq{ f} }_p, \quad 
\text{for every $f \in \V_p$.}\end{equation}
Then, $\Algebra$ is dense in $\V_p$.
\end{lemma}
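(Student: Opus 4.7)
The plan is, given $f\in\V_p$, to prove two things: (a) the semigroup regularisation $\sfP_t f$ tends to $f$ in $\V_p$ as $t\downarrow 0$, and (b) for each fixed $t>0$, $\sfP_t f$ lies in the $\V_p$-closure of $\Algebra$. An $\eps/2$-argument then yields the desired density.

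Step (b) is the routine half. Using Lemma~\ref{lem:newdensity}, pick a sequence $(g_n)\subset\Algebra$ with $g_n\to f$ in $L^2\cap\Lbm p$, and set $h_n:=\sfP_t g_n$, which lies in $\Algebra$ by the invariance \eqref{eq:Feller}. Then $h_n\to\sfP_t f$ in $\Lbm p$ by $L^p$-contractivity of $\sfP$; in $\V$, by applying the regularisation estimate \eqref{eq:regularization2} to $g_n-f$; and $\|\sqGq{\sfP_t(g_n-f)}\|_p\to 0$ by the $L^p$-$\Gamma$ inequality applied to $g_n-f$. Hence $h_n\to\sfP_t f$ in $\V_p$.

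Step (a) is the main obstacle. The convergence $\sfP_t f\to f$ in $\Lbm p$ is strong $L^p$-continuity of $\sfP$; the convergence in $\V$ follows by combining \eqref{eq:regularization2}, $L^2$-continuity of $\sfP$ and lower semicontinuity of $\cE$ to get norm convergence $\cE(\sfP_t f)\to\cE(f)$, upgrading weak convergence in $\V$ to strong. The delicate point is the gradient bound $\sqGq{\sfP_t f-f}\to 0$ in $\Lbm p$. The plan is as follows: from the $\V$-convergence and the pointwise inequality $|\sqGq{g}-\sqGq{h}|\le\sqGq{g-h}$, one has $\sqGq{\sfP_t f}\to\sqGq{f}$ in $\Lbm 2$ and $\sqGq{\sfP_t f-f}\to 0$ in $\Lbm 2$. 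Extract a subsequence $t_n\downarrow 0$ along which both convergences are $\mm$-a.e. Fatou's lemma combined with \eqref{eq:semigroup-approximating} then gives $\|\sqGq{\sfP_{t_n}f}\|_p\to\|\sqGq{f}\|_p$, and by the Brezis--Lieb lemma $\sqGq{\sfP_{t_n}f}\to\sqGq{f}$ strongly in $\Lbm p$.

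Finally, I exploit the pointwise domination $\sqGq{\sfP_{t_n}f-f}\le \sqGq{\sfP_{t_n}f}+\sqGq{f}$, whose right-hand side converges in $\Lbm p$. The generalised dominated convergence theorem in the form ``$|u_n|\le v_n$, $v_n\to v$ in $\Lbm p$, $u_n\to u$ a.e.\ imply $u_n\to u$ in $\Lbm p$'' then yields $\sqGq{\sfP_{t_n}f-f}\to 0$ in $\Lbm p$. Since every subsequence of $t\downarrow 0$ admits a further subsequence for which this holds, the full limit along $t\downarrow 0$ follows, completing (a). The hard part, and the reason \eqref{eq:semigroup-approximating} is genuinely needed, is precisely the upgrade from weak/$L^2$ convergence of $\sqGq{\sfP_t f}$ to strong $L^p$-convergence, which in turn feeds the dominated convergence step controlling the difference $\sqGq{\sfP_t f-f}$ in $\Lbm p$.
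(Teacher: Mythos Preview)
Your proof is correct and is genuinely more streamlined than the paper's argument. Both proofs begin identically: combining Fatou's lemma with the hypothesis \eqref{eq:semigroup-approximating} to obtain $\|\sqGq{\sfP_t f}\|_p\to\|\sqGq{f}\|_p$, and then upgrading this (via Brezis--Lieb, after passing to an a.e.\ convergent subsequence coming from the $\V$-convergence) to $\sqGq{\sfP_t f}\to\sqGq{f}$ strongly in $\Lbm p$. The difference lies in what each proof does with this fact.

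You use it directly: the pointwise bound $\sqGq{\sfP_{t_n}f-f}\le\sqGq{\sfP_{t_n}f}+\sqGq{f}$ together with the generalised dominated convergence theorem yields $\sqGq{\sfP_{t_n}f-f}\to 0$ in $\Lbm p$, whence $\sfP_t f\to f$ in $\V_p$; then you approximate $\sfP_t f$ by $\sfP_t g_n\in\Algebra$ via the $L^p$-$\Gamma$ inequality. The paper instead interposes the truncations $\Phi_n$ from Lemma~\ref{lem:refining-algebra}: it never proves $\sfP_t f\to f$ in $\V_p$ directly, but rather shows $\Phi_n(\sfP_t f)\to\Phi_n(f)$ in $\V_p$ using the explicit $\Gamma$-difference inequality \eqref{eq:gamma-difference} and equi-integrability with respect to the \emph{finite} measure $\phi_n(f)^p\mm$, and then approximates $\Phi_n(\sfP_t f)$ by $\Phi_n(\sfP_t f_k)$. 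Your route avoids the truncations and the somewhat delicate term-by-term analysis of \eqref{eq:gamma-difference}, at the price of invoking Brezis--Lieb and the generalised DCT as black boxes. The paper's more hands-on route, on the other hand, is what allows it to note (in the remark following the proof) that the argument extends verbatim to all $p\in[1,\infty)$ and to algebras $\Algebra$ not assumed a priori to lie in $\V_\infty$.
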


\begin{proof} Let $f \in \V_p$. Notice first that, since $\sfP_t f$ converges to $f$ in $\V$ as $t\downarrow0$, Fatou's lemma gives
\[ \nor{ \sqGq{ f} }_p \le \liminf_{t \down 0}  \nor{ \sqGq{ \sfP_t f} }_p\]
which combined with \eqref{eq:semigroup-approximating} gives convergence of $\Gamma\bra{ \sfP_t f}^{1/2}$ to 
$\Gamma\bra{f}^{1/2}$ in $\Lbm p$.

To prove density, we let $f \in \V_p$ and consider the functions $\Phi_n:\R\to\R$, with derivative $\phi_n$, introduced in Lemma~\ref{lem:refining-algebra}: since, by the chain rule, $\Phi_n(f)$ converge to $f$ in $\V_p$, it is sufficient to approximate each $\Phi_n(f)$ in $\V_p$ with elements of $\Algebra$.

We first show that $\lim_{t\downarrow0} \Phi_n(\sfP_t f) = \Phi_n(f)$ in $\V_p$. Since convergences in $\V$ and in $\Lbm p$ are obvious, we prove 
$\Gamma(\Phi_n(\sfP_t f)  - \Phi_n(f))^{1/2} \to 0$ in $\Lbm p$. We let $h_1 =\sfP_t f$ and $h_2 = f$ in \eqref{eq:gamma-difference} to get
\begin{eqnarray}\label{eq:verysplit}
 \Gamma( \Phi_n(\sfP_t f) -\Phi_n(f))^{1/2} &\le & 
 \abs{ \phi_n(\sfP_t f) - \phi_n(f)} \Gamma(f)^{1/4}\Gamma(\sfP_tf)^{1/4}\nonumber\\
  & + &2|\phi_n(\sfP_t f) -\phi_n(f)|^{1/2} \Gamma(f)^{1/4} (\Gamma(f)^{1/4}+\Gamma(\sfP_tf)^{1/4})\\
   & + & \phi_n(f) \Gamma(\sfP_t f - f)^{1/2}.\nonumber
  \end{eqnarray}
  To handle the integral of the $p$-power of the last term in the right hand side, 
  we notice that, since $\Gamma(\sfP_t f)^{1/2}$ converge to $\Gamma(f)^{1/2}$ in $\Lbm p$, 
  they converge also in $L^p(\mm')$ with $\mm'=\phi_n(f)^p\mm$. Since $\mm'$ is finite we obtain that $\Gamma(\sfP_t f)^{p/2}$ are equi-integrable with respect
to $\mm'$ and the Lebesgue-Vitali convergence ensures convergence to 0. The first term can be handled similarly,
adding and subtracting  $\abs{ \phi_n(\sfP_t f) - \phi_n(f)}^p \Gamma(f)^{p/4}\Gamma(f)^{p/4}$ and using
the dominated convergence, since $0\leq \phi_n\leq 1$; the integral of the $p$-th power of the last term can be estimated with dominated 
convergence for $\int |\phi_n(\sfP_t f) -\phi_n(f)|^{p/2} \Gamma(f)^{p/2} d\mm$ and with the same argument we used for the first term for 
$\int |\phi_n(\sfP_t f) -\phi_n(f)|^{p/2} \Gamma(f)^{p/4} \Gamma(\sfP_t f)^{p/4} d\mm$.

We proceed then to approximate $\Phi_n(\sfP_t f)$ in $\V_p$ by elements of $\Algebra$, at fixed $n\geq 1$ and $t>0$.
Let $(f_k) \subset \Algebra$ converge to $f$ in $L^2 \cap \Lbm p$. We show that $\Phi_n( \sfP_t f_k)$ converge to $\Phi_n( \sfP_t f)$ in $\V_p$. Notice that  
$\Phi_n( \sfP_t f_k)$ belong to $\Algebra$, because of \eqref{eq:Feller} and \eqref{eq:stability_composition}. Since convergence in $L^ 2 \cap \Lbm p$ holds, convergence in $\V_p$ follows again by \eqref{eq:gamma-difference} with $h_1 = \sfP_t f_k$ and $h_2 = \sfP_t f$, because
\begin{equation*}
\begin{split}
 \Gamma( \Phi_n(\sfP_t f_k) -\Phi_n(\sfP_t f ))^{1/2} \le & 
 \abs{ \phi_n(\sfP_t f_k) - \phi_n(\sfP_t f)} \Gamma(\sfP_t f)^{1/4}\Gamma(\sfP_tf_k)^{1/4}\\
  & + 2|\phi_n(\sfP_t f_k) -\phi_n(\sfP_t f)|^{1/2} \Gamma(\sfP_t f)^{1/4} (\Gamma(\sfP_tf_k)^{1/4} +\Gamma(\sfP_t f)^{1/4})\\
   & +\phi_n(\sfP_t f) \Gamma(\sfP_t f_k - \sfP_t f )^{1/2}.
 \end{split}
  \end{equation*}
By the $L^2$-$\Gamma$ inequality and the $L^p$-$\Gamma$ inequality, $\Gamma(\sfP_t f_k)^{1/2}$ converges to $\Gamma(\sfP_t f)^{1/2}$
in $L^2 \cap \Lbm p$ as $k \to \infty$ and we can argue as we did in connection with \eqref{eq:verysplit} to obtain that 
$\Gamma( \Phi_n(\sfP_t f_k) -\Phi_n(\sfP_t f ))^{1/2}\to 0$ in $\Lbm p$.
\end{proof}

Actually, the proof above entails the following result. Let $p \in [1,\infty)$, assume that the $L^p$-$\Gamma$ inequality holds, and let $\Algebra \subset \V$ satisfy \eqref{eq:stability_composition}, \eqref{eq:Feller}, \eqref{eq:semigroup-approximating}, and dense in $L^2 \cap \Lbm p$. Then $\Algebra$ is dense in $\V_p$. Finally, notice that this gives another proof of Remark~\ref{rem:Feller}.



\subsection{The commutator lemma}

We first collect some easy consequences of the $L^r$-$\Gamma$ inequality, for some $r \in (1,\infty)$, 
which allow for an approximation of the derivation $\bb$, via the action of 
$\sfP_\alpha$, as expressed in the next proposition. We denote by $\bB^\alpha$ the linear operator thus obtained, to stress the fact that it is not a derivation.

\begin{proposition}\label{prop:consequences-gradient-estimates}
Let $r,\,s\in (1,\infty)$, $q\in (1,\infty]$ satisfy $q^{-1} + r^{-1} + s^{-1}= 1$. 
Let $\bb$ be a derivation in $L^q+L^\infty$ assume that $\Algebra \subset \V_r$ is dense and that the $L^r$-$\Gamma$ inequality holds.
\begin{enumerate}[(i)]
\item For every $\alpha\in (0,\infty)$, the map
	$$ \Algebra\ni f \mapsto  d (\sfP_\alpha f) (\bb)$$
 extends uniquely to $\bB^\alpha\in \mathscr{L}(L^r\cap\Lbm 2, \Lbm {s'}+\Lbm 2)$, with 
 \begin{equation}\label{eq:b^t-estimate} 
 \nor{\bB^\alpha} \le \max\{c_r,c_2\} (\alpha\land 1)^{-1/2} \nor{\bb}_{L^q+L^\infty}.
 \end{equation}
\item For all $f \in L^r\cap\Lbm 2$ the map $\alpha\mapsto \bB^\alpha(f)$ is continuous from $(0,\infty)$ to $\Lbm {s'}+\Lbm 2$ 
and, if $\Delta f\in L^r\cap\Lbm 2$, it is $C^1((0,\infty); \Lbm {s'}+\Lbm 2)$, with
	\[ \frac{d}{d\alpha}\bB^\alpha( f) = \bB^\alpha(\Delta f). \]
\item Assume that $u\in L^r\cap\Lbm 2$, $\div\bb\in\Lbm q+\Lbm\infty$. Then,
	\begin{equation}
	\label{eq:divbetaptub}
	 \div( \beta(\sfP_\alpha u) \bb) = \beta(\sfP_\alpha u) \div \bb + \beta'(\sfP_\alpha u) \bB^\alpha(u)\in\Lbm {s'}+\Lbm 2
	 \end{equation}
	 for all $\alpha>0$ and all $\beta\in C^1(\R)\cap {\rm Lip}(\R)$ with $\beta(0)=0$.
	 In particular \eqref{eq:divbetaptub} with $\beta(z)=z$ gives
\begin{equation}
	\label{eq:divptub}
	 \div( (\sfP_\alpha u) \bb) = (\sfP_\alpha u)\div \bb + \bB^\alpha(u)\in\Lbm {s'}+\Lbm 2.
	 \end{equation}
\item Assume $u \in L^r\cap\Lbm 2$ and $\div \bb \in \Lbm q+\Lbm\infty$. Then
	$\Commutator^\alpha(\sfP_\delta u, \bb) \in \Lbm{s'}+\Lbm 2$ for every $\delta>0$ and
	\begin{equation}\label{eq:convergenza_sui_buoni}
	\lim_{\alpha\downarrow0} \nor{\Commutator^\alpha(\sfP_\delta u, \bb)}_{L^{s'}+L^2} = 0.
	\end{equation}
	 \end{enumerate}
\end{proposition}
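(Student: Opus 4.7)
\medskip

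\noindent\textbf{Plan.}

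\emph{Part (i).} The bound follows by starting from the pointwise inequality $|d(\sfP_\alpha f)(\bb)| \le |\bb|\,\sqrt{\Gamma(\sfP_\alpha f)}$, splitting $|\bb| = b_q + b_\infty$ with $b_q \in \Lbm q$ and $b_\infty \in \Lbm\infty$ almost optimally for $\|\cdot\|_{L^q+L^\infty}$, and applying H\"older with $1/s' = 1/q + 1/r$ and with $1/2 = 1/\infty + 1/2$. The resulting two factors involving $\Gamma(\sfP_\alpha f)^{1/2}$ in $\Lbm r$ and $\Lbm 2$ are controlled respectively by the $L^r$-$\Gamma$ and the $L^2$-$\Gamma$ inequality, in the form \eqref{eq:p-reverse-gamma-inequalities}; this yields the bound for $f\in\Algebra$, and density of $\Algebra$ in $L^r\cap\Lbm 2$ (Lemma~\ref{lem:newdensity}) then provides the unique extension $\bB^\alpha$.

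\emph{Part (ii).} For continuity, assume $\alpha > \alpha' > 0$ and use the semigroup property together with linearity to write $\bB^\alpha(f) - \bB^{\alpha'}(f) = \bB^{\alpha'}(\sfP_{\alpha-\alpha'} f - f)$; estimate \eqref{eq:b^t-estimate} together with the strong continuity of $\sfP$ in both $L^r$ and $\Lbm 2$ yields continuity in $L^{s'}+\Lbm 2$. For the $C^1$ statement with $\Delta f \in L^r\cap\Lbm 2$, I would first check that $(\sfP_h f - f)/h \to \Delta f$ strongly in $L^r \cap \Lbm 2$ (integrating $\sfP_s \Delta f$ and using strong continuity of $\sfP$ in each space); then linearity and the uniform bound in (i) give
\[\frac{\bB^{\alpha+h}(f)-\bB^{\alpha}(f)}{h}=\bB^{\alpha}\Big(\frac{\sfP_h f-f}{h}\Big)\longrightarrow \bB^{\alpha}(\Delta f)\quad\text{in }\Lbm{s'}+\Lbm 2,\]
and a symmetric argument handles the left derivative, where one also invokes continuity in $\alpha$ just proven.

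\emph{Part (iii).} Since $\sfP_\alpha u \in \V_r$ by the $L^r$-$\Gamma$ inequality, and $\beta$ is $C^1$ and Lipschitz with $\beta(0)=0$, one has $\beta(\sfP_\alpha u) \in \V_r$ with $\Gamma(\beta(\sfP_\alpha u))^{1/2} \le \|\beta'\|_\infty \Gamma(\sfP_\alpha u)^{1/2}$. I would approximate $\sfP_\alpha u$ by $v_n \in \Algebra$ in $\V_r$ (using density) and, by an argument analogous to the one yielding \eqref{eq:gamma-difference} (simpler here since $\beta \in C^1$), obtain $\beta(v_n) \to \beta(\sfP_\alpha u)$ in $\V_r$. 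On $\Algebra$ the derivation chain rule (Proposition~\ref{prop-chain-rule-derivations}) gives $d(\beta(v_n))(\bb) = \beta'(v_n) \, dv_n(\bb)$, and passing to the limit in $L^{s'}+\Lbm 2$ via the continuous extension in Remark~\ref{remark:bounded-derivations} produces the identity $d(\beta(\sfP_\alpha u))(\bb)=\beta'(\sfP_\alpha u)\,\bB^\alpha(u)$. The desired formula \eqref{eq:divbetaptub} then follows by combining this with the extension of the integration-by-parts formula \eqref{eq:extended_integration} from $\V$ to $\V_r$, obtained by the same density-based approximation argument, applied against arbitrary $\varphi \in \Algebra$.

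\emph{Part (iv).} Set $v:=\sfP_\delta u\in\V_r\cap\V_2$ so that $\sfP_\alpha v=\sfP_{\alpha+\delta}u$ and $\bB^\alpha(v)=\bB^{\alpha+\delta}(u)$. Applying \eqref{eq:divptub} to both $v$ and $\sfP_\alpha v$, and splitting the result by adding and subtracting $v\,\div\bb$ and $\bB^\delta(u)$, one obtains
\[\Commutator^\alpha(\sfP_\delta u,\bb)=(\sfP_\alpha v-v)\,\div\bb+\big[v\,\div\bb-\sfP_\alpha(v\,\div\bb)\big]+\big[\bB^{\alpha+\delta}(u)-\bB^\delta(u)\big]+\big[\bB^\delta(u)-\sfP_\alpha\bB^\delta(u)\big].\]
Each of the four brackets belongs to $\Lbm{s'}+\Lbm 2$ and converges to $0$ there as $\alpha\downarrow 0$: the first by strong continuity of $\sfP$ in $L^r\cap\Lbm 2$ combined with $\div\bb\in\Lbm q+\Lbm\infty$; the second and fourth by strong continuity of $\sfP$ in each of $\Lbm{s'}$ and $\Lbm 2$ applied to $v\,\div\bb,\,\bB^\delta(u)\in\Lbm{s'}+\Lbm 2$; the third by the continuity in $\alpha$ established in (ii).

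\emph{Main obstacle.} The delicate step is (iii): extending both the derivation chain rule and the integration-by-parts identity \eqref{eq:extended_integration} from $\Algebra$ to arguments in $\V_r$, via density. Once these extensions are established and combined carefully in the appropriate $\Lbm{s'}+\Lbm 2$ topology, parts (i), (ii), (iv) are essentially soft consequences of the $L^p$-$\Gamma$ inequalities, the semigroup property, and strong continuity.
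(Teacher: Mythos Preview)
Your proposal is correct and follows essentially the same approach as the paper. Parts (i), (ii), and (iv) match the paper's argument almost verbatim: split $\bb$ into its $L^q$ and $L^\infty$ pieces and use the $L^r$-$\Gamma$ and $L^2$-$\Gamma$ inequalities for (i); exploit the identity $\bB^{\alpha+\sigma}(f)=\bB^\alpha(\sfP_\sigma f)$ together with strong continuity of $\sfP$ for (ii); and expand $\Commutator^\alpha(\sfP_\delta u,\bb)$ via two applications of \eqref{eq:divptub} for (iv) --- your four-term splitting is exactly the paper's formula with the obvious terms added and subtracted.

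The only noticeable difference is in (iii): the paper dispatches it in one line by citing \eqref{eq:extended_integration} for \eqref{eq:divptub} and then the chain rule for \eqref{eq:divbetaptub}, whereas you spell out the approximation of $\sfP_\alpha u$ in $\V_r$ by elements of $\Algebra$ and pass to the limit in the chain rule. Your more explicit argument is appropriate, since \eqref{eq:extended_integration} as stated in the paper is for derivations in $L^2+L^\infty$ and $u\in\V$, so its use here for $\bb\in L^q+L^\infty$ and $\sfP_\alpha u\in\V_r$ does require exactly the density-based extension you describe (Remark~\ref{remark:bounded-derivations} together with density of $\Algebra$ in $\V_r$). Your identification of this as the main obstacle is accurate, even though the paper glosses over it.
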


\begin{proof}
\textit{(i)}. By Remark~\ref{remark:bounded-derivations}, if $\cc$ is a derivation in $L^q$, then we can extend it to a linear operator on $\V_r$, thus $d(\sfP_\alpha f)(\cc)$ is well defined. Since the $L^r$-$\Gamma$ inequality holds, for every $f\in\Algebra$, we get
$$
\nor{d(\sfP_\alpha f)(\cc)}_{s'}\le \nor{\cc}_q \nor{ \sqGq{ \sfP_\alpha f}}_r \le c_r (\alpha \land 1)^{-1/2}\nor{\cc}_q \nor{f}_r. 
$$
Analogously, if $\cc$ is a derivation in $L^\infty$, $d(\sfP_\alpha f)(\cc)$ is well defined and there holds
$$
\nor{d(\sfP_\alpha f)(\cc)}_2\le\nor{\cc}_\infty\nor{ \sqGq{\sfP_\alpha f}}_2 \le c_2 (\alpha \land 1)^{-1/2} \nor{\cc}_\infty\nor{f}_2.
$$
This gives $\nor{\bB^\alpha(f)}_{L^{s'}+L^2} \le \max\{c_r,c_2\}(\alpha \land 1)^{-1/2}\nor{\bb}_{L^q+L^\infty}
\|f\|_{L^r\cap L^2}$ on $\Algebra$.
By density of $\Algebra$ in $L^r\cap\Lbm 2$, this provides the existence of $\bB^s$ and the estimate on its norm.

\textit{(ii)}. The semigroup law and the uniqueness of the extension give
\[ \bB^{\alpha+\sigma}(f)= \bB^\alpha (\sfP_\sigma f), \text{ for every $f\in L^r\cap\Lbm 2$, $\alpha,\,\sigma \in (0,\infty)$.} \]
Then, continuity follows easily, combining identity with \eqref{eq:b^t-estimate} and the strong continuity of $\sfP_s$:
\[ \nor{ \bB^{\alpha+\sigma}(f) - \bB^\alpha(f) }_{L^{s'}+L^2} \le \max\{c_r,c_2\}(\alpha \land 1)^{-1/2}\nor{\bb}_{L^q+L^\infty}
\nor{\sfP_\sigma f - f}_{L^r\cap L^2}.\]
A similar argument shows differentiability if $\Delta f\in L^r\cap\Lbm 2$.

\textit{(iii)}. We obtain \eqref{eq:divptub} by \eqref{eq:extended_integration}. By the chain rule, the identity \eqref{eq:divbetaptub} follows.

\textit{(iv)}. To prove that $\Commutator^\alpha(\sfP_\delta u, \bb)\in \Lbm{s'}+\Lbm 2$, it is sufficient to apply \eqref{eq:divptub} twice, to get
\[ -\Commutator^\alpha(\sfP_\delta u,\bb) = \sfP_\alpha\sqa{ (\sfP_\delta u ) \div \bb }+ \sfP_\alpha(\bB^\delta(u))  - 
(\sfP_{\alpha+\delta} u) \div\bb - \bB^{\alpha+\delta}(u)\in\Lbm{s'}+\Lbm 2. \]
By strong continuity of $\alpha\mapsto\sfP_\alpha$ at $\alpha=0$ and continuity of $\alpha\mapsto\bB^\alpha(u)$ in
$(0,\infty)$, the same expression shows that $\Commutator^\alpha(\sfP_\delta u,\bb) \to 0$ in $\Lbm{s'}+\Lbm 2$ as $\alpha\downarrow0$.
\end{proof}

We are now in a position to state and prove the following crucial lemma.

\begin{lemma}[Commutator estimate]\label{lemma:commutator-estimate}
Let $r,\,s\in (1,\infty)$, $q\in (1,\infty]$ satisfy $q^{-1} + r^{-1} + s^{-1}= 1$. 
Let $\bb$ be a derivation in $L^q+L^\infty$ of type $(r,s)$ with $\div\bb\in \Lbm q+\Lbm  \infty$.
Assume that $\Algebra$ is dense in $\V_p$ and that the $L^p$-$\Gamma$ inequality holds, for $p\in\cur{r,s}$.
Then
 	\begin{equation}
	\label{eq:commutator-estimate}
	\nor{\Commutator^\alpha(u,\bb) }_{L^{s'}+L^2} \le c \nor{u}_{L^r\cap L^2} 
	\sqa{\nor{ D^{sym}\bb}_{r,s} +  \nor{\div \bb}_{L^q+L^\infty} } 
		\end{equation}
for all $u\in L^r\cap\Lbm 2$, and all $\alpha\in (0,1)$,
where $c$ is a constant depending only on the constants $c_r$, $c_s$ in \eqref{eq:p-reverse-gamma-inequalities} and
the constants $c^\Delta_r$ and $c^\Delta_s$ in \eqref{eq:p-reverse-Delta-inequalities}.\\
Moreover, $\Commutator^\alpha (u,\bb) \to 0$ in $\Lbm{s'}+\Lbm 2$ as $\alpha\downarrow0$.
\end{lemma}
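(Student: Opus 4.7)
The strategy is a duality argument coupled with the Bakry-\'Emery semigroup interpolation of \eqref{eq:introcommu-nice}. By the identification $(L^s\cap L^2)^* = L^{s'}+L^2$, it suffices to prove
\[
\Bigl|\int f\,\Commutator^\alpha(u,\bb)\,d\mm\Bigr| \le C\,\|u\|_{L^r\cap L^2}\,\|f\|_{L^s\cap L^2}
\]
uniformly for $f$ in a dense subset of $L^s\cap L^2$, say $f\in\Algebra$. Since Proposition~\ref{prop:consequences-gradient-estimates}(iv) already gives $\Commutator^\alpha(\sfP_\delta v,\bb)\in L^{s'}+L^2$ for $\delta>0$, I would first establish \eqref{eq:commutator-estimate} for $u=\sfP_\delta v$ with constants independent of $\delta$, and then extend to general $u\in L^r\cap L^2$ by density.

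The key identity arises by setting $\varphi(s):=\int d\sfP_{\alpha-s}f(\bb)\,\sfP_s u\,d\mm$ for $s\in(0,\alpha)$; using \eqref{eq:extended_integration} and the $L^2$-selfadjointness of $\sfP_\alpha$ one checks $\varphi(0)-\varphi(\alpha)=\int f\,\Commutator^\alpha(u,\bb)\,d\mm$. Differentiating in $s$, integrating by parts on the resulting $\int d\Delta\sfP_{\alpha-s}f(\bb)\sfP_s u\,d\mm$ via \eqref{eq:extended_integration}, and invoking Definition~\ref{def:deformation} yields
\[
-\varphi'(s) = 2\!\int D^{sym}\bb(\sfP_{\alpha-s}f,\sfP_s u)\,d\mm - \!\int\div\bb\bigl[\Gamma(\sfP_{\alpha-s}f,\sfP_s u) + \sfP_s u\,\Delta\sfP_{\alpha-s}f\bigr]\,d\mm.
\]
Writing $f_1:=\sfP_{\alpha-s}f$, $f_2:=\sfP_s u$, the elementary identity $\int_0^\alpha(f_1\Delta f_2-f_2\Delta f_1)\,ds = f\sfP_\alpha u-\sfP_\alpha f\cdot u$ together with $f_1\Delta f_2+f_2\Delta f_1 = \Delta(f_1 f_2)-2\Gamma(f_1,f_2)$ lets one rewrite the time integral of $-\varphi'(s)$ as
\[
\int f\,\Commutator^\alpha(u,\bb)\,d\mm = 2\!\int_0^\alpha\!\!\int D^{sym}\bb(f_1,f_2)\,d\mm\,ds + \mathcal{B}_\alpha - \tfrac{1}{2}\!\int_0^\alpha\!\!\int\div\bb\,\Delta(f_1 f_2)\,d\mm\,ds,
\]
with $\mathcal{B}_\alpha:=\tfrac{1}{2}\int\div\bb\,(f\sfP_\alpha u-\sfP_\alpha f\cdot u)\,d\mm$; the $\int\div\bb\,\Gamma$ contributions cancel.

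The main $D^{sym}\bb$-term is controlled via \eqref{eq:ineq_deformation} combined with the $L^r$- and $L^s$-$\Gamma$ inequalities applied with exponent $r$ to $\sfP_s u$ and exponent $s$ to $\sfP_{\alpha-s}f$; the beta integral $\int_0^\alpha s^{-1/2}(\alpha-s)^{-1/2}\,ds=\pi$ furnishes the $\alpha$-independent bound $2\pi c_r c_s\|D^{sym}\bb\|_{r,s}\|u\|_r\|f\|_s$, and the analogous $L^2$-based estimate with $c_2$ handles the $\|u\|_2\|f\|_2$ contribution. The boundary term $\mathcal{B}_\alpha$ is trivially bounded by $\|\div\bb\|_{L^q+L^\infty}\|u\|_{L^r\cap L^2}\|f\|_{L^s\cap L^2}$. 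The main obstacle is the remaining $\int\div\bb\,\Delta(f_1 f_2)$ integral: since naively $\|\Delta\sfP_t\|_p\lesssim t^{-1}$ gives a non-integrable singularity, I would restrict to the dense subspace where both $u=\sfP_\delta v$ and $f=\sfP_\epsilon\tilde f$ (so that $\Delta f_1=\sfP_{\alpha-s}\Delta\sfP_\epsilon\tilde f$ and $\Delta f_2=\sfP_s\Delta\sfP_\delta v$ are uniformly bounded in $s$) and exploit the symmetrisation in $\Delta(f_1 f_2)$ together with the $L^p$-$\Gamma$ inequalities to derive a bound uniform in $\delta,\epsilon$; continuity in $u$ and $f$ then yields the general estimate \eqref{eq:commutator-estimate}.

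Finally, for the convergence statement, write $u=\sfP_\delta u+(u-\sfP_\delta u)$: Proposition~\ref{prop:consequences-gradient-estimates}(iv) gives $\|\Commutator^\alpha(\sfP_\delta u,\bb)\|_{L^{s'}+L^2}\to 0$ as $\alpha\downarrow0$, while the uniform estimate \eqref{eq:commutator-estimate} just obtained yields $\|\Commutator^\alpha(u-\sfP_\delta u,\bb)\|_{L^{s'}+L^2}\le c\|u-\sfP_\delta u\|_{L^r\cap L^2}$, which is small for small $\delta$ by strong continuity of $\sfP$ on $L^r\cap L^2$; a standard $\epsilon/2$ argument concludes.
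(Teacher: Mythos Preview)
Your overall strategy (duality, interpolation $\varphi(s)=\int d\sfP_{\alpha-s}f(\bb)\,\sfP_s u\,d\mm$, and the density/uniform-bound argument for the convergence) matches the paper's, and your computation of $-\varphi'(s)$ is correct. The genuine gap is in your handling of the divergence term.

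After identifying $D^{sym}\bb$, you are left with
\[
-\int \div\bb\,\bigl[\Gamma(f_1,f_2)+f_2\,\Delta f_1\bigr]\,d\mm .
\]
The $\Gamma$-piece is harmless (it integrates via the same $\int_0^\alpha s^{-1/2}(\alpha-s)^{-1/2}\,ds=\pi$ computation). The real difficulty is the term $\int\div\bb\,f_2\,\Delta f_1\,d\mm$, whose naive bound is $O((\alpha-s)^{-1})$ and hence not integrable. Your move is to symmetrise via $f_1\Delta f_2+f_2\Delta f_1=\Delta(f_1f_2)-2\Gamma(f_1,f_2)$; this does make the $\Gamma$-contributions cancel nicely, but the price is the new term
\[
-\tfrac12\int_0^\alpha\!\int \div\bb\,\Delta(f_1f_2)\,d\mm\,ds ,
\]
which is \emph{worse}, not better: expanding $\Delta(f_1f_2)$ you now face \emph{both} $f_1\Delta f_2$ and $f_2\Delta f_1$, with singularities $O(s^{-1})$ and $O((\alpha-s)^{-1})$ respectively, and no extra structure to exploit. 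Restricting to $u=\sfP_\delta v$, $f=\sfP_\epsilon\tilde f$ does give finite bounds, but they blow up like $\delta^{-1}$, $\epsilon^{-1}$; there is no ``symmetrisation together with the $L^p$-$\Gamma$ inequalities'' that produces a bound uniform in $\delta,\epsilon$ here, because $\Delta(f_1f_2)$ cannot be integrated by parts onto $\div\bb$ (only assumed in $L^q+L^\infty$) and is not a total $s$-derivative.

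The paper avoids the symmetrisation altogether and treats $II:=-\int\div\bb\,(\Delta f_1)\,f_2\,d\mm$ directly by the add--subtract
\[
II=\int\div\bb\,(\Delta f_1)\,(\sfP_\alpha u-f_2)\,d\mm-\frac{d}{d\sigma}\int\div\bb\,f_1\,\sfP_\alpha u\,d\mm\qquad(\sigma=\alpha-s).
\]
The second piece is a total derivative and integrates to the controlled boundary term $\int\div\bb\,(f-\sfP_\alpha f)\,\sfP_\alpha u\,d\mm$. For the first piece one combines $\|\Delta f_1\|_{L^s\cap L^2}\le c^\Delta\sigma^{-1}\|f\|_{L^s\cap L^2}$ with Corollary~\ref{coro:stein}, which gives
\[
\|\sfP_\alpha u-\sfP_{\alpha-\sigma}u\|_{L^r\cap L^2}\le \min\Bigl\{2,\,c^\Delta\log\bigl(1+\tfrac{\sigma}{\alpha-\sigma}\bigr)\Bigr\}\|u\|_{L^r\cap L^2},
\]
so the integrand is dominated by $\min\{\sigma^{-1},(\alpha-\sigma)^{-1}\}$, whose integral over $(0,\alpha)$ is $2\log 2$. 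This is the missing idea: keep $f_2\Delta f_1$ as it is and trade the $\sigma^{-1}$ singularity of the Laplacian against the smallness of $\sfP_\alpha u-\sfP_{\alpha-\sigma}u$ supplied by Corollary~\ref{coro:stein}, rather than symmetrising into $\Delta(f_1f_2)$.
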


\begin{proof} 
For brevity, we introduce the notation $g^\alpha := \sfP_\alpha g$. By duality and density, inequality \eqref{eq:commutator-estimate} is equivalent to the validity of 
\begin{equation}
\label{eq:commutator-duality}
 \int df^\alpha(\bb) u d\mm - \int df(\bb) u^\alpha d\mm 
\le c  \sqa{ \nor{ D^{sym}\bb}_{r,s} +  \nor{\div b}_{L^q+L^\infty} } \nor{u}_{L^r\cap L^2} \nor{f}_{L^s\cap L^2},
\end{equation}
for every $f$ of the form $f=\sfP_\veps \varphi$, for some $\varphi\in\Algebra$, $\veps>0$. Since both sides are continuous in $u$ with respect to 
$L^r\cap\Lbm 2$ convergence, it is also enough to establish it in a dense set: we let therefore $u =\sfP_\delta v$ for some $v\in\Algebra$, $\delta >0$.

We also notice that, by Proposition~\ref{prop:consequences-gradient-estimates}, we know that for such a choice of $u$, 
$\Commutator^\alpha(u,\bb) \to 0$ in $\Lbm{s'}+\Lbm 2$ as $\alpha\downarrow0$. Thus, once \eqref{eq:commutator-estimate} is obtained, 
the same convergence as $\alpha\downarrow0$ holds for every $u\in L^r\cap\Lbm 2$, from a standard density argument.

Then, we have to estimate
\[ \int df^\alpha(\bb) u d\mm - \int df(\bb) u^\alpha d\mm = F(\alpha) - F(0),\]
where we let $F(\sigma) = \int df^\sigma(\bb)u^{\alpha-\sigma} d\mm$, for $\sigma \in [0,\alpha]$. Our assumption on $f = \sfP_\veps \varphi$ entails, 
via Proposition~\ref{prop:consequences-gradient-estimates}, that the map $\sigma \mapsto df^\sigma (\bb) =\bB^{\veps}(\varphi^\sigma)$ is 
$C^1([0,\alpha],\Lbm{s'}+\Lbm 2)$, with
\[ \frac{d}{d\sigma} \sqa{df^\sigma(\bb)} = \bB^\veps( \Delta \varphi^\sigma ).\]
On the other hand, \eqref{eq:p-reverse-Delta-inequalities} entails that $\Delta u  = \Delta \sfP_\delta v\in L^r\cap\Lbm 2$ 
and so $\sigma \mapsto u^\sigma$ in $C^1([0,\alpha], L^r\cap\Lbm 2)$. Thus, we are in a position to apply Leibniz rule to obtain
\[  F(\alpha) - F(0) =  \int_0^\alpha \biggl(\int  \bB^\veps( \Delta \varphi^\sigma ) u^{\alpha-\sigma} - df^\sigma(\bb) \Delta u^{\alpha-\sigma} d\mm\biggr) d\sigma.\]
By applying \eqref{eq:divptub} with $\Delta \varphi^\sigma$ in place of $u$, we integrate by parts to obtain
\[ \int \bB^\veps( \Delta\varphi^\sigma ) u^{\alpha-\sigma} d\mm = - \int  \Delta f^\sigma  du^{\alpha-\sigma}(\bb) + (\div \bb) (\Delta f^\sigma) u^{\alpha-\sigma}d\mm.\]

We now estimate separately the terms
\[ I= -\int  \Delta f^\sigma  du^{\alpha-\sigma}(\bb)+ df^\sigma(\bb) \Delta u^{\alpha-\sigma} d\mm, \qquad II := - \int(\div \bb) (\Delta f^\sigma) u^{\alpha-\sigma} d\mm,\]
at fixed $\sigma \in (0,\alpha)$ and then integrate over $\sigma$.

To handle the first term, we add and subtract $\int (\div \bb)\Gamma(f^\sigma, u^{\alpha-\sigma})d\mm$, and thus recognize twice the deformation of $\bb$, 
applied to $f^\sigma$ and $u^{\alpha-\sigma}$, which are admissible functions in the sense of Definition~\ref{def:deformation}, because of 
\eqref{eq:p-reverse-gamma-inequalities} and \eqref{eq:p-reverse-Delta-inequalities}:
\[ I = 2 \int  D^{sym}\bb(f^\sigma, u^{\alpha-\sigma})d\mm - \int (\div\bb) \Gamma(f^\sigma, u^{\alpha-\sigma})d\mm.\]
We use the assumption on $ D^{sym}\bb$, $\div \bb$ and $L^r$-$\Gamma$ and $L^s$-$\Gamma$ as well as $L^2$-$\Gamma$
inequalities to obtain that
\[\abs{I} \le \sqa{2\nor{ D^{sym}\bb}_{r,s} + \nor{\div\bb}_{L^q+L^\infty}} \frac{c}{\sqrt{\alpha(\alpha-\sigma)} } \nor{ f}_{L^s\cap L^2} \nor{u}_{L^r\cap L^2},\]
with $c=c_r+c_s+c_2$. To handle integration over $\sigma \in (0,\alpha)$, we use
\[ \int_0^\alpha \frac{d\sigma}{\sqrt{\sigma(\alpha-\sigma)} } = \pi. \]

To estimate the second term, we add and subtract
\[ \int (\div \bb) (\Delta f^\sigma)  u^\alpha d\mm = \frac{d}{d\sigma} \int (\div \bb)  f^\sigma u^\alpha d\mm,\]
obtaining
\[ II = \int (\div\bb) (\Delta f^\sigma) (u^\alpha - u^{\alpha-\sigma}) d\mm - \frac{d}{d\sigma } \int (\div \bb) f^\sigma  u^{s} d\mm.\]
We then estimate the first part of $II$ by means of \eqref{eq:p-reverse-Delta-inequalities} and Corollary~\ref{coro:stein}, to get
\[ 
\frac{c^\Delta}{\sigma}\min\cur{2,c^\Delta\log\bra{ 1 + \frac{\sigma}{\alpha-\sigma}} } \nor{f}_{L^s\cap L^2} \nor{u}_{L^r\cap L^2},\]
with $c^\Delta=c^\Delta_s+c^\Delta_r+c^\Delta_2$.

The remaining part of $II$ is estimated once we integrate over $\sigma \in (0,\alpha)$, as
\[ -\int_0^\alpha\frac{d}{d\sigma} \int (\div\bb) f^\sigma  u^\alpha d\mm d\sigma = \int \div
\bb (f - f^\alpha) u^\alpha\le 2 \nor{\div \bb}_{L^q+L^\infty} \nor{f}_{L^s\cap L^2} \nor{u}_{L^r\cap L^2}.\]

To conclude, we notice that
\begin{eqnarray*} \int_0^\alpha \min\cur{ \frac{2}{\sigma}, \frac{c^\Delta}{\sigma} \log\bra{ 1 + \frac{\sigma}{\alpha-\sigma}} } d\sigma&\le& 
\max\{2,c^\Delta\}\int_0^\alpha \min\cur{ \frac 1 \sigma , \frac {1 } {\alpha-\sigma} } d\sigma\\&=& 2\log 2\max\{2,c^\Delta\},
\end{eqnarray*}
thus the proof of \eqref{eq:commutator-duality} is complete.
\end{proof}

\begin{remark}[Time-dependent commutator estimate]{\rm
By integrating the commutator estimate with respect to time, we can achieve a similar estimate for time-dependent
derivations $\bb$ of type $(r,s)$ satisfying
$$
\text{ $|\bb| \in L^1_t(L^q_x+L^\infty_x)$, $\div \bb \in L^1_t(L^q_x+L^\infty_x)$ and $\nor{ D^{sym}\bb_t}_{r,s} \in L^1(I)$,}
$$
still  assuming the validity of the $L^p$-$\Gamma$ inequalities for $p\in\cur{r,s}$:
 	$$
	\int_I\nor{\Commutator^\alpha(u_t,\bb_t) }_{L^{s'}+L^2}\,dt \le c 
	\nor{u}_{L^\infty_t(L^r_x\cap L^2_x)} \sqa{\int_I \nor{ D^{sym}\bb_t}_{r,s} +  \nor{\div\bb_t}_{L^q+L^\infty} dt} 
	$$
	for all $u\in L^\infty_t(L^r_x\cap L^2_x)$ and $\alpha\in (0,\infty)$.
Moreover, dominated convergence gives 
\begin{equation}
	\label{eq:commutator-estimate-cont}
\lim_{\alpha\downarrow0}\int_I\nor{\Commutator^\alpha (u_t,\bb_t)}_{L^{s'}+L^2}\,dt =0.
\end{equation}
}\fr
\end{remark}

\subsection{Proof of Theorem~\ref{thm:uniqueness}}\label{sec:proof-uniqueness}

The proof of Theorem~\ref{thm:uniqueness} is similar to that of Theorem~\ref{thm:apriori}, but it crucially exploits 
Lemma~\ref{lemma:commutator-estimate} to show that the error terms are negligible. 

Let $(f_n)\subset\Algebra$ be a sequence given by \eqref{eq:mass_preserving}. Starting from 
$|z|^{1+r/s}$, we define $\beta$ as in \eqref{def:betan}, namely
$$
\beta(z):=
\begin{cases} 1+\frac{r+s}{s}(z-1) &\text{if $z>1$;}\\
 |z|^{1+r/s} &\text{if $|z|\leq 1$;}\\
1-\frac{r+s}{s}(z+1) &\text{if $z<-1$,}
\end{cases}
$$
so that ${\cal L}_\beta\leq (r/s)\beta$ and $\beta$ has linear growth at infinity.

By the linearity of the equation we can assume $\bar u=0$ and the goal is to prove that $u=0$. 
We first extend the time interval $I = (0,T)$ to $(-1, T)$, setting $\bb_t = 0$ for $t \in (-1,0)$ and given the weakly continuous (in duality with
$\Algebra$) solution in $[0,T)$, with $u \in L^\infty(L^r_x\cap L^2_x)$, we extend it to a weakly continuous solution in $(-1, T)$, setting $u_t = 0$ for $t\in (-1,0)$. 

For every $\alpha>0$, let $u^\alpha_t =\sfP_\alpha u_t \in L^\infty(L^r_x\cap L^2_x)$. As in the proof of Theorem~\ref{thm:apriori},
replacing $\vphi$ in \eqref{eq:fokker-planck-variational222} by $\sfP_s\vphi$ (recall Remark~\ref{rem:role-of-density}) we can check 
that $t\mapsto u^\alpha_t$ is a weakly continuous solution to the continuity equation
\begin{equation}\label{eq:ce-alpha} \partial_t u^\alpha_t + \div( u^\alpha_t \bb_t ) =  \Commutator^\alpha (u_t, \bb_t). \end{equation}
By \eqref{eq:divptub} in Proposition~\ref{prop:consequences-gradient-estimates} and \eqref{eq:commutator-estimate} in  
Lemma~\ref{lemma:commutator-estimate}, this equation entails that 
$$
\frac{d}{dt} u^\alpha_t=\Commutator^\alpha (u_t,\bb_t)-\div( u^\alpha_t \bb_t )\in L^1_t(L^{s'}_x+L^2_x)
$$
for a.e.\  $t\in (-1,T)$. Since $t\mapsto \int f_n\beta(u^\alpha_t) d\mm$ is lower semicontinuous (because $\beta$ is convex and $t\mapsto u_t$ is 
weakly continuous) and since $|\beta'(z)|\sim |z|^{r/s}$ near the origin and $r\geq s$ imply that $\beta'(u^\alpha_t)$ is uniformly bounded
in $L^s\cap\Lbm{2}$, we can argue as in the proof of \eqref{eq:veryhard}
to obtain that $t\mapsto\int f_n\beta(u^\alpha_t) d\mm$ is absolutely continuous and 
$$
\frac{d}{dt}\int f_n\beta(u^\alpha_t) d\mm=
\int f_n\beta'(u^\alpha_t)\frac{d}{dt} u^\alpha_t d\mm=
\int f_n\beta'(u^\alpha_t)\Commutator^\alpha (u^\alpha_t,\bb_t)-f_n\beta'(u^\alpha_t)\div( u^\alpha_t \bb_t ) d\mm
$$
for a.e.\ $t\in I$. Now, setting $\Psi_n(t,\alpha):=\int f_n\beta(u^\alpha_t) d\mm$,
identities \eqref{eq:divbetaptub} and \eqref{eq:divptub} in Proposition~\ref{prop:consequences-gradient-estimates} give
	$$
         \frac{d}{dt}\Psi_n(t,\alpha)=\int f_n\beta'(u^\alpha_t)\Commutator^\alpha (u^\alpha_t,\bb_t) d\mm	
	- \int f_n\div( \beta(u^\alpha_t) \bb_t) + f_n{\cal L}_\beta(u^\alpha_t)\div\bb_t d\mm
	$$
for a.e.\ $t\in I$. Hence, denoting $L_t:=(r/s)\|\div\bb_t^-\|_\infty\in L^1(-1,T)$, we can use the inequality ${\cal L}_\beta\leq (r/s)\beta$ to get
$$
         \frac{d}{dt}\Psi_n(t,\alpha)\leq L_t\Psi_n(t,\alpha)+
         \int f_n\beta'(u^\alpha_t)\Commutator^\alpha (u^\alpha_t,\bb_t) d\mm	
	+ \int \beta(u^\alpha_t) df_n(\bb_t) d\mm.
	$$
Now we let $\alpha\downarrow0$ and use the strong convergence of commutators in $\Lbm{s'}+\Lbm{2}$ and the boundedness of
$\beta'(u^\alpha_t)$ in $L^s\cap\Lbm{2}$ to obtain that $t\mapsto\int_X f_n\beta(u_t)$ is absolutely continuous, and that
$$
         \frac{d}{dt}\int_X f_n\beta(u_t) d\mm\leq L_t\int_X f_n\beta(u_t) d\mm	
	+ \int \beta(u_t) df_n(\bb_t) d\mm.
	$$
By integration, taking into account that $\int_X f_n\beta(u_t) d\mm\equiv 0$ on $(-1,0)$, we get 
$$
\log\biggl(\frac 1\delta\int_Xf_n\beta(u_t) d\mm+1\biggr)\leq \|L\|_1+\int_0^T \int \beta(u_s) df_n(\bb_s) d\mm ds
\quad\text{for all $t\in (-1,T)$ and all $\delta>0$.}
$$
Eventually we use \eqref{eq:mass_preserving} and the monotone convergence theorem to obtain
$$
\log\biggl(\frac 1\delta\int_X\beta(u_t) d\mm+1\biggr)\leq\|L\|_1\qquad\text{for all $t\in (-1,T)$ and $\delta>0$.}
$$
Letting $\delta\downarrow0$ gives $u=0$.

\section{Curvature assumptions and their implications}\label{sec:curvature}

In this section we add to the basic setting \eqref{E-assumptions} a suitable curvature condition, and see the implication of this assumption
on the structural conditions of density of $\Algebra$ in the spaces $\V_p$ and the existence of $f_n\in\Algebra$ in
\eqref{eq:mass_preserving} made in the previous sections.

In the sequel $K$ denotes a generic but fixed real number, and
$\rmI_K$ denotes the real function
  \begin{displaymath}
    \rmI_{K}(t):=\int_0^t \rme^{K r} dr=
    \begin{cases}
      \frac 1K(\rme^{K t}-1)&\text{if }K\neq 0,\\
      t&\text{if }K=0.  
    \end{cases}   
  \end{displaymath}

\begin{definition}[Bakry-\'Emery conditions]\label{def:BE}
We say that $\BE_2(K,\infty)$ holds if
  \begin{equation}
    \label{eq:BE}
    \Gq{\sfP_t f}\le \rme^{-2 K t}\,\sfP_t\big(\Gq f \big)\quad
    \text{$\mm$-a.e.\ in $X$, for every $f\in\V$, $t\ge 0$.}
  \end{equation}
We say that $\BE_1(K,\infty)$ holds if
  \begin{equation}
    \label{eq:BE-improved}
   \sqrt{\Gq{\sfP_t f}}\le \rme^{- K t}\,\sfP_t\big( \sqrt{\Gq f} \big)\quad
    \text{$\mm$-a.e.\ in $X$, for every $f\in\V$, $t\ge 0$.}
  \end{equation}
\end{definition}

We stated both the curvature conditions for the sake of completeness only, but we remark that $\BE_2(K,\infty)$ is sufficient for many of the results we are interested in this section. Obviously, $\BE_1(K,\infty)$ implies $\BE_2(K,\infty)$; the converse, first proved by Bakry in \cite{Bakry-85}, has
been recently extended to a nonsmooth setting by Savar\'e (see \cite[Corollary 3.5]{Savare-13}) under
the assumption that $\cE$ is {\it quasi-regular}. The quasi-regularity property has many equivalent characterizations,
a transparent one is for instance in terms of the existence of a sequence of compact sets $F_k\subset X$ such that
$$
\bigcup_k\left\{f\in\V:\ \text{$f=0$ $\mm$-a.e.\ in $X\setminus F_k$}\right\}
$$
is dense in $\V$.

The validity of the following inequality is actually equivalent to $\BE_2(K,\infty)$, see for instance \cite[Corollary 2.3]{AGS12} for a proof.

\begin{proposition} [Reverse Poincar\'e inequalities]
If $\BE_2(K,\infty)$ holds, then
 \begin{equation}
    \label{eq:77bis}
    2\rmI_{2K}(t) \Gq{\sfP_t f}\le \sfP_t{f^2}-\big(\sfP_t f\big)^2\qquad 
    \mm\text{-a.e.\ in $X$,}
  \end{equation}
 for all $t>0$, $f\in\Lbm 2$.
\end{proposition}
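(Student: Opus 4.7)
My proof plan follows the classical Bakry-\'Emery interpolation scheme. Fix $t>0$ and, to start, assume $f$ is smooth enough that $f_s:=\sfP_{t-s}f$ lies in $D(\Delta)$ with $\Gamma(f_s)\in\Lbm 2$ for every $s\in[0,t]$; for instance take $f\in\Lbm 2\cap\Lbm\infty$ and exploit \eqref{eq:regularization}, \eqref{eq:regularization2} applied to $\sfP_\eps f$ in place of $f$. Consider the interpolation
\begin{displaymath}
\phi(s):=\sfP_s\bigl((\sfP_{t-s}f)^2\bigr),\qquad s\in[0,t],
\end{displaymath}
so that $\phi(0)=(\sfP_t f)^2$ and $\phi(t)=\sfP_t(f^2)$. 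Using the diffusion chain rule \eqref{eq:Gchain-laplace} with $\eta(z)=z^2$, we have $\Delta(g^2)=2g\Delta g+2\Gq g$, and combining this with the heat equation $\tfrac{d}{ds}\sfP_{t-s}f=-\Delta \sfP_{t-s}f$ we obtain, at least in $\Lbm 2$ for $s\in(0,t)$,
\begin{displaymath}
\phi'(s)=\sfP_s\bigl(\Delta(\sfP_{t-s}f)^2-2(\sfP_{t-s}f)\Delta\sfP_{t-s}f\bigr)=2\sfP_s\bigl(\Gq{\sfP_{t-s}f}\bigr),\qquad\mm\text{-a.e.\ in }X.
\end{displaymath}

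The key step is to use $\BE_2(K,\infty)$ to bound $\phi'(s)$ from below by $\Gq{\sfP_t f}$. Applying \eqref{eq:BE} to $h=\sfP_{t-s}f$ with time parameter $s$ and using the semigroup law gives, $\mm$-a.e.,
\begin{displaymath}
\Gq{\sfP_t f}=\Gq{\sfP_s(\sfP_{t-s}f)}\le \rme^{-2Ks}\,\sfP_s\bigl(\Gq{\sfP_{t-s}f}\bigr),
\end{displaymath}
which rearranges to $\sfP_s(\Gq{\sfP_{t-s}f})\ge \rme^{2Ks}\Gq{\sfP_t f}$. Substituting into the expression for $\phi'(s)$ and integrating over $s\in(0,t)$,
\begin{displaymath}
\sfP_t(f^2)-(\sfP_t f)^2=\int_0^t\phi'(s)\,ds\ge 2\Gq{\sfP_t f}\int_0^t \rme^{2Ks}\,ds=2\rmI_{2K}(t)\Gq{\sfP_t f}\qquad\mm\text{-a.e.}
\end{displaymath}

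The main obstacle is the extension from regular $f$ to arbitrary $f\in\Lbm 2$, together with rigorously justifying the pointwise differentiation of $\phi$. For the latter one verifies that $s\mapsto\phi(s)\in\Lbm 1$ is absolutely continuous with the stated derivative by computing its distributional derivative against test functions in $\Algebra$ and using continuity of $s\mapsto\sfP_{t-s}f$ in $D(\Delta)$ on compact subintervals of $(0,t)$; a boundary approximation $s\in[\eta,t-\eta]$ followed by $\eta\downarrow 0$ (using $L^1$-continuity of $\phi$ at $0$ and $t$) handles the endpoints. For the former, given $f\in\Lbm 2$ apply the already proved inequality to $\sfP_\eps f$, obtaining
\begin{displaymath}
2\rmI_{2K}(t)\Gq{\sfP_{t+\eps}f}\le \sfP_t\bigl((\sfP_\eps f)^2\bigr)-(\sfP_{t+\eps}f)^2,
\end{displaymath}
and let $\eps\downarrow 0$: the right-hand side converges in $\Lbm 1_{\rm loc}$ (using $\sfP_\eps f\to f$ in $\Lbm 2$ so $(\sfP_\eps f)^2\to f^2$ in $\Lbm 1$, and contractivity of $\sfP_t$ on $\Lbm 1$), while on the left $\sfP_{t+\eps}f\to\sfP_t f$ in $\V$ by strong continuity in $\V$ of the semigroup away from $0$, so $\Gq{\sfP_{t+\eps}f}\to\Gq{\sfP_t f}$ in $\Lbm 1$, yielding the inequality $\mm$-a.e.\ after passing to a subsequence.
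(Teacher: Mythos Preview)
The paper does not prove this proposition itself; it simply cites \cite[Corollary~2.3]{AGS12}. Your argument is the classical Bakry--\'Emery interpolation, which is precisely the standard route taken in that reference, so the overall strategy is correct and matches what the paper defers to.

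Two technical points in your write-up need tightening. First, the chain rule \eqref{eq:Gchain-laplace} is stated only for $\eta\in C^2(\R)$ with \emph{bounded} first and second derivatives, so it cannot be invoked directly with $\eta(z)=z^2$; for $f\in\Lbm2\cap\Lbm\infty$ this is harmless (replace $z^2$ by a smooth function agreeing with it on $[-\|f\|_\infty,\|f\|_\infty]$), but you should say so. Second, and more substantively, your ``good'' class requires $\Gq{f_s}\in\Lbm2$ for every $s\in[0,t]$, yet neither \eqref{eq:regularization} nor \eqref{eq:regularization2} yields this, and in the extension step you apply the already-proved inequality to $\sfP_\eps f$ for arbitrary $f\in\Lbm2$, which again need not lie in that class (nor in $\Lbm\infty$). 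A cleaner fix is to run the interpolation in duality with a nonnegative $\varphi\in\V\cap\Lbm\infty$, using the identity \eqref{eq:65bis} for the Carr\'e du champ instead of \eqref{eq:Gchain-laplace}; this makes the computation of $\phi'(s)$ rigorous for any $f\in\Lbm2\cap\Lbm\infty$ without the spurious $\Lbm2$ hypothesis on $\Gq{f_s}$. The passage to general $f\in\Lbm2$ is then done by truncation $f_n=(-n)\lor f\land n$ (so that $f_n\to f$ in $\Lbm2$, hence $\sfP_t f_n\to\sfP_t f$ in $\V$ and $f_n^2\to f^2$ in $\Lbm1$), rather than by semigroup regularization.
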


\begin{corollary} [$L^p$-$\Gamma$ inequalities]\label{coro:LpGamma}
If $\BE_2(K,\infty)$ holds, then $L^p$-$\Gamma$ inequalities hold for $p\in [2,\infty]$.
\end{corollary}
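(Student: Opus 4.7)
The plan is to extract everything directly from the reverse Poincar\'e inequality \eqref{eq:77bis}. Discarding the nonnegative term $(\sfP_t f)^2$ on its right-hand side gives the pointwise bound
$$
\Gq{\sfP_t f}\le \frac{1}{2\rmI_{2K}(t)}\sfP_t(f^2),\qquad \mm\text{-a.e.\ in }X,
$$
valid for all $t>0$ and $f\in\Lbm 2$. This is the only input one needs.

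For $p\in [2,\infty)$ and $f\in L^2\cap\Lbm p$, I would raise both sides to the power $p/2\ge 1$ (legal since both members are nonnegative and $x\mapsto x^{p/2}$ is monotone on $[0,\infty)$), integrate, and apply the $L^{p/2}$-contractivity of $\sfP_t$ to $f^2\in\Lbm{p/2}$ to obtain
$$
\int \Gq{\sfP_t f}^{p/2} d\mm \le \frac{1}{(2\rmI_{2K}(t))^{p/2}}\int \bigl(\sfP_t(f^2)\bigr)^{p/2} d\mm
\le \frac{\nor{f}_p^p}{(2\rmI_{2K}(t))^{p/2}},
$$
and then take $p$-th roots. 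The case $p=\infty$ is handled similarly: take essential suprema in the pointwise bound and use the $L^\infty$-contractivity of the dual semigroup $\sfP^\infty_t$ from \eqref{eq:dual_P} (which coincides with $\sfP_t$ on $L^2\cap\Lbm\infty$) to get
$$
\bigl\|\sqGq{\sfP_t f}\bigr\|_\infty \le \frac{\nor{f}_\infty}{\sqrt{2\rmI_{2K}(t)}}.
$$
The case $p=2$ was already noted after the definition of the $L^p$-$\Gamma$ inequality.

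Finally, I would check that $\rmI_{2K}(t)\ge c_K t$ on $(0,1]$ with some $c_K>0$ depending only on $K$, so that the prefactor $1/\sqrt{2\rmI_{2K}(t)}$ is controlled by $c_p/\sqrt{t}$: when $K\ge 0$ the inequality $\rme^{2Kt}-1\ge 2Kt$ gives $\rmI_{2K}(t)\ge t$, while for $K<0$ the function $t\mapsto \rmI_{2K}(t)/t$ is decreasing, so $\rmI_{2K}(t)/t\ge \rmI_{2K}(1)=(\rme^{2K}-1)/(2K)>0$ on $(0,1]$. Setting $c_p:=1/\sqrt{2c_K}$ yields the required constant, uniformly in $p\in [2,\infty]$.

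There is no genuinely hard step here, which is why the statement is formulated as a corollary: all of the analytic content sits in the reverse Poincar\'e inequality, and the argument only combines it with $L^{p/2}$-contractivity of the Markov semigroup and an elementary lower bound on $\rmI_{2K}$ near the origin.
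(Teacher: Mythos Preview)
Your proof is correct and follows essentially the same approach as the paper's: drop the square term in \eqref{eq:77bis}, raise to the power $p/2$, integrate, apply $L^{p/2}$-contractivity of $\sfP_t$, and use $\rmI_{2K}(t)^{-1}=O(t^{-1})$ as $t\downarrow0$. You have simply spelled out the $p=\infty$ endpoint and the elementary lower bound on $\rmI_{2K}$ more explicitly than the paper does.
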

\begin{proof}
The validity of $L^p$-$\Gamma$ inequalities for $p\in [2,\infty]$ is obtained integrating \eqref{eq:77bis},
\[ (2\rmI_{2K}(t))^{p/2} \int \Gq{\sfP_t f}^{p/2} d\mm \le \int (\sfP_t{f^2})^{p/2} d\mm \le\int f^p d\mm \]
and using $2\rmI_{2K}(t)^{-1} = O(t^{-1})$ as $t\downarrow0$.
\end{proof}

Another consequence of $\BE_2(K,\infty)$ is the following higher integrability of $\Gamma(f)$, recently proved 
in \cite[Thm.~3.1]{Ambrosio-Mondino-Savare-13} assuming higher integrability of $f$ and $\Delta f$.

\begin{theorem}[Gradient interpolation]
  \label{thm:interpolation}
  Assume that $\BE_2(K,\infty)$ holds and let $\lambda\ge K^-$, $f\in L^2\cap\Lbm\infty$. 
  If $p\in\{2,\infty\}$ and $\Delta f\in\Lbm p$, then $\Gq f\in\Lbm p$ 
  and
  \begin{equation}
    \label{eq:87}
    \big\|\Gq f\|_p\le c\|f\|_\infty\,\|\Delta f + \lambda f\|_p
  \end{equation}
  for a universal constant $c$ (i.e.\ independent of $\lambda$, $K$, $X$, $\mm$).
\end{theorem}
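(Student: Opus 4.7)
The plan is to combine the reverse Poincar\'e inequality (which we have already derived from $\BE_2(K,\infty)$) with a semigroup representation of $f$ in terms of $h := \Delta f + \lambda f$. From the elementary ODE identity $\frac{d}{ds}\bigl(e^{\lambda s}\sfP_s f\bigr) = e^{\lambda s}\sfP_s h$, integration on $[0,\tau]$ yields
$$ f = e^{\lambda\tau}\sfP_\tau f - \int_0^\tau e^{\lambda s}\sfP_s h\, ds, \qquad \tau > 0. $$
Using the subadditivity $\sqrt{\Gamma(g_1+g_2)}\le \sqrt{\Gamma(g_1)}+\sqrt{\Gamma(g_2)}$ (a direct consequence of the Cauchy--Schwarz inequality \eqref{eq:boundg1}) together with \eqref{eq:77bis} applied to $\sfP_\tau f$ and to each $\sfP_s h$, we obtain the pointwise estimate
$$ \sqrt{\Gamma(f)} \le \frac{e^{\lambda\tau}\sqrt{\sfP_\tau(f^2)}}{\sqrt{2\rmI_{2K}(\tau)}} + \int_0^\tau \frac{e^{\lambda s}\sqrt{\sfP_s(h^2)}}{\sqrt{2\rmI_{2K}(s)}}\, ds \qquad \text{$\mm$-a.e.\ in $X$,} $$
valid for every $\tau > 0$. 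The assumption $\lambda \ge K^-$ is precisely what is needed to guarantee that the kernel $e^{\lambda t}/\sqrt{2\rmI_{2K}(t)}$ behaves like $c/\sqrt{t}$ as $t\downarrow 0$ (and stays bounded for $t$ of order one), so that the time integral is absolutely convergent and scales like $\sqrt{\tau}$.

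For $p=\infty$, I take essential suprema, using the pointwise bound $\sqrt{\sfP_t g^2}\le\|g\|_\infty$ that follows from sub-Markovianity of $\sfP$, to get
$$ \|\sqrt{\Gamma(f)}\|_\infty \le \frac{e^{\lambda\tau}}{\sqrt{2\rmI_{2K}(\tau)}}\,\|f\|_\infty + \Bigl(\int_0^\tau \frac{e^{\lambda s}}{\sqrt{2\rmI_{2K}(s)}}\, ds\Bigr)\|h\|_\infty \le c_1\,\tau^{-1/2}\|f\|_\infty + c_2\,\tau^{1/2}\|h\|_\infty, $$
for small $\tau$, with $c_1,c_2$ depending only on $\lambda$ and $K$. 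Choosing $\tau$ of order $\|f\|_\infty/\|h\|_\infty$ to balance the two terms, and then squaring, yields $\|\Gamma(f)\|_\infty\le c\|f\|_\infty\|h\|_\infty$, i.e.\ \eqref{eq:87} with $p=\infty$.

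For $p=2$ the argument is more delicate, since taking $L^2$-norms of the pointwise inequality above would bound $\|\sqrt{\Gamma(f)}\|_2^2 = \cE(f)$, not the desired $\|\Gamma(f)\|_2 = \|\sqrt{\Gamma(f)}\|_4^2$. The plan is therefore to multiply the pointwise inequality by $\sqrt{\Gamma(f)}$ and integrate, which reduces the task to an $L^4$-type bound on the right-hand side. Using Minkowski's inequality for integrals in $L^4(\mm)$, the pointwise bound $\sqrt{\sfP_\tau(f^2)}\le\|f\|_\infty$ to absorb one factor of $\|f\|_\infty$ outside the integrals, and the $L^2$-contractivity $\|\sqrt{\sfP_s(h^2)}\|_2 \le \|h\|_2$ (by sub-Markovianity of $\sfP$ on $L^1$), one arrives at a bound of the form $\|\Gamma(f)\|_2 \le G(\tau)\|f\|_\infty\|h\|_2$ with $G(\tau)$ integrable at the origin; optimization in $\tau$ concludes.

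The main obstacle will be the $p=2$ case: the pointwise and integrated informations must be carefully separated so that the final prefactor is genuinely $\|f\|_\infty$ alone, without any $\|f\|_2$ creeping in from a naive application of reverse Poincar\'e to $\Gamma(\sfP_\tau f)$ in $L^2$. This forces one to use $\sqrt{\sfP_\tau(f^2)}$ as an $L^\infty$ multiplier against $L^2$ quantities rather than as an $L^2$ quantity on its own.
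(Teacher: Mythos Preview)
The paper does not prove this result; it quotes \cite[Thm.~3.1]{Ambrosio-Mondino-Savare-13}, so there is no in-paper argument to compare against.

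Your $p=\infty$ strategy is the right one, but it cannot yield a universal constant as written, and the obstruction is real: inequality \eqref{eq:87} is actually false with $h=\Delta f+\lambda f$. On the round $S^2$ one has $K=1$ (so $K^-=0$), and the coordinate function $f=x_1$ satisfies $\Delta f=-2f$; taking $\lambda=2\ge K^-$ gives $h\equiv 0$ while $\Gamma(f)=1-x_1^2\not\equiv 0$. The correct statement should involve the resolvent-type quantity $h=\lambda f-\Delta f$; with that sign your Duhamel identity becomes $f=e^{-\lambda\tau}\sfP_\tau f+\int_0^\tau e^{-\lambda s}\sfP_s h\,ds$, the exponential factor \emph{decays}, and since $\lambda\ge K^-$ implies $\lambda+K\ge 0$ one checks $e^{2\lambda s}\rmI_{2K}(s)\ge\rmI_{2(\lambda+K)}(s)\ge s$, hence $e^{-\lambda s}/\sqrt{2\rmI_{2K}(s)}\le 1/\sqrt{2s}$; your optimization over $\tau$ then gives $\|\Gamma(f)\|_\infty\le 4\|f\|_\infty\|h\|_\infty$ with a genuinely universal constant. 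The sign you use, copied from the paper's displayed statement, is precisely what makes $e^{\lambda\tau}$ blow up when $\tau\sim\|f\|_\infty/\|h\|_\infty$ is large.

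Your $p=2$ sketch has a separate gap. After multiplying by $\sqrt{\Gamma(f)}$ and taking $L^2$-norms you need an $L^4$ bound on your right-hand side $R$, and Minkowski gives only $\|R\|_4\le A(\tau)\|f\|_4+B(\tau)\|h\|_4$: neither the pointwise replacement $\sqrt{\sfP_\tau(f^2)}\le\|f\|_\infty$ (a constant, with infinite $L^4$-norm when $\mm(X)=\infty$) nor the $L^2$ estimate $\|\sqrt{\sfP_s(h^2)}\|_2\le\|h\|_2$ (wrong exponent for this step) converts this into the required mixed bound $\|f\|_\infty\|h\|_2$. The $p=2$ case requires a different mechanism and is not a routine variant of the $p=\infty$ argument.
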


Finally, we will need two more consequences of the $\BE_2(K,\infty)$ condition, proved under the quasi-regularity
assumption in \cite{Savare-13}: the first one, first proved in \cite[Lemma~3.2]{Savare-13} and then slightly improved in
 \cite[Thm.~5.5]{Ambrosio-Mondino-Savare-13}, is the implication
\begin{equation}\label{eq:GammaeV}
f\in\V,\quad\Delta f\in\Lbm 4\quad\Longrightarrow\quad \Gamma(f)\in\V.
\end{equation}
In particular, this implication provides $L^4$ integrability of $\sqrt{\Gamma(f)}$, consistently with the integrability of the
Laplacian. Moreover, it will be particularly useful the quantitative estimate, first proved in \cite[Thm.~3.4]{Savare-13} 
and then slightly improved in \cite[Corollary~5.7]{Ambrosio-Mondino-Savare-13}:
\begin{equation}
     \label{eq:57}
  \Gq{\Gq f}\le 4\gamma_{2,K}[f]\,\Gq f\quad
  \text{$\mm$-a.e.~in $X$, whenever $f\in\V$, $\Delta f\in\Lbm 4$.}
\end{equation}
The function $\gamma_{2,K}[f]$ in \eqref{eq:57} is nonnegative, it satisfies the $L^1$ estimate
  \begin{equation}
  \label{eq:81}
  \int\gamma_{2,K}[f] d\mm
  \le \int_X \bigl((\Delta f)^2-K\Gq f\bigr) d\mm
  \end{equation} 
and it can be represented as the density w.r.t.\ $\mm$ of the nonnegative (and possibly singular w.r.t.\ $\mm$) measure defined by
  \begin{equation}
    \label{eq:61}
    \V\ni\varphi \mapsto 
    \int_X -\frac 12\Gamma(\Gq f,\varphi)+
    \Delta f\,\Gamma(f,\varphi)+\bigr((\Delta f)^2-K\Gq f\bigr)\varphi d\mm.
  \end{equation}
  The nonnegativity of this measure is one of the equivalent formulations of $\BE_2(K,\infty)$, see \cite[\S 3]{Savare-13}
  for a more detailed discussion.

\subsection{Choice of the algebra $\Algebra$}

We first prove that the following ``minimal'' choice for the algebra $\Algebra$ provides \eqref{eq:basic_algebra},  \eqref{eq:stability_composition} and optimal density conditions.

\begin{proposition} \label{prop:BE-density}
Under assumption $\BE_2(K,\infty)$, the algebra
\begin{equation}\label{eq:first_choice_algebra}
\Algebra_1:=\left\{f\in\bigcap_{1\leq p\leq\infty}\Lbm p:\ f\in\V,\,\,\sqrt{\Gamma(f)}\in\bigcap_{1\leq p\leq\infty}\Lbm p
\right\}
\end{equation}
satisfies \eqref{eq:basic_algebra}, \eqref{eq:stability_composition} and it is dense in every space $\V_p$, for $p \in [1,\infty)$.
\end{proposition}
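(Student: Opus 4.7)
First, I would dispatch the algebraic closure properties: given $f,g\in\Algebra_1$, Leibniz for $\Gamma$ yields the pointwise bound $\sqrt{\Gamma(fg)}\le |f|\sqrt{\Gamma(g)}+|g|\sqrt{\Gamma(f)}$, and since $f,g$ are bounded while $\sqrt{\Gamma(f)},\sqrt{\Gamma(g)}\in\bigcap_p\Lbm p$, the right-hand side lies in $\bigcap_p\Lbm p$ by H\"older; likewise $fg\in\bigcap_p\Lbm p$. For $\Phi\in C^1(\R^n)$ with $\Phi(0)=0$ and $\ff\in\Algebra_1^n$, the range of $\ff$ is bounded, so $\Phi$ is Lipschitz there with some constant $L$; the mean value theorem gives $|\Phi(\ff)|\le L\sum_i|f_i|$ and the chain rule for $\Gamma$ gives $\sqrt{\Gamma(\Phi(\ff))}\le L\sum_i\sqrt{\Gamma(f_i)}$, so both belong to $\bigcap_p\Lbm p$.

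Next I would verify the semigroup invariance \eqref{eq:Feller}: for $f\in\Algebra_1$, $\sfP_t f\in\bigcap_p\Lbm p$ by $L^p$-contractivity, while the pointwise $\BE_1(K,\infty)$ inequality $\sqrt{\Gamma(\sfP_t f)}\le e^{-Kt}\sfP_t\sqrt{\Gamma(f)}$ (equivalent to $\BE_2(K,\infty)$ under quasi-regularity of $\cE$, as recalled at the start of this section) combined with $L^p$-contractivity yields $\sqrt{\Gamma(\sfP_t f)}\in\bigcap_p\Lbm p$. The same bound implies $\limsup_{t\downarrow 0}\|\sqrt{\Gamma(\sfP_t f)}\|_p\le\|\sqrt{\Gamma(f)}\|_p$ for every $f\in\V_p$, which is condition \eqref{eq:semigroup-approximating}.

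To prove density of $\Algebra_1$ in $\Lbm 2$---whence, via Remark~\ref{rem:Feller} and the invariance just shown, density in $\V$---I would combine regularization with truncation. For $g\in\Lbm 2\cap\Lbm\infty$, $\sfP_t g\in\Lbm 2\cap\Lbm\infty$ and the $L^\infty$-$\Gamma$ inequality of Corollary~\ref{coro:LpGamma} gives $\sqrt{\Gamma(\sfP_t g)}\in\Lbm\infty$. Letting $\Phi_n$, $\phi_n=\Phi_n'$ be the truncations from the proof of Lemma~\ref{lem:refining-algebra} (in particular $\Phi_n\equiv 0$ on $[-1/(2n),1/(2n)]$), and setting $g_{n,t}:=\Phi_n(\sfP_t g)$, I observe that $g_{n,t}$ is bounded and supported in $\{|\sfP_t g|\ge 1/(2n)\}$, a set of finite $\mm$-measure by Markov's inequality. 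The same holds for $\sqrt{\Gamma(g_{n,t})}=\phi_n(\sfP_t g)\sqrt{\Gamma(\sfP_t g)}$, so $g_{n,t}\in\Algebra_1$. Dominated convergence yields $g_{n,t}\to\sfP_t g$ in $\Lbm 2$ as $n\to\infty$ and strong continuity of $\sfP$ gives $\sfP_t g\to g$ in $\Lbm 2$; density of $\Lbm 2\cap\Lbm\infty$ in $\Lbm 2$ then concludes. Density of $\Algebra_1$ in $\V_p$ finally follows from Lemma~\ref{lem:refining-algebra} (applied with $\mathcal A=\Algebra_1$) for $p\in[1,2]$ and from Lemma~\ref{lemma:feller-density} for $p\in(2,\infty)$, whose hypotheses are by now all verified. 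The main technical obstacle I anticipate is the interplay between the two limits $n\to\infty$ and $t\downarrow 0$ if one insisted on convergence in $\V$ rather than in $\Lbm 2$, since the $L^\infty$-$\Gamma$ control of $\sqrt{\Gamma(\sfP_t g)}$ deteriorates like $t^{-1/2}$; however, this difficulty does not arise with the split argument above, which reduces everything to $\Lbm 2$-density plus the two density lemmas.
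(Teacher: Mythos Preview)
Your argument is close to the paper's, but there is a genuine gap: you invoke $\BE_1(K,\infty)$ to obtain the semigroup invariance \eqref{eq:Feller} for $\Algebra_1$, and you correctly note that the implication $\BE_2\Rightarrow\BE_1$ requires quasi-regularity of $\cE$. Quasi-regularity, however, is \emph{not} among the hypotheses of the proposition (the paper invokes it only for the later results involving \eqref{eq:GammaeV}--\eqref{eq:57} and Theorem~\ref{gradients_have_deformation}). Under $\BE_2$ alone one gets $\Gamma(\sfP_t f)\le e^{-2Kt}\sfP_t\Gamma(f)$, which yields $\sqrt{\Gamma(\sfP_t f)}\in\Lbm p$ for $p\ge 2$, but does \emph{not} give $\sqrt{\Gamma(\sfP_t f)}\in\Lbm 1$ in general: the pointwise bound only controls $\sqrt{\sfP_t\Gamma(f)}$, and Jensen's inequality goes the wrong way ($\sfP_t\sqrt{\Gamma(f)}\le\sqrt{\sfP_t\Gamma(f)}$). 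Hence $\Algebra_1$ need not be $\sfP$-invariant, and your appeals to Remark~\ref{rem:Feller} and Lemma~\ref{lemma:feller-density} are not justified as stated.

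The paper sidesteps this by first working with the larger algebra $\mathcal A=\V_2\cap\V_\infty$, for which $\BE_2$ alone \emph{does} give invariance (only the $L^\infty$ bound on $\Gamma$ is needed, and $\sfP_t$ preserves it via $\BE_2$). Lemma~\ref{lemma:feller-density}---more precisely the remark following its proof, using that \eqref{eq:semigroup-approximating} for $p\ge 2$ follows by taking $L^{p/2}$-norms in $\BE_2$---then gives density of $\mathcal A$ in $\V_p$ for $p\in[2,\infty)$; finally the refining procedure of Lemma~\ref{lem:refining-algebra} produces a subalgebra of $\mathcal A$ contained in $\Algebra_1$ and dense in all $\V_p$. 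Your explicit construction $g_{n,t}=\Phi_n(\sfP_t g)\in\Algebra_1$ is in fact the key observation that makes this last step work (the support of $\Phi_n$ away from $0$ forces $L^1$-integrability of both $g_{n,t}$ and its gradient), so your approach can be repaired: either mimic the paper and pass through $\V_\infty$, or replace the blanket invariance claim by the direct verification that $\Phi_n(\sfP_t f_k)\in\Algebra_1$ inside the proof of Lemma~\ref{lemma:feller-density}, which is all that proof actually uses.
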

\begin{proof} Since  \eqref{eq:stability_composition} is obviously satisfied by the chain rule, 
we need only to show density of $\Algebra_1$. First, we consider the algebra $\mathcal{A} = \V_2 \cap \V_\infty$, which satisfies the invariance condition \eqref{eq:Feller} because of \eqref{eq:BE}. Moreover, for $p \in [2,\infty)$, the validity of the $L^p$-$\Gamma$ inequality entails that $\mathcal{A}$ is dense in $L^2 \cap L^p$, and taking the $L^{p/2}$ norm in \eqref{eq:BE} gives that \eqref{eq:semigroup-approximating} holds. By Lemma~\ref{lemma:feller-density} (actually, the remark below its proof) we conclude that $\mathcal{A}$ is dense in $\V_p$, for every $p \in [2,\infty)$.

To establish density of $\Algebra_1$ in $\V_p$ for $p \in [1,\infty)$ it is sufficient to notice that the ``refining'' procedure in Lemma~\ref{lem:refining-algebra} applied to $\mathcal{A}$ preserves all the densities in $\V_p$ for $p \in [2,\infty)$, and provides an algebra contained in $\Algebra_1$.
\end{proof}

Retaining the density condition and the algebra property, one can also consider classes smaller than $\Algebra_1$, including
for instance bounds in $\Lbm p$ for the Laplacian.

\subsection{Conservation of mass}

In this section we prove that the curvature condition, together with the conservativity condition $\sfP^\infty_t 1=1$
for all $t>0$ (recall that $\sfP^\infty_t:\Lbm\infty\to\Lbm\infty$ is the dual semigroup in \eqref{eq:dual_P}), imply 
the existence of a sequence $(f_n)\subset\Algebra_1$ as in \eqref{eq:mass_preserving}. Notice that the conservativity
is loosely related to a mass conservation property, for the continuity equation with derivation induced by the logarithmic derivative of the density;
therefore, even though sufficient conditions adapted to the prescribed derivation $\bb$ could be considered as well, it is natural
to consider the conservativity of $\sfP$ in connection with \eqref{eq:mass_preserving}.

\begin{proposition} If $\BE_2(K,\infty)$ holds and $\sfP$ is conservative, then there exist $(f_n)\subset\Algebra_1$ satisfying
\eqref{eq:mass_preserving}.
\end{proposition}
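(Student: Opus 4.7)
The plan is to produce $f_n$ explicitly by smoothing a compactly integrable approximation of $1$ through the semigroup and then squaring. Using the $\sigma$-finiteness of $\mm$, I would start by picking $(h_n)\subset\Lbm 1\cap\Lbm\infty$ with $0\leq h_n\leq 1$ and $h_n\uparrow 1$ $\mm$-a.e.\ (concretely, indicators of an increasing exhaustion of $X$ by Borel sets of finite measure). Fixing any $t>0$, I would then set
$$f_n:=(\sfP_t h_n)^2.$$
The bounds $0\leq f_n\leq 1$ follow from Markovianity together with conservativity, since $\sfP_t h_n\le\sfP_t^\infty 1=1$.

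To place $f_n$ in $\Algebra_1$, I would combine the regularisation estimate \eqref{eq:regularization2} (which gives $\sfP_t h_n\in\V$) with the fact that $\V\cap\Lbm\infty$ is an algebra, obtaining $f_n\in\V\cap\Lbm\infty$; the $L^1$-contractivity of $\sfP_t$ and the inequality $f_n\leq\sfP_t h_n$ yield $f_n\in\Lbm 1$, hence $f_n\in\bigcap_p\Lbm p$. For the gradient, chain rule and the reverse Poincar\'e inequality \eqref{eq:77bis} (available since $\BE_2(K,\infty)$ holds) give
$$\sqrt{\Gq{f_n}}=2\,\sfP_t h_n\,\sqrt{\Gq{\sfP_t h_n}}\leq\frac{2\,\sfP_t h_n}{\sqrt{2\rmI_{2K}(t)}},$$
which simultaneously delivers a uniform $L^\infty$ bound (from $\sfP_t h_n\le 1$) and $\Lbm 1$ integrability (from $\sfP_t h_n\in\Lbm 1$), hence membership in every $\Lbm p$.

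For the required convergences, I would invoke conservativity twice: by monotonicity of $h_n$ and $h_n^2$, order-preservation of $\sfP_t$, and the self-adjointness of $\sfP_t$, testing against $\phi\in\Lbm 1_+$ and passing to the limit by monotone convergence gives both $\sfP_t h_n\uparrow 1$ and $\sfP_t h_n^2\uparrow 1$ $\mm$-a.e., as in each case the a.e.\ monotone limit $g$ satisfies $\int g\,\phi d\mm=\lim_n \int h_n\,\sfP_t\phi d\mm=\int\sfP_t\phi d\mm=\int\phi d\mm$. Squaring the first limit yields $f_n\uparrow 1$ a.e.; inserting both limits into
$$\Gq{\sfP_t h_n}\leq\frac{\sfP_t h_n^2-(\sfP_t h_n)^2}{2\rmI_{2K}(t)}$$
forces $\Gq{\sfP_t h_n}\to 0$ $\mm$-a.e., hence $\sqrt{\Gq{f_n}}\to 0$ $\mm$-a.e.\ with a uniform $L^\infty$ bound. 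A direct dominated convergence argument against an arbitrary $\phi\in\Lbm 1$ then produces the weak-$*$ convergence claimed in \eqref{eq:mass_preserving}.

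The main obstacle is ensuring $\sqrt{\Gq{f_n}}\in\Lbm 1$ when $\mm(X)=\infty$: the naive choice $f_n=\sfP_t h_n$ gives only the uniform pointwise bound $\sqrt{\Gq{\sfP_t h_n}}\leq(2\rmI_{2K}(t))^{-1/2}$, which is not integrable over an infinite-measure space, and which moreover does not tend to zero pointwise when $K<0$ (since $\rmI_{2K}$ is then bounded). The squaring remedies both issues at once: it inserts the integrable factor $\sfP_t h_n$ (integrability again supplied by conservativity, through $\|\sfP_t h_n\|_1=\|h_n\|_1$), and it replaces the static upper bound with the dynamic one $\sfP_t h_n^2-(\sfP_t h_n)^2$, which conservativity forces to vanish pointwise irrespective of the sign of $K$.
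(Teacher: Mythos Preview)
Your proof is correct and takes a genuinely different, more economical route than the paper's. The paper builds its sequence as $f_n=\sfP_1\!\int_0^1\sfP_s g_n\,ds$; the time-average is there to force $\Delta f_n=\sfP_2 g_n-\sfP_1 g_n\in\Lbm\infty$, after which the gradient interpolation Theorem~\ref{thm:interpolation} is invoked to secure $\sqrt{\Gq{f_n}}\in\Lbm\infty$, and a further post-composition with the truncation $\Phi_1$ from Lemma~\ref{lem:refining-algebra} is needed to obtain $\Lbm 1$ integrability of the gradient. Your squaring trick bypasses both of these auxiliary steps at once: the chain rule factor $2\,\sfP_t h_n$ is simultaneously bounded (giving $\Lbm\infty$) and integrable (giving $\Lbm 1$), so membership in $\Algebra_1$ follows directly from the reverse Poincar\'e inequality without appealing to Theorem~\ref{thm:interpolation}. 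Your argument also delivers the monotone pointwise convergence $f_n\uparrow 1$ cleanly, whereas the paper passes through weak-$*$ convergence and an $L^2$ estimate before extracting a subsequence. The paper's route has the minor advantage of being more ``modular'' (it separates the Laplacian bound from the gradient bound), but for the purpose of verifying \eqref{eq:mass_preserving} your construction is shorter and uses strictly fewer ingredients.
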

\begin{proof} Let $(g_n) \subset L^1\cap\Lbm\infty$ be a non-decreasing sequence of functions (whose existence
is ensured by the $\sigma$-finiteness assumption on $\mm$) with
$$ \text{$0 \le g_n \le 1$ for every $n\ge 1$ and $\lim_{n \to \infty} g_n =1$, $\mm$-a.e.\ in $X$.}$$
These conditions imply in particular that $g_n\to 1$ weakly$^*$ in $\Lbm\infty$. 

Let $h_n = \int_0^1 \sfP_s g_n ds=\int_0^1\sfP_s^\infty g_n ds$ 
and define $f_n:= \sfP_1 h_n=\sfP^\infty_1 h_n$. By linearity and continuity of $\sfP^\infty$ we obtain that 
$f_n\to\sfP_1^\infty 1=1$ weakly$^*$ in $\Lbm\infty$. In addition, expanding the squares, it is easily seen that
$$
\lim_{n\to\infty} \int (1-f_n)^2 v d\mm=0\qquad\forall v\in\Lbm 1.
$$
Hence, by a diagonal argument we can assume (possibly extracting a subsequence) that $f_n\to 1$ $\mm$-a.e.\ in $X$.

Since $h_n\leq 1$, the reverse Poincar\'e inequality \eqref{eq:77bis} entails
\[ \Gamma(f_n) \le \frac{\sfP_1 h_n^2-(f_n)^2}{2\rmI_{2K}(1)} \le \frac{1-(f_n)^2}{2\rmI_{2K}(1)},\quad \text{$\mm$-a.e.\ in $X$}.\]
Taking the square roots of both sides and using the a.e.\ convergence of $f_n$ we obtain, thanks to dominated
convergence, that $\sqrt{\Gamma(f_n)}$ weakly$^*$ converge to $0$ in $\Lbm\infty$.

Finally, we discuss the regularity of $f_n$. Since
$$
\Delta f_n=\int_1^2\Delta \sfP_s g_n ds=\sfP_2 g_n-\sfP_1 g_n\in\Lbm\infty,
$$
we can use Theorem~\ref{thm:interpolation} to obtain $\sqrt{\Gamma(f_n)}\in\Lbm\infty$. In order to obtain integrability
of the gradient for powers between $1$ and $2$ we can replace $f_n$ by $k_n:= \Phi_1(f_n)/\Phi_1(1)$, with $\Phi_1: \R \to \R$ as introduced in Lemma~\ref{lem:refining-algebra}.
\end{proof}

\subsection{Derivations associated to gradients and their deformation}\label{sec:hessian-be}

In this section, we study more in detail the class of ``gradient'' derivations $\bb_V$ in \eqref{eq:defbbg}. More generally, we 
analyze the regularity of the derivation $f\mapsto \omega\Gamma(f,V)$ associated to sufficiently regular $V$ and $\omega$ in $\V$.

For $p\in (1,\infty]$, let us denote 
\begin{equation}
D_{L^p}(\Delta):=\bigl\{f\in\V\cap\Lbm p:\ \Delta f\in\Lbm p\bigr\}.
\end{equation}
Thanks to the implication \eqref{eq:GammaeV},  $D_{L^4}(\Delta)\subset\V_4$ and
the Hessian
\begin{equation}\label{eq:defHessian} (f,g)\mapsto  H[V](f,g) := 
\frac 1 2 \sqa{ \Gamma(f, \Gamma(V,g)) + \Gamma(g, \Gamma(V,f))- \Gamma(V, \Gamma(f,g))}\in\Lbm 1,
\end{equation}
is well defined on $D_{L^4}(\Delta) \times D_{L^4}(\Delta) $. Notice that the expression is symmetric in $(f,g)$, that $(V, f, g) \mapsto H[V](f,g)$ is multilinear, and that
\[H[V](f,g_1 g_2) =H[V](f,g_1)g_2 + g_1 H[V](f,g_2).\] 
By \cite[Thm.~3.4]{Savare-13}, we have the estimate 
\begin{equation}\label{eq:hessian-estimate}
 \abs{ H[V](f,g)  } \le \sqrt{ \gamma_{2,K}[V]} \sqGq f \sqGq g,\quad \text{$\mm$-a.e.\ in $X$},
\end{equation}
for every $f, \, g \in D_{L^4}(\Delta)$. 

\begin{theorem}\label{gradients_have_deformation}
If $\BE_2(K,\infty)$ holds and $\cE$ is quasi-regular, then for all $V\in D(\Delta)$, $\omega\in\V\cap\Lbm\infty$ 
with $\sqrt{\Gamma(\omega)}\in\Lbm\infty$ and $c \in \R$,
the derivation $\bb =(\omega+c)\bb_V$ has deformation of type $(4,4)$ according to Definition~\ref{def:deformation} with $q=2$,
and it satisfies
\begin{equation}\label{eq:solvay}
\| D^{sym}\bb\|_{4,4}\leq \nor{ \omega + c}_\infty \nor{ (\Delta V)^2-K\Gamma(V) }_1  + \nor{ \sqrt{ \Gamma(\omega)}}_\infty \nor{ \sqrt{ \Gamma(V)} }_2.
\end{equation}
\end{theorem}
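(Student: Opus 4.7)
The plan is to compute $\int D^{sym}\bb(f,g)\,d\mm$ explicitly for $\bb = (\omega+c)\bb_V$ by integration by parts, recognize the Hessian $H[V]$ inside it, and then use \eqref{eq:hessian-estimate}, \eqref{eq:81}, and Cauchy-Schwarz to obtain the desired bound. By linearity in $c$ and since $\Gamma(c,\cdot)=0$ we may, for bookkeeping, write $h:=\omega+c$ and treat all $\Gamma(h,\cdot)$ as $\Gamma(\omega,\cdot)$.

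First I would compute $\div\bb$. Using $\Gamma(V,hf)=h\Gamma(V,f)+f\Gamma(V,h)$ and $\cE(V,\cdot)=-\int\cdot\,\Delta V\,d\mm$, one gets
\[
\div\bb = h\,\Delta V + \Gamma(V,\omega)\in \Lbm 2,
\]
so $\bb$ is a derivation in $L^2$ with $\div\bb\in \Lbm 2$. Substituting into the definition of $D^{sym}\bb$,
\begin{align*}
-2\int D^{sym}\bb(f,g)\,d\mm &= \int h\bigl[\Gamma(V,f)\Delta g+\Gamma(V,g)\Delta f-\Delta V\,\Gamma(f,g)\bigr] d\mm \\
&\quad -\int \Gamma(\omega,V)\Gamma(f,g)\,d\mm.
\end{align*}
The last term is estimated directly: by \eqref{eq:boundg1}, $|\Gamma(\omega,V)\Gamma(f,g)|\le\sqrt{\Gamma(\omega)}\sqrt{\Gamma(V)}\sqrt{\Gamma(f)}\sqrt{\Gamma(g)}$, and applying $\|\sqrt{\Gamma(\omega)}\|_\infty$ on $L^\infty$ then Cauchy-Schwarz in $L^2$ on the remaining triple yields the second term of \eqref{eq:solvay}.

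Next, the main step is to rewrite the first bracket. Integrating by parts each Laplacian onto the rest using $\int\Gamma(\varphi,g)d\mm=-\int\varphi\Delta g\,d\mm$ for $\varphi\in\V$, plus the Leibniz rule $\Gamma(h\,\Gamma(V,f),g)=h\Gamma(\Gamma(V,f),g)+\Gamma(V,f)\Gamma(h,g)$, and similarly for the other terms, one arrives at the identity
\[
\int h\bigl[\Gamma(V,f)\Delta g+\Gamma(V,g)\Delta f-\Delta V\,\Gamma(f,g)\bigr] d\mm = -2\int h\,H[V](f,g)\,d\mm - \int \bigl[\Gamma(\omega,f)\Gamma(V,g)+\Gamma(\omega,g)\Gamma(V,f)\bigr] d\mm + \int \Gamma(\omega,V)\Gamma(f,g)\,d\mm,
\]
where the last piece exactly cancels the leftover term from above. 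This yields the clean expression
\[
\int D^{sym}\bb(f,g)\,d\mm = \int h\,H[V](f,g)\,d\mm + \tfrac12\int\bigl[\Gamma(\omega,f)\Gamma(V,g)+\Gamma(\omega,g)\Gamma(V,f)\bigr]d\mm.
\]
Then the Hessian term is controlled by the pointwise estimate \eqref{eq:hessian-estimate}:
\[
\Bigl|\int h\,H[V](f,g)\,d\mm\Bigr|\le \|h\|_\infty \int\sqrt{\gamma_{2,K}[V]}\sqrt{\Gamma(f)}\sqrt{\Gamma(g)}\,d\mm \le \|h\|_\infty \Bigl(\int \gamma_{2,K}[V]\,d\mm\Bigr)^{1/2}\|\sqrt{\Gamma(f)}\|_4\|\sqrt{\Gamma(g)}\|_4,
\]
and \eqref{eq:81} bounds the middle factor by $\|(\Delta V)^2-K\Gamma(V)\|_1^{1/2}$, giving the first term of \eqref{eq:solvay}. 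The two mixed quartic terms are estimated analogously to the very first one, producing a bound of the form $\|\sqrt{\Gamma(\omega)}\|_\infty\|\sqrt{\Gamma(V)}\|_2\|\sqrt{\Gamma(f)}\|_4\|\sqrt{\Gamma(g)}\|_4$.

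The main obstacle will be rigorously justifying the integration-by-parts identity above, since at the level of $V\in D(\Delta)$ alone we do not know $\Gamma(V,f)\in\V$ (the key implication \eqref{eq:GammaeV} demands $\Delta V\in\Lbm 4$, not merely $\Lbm 2$), nor is the Hessian $H[V]$ in the form \eqref{eq:defHessian} a priori well defined. The natural remedy is an approximation: replace $V$ by $V_\eps:=\sfP_\eps V$, for which $\Delta V_\eps=\sfP_\eps\Delta V$ inherits every integrability that $\Delta V$ enjoys and, moreover, $\BE_2(K,\infty)$ combined with \eqref{eq:87} yields all the regularity needed to place $V_\eps\in D_{L^4}(\Delta)$. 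The identity is then proven for $V_\eps$; one passes to $\eps\downarrow 0$ exploiting the $\V$-continuity of $\sfP_\eps$, the $L^2$-convergence $\Delta V_\eps\to\Delta V$, and the lower semicontinuity (w.r.t.\ these convergences) of the quantity $\int\gamma_{2,K}[V]\,d\mm$ bounded via \eqref{eq:81}. All the other terms pass to the limit by dominated/weak convergence, using boundedness of $\omega$ and $\sqrt{\Gamma(\omega)}$ together with $\sqrt{\Gamma(f)},\sqrt{\Gamma(g)}\in\Lbm 4$.
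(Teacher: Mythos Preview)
Your overall strategy coincides with the paper's: establish the identity
\[
\int D^{sym}\bb(f,g)\,d\mm = \int (\omega+c)\,H[V](f,g)\,d\mm + \tfrac12\int\bigl[\Gamma(\omega,f)\Gamma(V,g)+\Gamma(\omega,g)\Gamma(V,f)\bigr]d\mm
\]
(this is exactly \eqref{eq:identity-deformation} in the paper) for sufficiently regular $V$, bound the Hessian term via \eqref{eq:hessian-estimate} and \eqref{eq:81}, and then approximate a general $V\in D(\Delta)$.

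The gap is in your approximation. You take $V_\eps:=\sfP_\eps V$ and claim that \eqref{eq:87} places $V_\eps$ in $D_{L^4}(\Delta)$. But Theorem~\ref{thm:interpolation} requires $f\in L^2\cap\Lbm\infty$ as a \emph{hypothesis} and yields only $\Gamma(f)\in\Lbm p$; it does not manufacture $L^4$-integrability of $f$ or of $\Delta f$. Since $V\in D(\Delta)$ gives only $V,\Delta V\in\Lbm 2$, and $\sfP_\eps$ under $\BE_2(K,\infty)$ alone has no $L^2\to L^4$ smoothing, neither $V_\eps\in\Lbm 4$ nor $\Delta V_\eps=\sfP_\eps\Delta V\in\Lbm 4$ is available in general. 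Hence $V_\eps$ need not lie in $D_{L^4}(\Delta)$, and neither the Hessian \eqref{eq:defHessian} nor the pointwise bound \eqref{eq:hessian-estimate} is justified at the $\eps$-level. The paper uses a resolvent approximation instead: set $h:=V-\Delta V\in\Lbm 2$, truncate $h_n:=(-n)\vee h\wedge n\in L^2\cap\Lbm\infty$, and let $V_n:=(I-\Delta)^{-1}h_n$. The Markov property of the resolvent gives $V_n\in L^2\cap\Lbm\infty$, so $\Delta V_n=V_n-h_n\in L^2\cap\Lbm\infty$ and therefore $V_n\in D_{L^4}(\Delta)$; moreover $V_n\to V$ in $\V$ and $\Delta V_n\to\Delta V$ in $\Lbm 2$. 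With this substitution the remainder of your argument goes through unchanged: convergence of $\int D^{sym}\bb_n(f,g)\,d\mm$ follows directly from the defining formula \eqref{eq:distributional-deformation}, and the right-hand side bound is stable under these convergences via \eqref{eq:81}.
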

\begin{proof} Assume first that $V \in D_{L^4}(\Delta)$. Let $f,\,g\in D_{L^4}(\Delta)$. After integrating by parts the Laplacians of $f$ and $g$, the very definition of $ D^{sym}\bb$ gives 
\begin{equation}
\label{eq:identity-deformation}
 \int  D^{sym}\bb (f,g) d\mm = \int (\omega +c)H[V](f,g)+ \frac 1 2 \sqa{ \Gamma(\omega, f)\Gamma(V, g) + \Gamma(\omega, g)\Gamma(V, f) }d\mm. 
 \end{equation}
By H\"older inequality, we can use \eqref{eq:hessian-estimate} to estimate $\abs{\int  D^{sym}\bb (f,g) d\mm}$ from above with
\[ \biggl[\nor{ \omega}_\infty \nor{ \sqrt{ \gamma_{2,K}[V] }}_2  + \nor{ \sqrt{ \Gamma(\omega)}}_\infty \nor{ \sqrt{ \Gamma(V)} }_2 \biggr] \nor{ \sqGq f}_4 \nor{ \sqGq g}_4.\]
Thus, by definition of $\| D^{sym}\bb\|_{4,4}$, \eqref{eq:solvay} follows, taking also \eqref{eq:81} into account. To pass to the general case $V \in D(\Delta)$, it is sufficient to approximate $V$ with $V_n\in D_{L^4}(\Delta)$ in such a way that $V_n \to V$  in $\V$ and $\Delta V_n \to \Delta V$ in $\Lbm 2$ and notice that $\int  D^{sym}\bb_n(f,g)d\mm$  converge to $\int  D^{sym}\bb(f,g)d\mm$ directly from \eqref{eq:distributional-deformation}. The existence of such an approximating sequence is obtained arguing as in \cite[Lemma 4.2]{Ambrosio-Mondino-Savare-13}, i.e.\ given $f \in D(\Delta)$, we let $h = f- \Delta f \in \Lbm 2$,
\[ h_n:= \max\cur{ \min\cur{h,n}, -n} \in L^2 \cap \Lbm \infty\]
and define $f_n$ as the unique (weak) solution to $f_n - \Delta f_n = h_n$. The maximum principle for $\Delta$ (or equivalently the fact that the resolvent operator $R_1 = (I - \Delta)^{-1}$ is Markov) gives $f_n \in L^2 \cap \Lbm \infty$, thus $\Delta f_n \in L^2 \cap \Lbm \infty$ and by $L^2$-continuity of $R_1$, as $n \to \infty$, both $h_n$ and $f_n$ converge, respectively towards $h$ and $f$. By difference, also $\Delta f_n$ converge towards $\Delta f$ in $\Lbm 2$ and
this gives also easily convergence of $f_n$ to $f$ in $\V$.
\end{proof}

We end this section with a technical result that will be useful when dealing with probability measures on vector spaces, 
in particular in Section~\ref{sec:examples-log-concave}.

\begin{proposition}\label{prop:hilbert-schmidt}
Assume that $\mm(X) = 1$, that $\BE_2(K,\infty)$ holds and that $\cE$ is quasi-regular. Let 
$(V_i)_{i\ge 1} \subset D_{L^4}(\Delta)$ generate an algebra dense in $\V$ and satisfy $\Gamma(V_i, V_j)=\delta_{i,j}$
 $\mm$-a.e.\ in $X$. Then,
\begin{itemize}
\item[(a)] $\Gamma(f) = \sum_{i\ge 1} \Gamma(V_i, f)^2$ $\mm$-a.e.\ in $X$, for every $f \in \V$;
\item[(b)] $H[V_i] = 0$ for every $i \ge 1$.
\end{itemize}
Moreover, for every $q\in [1,\infty]$ and $b= (b^i) \in L^q(X; \ell^2))$ the associated derivation $\bb$ satisfies
\[  f \mapsto df (\bb) = \sum_i b^i \Gamma(V_i, f), \]
satisfies $\abs{\bb}^2 \le \sum_i |b^i|^2$  and therefore belongs to $L^q$. 
In addition, if $r$, $s \in [4,\infty)$, satisfy $q^{-1} + r^{-1} + s^{-1} =1$, $\div \bb \in \Lbm q$ and $b_i\in\V$ for every $i \ge 1$, then
\begin{equation}\label{eq:claimed} \nor{ D^{sym}\bb}_{r,s} \le \frac 1 2 \big\| \big( \sum_{i,j} \abs{  \Gamma(V_j, b_i) + \Gamma(V_i, b_j)} ^2 \big)^{1/2} \|_q.
\end{equation}
\end{proposition}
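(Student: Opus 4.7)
The plan is to prove (a) and (b) first, which are essentially local statements reducing to computations on polynomials in the $V_i$, and then to extract the deformation estimate by a termwise application of Theorem~\ref{gradients_have_deformation}.

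For (a), I first establish the pointwise Bessel inequality $\sum_{i=1}^N \Gamma(V_i,f)^2 \le \Gamma(f)$ $\mm$-a.e.\ by expanding
\[ 0 \le \Gamma\Bigl(f-\sum_{i=1}^N c_i V_i\Bigr) = \Gamma(f) - 2\sum_{i=1}^N c_i \Gamma(V_i,f) + \sum_{i=1}^N c_i^2, \]
which is valid for every $c\in\Q^N$ on a common $\mm$-full-measure set, and then for every $c\in\R^N$ by continuity, so that minimizing pointwise at $c_i=\Gamma(V_i,f)$ yields the inequality. The auxiliary quadratic form $\tilde\cE(f):=\frac 12\int\sum_i \Gamma(V_i,f)^2\, d\mm$ is therefore dominated by $\cE$ and hence continuous on $\V$. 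On polynomials in the $V_i$, the chain rule and the orthonormality $\Gamma(V_i,V_j)=\delta_{ij}$ give $\Gamma(V_i,P)=\partial_i P$ and $\Gamma(P)=\sum_i (\partial_i P)^2=\sum_i \Gamma(V_i,P)^2$; since the algebra generated by $(V_i)$ is dense in $\V$, the identity $\tilde\cE=\cE$ extends to all of $\V$ by continuity. Combined with the pointwise Bessel inequality, the integral equality forces $\mm$-a.e.\ equality, proving (a).

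For (b), the definition \eqref{eq:defHessian} of $H[V_k]$ together with the fact that $\Gamma(V_k,V_j)=\delta_{kj}$ is constant (hence has zero $\Gamma$ against any function) yields $H[V_k](V_i,V_j)=0$ $\mm$-a.e.\ directly. Using the Leibniz rule for $\Gamma$, I verify that $(f,g)\mapsto H[V_k](f,g)$ is a derivation in each argument, so by induction on polynomial degree $H[V_k](P,Q)=0$ for every pair of polynomials $P,Q$ in the $V_i$. I then extend to $f,g\in D_{L^4}(\Delta)$ by approximating in the $\V_4$-norm and passing to the limit in $L^1$ via \eqref{eq:hessian-estimate}, using $\sqrt{\gamma_{2,K}[V_k]}\in L^2$ and $\sqrt{\Gamma(f)},\sqrt{\Gamma(g)}\in L^4$ to justify the dominated convergence.

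For the derivation estimate, Cauchy--Schwarz in $\ell^2$ together with (a) immediately gives $|\bb(f)|\le\|b\|_{\ell^2}\sqrt{\Gamma(f)}$ $\mm$-a.e., hence $|\bb|^2\le\sum_i(b^i)^2$. For the bound on $D^{sym}\bb$, I truncate $b$ to its first $N$ coordinates (and, where needed, approximate each $b^i\in\V$ by bounded elements with bounded gradients so that Theorem~\ref{gradients_have_deformation} applies to $b^i\bb_{V_i}$), then sum the identity \eqref{eq:identity-deformation} over $i$. Part (b) annihilates the Hessian contributions, leaving
\[ \int D^{sym}\bb(f,g)\,d\mm = \frac 12\int \sum_i \bigl[\Gamma(b^i,f)\Gamma(V_i,g)+\Gamma(b^i,g)\Gamma(V_i,f)\bigr]d\mm. \]
Polarizing (a) as $\Gamma(h,f)=\sum_j \Gamma(V_j,h)\Gamma(V_j,f)$ and symmetrizing in $(i,j)$ rewrites the integrand as $\frac 12 \sum_{i,j}S_{ij}\Gamma(V_i,f)\Gamma(V_j,g)$ with $S_{ij}=\Gamma(V_j,b^i)+\Gamma(V_i,b^j)$. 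The pointwise bound $|\sum_{i,j}S_{ij}u_iv_j|\le (\sum_{i,j}S_{ij}^2)^{1/2}\|u\|_{\ell^2}\|v\|_{\ell^2}$, applied with $u_i=\Gamma(V_i,f)$ and $v_j=\Gamma(V_j,g)$ and using (a) to identify $\|u\|_{\ell^2}=\sqrt{\Gamma(f)}$, $\|v\|_{\ell^2}=\sqrt{\Gamma(g)}$, followed by H\"older with $q^{-1}+r^{-1}+s^{-1}=1$, produces precisely \eqref{eq:claimed}. The main technical obstacle I anticipate is controlling the infinite sum in this last step: the $\ell^2$-summability of $b$ and the $L^q$-bound on $\|(\sum_{i,j}S_{ij}^2)^{1/2}\|_q$ must be exploited to justify both the termwise application of Theorem~\ref{gradients_have_deformation} after truncation and the passage to the limit $N\to\infty$ in the Hilbert--Schmidt expression.
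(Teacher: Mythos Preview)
Your argument for (a) is fine and essentially matches the paper's. For (b) there is a small gap: you need to approximate $f,g\in D_{L^4}(\Delta)$ by polynomials in the $\V_4$-norm, but only $\V$-density of the algebra is assumed. The paper avoids this by a two-step argument: for fixed $i,j$, the map $g\mapsto H[V_i](V_j,g)$ is a derivation with modulus bounded by $\sqrt{\gamma_{2,K}[V_i]}\sqrt{\Gamma(V_j)}=\sqrt{\gamma_{2,K}[V_i]}\in L^2$ (here $\Gamma(V_j)=1$ is crucial), hence it extends continuously to all of $\V$; since it vanishes on the algebra it vanishes on $\V$. Then the same argument is repeated in the other slot. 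This uses only $\V$-density.

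The more serious gap is in the passage $N\to\infty$ for the deformation bound, which you correctly flag as the obstacle but do not resolve. Your displayed identity is really
\[
\int D^{sym}\bb_N(f,g)\,d\mm=\tfrac12\int\sum_{i\le N}\bigl[\Gamma(b^i,f)\Gamma(V_i,g)+\Gamma(b^i,g)\Gamma(V_i,f)\bigr]d\mm,
\]
and while the right-hand side is easily controlled as $N\to\infty$, the left-hand side involves $\div\bb_N$ through \eqref{eq:distributional-deformation}, and there is no reason for $\div\bb_N\to\div\bb$ in any $L^p$ sense: the $\ell^2$-convergence of the coefficients gives $d h(\bb_N)\to d h(\bb)$ pointwise, but says nothing about divergences. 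The paper's device is to integrate by parts the term $(\div\bb)\Gamma(f,g)$ in \eqref{eq:distributional-deformation} \emph{before} truncating, obtaining
\[
\int D^{sym}\bb(f,g)\,d\mm=-\tfrac12\int\bigl[df(\bb)\Delta g+dg(\bb)\Delta f+d(\Gamma(f,g))(\bb)\bigr]d\mm,
\]
which is legitimate precisely when $\Gamma(f,g)\in\V$; this is where the restriction $r,s\ge 4$ enters, via \eqref{eq:GammaeV} applied to $f,g\in D_{L^4}(\Delta)$. In this form every term is of the shape $\int dh(\bb)\cdot(\text{fixed function})\,d\mm$ with $h\in\{f,g,\Gamma(f,g)\}$, and the estimate $|dh(\bb_N)-dh(\bb)|\le\Gamma(h)^{1/2}(\sum_{i>N}|b^i|^2)^{1/2}$ plus dominated convergence gives $\int D^{sym}\bb_N(f,g)\,d\mm\to\int D^{sym}\bb(f,g)\,d\mm$. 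Without this integration by parts, your scheme does not close.
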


\begin{proof}
When $f = \psi(V_1, \ldots, V_n)$ belongs to the algebra generated by $(V_i)$, 
the first identity is immediate from $\Gamma(V_i, V_j) = \delta_{i,j}$. The general case of \emph{(a)} follows by density. 

From the definition \eqref{eq:defHessian} of Hessian it holds $H[V_i](V_j, V_k) = 0$ for every $i,\,j,\,k \ge 1$. For fixed $i,\,j \ge 1$, the derivation 
$g \mapsto H[V_i](V_j, g)$ belongs to $\Lbm 2$ in virtue of \eqref{eq:hessian-estimate}, thus it can be extended by density of $\Algebra$ to all of $\V$. 
By the chain rule, the extended derivation is identically zero on the algebra generated by $(V_i)$ thus by density it is the null derivation. 
In particular, for $g\in\Algebra$, $H[V_i](V_j, g) =0$, for every $j \ge 1$. Keeping fixed $g\in\Algebra$, we argue similarly, 
and obtain that $H[V_i](f, g) =0$ $\mm$-a.e.\ in $X$ for every $f,\,g\in\Algebra$, thus proving \emph{(b)}.

If only a finite number of $b^i$'s is different from $0$, and they belong to $\V$, the claimed estimate \eqref{eq:claimed}
follows immediately by linearity, 
\eqref{eq:identity-deformation} and \emph{(b)} above. The general case follows by ``cylindrical'' approximation, where the assumption $r$, $s \ge 4$ plays a role. Indeed, given $f \in \V_r \cap D_{L^r}(\Delta)$ and $g \in \V_s \cap D_{L^s}(\Delta)$ it holds $f$, $g \in D_{L^4} (\Delta)$, thus $\Gamma(f,g) \in \V$ and we can integrate by parts the last term in \eqref{eq:distributional-deformation}, obtaining
\begin{equation}\label{eq:def-ibp}\int  D^{sym}\bb(f,g) d\mm = -\frac{1}{2} \int df(\bb)\Delta g + dg(\bb) \Delta f + d(\Gamma(f,g))(\bb)  d\mm.\end{equation}
Let $N \ge 1$ and let $\bb_N$ be the derivation associated to the sequence $(b^1, \ldots, b^N, 0, 0, \ldots)$. Given $h \in \V$, it holds
\[\abs{d(h) \bb_N - d(h) \bb } \le \Gamma(h)^{1/2} \big( \sum_{i > N} \abs{b^i}^2\big)^{1/2}, \quad \text{ $\mm$-a.e.\ in $X$.}\]
By this estimate with $h =f$, $h= g$ and $h = \Gamma(f,g)$,  H\"older inequality and dominated convergence we conclude  that the sequence $\int  D^{sym}\bb_N(f,g) d\mm$ converges towards $\int  D^{sym}\bb(f,g) d\mm$ as $N \to \infty$, entailing \eqref{eq:claimed}. \end{proof}

Notice that the assumption $r$, $s \in [4,\infty)$ is used only to obtain  $\Gamma(f,g) \in \V$ and so \eqref{eq:def-ibp}. The same argument indeed shows that, for $r$, $s \in [1,\infty)$ and $q \in (1,\infty]$ with $q^{-1} +r^{-1} +s^{-1} = 1$, if $\Algebra$ is dense in the space $\V_p \cap D_{L^p}(\Delta)$, endowed with the norm $\nor{f} = \nor{f}_{\V_p} + \nor{\Delta f}_{L^2 \cap L^p}$, for $p \in \cur{r,s}$ and it satisfies $\Gamma(f, g) \in \Algebra$ for $f$, $g \in \Algebra$, then the last statement in Proposition~\ref{prop:hilbert-schmidt} holds, regardless of the condition $r$, $s \in [4,\infty)$.

\section{The superposition principle in $\R^\infty$ and in metric measure spaces}\label {sec:superpo}

In this section we denote $\R^\infty=\R^{\N}$ endowed with the product topology and
we shall denote by $\pi^n:=(p_1,\ldots,p_n):\R^\infty\to\R^n$ 
the canonical projections from $\R^\infty$ to $\R^n$. On the space $\R^\infty$ 
we consider the complete and separable distance
$$
d_\infty(x,y):=\sum_{n=1}^\infty 2^{-n}\min \cur{1,|p_n(x)-p_n(y)|}.
$$
Accordingly, we consider the space $C([0,T];\R^\infty)$ endowed with the distance
$$
\delta(\eta,\tilde \eta):=\sum_{n=1}^\infty 2^{-n}\max_{t \in [0,T]}\min\cur{1,|p_n(\eta(t))-p_n(\tilde \eta(t))|},
$$
which makes
$C([0,T];\R^\infty)$ complete and separable as well.
We shall also consider the subspace $AC_w([0,T];\R^\infty)$ of $C([0,T];\R^\infty)$ consisting of all $\eta$ such that
$p_i\circ\eta\in AC([0,T])$ for all $i\geq 1$. Notice that for this class of curves the derivative $\eta'\in\R^\infty$ can still be
defined a.e.\ in $(0,T)$, arguing componentwise. We use the notation $AC_w$ to avoid the confusion with the space
of absolutely continuous maps from $[0,T]$ to $(\R^\infty,d_\infty)$.

It is immediate to check that for any choice of convex superlinear and l.s.c.\ functions $\Psi_n:[0,\infty)\to [0,\infty]$ and for l.s.c.\ functions
$\Phi_n:[0,\infty)\to [0,\infty]$ with $\Phi_n(v)\to\infty$ as $v\to\infty$ the functional
${\cal A}:C([0,T];\R^\infty)\to [0,\infty]$ defined by
$$
{\cal A}(\eta):=\begin{cases}
\sum\limits_{n=1}^\infty \bigl[\Phi_n(p_n\circ\eta(0))+\int_0^T\Psi_n(|(p_n\circ\eta)'|) dt\bigr]
&\text{if $\eta\in AC_w([0,T];\R^\infty)$}\\ \\ \infty &\text{if $\eta\in C([0,T];\R^\infty)\setminus AC_w([0,T];\R^\infty)$}
\end{cases}
$$
is coercive in $C([0,T];\R^\infty)$, i.e.\ all sublevels $\{{\cal A}\leq M\}$ are compact in $C([0,T];\R^\infty)$.

We call smooth cylindrical function any $f:\R^\infty\to\R$ representable in the form 
$$
f(x)=\psi(\pi_n(x))=\psi\bigl(p_1(x),\ldots,p_n(x)\bigr)\qquad x\in\R^\infty,
$$
with $\psi:\R^n\to\R$ bounded and continuously differentiable, with bounded derivative. When we want to
emphasize $n$, we say that $f$ is $n$-cylindrical. Given $\psi$ smooth cylindrical,
we define $\nabla f:\R^\infty\to c_0$ (where $c_0$ is the space of sequences $(x_n)$ null for $n$ large enough)
by
\begin{equation}\label{def:cylindrical}
\nabla f(x):=\bigl(\frac{\partial\psi}{\partial z_1}(\pi_n(x)),\ldots,\frac{\partial\psi}{\partial z_n}(\pi_n(x)),0,0,\ldots).
\end{equation}

We fix a 
Borel vector field $\cc:(0,T)\times\R^\infty\to\R^\infty$ and a weakly continuous (in duality with smooth cylindrical functions)
family of Borel probability measures $\{\nu_t\}_{t\in (0,T)}$ in $\R^\infty$ satisfying 
\begin{equation}\label{eq:conti1}
\int_0^T\int |p_i(\cc_t)| d\nu_tdt <\infty,\qquad\forall i\geq 1
\end{equation}
and, in the sense of distributions,
\begin{equation}\label{eq:conti2}
\frac{d}{dt}\int fd\nu_t=\int\bra{\cc_t,\nabla f} d\nu_t\qquad\text{in $(0,T)$, for all $f$ smooth cylindrical.}
\end{equation}

\begin{theorem}[Superposition principle in $\R^\infty$]\label{thm:superpoRinfty}
Under assumptions \eqref{eq:conti1} and \eqref{eq:conti2}, there exists a Borel probability measure
$\llambda$ in $C([0,T];\R^\infty)$ satisfying $(e_t)_\#\llambda=\nu_t$ for all $t\in (0,T)$, concentrated on $\gamma\in AC_w([0,T];\R^\infty)$
which are solutions
to the ODE $\dot\gamma=\cc_t(\gamma)$ a.e.\ in $(0,T)$. 
\end{theorem}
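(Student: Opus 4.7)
The plan is to reduce the statement to the classical Euclidean superposition principle \cite[Thm.~8.2.1]{Ambrosio-Gigli-Savare05} via the projections $\pi^n:\R^\infty\to\R^n$, and then pass to the limit $n\to\infty$ using the coercivity of the functional $\cA$ introduced above. First, set $\nu^n_t:=(\pi^n)_\#\nu_t$ and, disintegrating $\nu_t=\int_{\R^n}\nu_t^{n,y}\,d\nu_t^n(y)$ along $\pi^n$, define $\cc^n_t:\R^n\to\R^n$ by the conditional expectation
\[ \cc^n_t(y):=\int\pi^n(\cc_t(x))\,d\nu_t^{n,y}(x). \]
Testing \eqref{eq:conti2} against $n$-cylindrical functions $f=\psi\circ\pi^n$ and using the tower property, one checks that $(\nu^n_t,\cc^n_t)$ solves the continuity equation on $\R^n$ in the distributional sense, with $|\cc^n_t|\in L^1(\nu^n_t\,dt)$ by Jensen's inequality and \eqref{eq:conti1}. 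The Euclidean superposition principle then produces $\sigma^n\in\Probabilities{C([0,T];\R^n)}$ with $(e_t)_\#\sigma^n=\nu^n_t$ and concentrated on absolutely continuous solutions of $\dot\gamma=\cc^n_t(\gamma)$.

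Next, I would lift each $\sigma^n$ to $\llambda^n\in\Probabilities{C([0,T];\R^\infty)}$ by sampling $\gamma\sim\sigma^n$ together with, conditionally on $\gamma(0)=y$, an $\R^\infty$-valued tail $z$ drawn from the disintegration $\nu_0^{n,y}$ of $\nu_0$ along $\pi^n$; the lifted curve is $\eta(t):=(\gamma(t),z_{n+1},z_{n+2},\ldots)$, which lies in $C([0,T];\R^\infty)$ since the tail is frozen in $t$. By construction $(e_0)_\#\llambda^n=\nu_0$ and, for $m\le n$, $(\pi^m)_\#(e_t)_\#\llambda^n=(\pi^m)_\#\nu_t$. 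For tightness, I would invoke de la Vall\'ee-Poussin on \eqref{eq:conti1} to pick, for each $i\ge 1$, a convex superlinear l.s.c.\ $\Psi_i$ with $\int_0^T\int\Psi_i(|p_i(\cc_t)|)\,d\nu_t\,dt<\infty$, take $\Phi_i(v):=v$, and weights $\alpha_i\downarrow 0$ fast enough that the corresponding coercive functional $\cA$ (built from $\alpha_i\Phi_i$ and $\alpha_i\Psi_i$) satisfies $\int\cA\,d\llambda^n\le C$ uniformly in $n$; here Jensen reduces the $\sigma^n$-integral of $\Psi_i(|(p_i\circ\gamma)'|)$ precisely to the chosen finite quantity. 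By coercivity of $\cA$, the family $\{\llambda^n\}$ is tight; I extract $\llambda^{n_k}\rightharpoonup\llambda$ and conclude $(e_t)_\#\llambda=\nu_t$ for all $t$ by testing against bounded continuous cylindrical functions.

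The hard part will be identifying $\llambda$ as concentrated on solutions of the full ODE. Fix $i$; for $n\ge i$, $\llambda^n$-a.e.\ $\eta$ satisfies
\[ p_i(\eta(t))-p_i(\eta(s))=\int_s^t a^n_r(\eta(r))\,dr,\qquad 0\le s\le t\le T, \]
where $a^n_r(x):=p_i(\cc^n_r(\pi^n x))$. Since the $\sigma$-algebras generated by $\pi^n$ exhaust the Borel $\sigma$-algebra of $\R^\infty$, martingale convergence gives $a^n_r\to p_i\circ\cc_r$ in $L^1(\nu_r\,dr)$ as $n\to\infty$. The subtle step is to interchange this $L^1$-convergence with the weak convergence $\llambda^{n_k}\rightharpoonup\llambda$, since $\eta\mapsto\int_s^t p_i(\cc_r(\eta(r)))\,dr$ is not weakly continuous in general. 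The superlinear bound $\int\cA\,d\llambda^n\le C$ supplies the uniform integrability needed for a Dunford-Pettis / Mazur-type argument, from which one deduces the identity $p_i(\eta(t))-p_i(\eta(s))=\int_s^t p_i(\cc_r(\eta(r)))\,dr$ for $\llambda$-a.e.\ $\eta$, all $i$, and all rational $0\le s\le t\le T$. Taking the countable intersection over $i$ and extending by continuity in $s,t$ yields $\dot\eta=\cc_r(\eta)$ a.e.\ for $\llambda$-a.e.\ $\eta$, i.e.\ $\eta\in AC_w([0,T];\R^\infty)$ solves the ODE.
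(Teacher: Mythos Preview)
Your overall strategy matches the paper's: project to $\R^n$, define $\cc^n$ by conditional expectation, apply the finite-dimensional superposition, lift to $C([0,T];\R^\infty)$, obtain tightness via the coercive functional $\cA$, and pass to a weak limit. Your lifting step (sampling a frozen tail from the $\nu_0$-disintegration) is more elaborate than the paper's, which simply pads with zeros and views $\R^n\subset\R^\infty$; it works, and in fact gives you $(e_0)_\#\llambda^n=\nu_0$ exactly, which the paper does not have at the prelimit stage. One minor slip: the choice $\Phi_i(v)=v$ requires $\int|p_i|\,d\nu_0<\infty$, which is not part of the hypotheses (each $p_i$-marginal could be Cauchy). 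You should instead choose any coercive $\Phi_i$ with $\int\Phi_i(|p_i|)\,d\nu_0<\infty$; such a choice always exists for a probability measure on $\R$.

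The genuine gap is in the ODE identification. You correctly flag that $\eta\mapsto\int_s^t p_i(\cc_r(\eta(r)))\,dr$ is not continuous on path space, but the remedy you propose---uniform integrability via $\cA$ and a Dunford--Pettis/Mazur argument---does not address this. Uniform integrability lets you upgrade weak convergence against bounded test functions to unbounded ones with controlled growth; it does nothing about \emph{discontinuity} of the integrand. Your conditional expectations $a^n_r$ are $n$-cylindrical but merely Borel, so the functional $\eta\mapsto\int_s^t a^n_r(\eta(r))\,dr$ is still not continuous, and you cannot pass to the limit $\llambda^{n_k}\rightharpoonup\llambda$ against it. The paper's fix is to approximate $p_i\circ\cc$ in $L^1(\nu_t\,dt)$ by bounded \emph{continuous} cylindrical functions $\dd$; then $\eta\mapsto\bigl|p_i\circ\eta(t)-p_i\circ\eta(0)-\int_0^t\dd_s(\eta(s))\,ds\bigr|$ is genuinely in $C\bigl(C([0,T];\R^\infty)\bigr)$, the weak limit goes through, and the error is controlled by $\int\int|p_i\circ\cc-\dd|\,d\nu_t\,dt$ via the already-established marginal identity $(e_t)_\#\llambda=\nu_t$. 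Your martingale convergence $a^n_r\to p_i\circ\cc_r$ is the right $L^1$-approximation in spirit, but you must trade the $a^n$ for continuous approximants before taking the weak limit in path space.
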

\begin{proof} The statement is known in finite-dimensional spaces, see e.g.\ \cite[Thm.~8.2.1]{Ambrosio-Gigli-Savare05}
for the case when $\int\int |\cc_t|^r d\nu_t dt<\infty$ for some $r>1$, and \cite[Thm.~12]{bologna} for the case $r=1$.
For $i\geq 1$ we choose convex, superlinear, l.s.c.\ functions $\Psi_i:[0,\infty)\to [0,\infty]$ with
\begin{equation}\label{eq:condpsi1}
\int_0^T\int\Psi_i(|p_i(\cc_t)|) d\nu_t dt\leq 2^{-i}
\end{equation}
and coercive $\Phi_i:[0,\infty)\to [0,\infty)$ satisfying 
\begin{equation}\label{eq:condpsi2}
\int \Phi_i(p_i(x))d\nu_0(x)\leq 2^{-i}
\end{equation}
and define ${\cal A}$ accordingly.
 
Defining $\nu^n_t:=(\pi^n)_\#\nu_t$ and $\cc^n_{t,i}$, $1\leq i\leq n$, as the density of $(\pi^n)_\#(p_i(\cc_t)\nu_t)$ w.r.t.
to $\nu^n_t$, it is immediate to check with Jensen's inequality that 
\begin{equation}\label{eq:conti3}
\int_0^T\int \Psi_i(|\cc^n_{t,i}|)d\nu^n_tdt \leq\int_0^T\int \Psi_i(|p_i(\cc_t)|) d\nu_tdt,\qquad i\geq 1,
\end{equation}
and that $\nu^n_t$ solve the continuity equation in $\R^n$ relative to the vector field $\cc^n=(\cc^n_i,\ldots,\cc^n_n)$. 
Therefore the finite-dimensional
statement provides $\llambda_n$, probability measures in $C([0,T];\R^n)$, concentrated on absolutely continuous a.e.\ solutions to the
ODE $\dot\gamma=\cc^n_t(\gamma)$ and satisfying $(e_t)_\#\llambda_n=\nu^n_t$ for all $t\in [0,T]$. 

In order to pass to the limit as $n\to\infty$ it is convenient to view $\llambda_n$ as probability measures in $C([0,T];\R^\infty)$
concentrated on curves $\gamma$ such that $p_i(\gamma)$ is null for $i>n$ and $\nu^n$ as probability measures in $\R^\infty$
concentrated on $\{x\in\R^\infty:\ p_i(x)=0\,\,\forall i>n\}\subset c_0$. Accordingly, if we set $\cc^n_{t,i}\equiv 0$ for $i>n$, we retain
the property that $\llambda_n$ is concentrated on absolutely continuous solutions to the
ODE $\dot\gamma=\cc^n_t(\gamma)$ and satisfies $(e_t)_\#\llambda_n=\nu^n_t$ for all $t\in [0,T]$. 

Using \eqref{eq:conti3} and our choice of $\Psi_i$ and $\Phi_i$ we immediately obtain
$$
\int {\cal A}(\gamma) d\llambda_n(\gamma) \leq 2,
$$
hence the sequence $(\llambda_n)$ is tight in $\Probabilities{C([0,T];\R^\infty)}$. 

We claim that any limit point $\llambda$ fulfills the properties stated in the lemma. Just for notational simplicity, we assume in the
sequel that the whole family $(\llambda_n)$ weakly converges to $\llambda$. The lower semicontinuity of ${\cal A}$ gives
$\int {\cal A}\,d\llambda<\infty$, hence $\llambda$ is concentrated on $AC_w([0,T];\R^\infty)$. Furthermore, since
$$
\gamma\mapsto \pi_k\circ\gamma(t),\qquad t\in [0,T]
$$
are continuous from $C([0,T];\R^\infty)$ to $\R^k$, passing to the limit as $n\to\infty$ in the identity
$(\pi_k)_\sharp (e_t)_\sharp\llambda_n=(\pi_k)_\sharp\nu^n_t$
it follows that $(\pi_k)_\sharp (e_t)_\sharp\llambda=(\pi_k)_\sharp\nu_t$
for all $k$. We can now use the fact that cylindrical functions generate the Borel $\sigma$-algebra of $\R^\infty$
to obtain that $(e_t)_\sharp\llambda=\nu_t$. 

It remains to prove that 
$\llambda$ is concentrated on solutions to the ODE $\dot\gamma=\cc_t(\gamma)$. To this aim, it suffices to show that
\begin{equation}\label{valleggi}
\int\left|p_i\circ\gamma(t)-p_i\circ\gamma(0)-\int_0^tp_i\circ\cc_s(\gamma(s))\,ds\right|\,d\llambda(\gamma)=0,
\end{equation}
for any $t\in [0,T]$ and $i\geq 1$. The technical difficulty is that this test function,
due to the lack of regularity of $\cc$, is not continuous in $C([0,T];\R^\infty)$. 
To this aim, we prove first that
\begin{equation}\label{valleggi1}
\int\left|p_i\circ\gamma(t)-p_i\circ\gamma(0)-\int_0^t\dd_s(\gamma(s))\,ds\right|\,d\llambda(\gamma)\leq
\int_{(0,T)\times\R^\infty}|p_i\circ\cc-\dd|\,d\nu_tdt,
\end{equation}
for any bounded Borel function $\dd$ with $\dd(t,\cdot)$ $k$-cylindrical for all $t\in (0,T)$, with
$k$ independent of $t$. It is clear that the space
$$
\bigl\{\dd\in L^1(\nu_t dt):\ \text{$\dd(t,\cdot)$ cylindrical for all $t\in (0,T)$}\bigr\}
$$
is dense in $L^1(\nu_t dt)$; by a further approximation, also the space 
$$
\bigcup_{k=1}^\infty\bigl\{\dd\in L^1(\nu_t dt):\ \text{$\dd(t,\cdot)$ $k$-cylindrical for all $t\in (0,T)$}\bigr\}
$$
is dense. Hence, choosing a sequence $(\dd^m)$ of functions admissible for \eqref{valleggi1}
converging to $p_i\circ\cc$ in $L^1(\nu_tdt)$ and noticing that
$$
\int_{(0,T)\times\R^\infty}|p_i\circ\cc_s(\gamma(s))-\dd_s^m(\gamma(s))|\,dsd\llambda(\gamma)
=\int_{(0,T)\times\R^\infty}|p_i\circ\cc-\dd^m|\,d\nu_tdt\rightarrow 0,
$$
we can take the limit in \eqref{valleggi1} with $\dd=\dd^m$ to obtain \eqref{valleggi}.

It remains to show \eqref{valleggi1}. We first prove
\begin{equation}\label{eqn:convergencec}
\limsup_{n\to\infty}\int_{(0,T)\times\R^\infty}|p_i\circ\cc^n-\dd|\,d\nu^n_s\,ds\leq
\int_{(0,T)\times\R^\infty}|p_i\circ\cc-\dd|\,d\nu_tdt
\end{equation}
for all bounded Borel functions $\dd$ with $\dd(t,\cdot)$ $k$-cylindrical for all $t\in (0,T)$, with
$k$ independent of $t$. The proof is elementary, because for $n\geq k$ and $t\in (0,T)$ we have
$$
(p_i\circ\cc^n_t-\dd_t)\nu^n_t=(\pi_n)_\#((p_i\circ\cc_t-\dd_t)\nu_t).
$$ 

Now we can prove \eqref{valleggi1}, with a limiting argument based on the fact
that \eqref{valleggi} holds for $\cc^n$, $\llambda_n$:
\begin{eqnarray*}
&&\int\left|p_i\circ\gamma(t)-p_i\circ\gamma(0)-\int_0^t\dd_s(\gamma(s))\,ds\right|\,d\llambda_n(\gamma)\\&=&
\int\left|\int_0^t\bigl(p_i\circ\cc^n_s(\gamma(s))-\dd_s(\gamma(s))\bigr)\,ds\right|\,d\llambda_n(\gamma)\\
&\leq&
\int\int_0^t|p_i\circ\cc^n_s-\dd_s|(\gamma(s))\,ds d\llambda_n(\gamma)\leq\int_{(0,T)\times\R^\infty}|p_i\circ\cc^n-\dd|\,d\nu^n_s ds.
\end{eqnarray*}
Since $\dd_s$ is cylindrical for all $s$ and uniformly bounded w.r.t.\ $s$, the map
$$
\gamma\mapsto \left|p_i\circ\gamma(t)-p_i\circ\gamma(0)-\int_0^t\dd_s(\gamma(s))\,ds\right|
$$
belongs to $C\bigl(C([0,T];\R^\infty)\bigr)$ and is nonnegative. Hence,
taking the limit in the chain of inequalities above and using \eqref{eqn:convergencec} we obtain \eqref{valleggi1}.
\end{proof}

We next consider the case of a (possibly extended) metric measure space $(X,\tau,\mm,d)$. Starting from the basic setup of Section~\ref{sec:setup},
we have indeed only a topology $\tau$ and the measure $\mm$. We assume the existence of a countable set $\Algebra^*\subset\{f\in\Algebra:\
\|\Gamma(f)\|_\infty\leq1\}$ satisfying:
\begin{equation}\label{eq:propertiesGstar1}
\text{$\R\Algebra^*$ is dense in $\V$ and any function in $\Algebra^*$ has a $\tau$-continuous representative,} 
\end{equation}
\begin{equation}\label{eq:propertiesGstar2}
\text{$\exists\lim_{n\to\infty}f(x_n)$ in $\R$ for all $f\in\Algebra^*$}\quad\Longrightarrow\quad\text{$\exists\lim_{n\to\infty} x_n$ in $X$.} 
\end{equation}
Since $\supp\mm=X$, the $\tau$-continuous representative of a $\mm$-measurable function if exists is unique, and
for this reason we do not use in \eqref{def:d_cE} and in the sequel a distinguished notation for the continuous representative of functions in $\Algebra^*$.
Notice that \eqref{eq:propertiesGstar2} implies that the family $\Algebra^*$ separates the points of $X$. Properties \eqref{eq:propertiesGstar1}
and \eqref{eq:propertiesGstar2} can be fulfilled in explicit cases;  moreover, since the role played by $\tau$ is marginal, one can ``artificially'' satisfy \eqref{eq:propertiesGstar2} by considering, in place of $\tau$, the coarsest topology that makes continuous all the functions in $\Algebra^*$. A posteriori, results about the given topology $\tau$ can be recovered if the topology induced by the distance $d_{\Algebra^*}$, as introduced in the following remark, is stronger than (or equal to) $\tau$.


\begin{remark}[Extended distance induced by $\Algebra^*$]\label{rem:hiddendistance}{\rm
Following \cite{Biroli-Mosco95} (see also \cite{Sturm95,Stollmann10}) we build $d_{\Algebra^*}:X\times X\to [0,\infty]$ as
\begin{equation}\label{def:d_cE}
d_{\Algebra^*}(x,y)=\sup\left\{|f(x)-f(y)|:\ f\in\Algebra^*\right\},\qquad x,\,y\in X.
\end{equation}
A priori, $d_{\Algebra^*}$ is an extended distance in the sense of \cite{AGS11a}, since it may take the value $\infty$; nevertheless, by definition, 
all functions in $\Algebra^*$ are $1$-Lipschitz w.r.t $d_{\Algebra^*}$ and $d_{\Algebra^*}$ is the smallest extended distance with this property. In particular
the derivative $\frac{d}{dt}(f\circ\eta)$ which occurs in the next definition makes sense a.e.\ in $(0,T)$ when $f\in\Algebra^*$ and
$\eta\in AC([0,T];(X,d_{\Algebra^*}))$, because $f\circ\eta$ belongs to $AC([0,T])$. However, we will not use the topology induced by
$d_{\Algebra^*}$, which could be much finer than the topology $\tau$ and, in the next definition, we will require only
continuity of $\eta:[0,T]\to X$ (with the topology $\tau$ in the target space $X$) and $W^{1,1}(0,T)$ regularity of $f\circ\eta$, for $f\in\Algebra$. 
A posteriori, in Lemma~\ref{eq:fromODEtoPDE} we are going to recover some absolute continuity for $\eta$, with respect to $d_{\Algebra^*}$.
In any case, whenever $f\in\Algebra$ has a continuous representative (as it happens when $f\in\Algebra^*$), the continuity of $f\circ\eta$ in
conjunction with Sobolev regularity gives $f\circ\eta\in AC([0,T])$.\fr}
\end{remark}

\begin{definition}[ODE induced by a family $(\bb_t)$ of derivations]\label{def:ourODE}
Let $\eeta\in\Probabilities{C([0,T];X)}$ and let $(\bb_t)_{t\in (0,T)}$ be a Borel family of derivations. We say that
$\eeta$ is concentrated on solutions to the ODE $\dot\eta=\bb_t(\eta)$ if 
 $$
 \text{$f\circ\eta\in W^{1,1}(0,T)$ and $\frac {d}{dt} (f\circ\eta)(t)=df(\bb_t)(\eta(t))$, 
for a.e.\ $t\in (0,T)$,}
$$
for $\eeta$-a.e.\ $\eta \in C([0,T];X)$, for all $f\in\Algebra$.
\end{definition}

Notice that the property of being concentrated on solutions to the ODE implicitly depends on the choice of Borel representatives of
the maps $f$ and $(t,x)\mapsto df(\bb_t)(x)$, $f\in\Algebra$. As such, it should be handled with care. We will see, however, that in 
the class of regular flows of Definition~\ref{def:dregflow} this sensitivity to the choice of Borel representatives disappears,
see Remark~\ref{rem:sensitivity}.

The following simple lemma shows that time marginals of measures $\eeta$ concentrated on solutions
to the ODE $\dot\eta=\bb_t(\eta)$ provide weakly continuous solutions to the continuity equation.

Given a derivation $\bb$, we introduce the following quantity
\begin{equation}
\label{eq:modulus-*} |\bb|_{*} = \sup \cur{ |d f(\bb)|: f \in \Algebra^*}.
\end{equation}
Notice that $|\bb|_*$ is well-defined up to $\mm$-a.e.\ equivalence and that one has
$|\bb|_* \le |\bb|$, $\mm$-a.e.\ in $X$. Also in view of \eqref{eq:inequality-or-equality} below,
it is natural to investigate the validity of the equality $|\bb| = |\bb|_*$, $\mm$-a.e.\ in $X$. We are able to prove this in the 
setting of $\RCD$ spaces, see Lemma~\ref{lem:algebra-*-RCD} below.

\begin{lemma}\label{eq:fromODEtoPDE}
Let $\eeta\in\Probabilities{C([0,T];X)}$ be concentrated on solutions $\eta$ to the ODE $\dot\eta=\bb_t(\eta)$, where $|\bb|\in L^1_t(L^p_x)$ for some $p\in [1,\infty]$ and $\mu_t:=(e_t)_\#\eeta\in\Probabilities{X}$ are representable as 
$u_t\mm$ with $u \in L^\infty_t(L^{p'}_x)$. Then, the following two properties hold:
\begin{itemize}
\item[(a)] the family $(u_t)_{t\in (0,T)}$ is a weakly continuous solution to the continuity equation;
\item[(b)] $\eeta$ is concentrated on $AC([0,T]; (X, d_{\Algebra^*}) )$, with
\begin{equation} \label{eq:inequality-or-equality}\abs{\dot \eta} (t) = \abs{\bb_t}_*(\eta(t))\quad \text{ for a.e.\ $t \in (0,T)$, for $\eeta$-a.e.\ $\eta$.} \end{equation}
\end{itemize}
\end{lemma}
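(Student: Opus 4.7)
Plan for the proof.

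For part (a), the plan is to derive the weak formulation of the continuity equation directly from the definition of being concentrated on solutions of the ODE, via Fubini's theorem. First I would verify the integrability needed: for any $f\in\Algebra$, we have $\Gamma(f)\in\Lbm\infty$ (by the standing assumption of Remark~\ref{rem:algebra-lipschitz}), so $|df(\bb_t)|\le\|\sqrt{\Gamma(f)}\|_\infty|\bb_t|$ pointwise $\mm$-a.e. Using $(e_t)_\#\eeta=u_t\mm$ and the H\"older pairing $L^1_t(L^p_x)\times L^\infty_t(L^{p'}_x)$, we get
\[
\int\int_0^T |df(\bb_t)|(\eta(t))\,dt\,d\eeta(\eta)=\int_0^T\int |df(\bb_t)|u_t\,d\mm\,dt\le\|\sqrt{\Gamma(f)}\|_\infty \|\bb\|_{L^1_t(L^p_x)}\|u\|_{L^\infty_t(L^{p'}_x)}<\infty.
\]
Then, for $\psi\in C^1_c(0,T)$ and $f\in\Algebra$, applying the fundamental theorem of calculus to the $W^{1,1}$ function $f\circ\eta$ for $\eeta$-a.e.\ $\eta$, and using Fubini,
\[
\int_0^T\psi'(t)\int fu_t\,d\mm\,dt=\int\int_0^T\psi'(t)f(\eta(t))\,dt\,d\eeta=-\int_0^T\psi(t)\int df(\bb_t)u_t\,d\mm\,dt,
\]
which is exactly \eqref{eq:fokker-planck} (with $w=0$, and letting $\psi$ range over compactly supported test functions handles the PDE in the distributional sense). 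Weak continuity in duality with $\Algebra$ follows from the fact that $t\mapsto \int f u_t d\mm=\int f\circ\eta(t)\,d\eeta(\eta)$ is absolutely continuous, being the integral of a uniformly integrable family of $W^{1,1}$ functions.

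For part (b), the countability of $\Algebra^*$ is the key. Fix $\eeta$-a.e.\ $\eta$ such that for every $f\in\Algebra^*$ the function $f\circ\eta$ lies in $W^{1,1}(0,T)$ with derivative $df(\bb_t)(\eta(t))$ a.e.\ in $(0,T)$ (this is a countable intersection of full-measure events, hence full measure). For each such $\eta$, fix Borel representatives so that $|df(\bb_t)(\eta(t))|\le |\bb_t|_*(\eta(t))$ for a.e.\ $t$, again thanks to countability. Since each $f\in\Algebra^*$ is $1$-Lipschitz with respect to $d_{\Algebra^*}$, integrating over $[s,t]$ and taking the supremum over $f\in\Algebra^*$ yields
\[
d_{\Algebra^*}(\eta(t),\eta(s))=\sup_{f\in\Algebra^*}|f(\eta(t))-f(\eta(s))|\le\int_s^t|\bb_r|_*(\eta(r))\,dr,
\]
which shows $\eta\in AC([0,T];(X,d_{\Algebra^*}))$ with metric derivative bounded above by $|\bb_t|_*(\eta(t))$. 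For the converse bound, apply the general metric inequality $|(f\circ\eta)'(t)|\le|\dot\eta|(t)$ (valid since $f$ is $1$-Lipschitz for $d_{\Algebra^*}$), so $|df(\bb_t)(\eta(t))|\le|\dot\eta|(t)$ for a.e.\ $t$, for each $f\in\Algebra^*$; taking the countable supremum we conclude $|\bb_t|_*(\eta(t))\le|\dot\eta|(t)$ a.e.

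The main subtlety is the interplay between $\mm$-a.e.\ equivalence classes and pointwise evaluation at $\eta(t)$. The bookkeeping is made possible because $\Algebra^*$ is countable: this allows us to (i) choose once and for all Borel representatives of each $df(\bb_\cdot)$, (ii) define $|\bb_t|_*$ as the pointwise countable supremum of these representatives (which agrees $\mm$-a.e.\ with \eqref{eq:modulus-*}), and (iii) exchange supremum and ``for a.e.\ $t$'' statements along $\eeta$-a.e.\ curve without measurability issues. Apart from this, no estimate beyond the elementary metric inequalities is needed.
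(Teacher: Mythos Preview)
Your proposal is correct and follows essentially the same approach as the paper: for (a) you integrate the ODE identity against $\eeta$ and use Fubini together with the pairing $L^1_t(L^p_x)\times L^\infty_t(L^{p'}_x)$, and for (b) you exploit the countability of $\Algebra^*$ to bound $d_{\Algebra^*}(\eta(t),\eta(s))$ above by $\int_s^t|\bb_r|_*(\eta(r))\,dr$ and then use the $1$-Lipschitz property for the matching lower bound on $|\dot\eta|$. Your treatment is in fact slightly more explicit than the paper's about the representative-choice bookkeeping (the paper bundles this into a single appeal to Fubini and the absolute continuity of the marginals), but the substance is identical.
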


\begin{remark}{\rm 
Arguing as in the last part of \cite[Thm.~8.3.1]{Ambrosio-Gigli-Savare05} one can prove that $u \in L^\infty_t(L^\infty_x)$ implies that
$(\mu_t)_t$ is an absolutely continuous curve in the Wasserstein space $W_p$ naturally associated to $d_{\Algebra^*}$ 
(see \cite{GB-14} for a more systematic investigation of this connection in metric measure spaces).} \fr
\end{remark}
\begin{proof} We integrate w.r.t.\ $\eeta$ the weak formulation
$$
\int_0^t -\psi'(t)f\circ\eta(t) dt=\int_0^T\psi(t)df(\bb_t)(\eta(t)) dt
$$
with $f\in\Algebra$, $\psi\in C^1_c(0,T)$, to recover the weak formulation of the continuity equation for $(u_t)$.

Given $f \in \Algebra^*$, for $\eeta$-a.e.\ $\eta$, the map $t \mapsto f \circ \eta(t)$ is absolutely continuous, with
\[ f\circ\eta(t) - f\circ\eta(s)  = \int_s^t  df( \bb_r) (\eta(r)) dr, \quad\text{for all $s,\, t \in [0,T]$.}\]
In particular one has $df(\bb_t)(\eta(t)=(f\circ\eta)'(t)$ a.e. in $(0,T)$, for $\eeta$-a.e. $\eta$.

By Fubini's theorem and the fact that the marginals of $\eeta$
are absolutely continuous w.r.t.\ $\mm$ we obtain that, for $\eeta$-a.e.\ $\eta$, one has
\[ \sup_{f \in \Algebra^*} \abs{(f \circ \eta)'(t)} =  \sup_{f \in \Algebra^*} \abs{df(\bb_t)(\eta(t))} = \abs{\bb_t}_*(\eta(t)), \quad \text{for a.e.\ $t\in (0,T)$,}\]
and therefore
\[ d_{\Algebra^*} (\eta(t), \eta(s) ) = \sup_{f \in \Algebra^*}\bigl|(f\circ\eta)(t) - (f\circ\eta)(s)\bigr|
\le \int_s^t \abs{\bb_t}_*(\eta(r))dr, \quad \text{for all $s,\,t \in [0,T]$,} \]
proving that $\eta \in AC([0,T]; (X, d_{\Algebra^*}))$, with $\abs{\dot \eta}(t) \le \abs{\bb_t}_*(\eta(t))$, for a.e.\ $t\in (0,T)$. 
The converse inequality follows from the fact that every $f \in \Algebra^*$ is $1$-Lipschitz with respect to $d_{\Algebra^*}$, 
thus for $\eeta$-a.e. $\eta$ one has
\[ \abs{\bb_t}_*(\eta(t)) =  \sup_{f \in \Algebra^*} \abs{(f \circ \eta)'(t)} \le \abs{\dot \eta}(t), \quad \text{for a.e.\ $t\in (0,T)$.}\]
\end{proof}

Even though, as we explained in Remark~\ref{rem:hiddendistance}, the (extended) distance is hidden in the choice
of the family $\Algebra^*$, we call the next result ``superposition in metric measure spaces'', because in most cases
$\Algebra^*$ consists precisely of distance functions from a countable dense set  (see also the recent papers \cite{Bate-12}
and \cite{Schioppa-13} for related results on the existence of suitable measures in the space of curves, and derivations).

\begin{theorem}[Superposition principle in metric measure spaces]\label{thm:superpo}
Assume \eqref{eq:propertiesGstar1}, \eqref{eq:propertiesGstar2}.
Let $\bb=(\bb_t)_{t\in (0,T)}$ be a Borel family of derivations and 
let $\mu_t=u_t\mm\in\Probabilities{X}$, $0\leq t\leq T$, be a weakly continuous solution to the continuity equation
\begin{equation}
	\label{eq:continuity-equation-homo}
	\partial_t\mu_t + \div\bra{\bb_t\mu_t} =0
	\end{equation} 
with
\begin{equation}\label{eq:superpo2bound}
u\in L^\infty_t(L^p_x),\qquad
\int_0^T\int|\bb_t|^rd\mu_tdt<\infty,\qquad \frac{1}{r}+\frac{1}{p}\leq 1/2.
\end{equation}
Then there exists $\eeta\in\Probabilities{C([0,T];X)}$ satisfying:
\begin{itemize}
\item[(a)] $\eeta$ is concentrated on solutions $\eta$ to the ODE $\dot\eta=\bb_t(\eta)$, according to Definition~\ref{def:ourODE};
\item[(b)] $\mu_t=(e_t)_\#\eeta$ for any $t\in [0,T]$.
\end{itemize}
\end{theorem}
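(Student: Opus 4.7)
The plan is to reduce the statement to Theorem~\ref{thm:superpoRinfty} by embedding $X$ into $\R^\infty$ through the countable family $\Algebra^*=\{g_k\}_{k\ge1}$. Define $J:X\to\R^\infty$ by $J(x):=(g_k(x))_k$. Each $g_k$ is $\tau$-continuous by \eqref{eq:propertiesGstar1}, so $J$ is $\tau$-continuous; condition \eqref{eq:propertiesGstar2} implies that $\Algebra^*$ separates points and that $J^{-1}:J(X)\to X$ is sequentially continuous, hence $J(X)$ is a Borel subset of $\R^\infty$ (Lusin-Souslin) and $J$ is a Borel isomorphism onto its image. Set $\nu_t:=J_\#\mu_t$ and define the Borel vector field $\cc_t:\R^\infty\to\R^\infty$ componentwise by
\begin{equation*}
p_k(\cc_t(y)):=\begin{cases} dg_k(\bb_t)(J^{-1}(y))&\text{if }y\in J(X),\\ 0&\text{otherwise,}\end{cases}
\end{equation*}
having fixed Borel representatives of $dg_k(\bb_t)$. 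Since $\|\Gamma(g_k)\|_\infty\le1$ we have $|p_k\circ\cc_t|\le|\bb_t|\circ J^{-1}$ on $J(X)$, so \eqref{eq:conti1} follows from the integrability of $|\bb_t|$ against the probability $\mu_t$.

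The continuity equation \eqref{eq:conti2} for $(\nu_t,\cc_t)$ is verified by testing against an arbitrary smooth cylindrical $f(y)=\psi(p_1(y),\ldots,p_n(y))$: subtracting $\psi(0)$ (which is harmless since each $\nu_t$ is a probability), we may assume $\psi(0)=0$, so that $f\circ J=\psi(g_1,\ldots,g_n)\in\Algebra$ by \eqref{eq:stability_composition}. The chain rule (Proposition~\ref{prop-chain-rule-derivations}) gives $d(f\circ J)(\bb_t)=\sum_i\partial_i\psi(g_1,\ldots,g_n)\,dg_i(\bb_t)$, which pushes forward to $\langle\cc_t,\nabla f\rangle$ on $\R^\infty$. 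The weak equation \eqref{eq:fokker-planck} tested against $\psi(g_1,\ldots,g_n)$ therefore becomes \eqref{eq:conti2} for $\nu_t$, and Theorem~\ref{thm:superpoRinfty} produces $\llambda\in\Probabilities{C([0,T];\R^\infty)}$ with $(e_t)_\#\llambda=\nu_t$, concentrated on $AC_w$-curves $\gamma$ satisfying $\dot\gamma=\cc_t(\gamma)$ a.e.

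To pull $\llambda$ back to $X$, I claim $\llambda$-a.e.\ $\gamma$ takes values in $J(X)$ for every $t\in[0,T]$. Fix a countable dense $D\subset[0,T]$: since $\nu_t(J(X))=1$ for every $t$, $\llambda$-a.e.\ $\gamma$ satisfies $\gamma(t)\in J(X)$ for all $t\in D$, say $\gamma(t)=J(x_t)$. Given arbitrary $t\in[0,T]$ and $t_n\in D$ with $t_n\to t$, continuity of $\gamma$ gives $g_k(x_{t_n})=p_k(\gamma(t_n))\to p_k(\gamma(t))$ for every $k$, and \eqref{eq:propertiesGstar2} yields $x_{t_n}\to x_\infty$ in $X$; by continuity of each $g_k$, $\gamma(t)=J(x_\infty)\in J(X)$. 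Hence $\eeta\in\Probabilities{C([0,T];X)}$, defined as the pushforward of $\llambda$ under the pullback $\gamma\mapsto J^{-1}\circ\gamma$, is well defined and satisfies $(e_t)_\#\eeta=\mu_t$, proving (b).

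For (a), the ODE of Definition~\ref{def:ourODE} with test functions $f=\psi(g_{k_1},\ldots,g_{k_n})\in\Algebra$ is immediate from $\dot\gamma=\cc_t(\gamma)$ via the chain rule. The main obstacle, which is the last step, is the extension to an arbitrary $f\in\Algebra$. I would approximate $f$ in $\V$ by $f_n\in\R\Algebra^*\subset\Algebra$ (possible by \eqref{eq:propertiesGstar1}); each such $f_n$ admits a continuous representative and satisfies, for $\eeta$-a.e.\ $\eta$,
\begin{equation*}
f_n(\eta(t))-f_n(\eta(s))=\int_s^t df_n(\bb_r)(\eta(r))\,dr\qquad\forall s,t\in[0,T],
\end{equation*}
and I would pass to the limit. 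Convergence of the left-hand side in $L^1(dt\otimes d\eeta)$ follows from $L^2(\mm)$-convergence of $f_n$ together with $u\in L^\infty_t(L^2_x)$, which holds by interpolation between $L^1_x$ and $L^p_x$ since $p\ge2$. For the right-hand side, bound
\begin{equation*}
\int_0^T\!\!\int\!|d(f_n-f)(\bb_t)|\,d\mu_t\,dt\le\int_0^T\!\!\int\!|\bb_t|\,\sqrt{\Gamma(f_n-f)}\,u_t\,d\mm\,dt
\end{equation*}
and apply H\"older first with exponent $r$ on $\mu_t$, then with a second exponent $q\in[1,p]$ on $\mm$ chosen so that $\sqrt{\Gamma(f_n-f)}$ needs only to converge in $L^2(\mm)$. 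The integrability hypothesis $1/r+1/p\le1/2$ is precisely what permits such a choice of $q$ while keeping $\|u_t\|_q$ uniformly bounded (interpolating $u_t$ between $L^1_x$, where it has unit norm, and $L^p_x$). Extracting a pointwise-convergent subsequence on a $(dt\otimes d\eeta)$-full-measure set and invoking Fubini identifies, for each $f\in\Algebra$, an $\eeta$-full-measure set of curves along which the integral ODE passes to the limit, thereby providing the AC representative of $f\circ\eta$ with derivative $df(\bb_t)(\eta(t))$ a.e.\ required by Definition~\ref{def:ourODE}. The hard part is engineering this H\"older chain so that only $\V$-convergence of the approximants is needed, since no stronger uniform control on $\sqrt{\Gamma(f_n)}$ is available from the $\V$-density of $\R\Algebra^*$.
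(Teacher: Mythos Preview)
Your proposal is correct and follows essentially the same route as the paper: embed $X$ into $\R^\infty$ via $J=(g_k)_k$, push the continuity equation forward, apply Theorem~\ref{thm:superpoRinfty}, pull $\llambda$ back to $X$ using \eqref{eq:propertiesGstar2}, and then extend the ODE from $\Algebra^*$ to all of $\Algebra$ by a $\V$-density argument combined with the H\"older chain enabled by $1/r+1/p\le1/2$. The only cosmetic differences are that the paper observes directly that $J(X)$ is \emph{closed} (so Lusin--Souslin is unnecessary) and that, for the approximation step, the paper composes the $f_i$ with smooth truncations to obtain approximants $g_n$ with $\|g_n\|_\infty\le\|f\|_\infty+1$, whereas your linear combinations from $\R\Algebra^*$ work just as well since $p\ge2$ already forces $u\in L^\infty_t(L^2_x)$.
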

\begin{proof} We enumerate by $f_i$, $i\geq 1$, the elements of $\Algebra^*$ and 
define a continuous and injective map $J:X\to\R^\infty$ by
\begin{equation}\label{eq:defJ}
J(x):=\bigl(f_1(x), f_2(x), f_3(x),\ldots\bigr).
\end{equation}
A simple consequence of \eqref{eq:propertiesGstar2},
besides the injectivity we already observed, is that $J(X)$ is a closed subset of $\R^\infty$ and that $J^{-1}$ is continuous from $J(X)$ to $X$. 

Defining $\nu_t\in\Probabilities{\R^\infty}$ by $\nu_t:=J_\#\mu_t$, $\cc:(0,T)\times\R^\infty\to\R^\infty$ by
$$
 \cc^i_t:=\begin{cases}
(d f_i(\bb_t))\circ J^{-1}&\text{on $J(X)$;}\\ \\ 0 &\text{otherwise,}
\end{cases}
$$
and noticing that
\begin{equation}\label{eq:boundcci}
|\cc^i_t|\circ J\leq |\bb_t|,\quad\text{$\mm$-a.e.\ in $X$,}
\end{equation}
the chain rule (see Proposition~\ref{prop-chain-rule-derivations})
$$
d\phi(\bb_t)(x)=\sum_{i=1}^n\frac{\partial\psi}{\partial z_i}( f_1(x),\ldots, f_n(x))\cc^i_t(x)
$$
for $\phi(x)=\psi(f_1(x),\ldots, f_n(x))$ 
shows that the assumption of Theorem~\ref{thm:superpoRinfty} are satisfied by $\nu_t$ with velocity $\cc$, because
\eqref{eq:boundcci} and $\mu_t\ll\mm$ give $|\cc^i_t|\leq |\bb_t|\circ J^{-1}$ $\nu_t$-a.e.\ in $\R^\infty$.

As a consequence we can apply Theorem~\ref{thm:superpoRinfty} to obtain $\llambda\in\Probabilities{C([0,T];\R^\infty)}$
concentrated on solutions $\gamma\in AC([0,T];\R^\infty)$ to the ODE $\dot\gamma=\cc_t(\gamma)$ such that $(e_t)_\#\llambda=\nu_t$ for all $t\in [0,T]$. 
Since all measures $\nu_t$ are concentrated on $J(X)$,
$$
\text{$\gamma(t)\in J(X)$ for $\llambda$-a.e.\ $\gamma$, for all $t\in [0,T]\cap\Q$.}
$$
Then, the closedness of $J(X)$ and the continuity of $\gamma$ give $\gamma([0,T])\subset J(X)$ for $\llambda$-a.e.\ $\gamma$. For this reason, it makes
sense to define 
$$
\eeta:=\Theta_\#\llambda
$$
where $\Theta:C([0,T];J(X))\to C([0,T];X)$ is the map $\gamma\mapsto \Theta(\gamma):=J^{-1}\circ\gamma$. Since
$(J^{-1})_\#\nu_t=\mu_t$, we obtain immediately that $(e_t)_\#\eeta=\mu_t$.

Let $i\geq 1$ be fixed. Since $f_i\circ\Theta(\gamma)=p_i\circ\gamma$, taking the definition of $\cc_i$ into account we obtain that 
$f_i\circ\eta$ is absolutely continuous in $[0,T]$ and that
\begin{equation}\label{eq:partial_derivatives}
\text{$(f_i\circ\eta)'(t)=df_i(\bb_t)(\eta(t))$ a.e.\ in $(0,T)$, for $\eeta$-a.e.\ $\eta$.}
\end{equation}
We will complete the proof by showing that \eqref{eq:partial_derivatives} extends from $\Algebra^*$ to all of $\Algebra$.
By the chain rule we observe, first of all, that \eqref{eq:partial_derivatives} extends from $f_i$ to smooth truncations
of $f_i$. Therefore, by the density of $\Algebra^*$ in $\V$, for any $f\in\Algebra$ we can find $g_n$ satisfying:
\begin{itemize}
\item[(a)] $g_n\to f$ in $\V$ and $\|g_n\|_\infty\leq\|f\|_\infty+1$;
\item[(b)] $g_n\circ\eta\in AC([0,T])$ and $(g_n\circ\eta)'(t)=dg_n(\bb_t)(\eta(t))$ a.e.\ in $(0,T)$, for $\eeta$-a.e.\ $\eta$.
\end{itemize}
Since
\begin{equation}\label{eq:convl1}
\int\int_0^T|(f-g_n)(\eta(t))|dt d\eeta(\eta)=\int_0^T\int |f-g_n|u_t d\mm dt\rightarrow 0
\end{equation}
we can assume, possibly refining the sequence $(g_n)$, that $g_n\circ\eta\to f\circ\eta$ in $L^1(0,T)$ for $\eeta$-a.e.\ $\eta$. 

In order to achieve Sobolev regularity of $f\circ\eta$ it remains to show convergence of the derivatives of $g_n\circ\eta$, namely
$dg_n(\bb_t)(\eta(t))$, to $df(\bb_t)(\eta(t))$. Arguing
as in \eqref{eq:convl1} we get
$$
\int\int_0^T|df(\bb_t)(\eta(t))-dg_n(\bb_t)(\eta(t))|dt d\eeta(\eta)=
\int_0^T\int |d(f-g_n)(\bb_t)| u_td\mm dt\rightarrow 0
$$
because of \eqref{eq:superpo2bound} and the convergence $\Gq{f-g_n}\to 0$ in $\Lbm 1$. Therefore, possibly refining once
more $(g_n)$, $dg_n(\bb)(\eta)\to df(\bb)(\eta)$ in $L^1(0,T)$ for $\eeta$-a.e.\ $\eta$.
\end{proof}

\section{Regular Lagrangian flows} \label{sec:RLF}

In this section we consider a Borel family of derivations $\bb=(\bb_t)_{t\in (0,T)}$ satisfying
\begin{equation}\label{eq:basicbb}
\bb\in L^1_t(L^1_x+L^\infty_x).
\end{equation}
Under the assumption that the continuity equation has uniqueness of solutions in the class
\begin{equation}\label{mathcalL}
{\mathcal L}_+:=\bigl\{u\in L^\infty_t(L^1_x \cap L^\infty_x):\ \text{$t\mapsto u_t$ is weakly continuous in $[0,T]$, $u\geq 0$}\bigr\}
\end{equation}
for any initial datum $\bar u\in L^1\cap\Lbm\infty$, and existence of solutions in the class 
\begin{equation}\label{eq:morenarrow}
\bigl\{u\in{\mathcal L}_+:\ \|u_t\|_{\infty}\leq  C(\bb)\|u_0\|_\infty\,\,\,\forall t\in [0,T]\bigr\},
\end{equation}
for any nonnegative initial datum $\bar u\in L^1\cap\Lbm\infty$,
we prove existence and uniqueness of the regular flow $\XX$ associated to $\bb$. 
Here, the need for a class as large as possible where uniqueness holds is hidden in the proof of Theorem~\ref{thm:nosplitting}, where
solutions are built by taking the time marginals of suitable probability measures on curves and uniqueness leads to a non-branching
result.
The concept of regular flow, adapted from \cite{Ambrosio03}, is the following:

\begin{definition} [Regular flows]\label{def:dregflow} We say that $\XX: [0,T]\times X\to X$ is a regular flow (relative
to $\bb$) if the following two properties hold: 
\begin{itemize}
\item[(i)] $\XX(0,x)=x$ and $\XX(\cdot,x)\in C([0,T];X)$ for all $x\in X$;
\item[(ii)] for all $f\in\Algebra$, $f(\XX(\cdot,x))\in W^{1,1}(0,T)$ and $\frac{d}{dt} f(\XX(t,x))=df(\bb_t)(\XX(t,x))$ 
for a.e.\ $t\in (0,T)$, for $\mm$-a.e.\ $x\in X$;
\item[(iii)] there exists a constant $C=C(\XX)$ satisfying $\XX(t,\cdot)_\#\mm\leq C\mm$ for all $t\in [0,T]$.
\end{itemize}
\end{definition}

\begin{remark}[Invariance under modifications of $\bb$ and $f$] \label{rem:sensitivity} {\rm Assume that $\bb$ and $\tilde\bb$ satisfy
\begin{equation}\label{eq:equibb}
\text{for all $f\in\Algebra$, $df(\bb)=df(\tilde\bb)$ $\Leb{1}\otimes\mm$-a.e.\ in $(0,T)\times X$.}
\end{equation}
Then $\XX$ is a regular flow relative to $\bb$ if and only if $\XX$ is a regular flow relative to $\tilde\bb$. Indeed,
let us fix $f\in\Algebra$ and let us notice that for all $t\in (0,T)$ such that
$\mm(\{df(\bb_t)\neq df(\tilde\bb_t)\})=0$, condition (iii) of Definition~\ref{def:dregflow}
gives
$$
df(\bb_t)(\XX(t,x))=df(\tilde\bb_t)(\XX(t,x)),\qquad\text{for $\mm$-a.e.\ $x\in X$.}
$$
Thanks to \eqref{eq:equibb} and Fubini's theorem, the condition $\mm(\{df(\bb_t)\neq df(\tilde\bb_t)\})=0$ is satisfied for a.e.\ $t\in (0,T)$. 
Hence, we may apply once more Fubini's theorem to get
$$
df(\bb_t)(\XX(t,x))=df(\tilde\bb_t)(\XX(t,x))\quad\text{a.e.\ in $(0,T)$},\qquad\text{for $\mm$-a.e.\ $x\in X$.}
$$
With a similar argument, one can show that if we modify not only $df(\bb)$, but also $f$ in a $\mm$-negligible set,
to obtain a Borel representative $\tilde f$, then
$f(\XX(\cdot,x))\in W^{1,1}(0,T)$ and $\frac{d}{dt} f(\XX(t,x))=\bb_t(\XX(t,x))$ 
for a.e.\ $t\in (0,T)$ if and only if $\tilde f(\XX(\cdot,x))\in W^{1,1}(0,T)$ and $\frac{d}{dt} \tilde f(\XX(t,x))=\bb_t(\XX(t,x))$ 
for a.e.\ $t\in (0,T)$, because Fubini's theorem gives $\tilde f(\XX(t,x))=f(\XX(t,x))$ for a.e.\ $t\in (0,T)$, for $\mm$-a.e.\ $x\in X$. 
For this reason the choice of a Borel representative of $f\in\Algebra$ is not really important. Whenever
this is possible, the natural choice of course is given by the continuous representative.\fr
}\end{remark}

The main result of the section is the following existence and uniqueness result. We stress that uniqueness is understood in the
pathwise sense, namely $\XX(\cdot,x)=\YY(\cdot,x)$ in $[0,T]$ for $\mm$-a.e.\ $x\in X$, whenever $\XX$ and $\YY$ are
regular Lagrangian flows relative to $\bb$.

\begin{theorem}[Existence and uniqueness of the regular Lagrangian flow]\label{thm:uniflow}
Assume \eqref{eq:basicbb}, and that the continuity equation induced by $\bb$ has uniqueness of solutions in ${\mathcal L}_+$ for all
initial data $\bar u\in L^1\cap\Lbm\infty$, as well as existence of solutions in the class \eqref{eq:morenarrow} for all
nonnegative initial datum $\bar u\in L^1\cap\Lbm\infty$. Then there exists a unique regular Lagrangian flow relative to $\bb$.
\end{theorem}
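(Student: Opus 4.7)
Following the general strategy of \cite{Ambrosio03}, I would reduce both existence and pathwise uniqueness of the regular flow to a single non-branching lemma for probability measures on paths. The statement to establish is: whenever $\eeta\in\Probabilities{C([0,T];X)}$ is concentrated on solutions to $\dot\eta=\bb_t(\eta)$ in the sense of Definition~\ref{def:ourODE} and $(e_t)_\#\eeta\le C\mm$ uniformly in $t\in[0,T]$, the disintegration $\eeta=\int\eeta_x\,d((e_0)_\#\eeta)(x)$ satisfies $\eeta_x=\delta_{\eta^x}$ for $(e_0)_\#\eeta$-a.e.\ $x$, for some Borel selection $x\mapsto\eta^x\in C([0,T];X)$.

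\textbf{Deriving existence and uniqueness from non-branching.} For existence, I would fix $\bar u\in L^1\cap\Lbm\infty$ with $\bar u>0$ $\mm$-a.e.\ and $\int\bar u\,d\mm=1$ (such a $\bar u$ exists by $\sigma$-finiteness of $\mm$). The existence assumption yields $u\in\mathcal{L}_+$ with $u_0=\bar u$ and $\|u_t\|_\infty\le C(\bb)\|\bar u\|_\infty$. Theorem~\ref{thm:superpo} applies with $p=\infty$, $r=1$, since the integrability requirement \eqref{eq:superpo2bound} on $|\bb|$ against $u_t\mm$ follows from $\bb\in L^1_t(L^1_x+L^\infty_x)$ combined with the uniform $L^\infty$ bound on $u_t$; this yields $\eeta$ with $(e_t)_\#\eeta=u_t\mm$ concentrated on ODE solutions, and the non-branching lemma then produces a single selection $\XX(\cdot,x):=\eta^x$ for $\bar u\mm$-a.e.\ $x$, hence for $\mm$-a.e.\ $x$; properties (i), (ii) of Definition~\ref{def:dregflow} are automatic, and (iii) follows from $u_t\le C(\bb)\|\bar u\|_\infty$, after extending $\XX$ arbitrarily to a negligible set. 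For pathwise uniqueness of regular flows $\XX,\YY$, I would form $\eeta:=\frac12\bigl[(\XX(\cdot,\cdot))_\#(\bar u\mm)+(\YY(\cdot,\cdot))_\#(\bar u\mm)\bigr]$, which is concentrated on ODE solutions by property (ii) and has marginals bounded by $\frac12(C(\XX)+C(\YY))\|\bar u\|_\infty\mm$ by property (iii); the non-branching lemma forces $\eeta_x$ to be a Dirac for $\bar u\mm$-a.e.\ $x$, which, since both $\delta_{\XX(\cdot,x)}$ and $\delta_{\YY(\cdot,x)}$ appear in its support, forces $\XX(\cdot,x)=\YY(\cdot,x)$ for $\mm$-a.e.\ $x$.

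\textbf{The non-branching lemma and the main obstacle.} This is where I expect the real work, and I would argue by contradiction. Suppose $B:=\{x:\eeta_x$ is not a Dirac mass$\}$ has positive $\mu_0$-measure, with $\mu_0:=(e_0)_\#\eeta$. Using that $C([0,T];X)$ is Polish and that its Borel $\sigma$-algebra is countably generated (for instance by pullbacks $f\circ e_t$ with $f\in\Algebra^*$ and $t\in\Q\cap[0,T]$, which separate points of $C([0,T];X)$ thanks to \eqref{eq:propertiesGstar1} and continuity of curves), I would construct Borel-measurably in $x\in B$ a decomposition $\eeta_x=\lambda(x)\eeta^1_x+(1-\lambda(x))\eeta^2_x$ with $\lambda:B\to(0,1)$ Borel and $\eeta^1_x,\eeta^2_x$ mutually singular probability measures on $\{\eta:\eta(0)=x\}$ whose time-$t^\ast(x)$ marginals differ for some rational $t^\ast(x)>0$ (such a $t^\ast$ exists because two distinct $\tau$-continuous curves starting at $x$ must be separated by some $f\in\Algebra^*$ at some rational time). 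Glueing, set $\eeta^i:=\int_B\eeta^i_x\,d\mu_0(x)+\int_{X\setminus B}\eeta_x\,d\mu_0(x)$ for $i=1,2$; both are concentrated on ODE solutions since that property is preserved under restriction, both share the initial marginal $\mu_0$, and by Lemma~\ref{eq:fromODEtoPDE} their marginals $\mu^i_t:=(e_t)_\#\eeta^i$ are weakly continuous solutions of the continuity equation in $\mathcal{L}_+$, with densities bounded by $C/\min\{\lambda,1-\lambda\}$ (truncating $\lambda\in[\veps,1-\veps]$ on a smaller subset of $B$ keeps us in $L^\infty$). But $\mu^1_{t^\ast}\neq\mu^2_{t^\ast}$ on a set of positive measure, contradicting uniqueness of CE solutions in $\mathcal{L}_+$. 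The hard part will be executing the Borel-measurable splitting with the separation of marginals at some positive time while maintaining the a priori $L^\infty$ bound on the pieces, for which one has to carefully truncate on sub-level sets of $\lambda$; once this is handled, the rest is a bookkeeping argument chaining Lemma~\ref{eq:fromODEtoPDE}, Theorem~\ref{thm:superpo}, and the hypothesis.
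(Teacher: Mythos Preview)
Your overall architecture---reduce everything to a no-splitting lemma for path measures, then derive existence by superposition and uniqueness by mixing two flows---is exactly the strategy the paper follows (and inherits from \cite{Ambrosio03}, \cite{bologna}). The uniqueness half of your argument is fine and matches the paper's. But there is a genuine gap in your existence argument, and it is precisely the reason the paper does \emph{not} use a single global density $\bar u>0$.

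\textbf{The gap: property (iii) does not follow from a single $\bar u$ when $\mm(X)=\infty$.} From the non-branching lemma you obtain $\eeta=\int\delta_{\eta^x}\,d(\bar u\mm)(x)$, hence
\[
\XX(t,\cdot)_\#(\bar u\,\mm)=(e_t)_\#\eeta=u_t\,\mm\le C(\bb)\|\bar u\|_\infty\,\mm.
\]
You then assert that (iii), namely $\XX(t,\cdot)_\#\mm\le C\mm$, follows. It does not: the bound you have controls $\int_{\XX(t,\cdot)^{-1}(A)}\bar u\,d\mm$, not $\mm(\XX(t,\cdot)^{-1}(A))$, and the two are comparable only if $\bar u$ is bounded away from $0$. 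Since $\bar u\in\Lbm1$ and $\mm$ is merely $\sigma$-finite, $\inf\bar u=0$ in general. The paper avoids this by working with $\bar u=\chi_B/\mm(B)$ on sets $B$ of finite measure: then $\XX(t,\cdot)_\#(\chi_B\mm)\le C(\bb)\,\mm$ with a constant \emph{independent of $B$}, and an exhaustion $B_n\uparrow X$ plus consistency (proved by the same mixture/no-splitting trick you use for uniqueness) gives $\XX(t,\cdot)_\#\mm\le C(\bb)\,\mm$ in the limit. This localization is not optional.

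\textbf{Two smaller points.} First, your claim that Theorem~\ref{thm:superpo} applies with $p=\infty$, $r=1$ contradicts the stated hypothesis $1/r+1/p\le 1/2$; with $p=\infty$ one needs $r\ge 2$, so the verification of \eqref{eq:superpo2bound} requires more care than you indicate. Second, in your no-splitting sketch the separation time $t^*(x)$ depends on $x$, which makes the final contradiction (``$\mu^1_{t^*}\neq\mu^2_{t^*}$'') ill-posed. The standard fix---and what is done in \cite{bologna}, to which the paper simply defers---is to use the countable family $\{f\circ e_t:f\in\Algebra^*,\ t\in\Q\cap[0,T]\}$ to find a \emph{single} $f$ and a \emph{single} rational $t$ for which $\eta\mapsto f(\eta(t))$ is non-constant under $\eeta_x$ on a set of positive $\mu_0$-measure, and split along a measurable threshold for that one function.
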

\begin{proof} Let $B\in\BorelSets{X}$ with positive and finite $\mm$-measure and let us build first a ``generalized'' flow starting from
$B$. To this aim, we take $\bar u=\chi_B/\mm(B)$ as initial datum and we apply first the assumption on existence of a solution
$u\in{\mathcal L}_+$ starting from $\bar u$, with $u_t\leq C(\bb)/\mm(B)$, and then
the superposition principle stated in
Theorem~\ref{thm:superpo} to obtain $\eeta\in\Probabilities{C([0,T];X)}$ whose time marginals are $u_t\mm$, concentrated
on solutions to the ODE $\dot\eta=\bb_t(\eta)$. Then, Theorem~\ref{thm:nosplitting} below (which uses the uniqueness
part of our assumptions relative to the continuity equation) provides a representation
$$
\eeta= \frac{1}{\mm(B)}\int_B \delta_{\eta_x} d\mm(x),
$$
with $\eta_x\in C([0,T];X)$, such that $\eta_x(0)=x$ and $\dot\eta_x=\bb_t(\eta)$. Setting $\XX(\cdot,x)=\eta_x(\cdot)$ for $x\in B$, it follows that $\XX:B\times [0,T]$ is a regular flow, relative to $\bb$, 
with the only difference that (i) and (ii) in Definition~\ref{def:dregflow} have
to be understood for $\mm$-a.e.\ $x\in B$, and 
\begin{equation}\label{eq:oracena}
\XX(t,\cdot)_\#(\bar u\mm)=(e_t)_\#\eeta=u_t\mm\leq \frac{C(\bb)}{\mm(B)}\mm.
\end{equation}

Next we prove consistency of these ``local'' flows $\XX_B$. If $B_1\subset B_2$ with $\mm(B_1)>0$ and $\mm(B_2)<\infty$,
we can consider the measure
$$
\eeta:=\frac{1}{2\mm(B_1)}\int_{B_1} \bigl(\delta_{\sXX_{B_1}(\cdot,x)}+\delta_{\sXX_{B_2}(\cdot,x)}\bigr)\,d\mm(x)
\in\Probabilities{C([0,T];X)}
$$
to obtain from Theorem~\ref{thm:nosplitting} that $\XX_{B_1}(\cdot,x)=\XX_{B_2}(\cdot,x)$ for $\mm$-a.e.\ $x\in B_1$.

Having gained consistency, we can build a regular Lagrangian flow by considering a nondecreasing sequence of
a Borel sets $B_n$ with positive and finite $\mm$-measure whose union covers $\mm$-almost all of $X$ and the
corresponding local flows $\XX_n:B_n\times [0,T]\to X$. Notice
that we needed a quantitative upper bound on $\XX_n(t,\cdot)_\#(\chi_{B_n}\mm)$ precisely in order to be able to
pass to the limit in condition (iii) of Definition~\ref{def:dregflow}, since \eqref{eq:oracena} gives
$\XX(t,\cdot)_\#(\chi_B\mm)\leq C(\bb)\mm$.

This completes the existence part. The uniqueness part can be proved using once more Theorem~\ref{thm:nosplitting}
and the same argument used to show consistency of the ``local'' flows. 
\end{proof}

\begin{theorem}[No splitting criterion]\label{thm:nosplitting}
Assume \eqref{eq:basicbb}, and that the continuity equation induced by $\bb$ has at most one solution in ${\mathcal L}_+$ for all
$\bar u\in L^1\cap\Lbm\infty$. Let $\eeta\in\Probabilities{C([0,T];X)}$ satisfy:
\begin{itemize}
\item[(i)] $\eeta$ is concentrated on solutions $\eta$ to the ODE $\dot\eta=\bb_t(\eta)$;
\item[(ii)] there exists $L_0\in [0,\infty)$ satisfying
\begin{equation}\label{eqn:noconcentration}
(e_t)_\#\eeta\leq L_0\mm\qquad\forall \,t\in [0,T].
\end{equation}
\end{itemize}
Then the conditional measures $\eeta_x\in\Probabilities{C([0,T];X})$ induced by the map $e_0$ are Dirac masses for
$(e_0)_\#\eeta$-a.e.\ $x$; equivalently, there exist $\eta_x\in C([0,T];X)$ such that $\eta_x(0)=x$ and solving the ODE
$\dot\eta_x=\bb_t(\eta_x)$, satisfying
$\eeta= \int \delta_{\eta_x} d(e_0)_\#\eeta(x)$.
\end{theorem}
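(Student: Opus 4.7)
I would argue by contradiction and disintegration. By hypothesis (ii), $\mu_0 := (e_0)_\#\eeta$ is absolutely continuous with respect to $\mm$, so I may disintegrate $\eeta = \int \eeta_x\, d\mu_0(x)$ with $\eeta_x$ a probability measure supported on $\{\eta : \eta(0) = x\}$. The goal is to show that the set
\[
N := \{x \in X : \eeta_x \text{ is not a Dirac mass}\}
\]
is $\mu_0$-negligible; suppose for contradiction that $\mu_0(N) > 0$.

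\textbf{Selection of a splitting observable.} Since $X$ is Polish, fix a countable family $\{f_n\} \subset C_b(X)$ that separates points (e.g.\ bounded truncations of distances to a countable dense set in a compatible metric). By continuity of paths and a countable intersection, $\eeta_x$ is Dirac if and only if $f_n\circ e_{t^*}$ has zero variance under $\eeta_x$ for every rational $t^* \in [0,T]$ and every $n \ge 1$: indeed, if all such variances vanish, then $(e_{t^*})_\#\eeta_x = \delta_{\xi_x(t^*)}$ on rationals, and continuity of paths forces $\eeta_x = \delta_{\xi_x}$ for a unique continuous extension $\xi_x$. Hence
\[
N = \bigcup_{t^* \in [0,T]\cap\Q,\ n \ge 1} A_{t^*,n}, \qquad A_{t^*,n} := \{x : \mathrm{Var}_{\eeta_x}(f_n\circ e_{t^*}) > 0\},
\]
and $\mu_0(N)>0$ yields a time $t^* \in (0,T]\cap\Q$, a bounded Borel function $f := f_n$, and a Borel set $A \subset A_{t^*,n}$ with $\mu_0(A) > 0$.

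\textbf{Construction of two competing solutions.} Set $c(x) := \int f(\eta(t^*))\,d\eeta_x(\eta)$, which is Borel in $x$ by the measurability of disintegrations, and define
\[
G(\eta) := \chi_A(\eta(0))\bigl[f(\eta(t^*)) - c(\eta(0))\bigr], \qquad \eeta^\pm := G^\pm \cdot \eeta.
\]
Both $\eeta^\pm$ are nonzero finite Borel measures on $C([0,T];X)$; being absolutely continuous with respect to $\eeta$, they are still concentrated on solutions to $\dot\eta = \bb_t(\eta)$, and $(e_t)_\#\eeta^\pm \le \|G\|_\infty L_0\mm$. The densities $u^\pm_t$ of their time marginals therefore belong to $L^\infty_t(L^1_x \cap L^\infty_x)$ and are weakly continuous in $t\in[0,T]$ (in duality with $\Algebra$, by dominated convergence). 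A direct Fubini computation on the pathwise identity $\frac{d}{dt}(\vphi\circ\eta) = d\vphi(\bb_t)(\eta)$, whose integrability rests on $|d\vphi(\bb_t)| \le \|\sqrt{\Gamma(\vphi)}\|_\infty|\bb_t|$ together with $u^\pm \in L^\infty_t(L^1_x\cap L^\infty_x)$ and $\bb \in L^1_t(L^1_x+L^\infty_x)$, verifies that $u^\pm$ is a weak solution to the continuity equation in the sense of Definition~\ref{def:continuity-equation}; thus $u^\pm \in \mathcal{L}_+$. Crucially, the zero-conditional-mean property $\int G\,d\eeta_x = 0$ for every $x$ gives $(e_0)_\#\eeta^+ = (e_0)_\#\eeta^-$, i.e.\ $u^+_0 = u^-_0$.

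\textbf{Contradiction and main obstacle.} The uniqueness assumption in $\mathcal{L}_+$ with common initial datum forces $u^+_t = u^-_t$ for every $t \in [0,T]$. However, using $c(x) = E_{\eeta_x}[f\circ e_{t^*}]$,
\[
\int f\,(u^+_{t^*} - u^-_{t^*})\,d\mm = \int f(\eta(t^*))\,G(\eta)\,d\eeta = \int_A \mathrm{Var}_{\eeta_x}(f\circ e_{t^*})\,d\mu_0(x) > 0
\]
by the defining property of $A$. This contradicts $u^+_{t^*} = u^-_{t^*}$ and proves $\mu_0(N) = 0$, so $\eeta_x = \delta_{\eta_x}$ for $\mu_0$-a.e.\ $x$ and the representation $\eeta = \int \delta_{\eta_x}\, d\mu_0(x)$ follows. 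The main obstacle is the selection step, namely extracting a \emph{single} observable $(t^*,f)$ witnessing the non-Dirac behaviour on a $\mu_0$-positive set, together with the verification that the positive measures $\eeta^\pm$ induce genuine $\mathcal{L}_+$-solutions with matching initial data, so that the uniqueness hypothesis becomes applicable and converts the splitting into a contradiction.
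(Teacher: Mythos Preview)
Your argument is correct and is precisely the decomposition procedure that the paper outsources to \cite[Thm.~5.4]{Ambrosio03} and \cite[Thm.~18]{bologna}; you have written out in full the disintegration-and-splitting argument those references contain, so your approach coincides with the paper's. One small point: your justification of weak continuity of $t\mapsto u^\pm_t$ ``by dominated convergence'' is a bit quick, since functions in $\Algebra$ need not be continuous on $X$; the cleaner route is to first obtain the weak formulation of the continuity equation for $u^\pm$ (as you do) and then recover a weakly continuous representative via the standard argument in Remark~\ref{rem:form}.
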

\begin{proof} Using the uniqueness assumption at the level of the continuity equation, as well as the implication provided by Lemma~\ref{eq:fromODEtoPDE},
the decomposition procedure of \cite[Thm.~18]{bologna} (that slightly improves the original argument of \cite[Thm.~5.4]{Ambrosio03}, 
where comparison principle for the continuity equation was assumed) gives the result.
\end{proof}

\section{Examples}\label{sec:examples}

In this section, on one hand we illustrate relevant classes of metric measure spaces for which our abstract theory applies. On the other hand we try
to compare our results on the well-posedness of the continuity equation with the ones obtained in other papers, for particular classes of spaces.
Several variants of the existence and uniqueness results are possible, varying the regularity and the growth conditions imposed on $\bb$ and
on the density $u_t$; we focus mainly on the issue of uniqueness, since existence in particular classes of spaces (e.g.\ the Euclidean ones)
can be often be obtained by ad hoc methods (e.g.\ convolution of the components of the vector field, which preserve bounds on divergence) 
not available in general spaces. Also,
we will not discuss the existence/uniqueness of the flow, which follow automatically from well-posedness at the PDE level 
using the transfer mechanisms presented in Section~\ref{sec:RLF}. We list the examples following, to some extent, chronological order 
and level of complexity.

\subsection{Euclidean spaces: the DiPerna-Lions theory}\label{sec:diperna-lions}

The theory of well posedness for flows and for transport and continuity equations was initiated by 
DiPerna-Lions in \cite{DiPerna-Lions89} and it (quite obviously) fits into our abstract setting. More explicitly we let, in the basic setup 
\eqref{E-assumptions}, $X=\R^n$, $\mm = {\mathscr L}^n$ the Lebesgue measure and
$$\mathcal{E}(f) = \int \abs{\nabla f}^2(x) d{\mathscr L}^n(x),\quad \text{for $f\in W^{1,2}(\R^n)$,}$$
so that $\Delta$ is the usual Laplacian and $(\sfP_t)_t$ is the heat semigroup, that corresponds (up to a factor $2$ in the time scale) to the transition semigroup of the Brownian motion, which is conservative.
The algebra $\Algebra$ of Section~\ref{subsec:algebra} can be chosen to be the space of Lipschitz functions with compact support.

Given a Borel vector field $b = \sum_{i=1}^n b^i e_i$, with $b \in (L^1+L^\infty)^n$, its associated derivation $\bb$ is 
\[ \Algebra \ni f \mapsto df(\bb) = \sum_{i =1}^n b_i \frac{\partial f}{\partial x^i}. \]
Obviously, $\div \bb$ is the usual distributional divergence and $ D^{sym}\bb$ is the symmetric part of the distributional derivative of $\bb$.
Then, the uniqueness Theorem~\ref{thm:uniqueness} above corresponds to \cite[Corollary~II.1]{DiPerna-Lions89}, as long as $q\in (1,\infty]$.

On the other hand, in Euclidean spaces the strong local convergence of commutators depends on local regularity assumptions
on $\bb$ (and the use of convolutions with compact support), while our setting is intrinsically global. In order to adapt our methods
to this case, one could ``localize the Dirichlet form'' by considering $X = B_r(0)$ and the form
$$\mathcal{E}_r(f) = \int_{B_r} \abs{\nabla f}^2 d{\mathscr L}^n, \quad\text{for $f\in H^1(B_r)$.}$$
Thus $\Delta$ would be the Laplacian with Neumann boundary conditions and $(\sfP_t)_t$ would be the semigroup correspondent to the 
Brownian motion reflected at the boundary $\partial B_r(0)$, which is still conservative. Since the ball is convex, it can be proved that $\BE_2(0,\infty)$
still holds, see for instance \cite[Thm.~6.20]{AGS11b}.

A second major difference is that uniqueness assuming the regularity $b\in (W^{1,1})^n$ (or even $b \in (BV)^n$, the case
considered in \cite{Ambrosio03}) is not covered. Indeed, the $BV$ case seems difficult to reach in the abstract setting, 
due to the present lack of a covariant derivative (but see \cite{gigli-WIP}).

\subsection{Weighted Riemannian manifolds}\label{sec:riemannian}

Our arguments extend the classical DiPerna-Lions theory to the setting of weighted Riemannian manifolds. Of course, 
in order to prove strong convergence of commutators and the fact that solutions are renormalized 
one can always argue by local charts, but computations become more cumbersome, compared to the
Euclidean case, and here the 
advantages of our intrinsic approach become more manifest.

Let $(M,\gg)$ be a smooth Riemannian manifold and let $\mu$ be its associated Riemannian volume measure. Assume that the Ricci curvature tensor $\mathrm{Ric}_{\sgg}$ is pointwise bounded from below (in the sense of  quadratic forms) by some constant $K \in \R$. More generally, one can add a ``weight'' $V: M \to \R$ to the measure, i.e.\ consider a smooth non-negative function and assume that the Bakry-\'Emery curvature tensor is bounded from below by $K\in\R$, i.e.\
\[ \mathrm{Ric}_\sgg + \mathrm{Hess}(V) \ge K.\]
The form (on smooth compactly supported functions)
\[ f \mapsto \mathcal{E}_V (f) = \int_M \gg(\nabla f, \nabla f) e^{-V} d\mu,\]
is closable and we are in the setup \eqref{E-assumptions}.
Once more, the algebra $\Algebra$ of Section~\ref{subsec:algebra} can be chosen to be the space of Lipschitz functions with compact support.

When $V = 0$, Bochner's formula entails that $\BE_2(K,\infty)$ holds and it is a classical result due to S.-T.\ Yau that the heat semigroup is conservative. In the case of weighted measures, analogous results can be found in \cite[Prop.~6.2]{Bakry-94b} for the curvature bound and in \cite[Thm.~9.1]{grigorian-99} for the conservativity of $\sfP$, relying on a correspondent volume comparison theorem, see e.g.\ \cite[Thm.~1.2]{wei09}.

Given a Borel vector field $b$, i.e.\ a Borel section of the tangent bundle of $M$, its associated derivation $\bb$ acts on smooth functions by
$$ f \mapsto df (\bb) = \gg(b,\nabla f).$$
The divergence can be given in terms of the $\mu$-distributional divergence of $b$ by
\[ \div\bb = \div b - \gg(\bb,\nabla V), \]
while the deformation is the symmetric part of the distributional covariant derivative, see 
Remark~\ref{rem:deformation-smooth}.

\subsection{Abstract Wiener spaces}\label{sec:examples-wiener}

Let $(X,\gamma,\mathcal{H})$ be an abstract Wiener space, i.e.\ $X$ is a separable Banach space, 
$\gamma$ is a centered non-degenerate Gaussian measure on $X$, with covariance operator $Q: X^* \mapsto X$ and $\mathcal{H} \subset X$ 
is its associated Cameron-Martin space, which is naturally endowed with a Hilbertian norm. Moreover, $QX^*\subset\mathcal{H}$.

We define the set of smooth cylindrical functions $\mathcal{FC}_b^\infty(X)$ as the set of all functions $f(x)$ representable as 
 $\varphi(x_1^*(x),\ldots,x_n^*(x))$, with $\varphi: \R^n\to\R$ smooth and bounded, $x_i^*\in X^*$ for $i \in\cur{1,\ldots,n}$, for some integer $n\geq 1$.

We introduce a notion of ``gradient'' on functions $f \in \mathcal{FC}_b^\infty(X)$ letting $\nabla_\mathcal{H} f = Q df$, 
where $df$ is the Fr\'echet differential of $f$. With these definitions, for $f=\varphi(x_1^*,\ldots,x_n^*)$, there holds
\[\nabla_{\mathcal{H}} f (x) = \sum_{j=1}^n \frac{\partial\varphi}{\partial z_j} Qx_j^*=
\sum_i \frac{\partial f}{\partial h_i}  (x) h_i,\quad \text{where } \quad\frac{\partial f}{\partial h_i} (x) 
= \lim_{\veps \to 0} \frac{ f\bra{x+ \veps h_i} - f\bra{x} }{\veps}\]
where $(h_i)$ is any orthonormal basis of $\mathcal{H}$.

It is well-known \cite{Bouleau-Hirsch91} that Sobolev-Malliavin calculus on $(X,\gamma,\mathcal{H})$ fits into the setting \eqref{E-assumptions}, 
considering the closure of the quadratic form
\[ \mathcal{E}(f) = \int_X \abs{\nabla_{\mathcal{H}} f }^2_{\mathcal{H}} d\gamma,\quad \text{ for every $f \in \mathcal{FC}_b^\infty(X)$.}\]
The domain $\V$ coincides with the space $W^{1,2}(X,\gamma)$. The semigroup $\sfP$ is the Ornstein-Uhlenbeck semigroup, given by Mehler's formula
\[\sfP_t f (x) = \int_X f( e^{-t} x + \sqrt{1 - e^{-2t}} y ) d\gamma(y), \quad\text{for $\gamma$-a.e.\ $x \in X$.}\]
From this expression, it is easy to show that $\BE_2(1,\infty)$ holds (indeed, on cylindrical functions $\nabla_\mathcal{H} \sfP_t f=e^{-t}\sfP_t\nabla_\mathcal{H} f$,
understanding the action of the semigroup componentwise); it is a classical result that $\cE$ is \emph{quasi-regular}, see e.g.\ \cite[Thm.~5.9.9]{Bogachev98}. 
We let $\Algebra = \mathcal{FC}_b^\infty(X)$, which is well-known to be dense in every $L^p$-space and satisfy \eqref{eq:Feller} by Mehler's formula above: in particular we obtain density in $\V_p$ spaces by Lemma~\ref{lemma:feller-density} and Lemma~\ref{lem:refining-algebra}. 

Given an $\mathcal{H}$-valued field $b = \sum_i b^i h_i$, we introduce the derivation $f \mapsto \bb(f)  = \sum_i b^i \frac{\partial f}{\partial h_i}$
and we briefly compare our well-posedness results for the continuity equation with those contained in \cite{AF-09}.
 Combining Proposition~\ref{prop:hilbert-schmidt} and the subsequent remark, we obtain that our notion of deformation for  $\bb$ is comparable to that of $(\nabla b)^{sym}$ introduced in \cite[Def.~2.6]{AF-09}. Precisely, it can be proved that if $b \in LD^q(\gamma;\mathcal{H})$ for some $q>1$, then the deformation of $\bb$ is of type $(r,s)$, for any $r$, $s$, with $q^{-1}+r^{-1} +s^{-1} = 1$. It is then easy to realize that Theorem~\ref{thm:uniqueness} entails the uniqueness part of \cite[Thm.~3.1]{AF-09}, with the 
exception, as we observed in connection to the Euclidean theory, of the case $b\in W^{1,1}(X,\gamma;\mathcal{H})$ (the case $b\in BV(X,\gamma;\mathcal{H})$ 
has been recently settled in \cite{Trevisan-13}).

\subsection{Gaussian Hilbert spaces} \label{sec:examples-daprato}

We let $X = H$ be a separable Hilbert space, with norm $\abs{\cdot}$, in the setting introduced in the previous section, namely
$\gamma$ is a Gaussian centered and nondegenerate measure in $H$. By identifying $H = H^*$ via the Riesz isomorphism induced by the norm, 
the covariance operator $Q: H \to H$ is a symmetric positive trace class operator, thus compact. 
In this setting the Cameron-Martin space is $\mathcal{H} = Q^{1/2} H$, with the norm $\abs{h}_{\mathcal{H}} = \abs{Q^{-1/2} h}$. 

We let $(e_i)\subset H$ be an orthonormal basis of $H$ consisting of eigenvectors of $Q$, with eigenvalues $(\lambda_i)$, i.e.\ $Qe_i = \lambda_i e_i$ for every $i\ge 1$: in this setting, we define the class of smooth cylindrical functions $\mathcal{FC}_b^\infty(H)$ as those functions $f: X \to \R$ of the form $f(x) = \varphi( \ang{e_i, x}, \ldots \ang{e_n, x})$, with  $\varphi: \R^n \to \R$ smooth and bounded. Given $f \in \mathcal{FC}_b^\infty(H)$, from its Fr\'echet derivative $d f: H \mapsto H^*$ we introduce $\nabla f: H \mapsto H$ via $H = H^*$, in coordinates:
\[ \nabla f (x) = \sum_{i} \partial_i f (x) e_i, \quad\text{where } \partial_i f(x) = \lim_{\veps \to 0} \frac{ f\bra{x+ \veps e_i} - f\bra{x} }{\veps}. \]

To recover the abstract setting of the previous section, notice that the family $h_i = \lambda_i^{1/2} e_i$ is an orthonormal basis of $\mathcal{H}$ and 
that $\partial/\partial h_i = \lambda^{-1/2}_i \partial_i$, thus it holds that $Q \nabla f = \nabla_\mathcal{H} f$.

For $\alpha \in \R$, we introduce the form
\[  \mathcal{E}^\alpha(f) = \int_X \bigl|Q^{{(1-\alpha)}/{2}} \nabla f \bigr|^2 d\gamma,\quad \text{ $f \in \mathcal{FC}_b^\infty(H)$},\]
which is closable: its domain is the space $W_\alpha^{1,2} (H,\gamma)$, see \cite[Chapters 1 and 2]{DP04} for more details. Evidently, we recover \eqref{E-assumptions}, with $\Gamma(f) = \sum_i \lambda_i^{1-\alpha} \bigl|\partial_i f\bigr|^2$. Notice that the associated distance is the one induced by the norm 
$|Q^{{(\alpha-1)}/{2} } x|$, which is extended if and only if $\alpha <1$.

The associated semigroup can be still be seen as the transition semigroup of an infinite dimensional SDE,
and its infinitesimal generator $\Delta_\alpha$ is given by
\[ \Delta_\alpha f (x) =   \mathrm{Tr}\sqa{Q^{1-\alpha} D^2f (x) } -\ang{ x, Q^{-\alpha} \nabla f(x)}, \quad \text{$f \in \mathcal{FC}_b^\infty(H)$.}\]
It can be shown that $\BE_2(1,\infty)$ holds \cite[Prop.~2.60]{DP04}. We let $\Algebra = \mathcal{FC}_b^\infty(H)$, which is dense in every $\Lbm p$ space and satisfies \eqref{eq:Feller}, thus obtaining density results in $\V_p$ ($p \in [1,\infty)$) by Lemma~\ref{lemma:feller-density} and Lemma~\ref{lem:refining-algebra}.

For $\alpha = 0$, we recover the abstract Wiener space setting discussed above, while for $\alpha =1$ we obtain the setting of \cite{DaPrato-Flandoli-Rockner-13}. We show that our results encompass those in \cite{DaPrato-Flandoli-Rockner-13} and analogues hold for any $\alpha \in \R$.

Given $b: H \mapsto H$,  $b = \sum_i b_i e_i$ Borel, we consider the map
\[ \mathcal{FC}_b^\infty(H) \ni f \mapsto df(\bb):= \ang{b, \nabla f}_H = \sum_i b_i \partial_i f.\]
If $\abs{Q^{(\alpha-1)/2} b} \in L^q(H,\gamma)$ for some $q \in [1,\infty]$, then $\bb$ is a well-defined derivation, with $\abs{\bb} \le \abs{Q^{(\alpha-1)/2} b}$.

The Cameron-Martin theorem entails an integration by parts formula \cite[Thm.~1.4 and Lemma 1.5]{DP04} that reads in our notation as
\[ \div \ee_i (x) = -\frac{ \ang{e_i, x} }{\lambda_i}, \quad \text{where $df(\ee_i) = \partial_i f$.} \]
On smooth ``cylindrical'' fields $b = \sum_i^n b_i e_i$, this gives
\[  \div  \bb (x) = \sum_{i} \partial_i b_i(x) - \frac{ \ang{e_i, x} }{\lambda_i} b_i,\]
where the series reduces to a finite sum. Notice that the expression does not depend on $\alpha$ but only on $\gamma$, 
in agreement with the notion of divergence as dual to derivation.

Notice also that the boundedness of the Gaussian Riesz transform \cite[Prop.~5.88]{Bogachev98} entails that if $\bb \in W^{1,p}(H,\gamma, \mathcal{H})$, then $\div \bb \in L^p(H,\gamma)$. These are only sufficient conditions and their assumptions would force us to limit the discussion to $\mathcal{H}$-valued fields, as in \cite{DaPrato-Flandoli-Rockner-13} (see Section 5 therein). Our results hold even for some classes of
fields not taking values in $\mathcal{H}$, see at the end of this section. 

Arguing on smooth cylindrical functions,
\begin{equation} \label{eq:hilbert-schmidt-deformation} \int  D^{sym}\bb (f,g) d \gamma = \int \sum_{i,\,j} \frac 1 2 \sqa{ \bra{ \frac{ \lambda_i} {\lambda_j}}^{{(1-\alpha)}/{2}}\hskip -18pt  \partial_i b_j + \bra{ \frac{ \lambda_j} {\lambda_i}}^{{(1-\alpha)}/{2}} \hskip -18pt\partial_j b_i} \bra{ \lambda_i^{{(1-\alpha)}/{2}} \partial_i f} \bra{ \lambda_j^{{(1-\alpha)}/{2}} \partial_j f} d\gamma,\end{equation}
thus our bound on $ D^{sym}\bb$ is implied by an $L^q$ bound of the Hilbert-Schmidt norm of the expression is square brackets above (a fact that could also be seen as a consequence of Proposition~\ref{prop:hilbert-schmidt} and the subsequent remark, setting $V_i(x) = \lambda_i^{(\alpha-1)/2}\ang{e_i, x}$, for $i\ge 1$).

Comparing our setting with that in \cite{DaPrato-Flandoli-Rockner-13}, it is clear that Theorem 2.3 therein is a consequence of Theorem~\ref{thm:uniqueness}.

We end this section considering a field $b$ taking values outside $\mathcal{H}$, to which our theory applies (although well-posedness was already shown in \cite{MW-Zakai-05}). Assume that that each eigenvalue of $Q$ admits a two-dimensional eigenspace thus, slightly changing the notation, we write $(e_i, \tilde{e}_i)$ for an orthonormal basis of $H$ consisting of eigenvectors of $Q$. We let
\[b = \sum_{i=1}^\infty \lambda_i^{1/2} \sqa{(\div \tilde{\ee}_{i}) e_{i} - (\div \ee_{i}) \tilde{e}_{i}}, \quad \text{thus}  \quad \int \bigl| 
Q^{{(\alpha-1)}/{2}} b\bigr|^2 d\gamma = \sum_{i=1}^\infty \lambda_i^\alpha.\]
The series above converges if $\alpha = 1$, and it does not if $\alpha =0$. Since $(\div \ee_{i}, \div \tilde{\ee}_{i})_i$ are independent, Kolmogorov's $0$-$1$ law entails that $b$ is well defined as an $H$-valued map, but $b(x) \notin \mathcal{H}$ for $\gamma$-a.e.\ $x\in H$. 
The derivation $\bb$ is therefore well-defined if $\alpha =1$, and $\abs{\bb} \in \Lbm 2$. From its structure and \eqref{eq:hilbert-schmidt-deformation}, both its divergence and its deformation are seen to be identically $0$, thus our results apply.

\subsection{Log-concave measures} \label{sec:examples-log-concave}

Let $(H, \abs{\cdot})$ be a separable Hilbert space and let $\gamma$ be a \emph{log-concave} probability measure on $H$, i.e.\ for all open sets $B,\,C \subset H$,
\[ \log \gamma\bra{ (1-t)B + tC} \ge (1-t) \log \gamma\bra{ B}  + t \log \gamma\bra{ C}, \text{for every $t\in [0,1]$.} \]
Assume also that $\gamma$ is non-degenerate, namely that it is not concentrated on a proper
closed subspace of $H$. Consider the quadratic form
\[ \mathcal{E}_\gamma (f) = \int \abs{ \nabla f}^2 d\gamma, \quad \text{defined for $f \in C^1_b(H)$, }\]
where $C^1_b(H)$ denotes the space of continuously Fr\'echet differentiable functions which are bounded together with their differential.

It is shown in \cite{AmbrosioSavareZambotti09} that the $\mathcal{E}_\gamma$ is closable, extending previous results obtained under more restrictive
assumptions on $\gamma$. Actually, since in \cite{AmbrosioSavareZambotti09} the so-called ${\sf EVI}$ property for the associated semigroup
$\sfP$ is proved, and since in \cite{AGS11b} this is proved to be one of the equivalent characterizations of $\RCD$, it follows that
$(H,\abs{\cdot},\gamma)$ is an $\RCD(0,\infty)$ space, thus the results in Section~\ref{sec:RCD} below apply and we already obtain abstract well-posedness result under no additional assumption on $\gamma$. Recall that in that abstract setting $\Algebra$ can be taken as the space of 
Lipschitz functions with bounded support.

Let $(e_i)_{i\geq 1}\subset H$ be an orthonormal basis. For every $f \in \V$, there exist
$f_n\in C^1_b(H)$ such that $f_n \to f$ in $L^2(\gamma)$ and
\[ \lim_{n,\,m \to \infty}  \mathcal{E}_\gamma( f_n - f_m) \to 0,\]
thus an $H$-valued ``gradient'' $\nabla f = \sum_i \partial_i f e_i$ is $\gamma$-a.e.\ defined in $H$.

Let $b: H \mapsto H$, $b = \sum_i b_i e_i$: we associate the derivation $f \mapsto df(\bb) = \sum_i b_i f_i$, thus $\abs{\bb} \le \abs{b}$. For $v\in H$, we write $\vv$ for the constant derivation corresponding to 
the constant vector field equal to $v$, and $\ee_i$ for the derivation corresponding to $e_i$.

Let us remark that, in this very general setting setting, bounds on the divergence of a given field $\bb$ seem to be difficult to obtain, even
for constant vector fields: this is due to the fact that presently it is not known whether every log-concave measure 
$\gamma$ admits at least one nonzero direction $v$ such that $\div \vv \in\Lbm 1$, \cite[\S 4.3]{Bogachev-10}.
On the other hand, our abstract arguments do not require any absolute continuity of $\gamma$ with respect to a
Gaussian or other product measures and combining our abstract well-posedness results with Theorem~\ref{gradients_have_deformation}, we are able to provide non-trivial derivations that admit a well-posed flow, e.g.\ gradient derivations of functions in $D_{L^4}(\Delta)$, such as those of the form $\int_{1}^2 \sfP_t f dt$, for $f \in \Lbm 4$.

To state an explicit sufficient condition to bound the deformation of $\bb$, we assume that $\div\ee_i\ll\mm$ and, denoting by $\beta_i$ the density, we require that, for $i \ge 1$, $\beta_i \in \V$ or equivalently that the function $x \mapsto V_i(x) = \ang{e_i, x}$ satisfies $\Delta V_i \in \V$, thus  
Proposition~\ref{prop:hilbert-schmidt} gives $\nor{ D^{sym}\bb}_{r,s}<\infty$ if $\sqa{ \partial_i b_j + \partial_j b_i}_{i,j} \in L^q(\gamma; \ell^2(\N \otimes \N))$, provided that $r$, $s \ge 4$.

We conclude by comparing our results in this setting with \cite[Thm.~7.6]{Kolesnikov-Rockner-13}, where uniqueness for the continuity equation is obtained in the case of log-concave measures formally given by $\gamma = \text{``}e^{-V} d{\mathscr L}^\infty\text{''}$, for convex Hamiltonians $V$ of specific form. In particular, the assumptions on $\beta_i$ imposed therein are stronger than ours. Their assumptions on the field $b$ in \cite{Kolesnikov-Rockner-13} entail that $\abs{\bb} \in L^{a_1}(\gamma)$, for some $a_1 > 1$ and that $\sqa{ \partial_i b_j + \partial_j b_i}_{i,j} \in L^{a_2}(\gamma; \ell^2(\N \otimes \N))$, for some $a_2 > 4$. Moreover, to deduce uniqueness, $\div \bb \in L^q(\gamma)$, for some $q>1$ is also assumed. Therefore, if $a_1 \ge 2$ and $q \ge 2$, we are in a position to recover, via Proposition~\ref{prop:hilbert-schmidt}, such a uniqueness result as a special case of Theorem~\ref{thm:uniqueness}.

\subsection{$\RCD(K,\infty)$ metric measure spaces}\label{sec:RCD}

Recall that the class $\CD(K,\infty)$, introduced and deeply studied in \cite{Lott-Villani09}, \cite{Sturm06I}, \cite{Sturm06II} consists of complete metric measure spaces such that
the Shannon relative entropy w.r.t.\ $\mm$ is $K$-convex along Wasserstein geodesics, see \cite{Villani09} for a full account of the theory and its geometric and
functional implications.
The class of $\RCD(K,\infty)$ metric measure spaces was first introduced in \cite{AGS11b}, from a metric perspective, as class of spaces smaller than that of 
$\CD(K,\infty)$ metric measure spaces. The additional requirement, in this class of spaces, is that the so-called Cheeger energy is quadratic; with this
axiom, Finsler geometries are ruled out and stronger structural (and stability) properties can be estabilished. 
Subsequently, connections with the theory of Dirichlet forms gave rise to a series of works, \cite{AGS12}, \cite{Savare-13} and 
\cite{Ambrosio-Mondino-Savare-13}. For a brief introduction to the setting and its notation, we refer to Sections~4.1 and 4.2 in \cite{Savare-13}, 
and in particular to Theorem 4.1 therein, which collects non-trivial equivalences among different conditions.

We will use the notation $W^{1,2}(X,d,\mm)$ for the Sobolev space, $\textsf{Ch}$ for the Cheeger energy arising from the relaxation
in $L^2(X,\mm)$  of the local Lipschitz constant 
\begin{equation}
  \label{eq:20}
  |\rmD f|(x):=\limsup_{y\to x}\frac{|f(y)-f(x)|}{d(y,x)}
\end{equation}
of $\Lbm2$ and Lipschitz maps $f:X\to\R$.

To introduce $\RCD(K,\infty)$ spaces we restrict the discussion to metric measure spaces $(X,d,\mm)$ satisfying the following three conditions:
\begin{itemize}
\item[(a)] $(X,d)$ is a complete and separable length space;
\item[(b)] $\mm$ is a nonnegative Borel measure with $\supp(\mm)=X$, satisfying
\begin{equation}\label{eq:grygorian}
\mm(B_r(x))\leq c\,\rme^{Ar^2}
\end{equation}
for suitable constants $c\geq 0$, $A\geq 0$;
\item[(c)] $(X,\sfd,\mm)$ is infinitesimally Hilbertian according to the terminology introduced in \cite{Gigli2012}, i.e., the Cheeger
energy $\textsf{Ch}$ is a quadratic form. 
\end{itemize}
As explained in \cite{AGS11b}, \cite{AGS12}, the quadratic form $\textsf{Ch}$ canonically 
induces a strongly regular, strongly local Dirichlet form $\cE$ in $(X,\tau)$ (where $\tau$ is the topology induced by the distance $d$), 
as well as a Carr\'e du champ $\Gamma: D(\cE)\times D(\cE)\to\Lbm 1$. Thus, we recover the basic setting of 
\eqref{E-assumptions} and we can identify $W^{1,2}(X,d,\mm)$ with
$\V$. In addition, $\sfP$ is conservative because of \eqref{eq:grygorian} and the definition of $\textsf{Ch}$ provides the
approximation property
\begin{equation}\label{eq:approximation_chee} 
\text{$\exists\,\,f_n\in {\rm Lip}(X)\cap \Lbm 2$ with $f_n\to f$ in $\Lbm 2$ and $|\rmD f_n|\to \sqrt{\Gamma(f)}$ in $\Lbm 2$}
\end{equation}
for all $f\in\V$.

The above discussions justify the following definition of $\RCD(K,\infty)$. It is not the original one given in
\cite{AGS11b} we mentioned at the beginning of this section, but it is more appropriate for our purposes;
the equivalence of the two definitions is given in \cite{AGS12}.

\begin{definition}[$\RCD(K,\infty)$ metric measure spaces]
  \label{def:topo-BE}
  We say that 
  $(X,d,\mm)$, satisfying \emph{(a)},  \emph{(b)},  \emph{(c)} above, is an $\RCD(K,\infty)$ space if:
  \begin{itemize}
  \item[(a)]
  the Dirichlet form associated to the Cheeger
  energy of $(X,d,\mm)$ satisfies $\BE_2(K,\infty)$ according to
  Definition~\ref{def:BE};
  \item[(b)]
  any $f\in W^{1,2}(X,d,\mm)\cap\Lbm\infty$ with
      $\big\|\Gq f\big\|_\infty\le 1$ has a $1$-Lipschitz representative.
\end{itemize}
\end{definition}

From \cite[Lemma 6.7]{AGS11b} we obtain that $\cE$ is \emph{quasi-regular}. We set throughout $\Algebra$ be the class of
Lipschitz functions with bounded support. It is easily seen that $\Algebra$ is dense in $\V$.

\begin{lemma}\label{lem:algebra-*-RCD}
There exists a countable set $\Algebra^* \subset \Algebra$ with $\nor{\Gamma(f)}_\infty \le 1$ for every $f \in \Algebra^*$, such that \eqref{eq:propertiesGstar1} is satisfied, the distance $d_{\Algebra^*}$ defined by \eqref{def:d_cE} in Remark~\ref{rem:hiddendistance} coincides with $d$ and
for any derivation $\bb$ one has
\begin{equation} \label{eq:b=b_*}\text{$|\bb|= |\bb|_*$, $\mm$-a.e.\ in $X$, where $|\bb|_*$ is defined in \eqref{eq:modulus-*}.}\end{equation}
\end{lemma}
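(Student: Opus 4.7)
The construction of $\Algebra^*$ combines two countable families of Lipschitz functions with bounded support, both contained in $\{f \in \Algebra : \|\Gamma(f)\|_\infty \le 1\}$. Note that for every $1$-Lipschitz $f \in \V$ one has $\Gamma(f) \le 1$ $\mm$-a.e., since in the $\RCD$ setting $\sqrt{\Gamma(f)}$ coincides with the minimal weak upper gradient and the latter is dominated by any upper gradient, in particular by the constant $1$. Fix a countable dense set $D = \{x_n\} \subset X$. The first family consists of the truncated distance functions $h_{n,R}(x) := (R - d(x, x_n))^+$ for $n, R \in \N$ together with finite McShane-type combinations $\max_{i \le k}(q_i - d(\cdot, x_{n_i}))^+$ with rational coefficients $q_i > 0$ and $x_{n_i} \in D$; all are $1$-Lipschitz with bounded support. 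The second family is a $\V$-dense countable subset $\{\tilde g_n\} \subset \Algebra$, normalized by division by $\max(\LIP(g_n), 1)$ so that $\|\Gamma(\tilde g_n)\|_\infty \le 1$ while the $\R$-linear span is unchanged, hence remains $\V$-dense.

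With $\Algebra^*$ defined this way, condition \eqref{eq:propertiesGstar1} is immediate: every element is Lipschitz (hence admits a $\tau$-continuous representative), and $\R\Algebra^*$ is $\V$-dense because the second family is. For the identity $d_{\Algebra^*} = d$, the inequality $d_{\Algebra^*} \le d$ follows from the $1$-Lipschitz property of every $f \in \Algebra^*$, while the opposite inequality comes from evaluating $h_{n,R}$ for $x_n$ approaching a given point $x$ and $R$ large: one gets $h_{n,R}(x) - h_{n,R}(y) = d(y, x_n) - d(x, x_n) \to d(x, y)$ as $x_n \to x$.

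The main obstacle is \eqref{eq:b=b_*}. The inequality $|\bb|_* \le |\bb|$ is immediate from $\|\Gamma(f)\|_\infty \le 1$ on $\Algebra^*$ and the defining bound $|df(\bb)| \le |\bb| \sqrt{\Gamma(f)}$. For the converse, by the minimality characterization of $|\bb|$ it suffices to prove $|df(\bb)| \le |\bb|_* \sqrt{\Gamma(f)}$ $\mm$-a.e.\ for every $f \in \Algebra$, and by homogeneity one may reduce to the case $\Gamma(f) \le 1$ $\mm$-a.e.; by the $\RCD$ axiom such an $f$ has a $1$-Lipschitz representative. The strategy is to approximate $f$ by a sequence $f_m$ taken from the first family above — each still $1$-Lipschitz with bounded support — converging to $f$ uniformly on a fixed ball containing $\supp f$, and in the $\V$-norm, via a McShane construction using rational data on $D$ combined with the Cheeger-energy relaxation \eqref{eq:approximation_chee} to secure $\V$-convergence. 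Then the pointwise Cauchy-Schwarz bound $|d(f - f_m)(\bb)| \le |\bb| \sqrt{\Gamma(f - f_m)}$, together with the $\mm$-a.e.\ finiteness of $|\bb|$, yields convergence $df_m(\bb) \to df(\bb)$ in $\mm$-measure; since $|df_m(\bb)| \le |\bb|_*$ for every $m$, the claim follows by passing to the limit.

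The hard part is ensuring that this approximation can be carried out both uniformly and in the $\V$-norm while each $f_m$ lies in $\Algebra^*$ itself — not merely in its $\R$-linear span, which would spoil the bound $|df_m(\bb)| \le |\bb|_*$; this rests on the McShane extension at points of $D$, truncation to maintain simultaneously the $1$-Lipschitz property and boundedness of support, and rational approximation of the defining coefficients, and is the reason for the specific inclusion of McShane-type elements in the first family.
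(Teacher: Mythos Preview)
Your argument for $|\bb| = |\bb|_*$ has a genuine gap at the ``by homogeneity'' step. What you need is the pointwise inequality $|df(\bb)| \le |\bb|_*\,\sqrt{\Gamma(f)}$ $\mm$-a.e.\ for every $f \in \Algebra$, but your approximation of a $1$-Lipschitz $f$ by elements $f_m \in \Algebra^*$ (assuming it succeeds) only delivers $|df(\bb)| \le |\bb|_*$. Homogeneity under $f \mapsto \lambda f$ lets you normalize so that $\|\Gamma(f)\|_\infty \le 1$, but it replaces the pointwise factor $\sqrt{\Gamma(f)}(x)$ by the global constant $\|\sqrt{\Gamma(f)}\|_\infty$; at points where $\Gamma(f)(x) < 1$ the bound you obtain is strictly weaker than the one required, and nothing in your scheme recovers the missing factor. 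Taking the supremum over $f$, you have only shown that $|\bb|_*$ dominates $\sup\{|df(\bb)|:\Gamma(f)\le 1\}$, which is in general smaller than $|\bb|=\esssup_f |df(\bb)|/\sqrt{\Gamma(f)}$.

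The paper confronts exactly this point. For each $f$ and each center $x_h$ from a dense family it constructs a function $T_{h,\eps,M}(f)$ that coincides with $(f - f(x_h))/M$ on $B(x_h,\eps)$, where $M$ is a local upper bound for an upper-semicontinuous majorant $\zeta\ge\sqrt{\Gamma(f)}$, and is globally $1$-Lipschitz with bounded support. Locality of the derivation gives $df(\bb)=M\,dT_{h,\eps,M}(f)(\bb)$ on $B(x_h,\eps)$, so including these localized functions in $\Algebra^*$ yields $|df(\bb)|\le M|\bb|_*$ there; optimizing over rational $M$ and shrinking $\eps$ gives $|df(\bb)|\le\zeta\,|\bb|_*$. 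The existence of an upper-semicontinuous $\zeta$ is not automatic: the paper secures it by passing to $\sfP_t f$ (which is continuous in the $\RCD$ setting) and using the $\BE_2(K,\infty)$ estimate $\Gamma(\sfP_t f)\le e^{-2Kt}\sfP_t\Gamma(f)$, whose continuous version serves as $\zeta^2$; the family $\Algebra^*$ is then enlarged by the countably many $T_{h,\eps,M}(\sfP_t f_n)$ with rational parameters, after which a density argument transfers the pointwise bound to all of $\Algebra$. Your McShane-type approximation provides no substitute for this localization. As a secondary concern, the claim that the McShane approximants converge to $f$ in the $\V$-norm---not merely uniformly---is itself delicate and is not supplied by \eqref{eq:approximation_chee}, which only approximates by general Lipschitz functions, not by the specific McShane combinations you placed in $\Algebra^*$.
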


We do not deduce that \eqref{eq:propertiesGstar2} holds for $\Algebra^*$: however, since we prove that $d_{\Algebra^*}$ coincides with $d$, Lemma \ref{eq:fromODEtoPDE} entails that solutions to the ODE $\dot{\eta}=\bb_t (\eta)$ are absolutely continuous curves. Moreover, in order to apply Theorem \ref{thm:superpo} rigorously in this setting, e.g.\ in Theorem \ref{thm:ediss} below, it is sufficient to endow $X$ with the coarsest topology that makes all the functions in $\Algebra^*$ continuous (see also Remark~\ref{rem:hiddendistance}).

\begin{proof} Since both $(X,d)$ and $\V$ are separable, it is not difficult to exhibit a countable family $\Algebra^* \subset\Algebra$ such that \eqref{eq:propertiesGstar1} is satisfied: let $(x_h) \subset X$ be dense, and set $f_{h,k} := (d(x_h, \cdot) - k)^- \in \Algebra$ for $h,\,k \in \N$;  then,
define $${\mathscr B} := \bigcup_{h,\,k=0}^\infty \{f_{h,k}\}\cup\bigcup_{h=0}^\infty\{g_h\},$$ 
with $(g_h)\subset\Algebra$ dense in $\V$. Then, defining $\Algebra^*=\{f\in{\mathscr B}:\ \|\Gamma(f)\|_\infty\leq 1\}\subset\Algebra$, since
$\R\Algebra^*={\mathscr B}$ we obtain \eqref{eq:propertiesGstar1}.

To show that the distances coincide, notice that $d_{\Algebra^*} \le d$ is obvious, while $d \le d_{\Algebra^*}$ follows from taking $f = f_{h,k}$ in \eqref{def:d_cE}, with $x_h$ arbitrarily close to $x$ and $k$ larger than $d(x,y)$.

We show that, up to a further enlargement of $\Algebra^*$, \eqref{eq:b=b_*} holds (notice that $d=d_{\Algebra^*}$ holds automatically for the enlargement, and we need only to retain
\eqref{eq:propertiesGstar1}). The problem reduces to argue that, for $f \in \Algebra^*$ one can improve the inequality $\abs{df(\bb)} \le \abs{\bb}_*$ into $\abs{df(\bb)} \le \Gamma(f)^{1/2}\abs{\bb}_*$, $\mm$-a.e.\ in $X$. This is based on a localization procedure akin to \cite[Proposition~3.11]{AGS12}, which leads to the key inequality \eqref{eq:intermediate-b=b*} below: the thesis follows then by a density argument, using the curvature assumption.

For any $\veps >0$, let $S_\veps \in C^1(\R)$ be a $1$-Lipschitz truncation function, given by $S_\veps(r) = \veps S_1(r/\veps)$, where $S_1(r)$ is a $1$-Lipschitz functions with
\begin{equation}
\begin{cases}
S_1(r)= 1&\text{for $r \le 1$;}\\
S_1(r)= 0 &\text{for $r \ge 3$.}
\end{cases}
\end{equation} 
We notice that $S_\veps (r) = \veps$ if $r \le \veps$.

Fix $f \in \V \cap C_b(X)$ and assume that $\zeta: X \to [0,\infty)$ is a bounded upper semicontinuous function, such that $\Gamma(f)^{1/2} \le \zeta$, $\mm$-a.e.\ in $X$. For any $h \ge 1$, $\veps>0$, $M >0$, such that $M \ge \sup_{B(x_h, 3\veps)} \zeta$, we introduce the following ``localization'' of $f$ at $x_h \in X$  (as above, $(x_h)_{h\ge1} \subset X$ is dense in $X$):
\[ T_{h,\veps,M} (f)(\cdot) := \frac{f(\cdot) - f(x_h)}{M} \land \sqa{S_\veps \circ d(\cdot, x_h)} \lor \sqa{- S_\veps \circ d(\cdot,x_h)}. \]
The following properties are easy to check:
\begin{itemize}
\item[(i)] $T_{h,\veps,M} (f) \in \V$ is supported in $B(x_h, 3\veps)$, and $T_{h,\veps,M} (f)(x_h)=0$;
\item[(ii)] $\Gamma(T_{h,\veps,M} (f))^{1/2} \le (\Gamma(f)^{1/2}/M) \le 1$ on $B(x_h, 3\veps)$ and $\Gamma(T_{h,\veps,M} (f)) =0$ outside $B(x_h, 3\veps)$, thus $T_{h,\veps,M} (f)$ and is $1$-Lipschitz by condition $(b)$ in Definition \ref{def:topo-BE};
\item[(iii)] combining (i) and (ii), it holds $\abs{T_{h,\veps,M} (f)(x)} \le d(x_h, x)$ in $X$, thus $\abs{T_{h,\veps,M} (f)(x)} < \veps$ in $B(x_h, \veps)$, so that $T_{h,\veps,M} (f) = (f - f(x_h))/M$ in $B(x_h, \veps)$.
\end{itemize}

From (i) and (ii) we obtain $T_{h,\veps, M} f  \in \Algebra$, which together with (iii) leads to the identity
\begin{equation}
\label{eq:identity-localized-derivation} df(\bb) = M d T_{h,\veps, M}( f )(\bb), \quad \text{$\mm$-a.e.\ in $B(x_h, \veps)$.}
\end{equation}
Indeed, for every  $g \in \Algebra$ and $a \in \R$,  it holds $dg(\bb)=0$ $\mm$-a.e.\ in the set $\cur{g = a}$ as a consequence  of \eqref{eq:72}, with $N = \cur{a}$, and the upper bound $\abs{dg(\bb)} \le \abs{\bb} \Gamma(g)^{1/2}$. In the situation that we are considering, take $g = f - M T_{h,\veps, M} f$ and $a = f(x_h)$.

Let us assume that $T_{h,\veps,M} (f) \in \Algebra^*$, for every $h \ge 1$ and rational numbers $\veps$, $M >0$, such that $M \ge \sup_{B(x_h, 3\veps)} \zeta$. We claim that it holds
\begin{equation}\label{eq:intermediate-b=b*} \abs{df(\bb)} \le \zeta \abs{\bb}_*, \quad \text{$\mm$-a.e.\ in $X$.}\end{equation}
Indeed, from \eqref{eq:identity-localized-derivation}, we deduce
\[  \abs{df(\bb)} (x)  = M \abs{ d T_{h,\veps,M} (f)(\bb) }  \le  M \abs{\bb}_*(x), \quad \text{$\mm$-a.e.\ $x \in B(x_h, 3\veps).$}\]
We pass to the infimum upon $M$ (which is rational and greater than $\sup_{B(x_h, 3\veps)} \zeta$) and $h \ge 1$, then we let $\veps \downarrow 0$, to obtain
\[  \abs{df(\bb)}(x) \le \limsup_{\veps \down 0} \inf_{h: d(x_h,x)< \veps} \sup_{B(x_h, 3\veps)} \zeta \abs{\bb}_*(x)\le  \zeta(x) \abs{\bb}_*(x), \quad \text{$\mm$-a.e.\ $x \in X$,}\]
where the second inequality holds by upper semicontinuity of $\zeta$.

Thanks to the curvature assumption, it is not difficult to show that the class of functions $f \in \V \cap C_b(X)$ that admit functions $\zeta$ as above is not empty: by \cite[Theorem~3.17]{AGS12}
the operator $\sfP_t$ maps $L^2\cap \Lbm\infty$ into $C_b(X)$ for every $t>0$. In addition, if $f\in \V$, it holds
\[ \Gamma( \sfP_{t} f)\le  e^{-2Kt}\sfP_t(\Gamma(f)), \quad \text{$\mm$-a.e.\ in $X$.}\]
Thus, if $\Gamma(f)\in\Lbm\infty$ we may let $\zeta^2$ be the continuous version of the expression in the right hand side above that we denote by $e^{-2Kt}\tilde{\sfP}_t(\Gamma(f))$ (see also \cite[Proposition 3.2]{AGS12}).
 
We are in a position to prove that, up to enlarging $\Algebra^*$, \eqref{eq:b=b_*} holds. More precisely, we let $(f_n)_{n \ge 1} \subseteq \Algebra$ be any countable family, with $\Gamma(f_n) \le 1$, $\mm$-a.e.\ in $X$, for $n\ge 1$, and such that the dilations $(\lambda f_n)_{\lambda \in \R, n \ge 1}$ provide a dense set in $\V$. We enlarge $\Algebra^*$ with the union of all functions 
\[ T_{\veps, h, M} (\sfP_t f_n),\]
for $n$, $h \ge 1$ and rational numbers $t$, $\veps$, $M >0$, such that $M^2 \ge \sup_{B(x_h, 3\veps)} e^{-2Kt}\tilde{\sfP}_t(\Gamma(f))$.

For every $n \ge 1$ and rational $t>0$, \eqref{eq:intermediate-b=b*} with $\sfP_t f_n$ in place of $f$ and $\zeta = e^{-Kt}\sqa{\tilde{\sfP}_t(\Gamma(f))}^{1/2}$ gives
\[ \abs{d \sfP_t f_n (\bb)} \le e^{-Kt}\sqa{\tilde{\sfP}_t(\Gamma(f_n))}^{1/2} \abs{\bb}_*, \quad \text{$\mm$-a.e.\ in $X$.}\]
We let $t \downarrow 0$ to deduce that
\[  \abs{d f_n (\bb)} \le \Gamma(f_n)^{1/2} \abs{\bb}_*, \quad \text{$\mm$-a.e.\ in $X$.}\]
By homogeneity, a similar inequality holds for $\lambda f_n$ in place of $f_n$ for every $\lambda \in \R$. To conclude, let $g \in \Algebra$ and let $(g_k)_k \subseteq (\lambda f_n)_{\lambda \in \R, n\ge 1}$ converge towards $g$ in $\V$. It holds
\[  \abs{d g (\bb)} \le \liminf_{k \to \infty}  \Gamma(g_k)^{1/2} \abs{\bb}_* + \Gamma(g_k -g)^{1/2} \abs{\bb} =  \Gamma(g)^{1/2} \abs{\bb}_*,\quad\text{$\mm$-a.e.\ in $X$,}\]
and we deduce that $\abs{\bb} \le \abs{\bb}_*$, $\mm$-a.e.\ in $X$.
\end{proof}

We discuss now the fine regularity properties of functions in $\V$, recalling some results
developed in \cite{AGS11a}. We start with the notion of $2$-plan.

\begin{definition}[$2$-plans]
We say that a positive finite measure $\eeta$ in $\Probabilities{C([0,T];X)}$ is a $2$-plan if 
$\eeta$ is concentrated on $AC([0,T]; (X,d))$
and the following two properties hold:
\begin{itemize}
\item[(a)] $\int \int_0^T \abs{\dot \eta}^2 (t) dt d\eeta(\eta)<\infty$;
\item[(b)] there exists $C\in [0,\infty)$ satisfying $(e_t)_\#\eeta\leq C\mm$ for all $t\in [0,T]$.
\end{itemize}
\end{definition}

Accordingly, we say that $V:X\to\R$ is {\it $W^{1,2}$ along $2$-almost every curve} if, for all $s\leq t$ and
all $2$-plans $\eeta$, the family of inequalities
\begin{equation}\label{eq:wash}
\int |V(\eta(s))-V(\eta(t))| d\eeta(\eta)\leq \int \int_s^t g(\eta(r))|\dot\eta(r)| dr d\eeta(\eta),
\quad\text{for all $s,\,t \in [0,T)$ with $s\le t$}
\end{equation}
holds for some $g\in\Lbm 2$.
Since Lipschitz functions with bounded support are dense in $\V$, a density argument
\cite[Thm.~5.14]{AGS11a} based on \eqref{eq:approximation_chee} provides the following result:

\begin{proposition} \label{prop:soboreg} Any $V\in\V$ is $W^{1,2}$ along $2$-almost every curve. In addition,
\eqref{eq:wash} holds with $g=\sqrt{\Gamma(V)}$.
\end{proposition}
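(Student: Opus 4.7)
The plan is to first establish the inequality for Lipschitz test functions by a pointwise argument along each absolutely continuous trajectory, then to extend it to a general $V\in\V$ by approximation via \eqref{eq:approximation_chee}, exploiting the marginal bound $(e_r)_\#\eeta\leq C\mm$ of the $2$-plan to transfer $L^2(\mm)$ convergence into convergence of the integrals appearing in \eqref{eq:wash}.

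For a Lipschitz $W:X\to\R$ and an absolutely continuous $\eta:[0,T]\to X$, the composition $W\circ\eta$ is Lipschitz, and factoring the incremental quotient
$$\frac{|W(\eta(r+h))-W(\eta(r))|}{|h|}=\frac{|W(\eta(r+h))-W(\eta(r))|}{d(\eta(r+h),\eta(r))}\cdot\frac{d(\eta(r+h),\eta(r))}{|h|}$$
and passing to the limit $h\to 0$ yields $\bigl|\tfrac{d}{dr}(W\circ\eta)(r)\bigr|\leq|\rmD W|(\eta(r))|\dot\eta|(r)$ for a.e.\ $r$. Integration gives the pointwise inequality
$$|W(\eta(s))-W(\eta(t))|\leq\int_s^t|\rmD W|(\eta(r))|\dot\eta|(r)\,dr,\qquad 0\leq s\leq t<T,$$
valid for every AC curve $\eta$. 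Under the further assumption $|\rmD W|\in L^2(\mm)$ (automatic when $W$ has bounded support), integration against $\eeta$ combined with Fubini---whose hypotheses are met thanks to $(e_r)_\#\eeta\leq C\mm$ and $\int\!\int_0^T|\dot\eta|^2dr\,d\eeta<\infty$---produces
$$\int|W(\eta(s))-W(\eta(t))|\,d\eeta\leq\int_s^t\!\int|\rmD W|(\eta(r))|\dot\eta|(r)\,d\eeta\,dr.$$

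For a general $V\in\V$, invoke \eqref{eq:approximation_chee} to find Lipschitz $V_n\in L^2(\mm)$ with $V_n\to V$ in $L^2(\mm)$ and $|\rmD V_n|\to\sqrt{\Gamma(V)}$ in $L^2(\mm)$. Fix any Borel representative of $V$ (the choice being irrelevant thanks to $(e_s)_\#\eeta\leq C\mm$); then for each fixed $s$ Cauchy--Schwarz gives
$$\int|V_n(\eta(s))-V(\eta(s))|\,d\eeta\leq\eeta(C([0,T];X))^{1/2}\,C^{1/2}\,\nor{V_n-V}_{L^2(\mm)}\to 0,$$
and analogously at $t$, so the left-hand side of the inequality passes to the limit for every $s\leq t$. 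For the right-hand side, Cauchy--Schwarz on the product measure $\eeta\otimes\Leb{1}|_{[s,t]}$, together with $(e_r)_\#\eeta\leq C\mm$, yields
$$\int_s^t\!\int\bigl||\rmD V_n|-\sqrt{\Gamma(V)}\bigr|(\eta(r))|\dot\eta|(r)\,d\eeta\,dr\leq\sqrt{C(t-s)}\,\nor{|\rmD V_n|-\sqrt{\Gamma(V)}}_{L^2(\mm)}\,E^{1/2},$$
where $E:=\int\!\int_0^T|\dot\eta|^2dr\,d\eeta<\infty$. This vanishes as $n\to\infty$, and we recover \eqref{eq:wash} with $g=\sqrt{\Gamma(V)}$.

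The main obstacle is identifying the admissible weight $g$ with precisely $\sqrt{\Gamma(V)}$ rather than with some a priori larger object (say the asymptotic Lipschitz constant of an approximating sequence, or a generic weak upper gradient). This identification is the essential content of the Cheeger relaxation property \eqref{eq:approximation_chee}: the existence of a Lipschitz approximating sequence whose local Lipschitz constants converge to $\sqrt{\Gamma(V)}$ in $L^2(\mm)$ is what bridges the metric and the energetic viewpoints and makes the passage to the limit deliver the sharp weight.
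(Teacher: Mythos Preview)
Your argument is correct and is precisely the density argument the paper alludes to: the paper does not spell out a proof here, merely pointing to \cite[Thm.~5.14]{AGS11a} and the approximation property \eqref{eq:approximation_chee}, which is exactly what you implement. Your handling of both sides of \eqref{eq:wash}---passing to the limit via Cauchy--Schwarz and the marginal bound $(e_r)_\#\eeta\leq C\mm$---is the standard and correct way to carry out this scheme.
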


Actually, a much finer result could be established (see \cite[\S 5]{AGS11a}), namely the existence of a representative $\tilde V$
of $V$ in the $\Lbm 2$ equivalence class, with the property that $\tilde V\circ\eta$ is absolutely continuous
$\ppi$-a.e.\ $\eta$ for any $2$-plan $\ppi$, with $|(\tilde V\circ\eta)'|\leq\sqrt{\Gamma(V)}|\dot\eta|$ a.e.\ in $(0,T)$.
However, we shall not need this fact in the sequel. Here we notice only that since $\chi_B\eeta$ is a $2$-plan for any Borel
set $B\subset C([0,T];X)$, it follows from \eqref{eq:wash} with $g=\sqrt{\Gamma(V)}$ that
\begin{equation}\label{eq:wash2}
 |V(\eta(s))-V(\eta(t))| \leq \int \int_s^t \sqrt{\Gamma(V)}(\eta(r))|\dot\eta(r)| dr, \qquad\text{for $\eeta$-a.e.\ $\eta$}
\end{equation}
for all $s,\,t \in [0,T)$ with $s\le t$.

Now, we would like to relate these known facts to solutions to the ODE $\dot\eta=\bb_t(\eta)$.
The first connection between $2$-plans and probability measures concentrated on solutions to the ODE 
is provided by the following proposition.

\begin{proposition}\label{proposition-test-plan}
Let $\bb = (\bb_t)$ be a Borel family of derivations with $\abs{\bb} \in L^1_t(L^2)$ and let
$u\in \Lbtx \infty \infty$. Let $\eeta$ be concentrated on solutions to the ODE $\dot\eta=\bb_t(\eta)$,
with $(e_t)_\#\eeta=u_t\mm$ for all $t\in (0,T)$.  Then $\eeta$ is a $2$-plan.
\end{proposition}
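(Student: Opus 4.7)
The two conditions defining a 2-plan can be established separately, using only the superposition-principle machinery of the previous sections together with the identifications available in the RCD setting.

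Condition (b) is immediate: since $(e_t)_\#\eeta = u_t\mm$ and $u \in L^\infty_t(L^\infty_x)$, setting $C := \|u\|_{L^\infty_t L^\infty_x}$ yields $(e_t)_\#\eeta \le C\mm$ for every $t \in [0,T]$.

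For condition (a), the plan is to invoke Lemma~\ref{eq:fromODEtoPDE} with the Hölder pairing $(p,p')=(2,2)$: the hypothesis $|\bb| \in L^1_t(L^2_x)$ gives the required bound on $\bb$, while the probabilistic normalization $\int u_t\,d\mm = 1$ together with $u \in L^\infty_t L^\infty_x$ supplies $u \in L^\infty_t L^2_x$. The conclusion of that lemma is that $\eeta$ is concentrated on $AC([0,T];(X,d_{\Algebra^*}))$ with metric derivative $|\dot\eta|(t) = |\bb_t|_*(\eta(t))$ for a.e.\ $t \in (0,T)$ and $\eeta$-a.e.\ $\eta$. Since we are in the $\RCD(K,\infty)$ setting, Lemma~\ref{lem:algebra-*-RCD} identifies $d_{\Algebra^*}$ with $d$ and gives $|\bb_t|_* = |\bb_t|$ $\mm$-a.e.; the metric derivatives w.r.t.\ the two distances then coincide, so that $\eeta$ is concentrated on $AC([0,T];(X,d))$ with $|\dot\eta|(t) = |\bb_t|(\eta(t))$.

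Squaring, applying Tonelli's theorem and using the push-forward relation $(e_t)_\#\eeta = u_t\mm$ gives
\[
\int \int_0^T |\dot\eta|^2(t)\, dt\, d\eeta(\eta) = \int_0^T \int_X |\bb_t|^2 u_t\, d\mm\, dt \le \|u\|_{L^\infty_t L^\infty_x} \int_0^T \int_X |\bb_t|^2\, d\mm\, dt,
\]
which is finite by the integrability assumption on $|\bb|$, yielding (a).

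The one subtle ingredient is the passage from $d_{\Algebra^*}$ and $|\bb|_*$ to $d$ and $|\bb|$; without it one would only deduce absolute continuity in the (possibly coarser) extended distance $d_{\Algebra^*}$ and a kinetic-energy bound involving $|\bb|_*$ rather than $|\bb|$. This identification is precisely the content of Lemma~\ref{lem:algebra-*-RCD}, which is the main technical input borrowed from the $\RCD$ discussion of the previous section.
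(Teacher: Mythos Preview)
Your argument is essentially identical to the paper's: both verify the bounded-compression condition directly from $u\in L^\infty_t(L^\infty_x)$, invoke Lemma~\ref{eq:fromODEtoPDE} together with the identification $d=d_{\Algebra^*}$ and $|\bb|_*=|\bb|$ from Lemma~\ref{lem:algebra-*-RCD} to obtain $|\dot\eta|(t)=|\bb_t|(\eta(t))$, and then push forward to compute the kinetic energy. You are simply more explicit about the H\"older exponent and the interpolation $u_t\in L^1\cap L^\infty\Rightarrow u_t\in L^2$ needed to feed Lemma~\ref{eq:fromODEtoPDE}.

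One small caveat, shared with the paper: the final bound $\int_0^T\!\int_X |\bb_t|^2\,d\mm\,dt<\infty$ requires $|\bb|\in L^2_t(L^2_x)$, not merely the stated $|\bb|\in L^1_t(L^2_x)$; your phrase ``finite by the integrability assumption on $|\bb|$'' glosses over this. The paper's own proof makes the same unjustified leap, so this is an imprecision in the hypothesis of the proposition rather than a flaw specific to your argument, and in the applications (time-independent gradient derivations $\bb_V$ with $\sqrt{\Gamma(V)}\in L^2$) the stronger integrability holds trivially.
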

\begin{proof}
The fact that $\eeta$ has bounded marginals follows from the assumption $u\in \Lbtx \infty \infty$. By Lemma~\ref{eq:fromODEtoPDE} and the identification $d = d_{\Algebra^*}$, $\eeta$ is concentrated on $AC([0,T]; (X,d))$, with $\abs{\dot \eta}(t) = \abs{\bb_t} (\eta(t))$, $\Leb{1}$-a.e.\ in $(0,T)$ 
for $\eeta$-a.e.\ $\eta$. Thus,
 \[ \int \int_0^T \abs{\dot \eta}^2 (t) dt d\eeta(\eta) = \int_0^T \int \abs{\bb_t}^2 u_t d\mm dt < \infty.\]
\end{proof}

We now focus on the case of a ``gradient'' and time-independent derivation $\bb_V$ associated to $V\in\V$.
Recall that in this case $|\bb_V|^2=\Gamma(V)$ $\mm$-a.e.\ in $X$.

\begin{theorem}\label{thm:ediss}
Let $V \in D(\Delta)$ with $\Delta V^-\in\Lbm\infty$. Then, 
there exist weakly continuous solutions (in $[0,T)$, in duality with $\Algebra$) $u\in L^\infty_t(L^1_x\cap L^\infty_x)$ to the continuity equation, 
for any initial condition $\bar u\in L^1\cap\Lbm\infty$. In addition, if $\eeta$ is given by Theorem~\ref{thm:superpo} (namely $\eeta$ is concentrated on solutions to the ODE $\dot\eta=\bb_V(\eta)$ and
$(e_t)_\#\eeta=u_t$ for all $t\in (0,T)$), then:
\begin{itemize}
\item[(a)] $\eeta$ is concentrated on curves $\eta$ satisfying 
$\abs{\dot \eta} (t) = \Gamma(V)^{1/2} (\eta(t))$ for a.e.\ $t \in (0,T)$;
\item[(b)] for all  $s,\,t \in [0,T)$ with $s\le t$, there holds
\[  V\circ\eta(t)- V\circ\eta(s) = \int_s^t  \Gamma(V)(\eta(r)) dr, \quad \text{for $\eeta$-a.e.\ $\eta$.} \]
\end{itemize}
\end{theorem}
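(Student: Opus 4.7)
The plan is to produce $u$ via the abstract existence theorem, lift it to $\eeta \in \Probabilities{C([0,T];X)}$ via the superposition principle, and then upgrade the information along $\eeta$-a.e.\ curve by combining Sobolev regularity of $V$ with the continuity equation tested against $V$ itself.

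For the first two steps, Example~\ref{example:derivation-gradient} identifies $\div\bb_V = \Delta V$ and $|\bb_V| = \sqrt{\Gamma(V)}$. Since $V \in D(\Delta) \subset \V$ and these objects are time-independent, the hypotheses of Theorem~\ref{thm:existence-lp} (with $r = \infty$, using $(\Delta V)^- \in \Lbm\infty$) are met, yielding a weakly continuous nonnegative solution $u \in L^\infty_t(L^1_x \cap L^\infty_x)$ with $u_0 = \bar u$. Mass conservation \eqref{eq:mass_preserving1} holds in the $\RCD$ setting thanks to the existence of $(f_n)$ as in \eqref{eq:mass_preserving} (Section~\ref{sec:curvature}), so assuming $\bar u$ is a probability density, so is $u_t\mm$ for every $t$. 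The bound
\[
\int_0^T \int |\bb_V|^2 u_t\, d\mm\, dt \le T\, \|u\|_{L^\infty_t L^\infty_x}\|\Gamma(V)\|_1 < \infty
\]
then allows us to apply Theorem~\ref{thm:superpo} with $p = \infty$, $r = 2$, producing $\eeta$. Claim (a) is an immediate consequence of Lemma~\ref{eq:fromODEtoPDE}, which gives the metric-speed identity $|\dot\eta|(t) = |\bb_V|_*(\eta(t))$ for a.e.\ $t$ and $\eeta$-a.e.\ $\eta$, combined with Lemma~\ref{lem:algebra-*-RCD} (valid in the $\RCD$ setting) that identifies $d_{\Algebra^*} = d$ and $|\bb_V|_* = \sqrt{\Gamma(V)}$ $\mm$-a.e.

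For (b), I would combine two ingredients. On the one hand, Proposition~\ref{proposition-test-plan} shows $\eeta$ is a $2$-plan, so Proposition~\ref{prop:soboreg} and its pointwise version \eqref{eq:wash2}, together with (a), give for each $s \le t$ and $\eeta$-a.e.\ $\eta$ the one-sided bound
\begin{equation}\label{eq:plan-ineq}
|V(\eta(t)) - V(\eta(s))| \le \int_s^t \sqrt{\Gamma(V)}(\eta(r))\,|\dot\eta(r)|\,dr = \int_s^t \Gamma(V)(\eta(r))\,dr.
\end{equation}
On the other hand, although $V \notin \Algebra$ in general, a density approximation of $V$ by elements $V_n \in \Algebra$ in $\V$, together with $u \in L^\infty_t(L^\infty_x)$, extends the weak formulation of the continuity equation to the test function $V$: $t \mapsto \int V u_t\, d\mm$ is absolutely continuous with derivative $\int \Gamma(V)\, u_t\, d\mm$. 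Integrating against $\eeta$ via $(e_t)_\#\eeta = u_t\mm$ yields
\[
\int \bigl[V(\eta(t)) - V(\eta(s))\bigr]\, d\eeta(\eta) = \int \int_s^t \Gamma(V)(\eta(r))\,dr\, d\eeta(\eta)
\]
for every $s \le t$. Since the integrand on the left is dominated in absolute value by the integrand on the right while the two integrals agree, equality must hold $\eeta$-a.e., which is (b) at each fixed pair $(s,t)$; passing to a countable dense family of pairs and exploiting the continuity in $(s,t)$ of both sides then extends the identity to all $s \le t$ on a common full-measure set. The main subtlety is this final comparison, in which the choice of Borel representative of $V$ used in \eqref{eq:plan-ineq} must be compatible with the integrated identity; this is automatic because $(e_t)_\#\eeta = u_t\mm \ll \mm$ at every $t$, so any two Borel representatives of $V$ coincide on $\eeta$-a.e.\ curve at each fixed time.
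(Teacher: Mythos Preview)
Your proof is correct and follows essentially the same strategy as the paper: existence via Theorem~\ref{thm:existence-lp}, extension of the weak formulation of the continuity equation to the test function $V$ by density of $\Algebra$ in $\V$, and then a squeeze argument comparing the resulting integrated identity with the pointwise upper bound coming from \eqref{eq:wash2}. The one noteworthy difference is in how (a) is obtained: you deduce $|\dot\eta|=\sqrt{\Gamma(V)}\circ\eta$ directly from the identity $|\bb_V|_*=|\bb_V|$ supplied by Lemma~\ref{lem:algebra-*-RCD}, whereas the paper does not invoke this identity and instead recovers (a) simultaneously with (b) from the very same squeeze (using only the inequality $|\dot\eta|\le |\bb_V|\circ\eta$ from Lemma~\ref{eq:fromODEtoPDE} as input). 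Your route to (a) is slightly more direct; the paper's route shows that the $\RCD$-specific identity $|\bb|_*=|\bb|$ is not actually needed for this theorem.
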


\begin{proof} The proof of the first statement follows immediately by Theorem~\ref{thm:existence-lp} with $r=\infty$.
Since
$$
 \int_s^t \int \Gamma(V,f) u_r d\mm dr = \int f u_t - \int f u_s \quad\text{for all $s,\,t\in [0,T)$ with $s\leq t$}
$$
for all $f\in\Algebra$, we can use the density of $\Algebra$ in $\V$ and a simple limiting procedure to obtain
\begin{equation}\label{eq:Vdiss1}
 \int_s^t \int \Gamma(V) u_r d\mm dr = \int V u_t - \int V u_s \quad\text{for all $s,\,t\in [0,T)$ with $s\leq t$.}
\end{equation}

If $\eeta$ is as in the statement of the theorem, since $\eeta$ is a $2$-plan we can combine Proposition~\ref{prop:soboreg} and the inequality
$|\dot\eta|\leq |\bb_V|(\eta)$ stated in Lemma~\ref{eq:fromODEtoPDE} to get
\[ \int V(\eta(t)) - V(\eta(s)) d\eeta(\eta)  \le \int \int_s^t \Gamma(V)^{1/2} (\eta(r)) \abs{\dot \eta}(r)dr d\eeta(\eta)\le 
\int\int_s^t\Gamma(V)(\eta(r)) d\eeta(\eta),\] 
for all $s,\,t \in [0,T)$ with $s\leq t$.
Since $(e_r)_\#\eeta=u_r\mm$ for all $r\in [0,T)$, it follows that
\begin{equation}\label{eq:Vdiss2}
 \int V u_t - \int V u_s = \int V (\eta(t)) - V (\eta(s))  d\eeta(\eta) \le \int_s^t\Gamma(V) u_r d\mm dr.\end{equation}
Combining \eqref{eq:Vdiss1} and \eqref{eq:Vdiss2} it follows that all the intermediate inequalities we integrated
w.r.t.\ $\eeta$ are actually identities, so that for $\eeta$-a.e.
$\eta$ it must be $|\dot\eta|=\sqrt{\Gamma(V)}\circ\eta$ a.e.\ in $(0,T)$ and equality holds in \eqref{eq:wash2}. 
\end{proof}

In particular, one could prove that $\eeta$ is a $2$-plan representing the $2$-weak gradient of $V$, according to \cite[Def.~3.7]{Gigli2012}, where
a weaker asymptotic energy dissipation inequality was required at $t=0$. Our global energy dissipation is stronger, but it requires additional bounds on
the Laplacian.

We can also prove uniqueness for the continuity equation, considering just for simplicity still the autonomous
version. 

\begin{theorem}\label{thm:ediss2}
Let $V \in D(\Delta)$ with $\Delta V^-\in\Lbm\infty$. Then
the continuity equation induced by $\bb_V$ has existence and uniqueness in $L^\infty_t(L^1_x\cap L^\infty_x)$ for
any initial condition $\bar u\in L^1\cap\Lbm\infty$. 
\end{theorem}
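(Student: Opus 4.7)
Existence of a weakly continuous solution $u \in L^\infty_t(L^1_x \cap L^\infty_x)$ for every nonnegative $\bar u \in L^1 \cap \Lbm\infty$ is already supplied by Theorem~\ref{thm:ediss} (applied with $r=\infty$), and by linearity the sign restriction can be removed. The only task is therefore uniqueness, which I would obtain by applying the general uniqueness Theorem~\ref{thm:uniqueness} with the specific choice of exponents $r=s=4$ and $q=2$ (so that $q^{-1}+r^{-1}+s^{-1}=1$), and then transferring the resulting uniqueness from the class $L^\infty_t(L^4_x \cap L^2_x)$ to the larger class $L^\infty_t(L^1_x \cap L^\infty_x)$ via interpolation.

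For these exponents each hypothesis of Theorem~\ref{thm:uniqueness} is already available in the $\RCD(K,\infty)$ setting. The $L^p$-$\Gamma$ inequalities for $p\in\{2,4\}$ follow from $\BE_2(K,\infty)$ via Corollary~\ref{coro:LpGamma}; the truncation sequence $(f_n)\subset\Algebra$ of \eqref{eq:mass_preserving} exists because $\sfP$ is conservative (a consequence of the growth bound \eqref{eq:grygorian}) and $\BE_2(K,\infty)$ holds, as shown in Section~\ref{sec:curvature}; density of $\Algebra$ in $\V_p$ for $p\in\{2,4\}$ is given by Proposition~\ref{prop:BE-density}, working if necessary with the refined algebra $\Algebra_1$ of \eqref{eq:first_choice_algebra}. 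As for the derivation itself, $V\in D(\Delta)$ ensures that $|\bb_V|=\sqrt{\Gamma(V)}\in\Lbm 2$ and $\div\,\bb_V=\Delta V\in\Lbm 2$, so both lie in $\Lbm 2+\Lbm\infty$. Finally, Theorem~\ref{gradients_have_deformation} applied with $\omega=0$ and $c=1$ (which is admissible since $\cE$ is quasi-regular in $\RCD$ spaces) yields $\|D^{sym}\bb_V\|_{4,4}<\infty$, with the explicit bound \eqref{eq:solvay} involving $\|(\Delta V)^2 - K\Gamma(V)\|_1$.

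The Gronwall step in the proof of Theorem~\ref{thm:uniqueness} also tacitly requires $\dbneg \in L^1_t(\Lbm\infty)$; in our autonomous setting this reduces to $(\Delta V)^-\in\Lbm\infty$, which is exactly the extra hypothesis imposed on $V$ (and is the only place where that hypothesis enters). Once all conditions are checked, Theorem~\ref{thm:uniqueness} gives uniqueness of weakly continuous solutions in $L^\infty_t(L^4_x \cap L^2_x)$ with initial datum in $L^4\cap\Lbm 2$. Since $L^1\cap\Lbm\infty\subset L^r$ for every $r\in[1,\infty]$ by interpolation, any $\bar u\in L^1\cap\Lbm\infty$ belongs to $L^4\cap\Lbm 2$ and any candidate solution $u\in L^\infty_t(L^1_x\cap L^\infty_x)$ belongs to $L^\infty_t(L^4_x\cap L^2_x)$, so uniqueness in the smaller class transfers immediately to the stated one.

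The entire argument is essentially a bookkeeping exercise that aligns the $\Gamma$-calculus estimates of Section~\ref{sec:curvature} on gradient derivations with the commutator-based uniqueness machinery of Section~\ref{sec:commutators}; I do not expect any substantial obstacle, the only point requiring some care being the verification that the algebra $\Algebra$ chosen in the $\RCD$ section is dense in $\V_p$ for $p\in\{2,4\}$ (so as to validate the extension of $\bb_V$ to $\V_p$ used in Definition~\ref{def:deformation}), which can always be arranged by passing to the refined algebra $\Algebra_1$.
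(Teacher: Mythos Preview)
Your proposal is correct and follows essentially the same route as the paper: existence from Theorem~\ref{thm:ediss}, uniqueness from Theorem~\ref{thm:uniqueness} with $q=2$, $r=s=4$, the hypotheses being supplied by $\BE_2(K,\infty)$ (Corollary~\ref{coro:LpGamma} for the $L^4$-$\Gamma$ inequality, conservativity for \eqref{eq:mass_preserving}) and Theorem~\ref{gradients_have_deformation} for the $(4,4)$ deformation bound. One small terminological slip: $L^\infty_t(L^1_x\cap L^\infty_x)$ is the \emph{smaller} class (since $L^1\cap L^\infty\subset L^2\cap L^4$), so uniqueness in the larger class $L^\infty_t(L^2_x\cap L^4_x)$ automatically yields the statement---exactly as the paper phrases it.
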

\begin{proof} We already discussed existence in Theorem~\ref{thm:ediss}. For uniqueness, 
we want to apply
Theorem~\ref{thm:uniqueness} with $q=2$ and $r=s=4$ (which provides uniqueness in the larger
class $L^2\cap\Lbm 4$). In order to do this we need only to know that 
\eqref{eq:mass_preserving} holds (this follows by conservativity of $\sfP$ and $\BE_2(K,\infty)$), that
$L^4$-$\Gamma$ inequalities
hold in $\RCD(K,\infty)$ spaces (this follows by $\BE_2(K,\infty)$ thanks to Corollary~\ref{coro:LpGamma}) 
and that the deformation of $\bb_V$ is of type $(4,4)$ (this follows by Theorem~\ref{gradients_have_deformation}).
\end{proof}


%
%
%

\def\cprime{$'$}
\providecommand{\bysame}{\leavevmode\hbox to3em{\hrulefill}\thinspace}
\providecommand{\MR}{\relax\ifhmode\unskip\space\fi MR }
\providecommand{\MRhref}[2]{%
  \href{http://www.ams.org/mathscinet-getitem?mr=#1}{#2}
}
\providecommand{\href}[2]{#2}


\begin{thebibliography}{AGS11b}

\bibitem[AK00]{Ambrosio-Kirchheim}
L.~Ambrosio and B.~Kirchheim, \emph{Currents in metric spaces.} Acta Mathematica, {\bf 185} (2000), 1--80.

\bibitem[Amb04]{Ambrosio03}
L.~Ambrosio, \emph{Transport equation and {C}auchy problem for {$BV$} vector
  fields}. Invent. Math. \textbf{158} (2004), 227--260. 

\bibitem[Amb08]{cetraro}
L.~Ambrosio, \emph{Transport equation and Cauchy problem for non-smooth vector fields.} 
Lecture Notes in Mathematics  ``Calculus of Variations and Non-Linear Partial
Differential Equations'' (CIME Series, Cetraro, 2005) {\bf 1927}, B. Dacorogna, 
P. Marcellini eds., 2--41, 2008.

\bibitem[AC08]{bologna}
L.~Ambrosio and G.~Crippa, \emph{Existence, uniqueness, stability and
  differentiability properties of the flow associated to weakly differentiable
  vector fields}. Lecture Notes of the Unione Matematica Italiana {\bf 5},
  Springer, 2008.

\bibitem[AF09]{AF-09}
L.~Ambrosio and A.~Figalli, \emph{On flows associated to {S}obolev
  vector fields in {W}iener spaces: an approach \`a la {D}i{P}erna-{L}ions}. J.
  Funct. Anal. \textbf{256} (2009), 179--214. 

\bibitem[AGS05]{Ambrosio-Gigli-Savare05}
L.~Ambrosio, N.~Gigli, and G.~Savar{\'e}, \emph{Gradient flows in
  metric spaces and in the space of probability measures}. Lectures in
  Mathematics ETH Z\"urich, Birkh\"auser Verlag, Basel, 2005.

\bibitem[AGS12]{AGS12}
L.~Ambrosio, N.~Gigli, and G.~Savar{\'e}, \emph{{Bakry-\'Emery
  curvature-dimension condition and Riemannian Ricci curvature bounds}}. ArXiv
  1209.5786 (2012).

\bibitem[AGS14a]{AGS11a}
L.~Ambrosio, N.~Gigli, and G.~Savar{\'e}, \emph{Calculus and heat
  flow in metric measure spaces and applications to spaces with {R}icci bounds
  from below}. Inventiones Mathematicae, {\bf 195} (2014), 289--391.

\bibitem[AGS14b]{AGS11b}
L.~Ambrosio, N.~Gigli, and G.~Savar{\'e}, \emph{Metric measure spaces with {R}iemannian {R}icci curvature
  bounded from below}.  Duke Math. J., {\bf 163} (2014), 1405--1490.


\bibitem[AMS13]{Ambrosio-Mondino-Savare-13}
L.~{Ambrosio}, A.~{Mondino}, and G.~{Savar{\'e}}, \emph{{On the
  Bakry-\'Emery condition, the gradient estimates and the
  Local-to-Global property of $\RCD*(K,N)$ metric measure spaces}}. ArXiv 1309.4664
  (2013).

\bibitem[ASZ09]{AmbrosioSavareZambotti09}
L.~Ambrosio, G.~Savar\'e, and L.~Zambotti, \emph{{Existence and
  stability for Fokker-Planck equations with log-concave reference measure.}}
  Probab. Theory Relat. Fields \textbf{145} (2009), 517--564.

\bibitem[Bak85]{Bakry-85}
D.~Bakry, \emph{Transformations de {R}iesz pour les semi-groupes sym\'etriques.
  {II}. \'{E}tude sous la condition {$\Gamma\sb 2\geq 0$}}. S\'eminaire de
  probabilit\'es, {XIX}, 1983/84, Lecture Notes in Math., vol. 1123, Springer,
  Berlin, 1985, pp.~145--174. 
  
\bibitem[Bak94]{Bakry-94}
D.~Bakry,  \emph{{On Sobolev and logarithmic Sobolev inequalities for
              Markov semigroups.}}
     New trends in stochastic analysis ({C}haringworth, 1994), World Sci. Publ., River Edge, NJ, 1997, 43--75.
     
\bibitem[Bak94b]{Bakry-94b}
D.~Bakry, \emph{L'hypercontractivit\'e et son utilisation en th\'eorie des
              semigroupes.}
              Lectures on probability theory (Saint-Flour, 1992), Lecture Notes in Math., vol 1581, Springer, Berlin, 1994.
              
\bibitem[BGL13]{BGL-13}
D.~Bakry, I.~Gentil and M.~Ledoux, \emph{Analysis and Geometry of Markov Diffusion Operators.}
Springer Verlag, 2013.

\bibitem[Ba12]{Bate-12} 
D.~Bate, \emph{Structure of measures in Lipschitz differentiability spaces}. ArXiv 1208.1954 (2012), to appear on Journal Amer. Math. Soc.

\bibitem[BH91]{Bouleau-Hirsch91}
N.~Bouleau and F.~Hirsch, \emph{Dirichlet forms and analysis on {W}iener
  sapces}. De Gruyter studies in Mathematics {\bf 14}, De Gruyter, 1991.

\bibitem[BM95]{Biroli-Mosco95}
M.~Biroli and U.~Mosco, \emph{A {S}aint-{V}enant principle for {D}irichlet
  forms on discontinuous media}. Ann. Mat. Pura Appl. \textbf{169} (1995),
  125--181.

\bibitem[Bog98]{Bogachev98}
V.I.~Bogachev, \emph{Gaussian measures}, Mathematical Surveys and
  Monographs, vol.~62, American Mathematical Society, Providence, RI, 1998.

\bibitem[Bog10]{Bogachev-10}
\bysame, \emph{Differentiable measures and the {M}alliavin calculus},
  Mathematical Surveys and Monographs, vol. 164, American Mathematical Society,
  Providence, RI, 2010. 

\bibitem[CP96]{Capuzzo}
I.~Capuzzo Dolcetta and B.~Perthame,
\emph{On some analogy between different approaches to first order PDE's
with nonsmooth coefficients,} Adv. Math. Sci Appl. {\bf 6} (1996),
689--703.

\bibitem[DFR14]{DaPrato-Flandoli-Rockner-13}
G.~{Da Prato}, F.~{Flandoli}, and M.~{R{\"o}ckner}, \emph{{Uniqueness for
  continuity equations in Hilbert spaces with weakly differentiable drift}},
  Stoch.\ PDE: Anal.\ and Comp. {\textbf 2} (2014) 121--145.

\bibitem[DL89]{DiPerna-Lions89}
R.~J. DiPerna and P.-L. Lions, \emph{Ordinary differential equations, transport
  theory and {S}obolev spaces}, Invent. Math. \textbf{98} (1989), 
  511--547. 

\bibitem[DP04]{DP04}
G.~Da~Prato, \emph{Kolmogorov equations for stochastic {PDE}s}.
Advanced Courses in Mathematics. CRM Barcelona, Birkh\"auser, 2004.

\bibitem[FOT11]{Fukushima-Oshima-Takeda11}
M.~Fukushima, Y.~Oshima, and M.~Takeda, \emph{Dirichlet forms
  and symmetric {M}arkov processes}. Extended ed., De Gruyter Studies in
  Mathematics, vol.~19, Walter de Gruyter \& Co., Berlin, 2011. 

\bibitem[Gig12]{Gigli2012}
N.~Gigli, \emph{On the differential structure of metric measure spaces and
  applications}. Memoirs of the AMS, vol.~236, n.~1113 (2014).

\bibitem[Gig13]{Gigli-splitting}
N.~Gigli, \emph{The splitting theorem in non-smooth context.}
ArXiv 1302.5555 (2013).

\bibitem[GB14]{GB-14}
N.~Gigli, H.~Bangxian, \emph{The continuity equation on metric measure spaces.} To appear in Calc.~Var.~Partial Differential Equations (2014).

\bibitem[{Gig}14]{gigli-WIP}
N.~{Gigli}, \emph{Nonsmooth differential geometry --
An approach tailored for spaces with Ricci curvature bounded from below.}, ArXiv 1407.0809, (2014).
  
\bibitem[{Gri}99]{grigorian-99}
A.~Grigor{\cprime}yan, \emph{Analytic and geometric background of recurrence and 
non-explosion of the Brownian motion on Riemannian manifolds}. Bull. Am. Math. Soc. \textbf{36}, pp. 135--249 (1999).

\bibitem[KR14]{Kolesnikov-Rockner-13}
A.~V. {Kolesnikov} and M.~{R{\"o}ckner}, \emph{{On continuity equations in
  infinite dimensions with non-Gaussian reference measure}}, J.\ Funct.\ Anal. {\textbf 266} (2014), 4490-4537.

\bibitem[Led00]{Ledoux-00}
M.~Ledoux, \emph{The geometry of {M}arkov diffusion generators}. Ann. Fac.
  Sci. Toulouse Math. (6) \textbf{9} (2000), 305--366, Probability
  theory. 

\bibitem[LV09]{Lott-Villani09}
J.~Lott and C.~Villani, \emph{Ricci curvature for metric-measure
  spaces via optimal transport}. Ann. of Math. (2) \textbf{169} (2009), 
  903--991. 

\bibitem[MWZ05]{MW-Zakai-05}
E.~Mayer-Wolf and M.~Zakai, \emph{The divergence of {B}anach space valued
  random variables on {W}iener space}, Probab. Theory Related Fields
  \textbf{132} (2005), 291--320. 

\bibitem[{Sav}14]{Savare-13}
G.~{Savar{\'e}}, \emph{{Self-improvement of the Bakry-\'Emery
  condition and Wasserstein contraction of the heat flow in
  RCD(K,$\infty$) metric measure spaces}}, 
  Discrete Contin. Dyn. Syst. (A), {\bf 34} (2014), 1641--1661.
  
\bibitem[Sc13]{Schioppa-13}
A.~Schioppa, \emph{Derivations and Alberti representations.}
ArXiv 1311.2439  (2013).

\bibitem[Sho97]{Showalter97}
R.~E. Showalter, \emph{Monotone operators in {B}anach space and nonlinear
  partial differential equations}, American Mathematical Society, Providence,
  RI, 1997. 
  
\bibitem[Smi94]{Smirnov}
S.K.~Smirnov, \emph{Decomposition of solenoidal vector charges into elementary solenoids and the
structure of normal one-dimensional currents}. St. Petersburg Math. J., \textbf{5}, pp.~841--867, 1994.

\bibitem[Ste70]{Stein-70}
E.~Stein, \emph{Topics in harmonic analysis related to the Littlewood-Paley theory}. Annals of Mathematics 
Studies, \textbf{63}, Princeton University Press, Princeton, N.J., 1970.

\bibitem[Sto10]{Stollmann10}
P.~Stollmann, \emph{{A dual characterization of length spaces with
  application to Dirichlet metric spaces}}, Polska Akademia Nauk. Instytut
  Matematyczny. Studia Mathematica \textbf{198} (2010), no.~3, 221--233.

\bibitem[Stu95]{Sturm95}
K.T.~Sturm, \emph{{Analysis on local Dirichlet spaces. II. Upper
  Gaussian estimates for the fundamental solutions of parabolic equations}},
  Osaka Journal of Mathematics \textbf{32} (1995), no.~2, 275--312.

\bibitem[Stu06a]{Sturm06I}
\bysame, \emph{On the geometry of metric measure spaces {I}.} Acta Math.
  \textbf{196} (2006), 65--131. 

\bibitem[Stu06b]{Sturm06II}
\bysame, \emph{On the geometry of metric measure spaces {II}.} Acta Math.
  \textbf{196} (2006), 133--177.

\bibitem[{Tre}13]{Trevisan-13}
D.~{Trevisan}, \emph{{Lagrangian flows driven by {$BV$} fields in Wiener
  spaces}}. To appear in Probab. Theory Relat. Fields. arXiv 1310.5655 (2013).

\bibitem[Vil09]{Villani09}
C.~Villani, \emph{Optimal transport. old and new}. Grundlehren der
  Mathematischen Wissenschaften {\bf 338}, Springer-Verlag, Berlin, 2009.

\bibitem[We00]{weaver}
N.~Weaver, \emph{Lipschitz algebras and derivations. II. Exterior differentiation.} J. Funct.
Anal., {\bf 178} (2000), pp.~64--112. 

\bibitem[WW09]{wei09}
G.~Wei and W.~Wylie, \emph{Comparison geometry for the Bakry-Emery Ricci tensor}.
 J. Differential Geom.  \textbf{83}  (2009), no.~2, pp.~377--405.
 
\bibitem[Yos80]{Yoshida-80}
K.~Yosida, \emph{Functional analysis}. Sixth edition. Grundlehren der Mathematischen Wissenschaften, 123. Springer-Verlag, Berlin-New York,  1980.


\end{thebibliography}
\end{document}